\theoremstyle{plain}
\newtheorem{thm}{Theorem}[section]
\newtheorem{problem}{Problem}
\newtheorem{cor}[thm]{Corollary}
\newtheorem{lemma}[thm]{Lemma}
\newtheorem{prop}[thm]{Proposition}
\newtheorem{conj}[thm]{Conjecture}
\theoremstyle{definition}
\newtheorem{defn}[thm]{Definition}
\newtheorem{example}[thm]{Example}
\theoremstyle{remark}
\newtheorem{rmk}[thm]{Remark}
\newcommand{\BA}{{\mathbb{A}}}
\newcommand{\BC}{{\mathbb{C}}}
\newcommand{\BE}{{\mathbb{E}}}
\newcommand{\BF}{{\mathbb{F}}}
\newcommand{\BG}{{\mathbb{G}}}
\newcommand{\BN}{{\mathbb{N}}}
\newcommand{\BP}{{\mathbb{P}}}
\newcommand{\BQ}{{\mathbb{Q}}}
\newcommand{\BZ}{{\mathbb{Z}}}
\newcommand{\CA}{{\mathcal A}}
\newcommand{\CC}{{\mathcal C}}
\newcommand{\CL}{{\mathcal L}}
\newcommand{\CM}{{\mathcal M}}
\newcommand{\CO}{{\mathcal O}}
\newcommand{\CS}{{\mathcal S}}
\newcommand{\CW}{{\mathcal W}}
\newcommand{\CX}{{\mathcal X}}
\newcommand{\CZ}{{\mathcal Z}}
\newcommand{\Fq}{{\mathfrak{q}}}
\newcommand{\pt}{{\mathsf{p}}}
\newcommand{\ch}{{\mathrm{ch}}}
\DeclareMathOperator{\Hilb}{Hilb}
\DeclareFontFamily{OT1}{rsfs}{}
\DeclareFontShape{OT1}{rsfs}{n}{it}{<-> rsfs10}{}
\DeclareMathAlphabet{\curly}{OT1}{rsfs}{n}{it}
\newcommand{\p}{\mathbb{P}}
\newcommand\Id{\operatorname{Id}}
\newcommand{\Mbar}{{\overline M}}
\newcommand{\Pic}{\mathop{\rm Pic}\nolimits}
\newcommand{\PT}{\mathsf{PT}}
\newcommand{\GW}{\mathsf{GW}}
\newcommand{\Sym}{{\mathrm{Sym}}}
\newcommand{\ev}{{\mathrm{ev}}}
\newcommand{\DR}{\mathsf{DR}}
\newcommand{\Aut}{\operatorname{Aut}}
\newcommand{\aaa}{\alpha}
\newcommand{\bbb}{\beta}
\newcommand{\A}{\mathsf{A}}
\newcommand{\QJac}{\mathrm{QJac}}
\newcommand{\QMod}{\mathrm{QMod}}
\newcommand{\wt}{\mathsf{wt}}
\newcommand{\pr}{\mathrm{pr}}
\newcommand{\1}{\mathsf{1}}
\newcommand{\vacuum}{v_{\varnothing}}
\newcommand{\vir}{\text{vir}}
\newcommand{\red}{\text{red}}
\newcommand{\fq}{{\mathfrak{q}}}
\newcommand{\Nak}{\mathrm{Nak}}
\newcommand{\Deg}{\mathrm{Deg}}
\newcommand{\bo}{\mathbf{0}}
\newcommand{\QPT}{\mathsf{Q}^{\PT}}
\newcommand{\QHilb}{\mathsf{Q}^{\Hilb}}
\newcommand{\Fock}{\mathfrak{F}}
\begin{document}
\title{Quantum cohomology of the Hilbert scheme of points on an elliptic surface}
\date{\today}


\author{Georg Oberdieck}
\address{KTH Royal Institute of Technology, Department of Mathematics}
\email{georgo@kth.se}

\author{Aaron Pixton}
\address{University of Michigan, Department of Mathematics}
\email{pixton@umich.edu}

\begin{abstract}
We determine the quantum multiplication with divisor classes on the Hilbert scheme of points on an elliptic surface $S \to \Sigma$ for all curve classes which are contracted by the induced fibration $S^{[n]} \to \Sigma^{[n]}$.
The formula is expressed in terms of explicit operators on Fock space.
The structure constants are meromorphic quasi-Jacobi forms of index $0$.
Combining with work of Hu-Li-Qin, this determines the quantum multiplication with divisors on the Hilbert scheme of elliptic surfaces with $p_g(S)>0$.
We also determine the equivariant quantum multiplication with divisor classes for the Hilbert scheme of points on the product $E \times \BC$.

The proof of our formula is based on Nesterov's Hilb/PT wall-crossing, a newly established GW/PT correspondence for the product of an elliptic surface times a curve,
and new computations in the Gromov-Witten theory of an elliptic curve.
\end{abstract}

\maketitle
\baselineskip=14pt

\setcounter{tocdepth}{1} 
\tableofcontents

\section{Introduction}
\subsection{Background}
Let $S^{[n]}$ be the Hilbert scheme of $n$ points on a complex smooth projective surface $S$.
It parametrizes the $0$-dimensional closed subschemes of $S$ of length $n$ and is a crepant resolution of the $n$-th symmetric product $\Sym^n(S)$.
The cohomology of $S^{[n]}$ has been completely described,
first additively as an irreducible representation of a Heisenberg algebra in \cite{Nak, Groj}, and later on multiplicatively in \cite{Lehn, LQW, LQW2} with connections to Virasoro and $\CW$-algebras.

For any smooth projective variety $X$, quantum cohomology is a commutative associative deformation of the classical cup product defined by
\[ (\gamma_1 \ast \gamma_2, \gamma_3) = \sum_{\beta \geq  0} q^{\beta} \langle \gamma_1, \gamma_2, \gamma_3 \rangle^X_{0,\beta} \]
where $(\gamma_1, \gamma_2) = \int_X \gamma_1 \cup \gamma_2$ is the intersection pairing, $\beta \in H_2(X,\BZ)$ runs over the cone of semieffective curves of $X$ modulo torsion, $q^{\beta}$ is a formal variable (an element of the Novikov ring), and 
$\langle \gamma_1, \gamma_2, \gamma_3 \rangle^X_{0,\beta}$ are the $3$-pointed genus $0$ Gromov-Witten invariants, which are virtual counts of rational curves in class $\beta$ meeting cycles Poincar\'e dual to $\gamma_1, \gamma_2, \gamma_3$.

\begin{problem} \label{problem:main}
Determine the quantum cohomology ring of the Hilbert schemes of points of $S$.
\end{problem}

This problem has received considerable attention over the years
and is of interest both from a geometric and representation-theoretic viewpoint.
Graber \cite{Graber} computed the quantum cohomology of $(\p^2)^{[2]}$
and used this to determine counts of hyperelliptic plane curves passing through an appropriate number of points.
Li-Li \cite{LiLi} computed for Hilbert schemes of $n$ points, for any $n$, on a simply connected surface
the $2$-pointed Gromov-Witten invariants for extremal curve classes, i.e. those contracted by the Hilbert-Chow morphism $S^{[n]} \to \Sym^n(S)$,
This led subsequently to a proof of Ruan's crepant resolution conjecture for the Hilbert-Chow morphism \cite{Cheong, LiQin}.
 For surfaces $S$ with $p_g(S)>0$, Hu-Li-Qin \cite{HLQ} showed that all Gromov-Witten invariants of $S^{[n]}$ vanish unless they are a linear combination of $(K_S)_n$ and extremal curve classes (see Section~\ref{sec:main formulas} for notation).
For K3 surfaces $S$ the reduced\footnote{Defined by the $3$-pointed reducd Gromov-Witten invariants, see \cite{HilbK3}.} quantum cohomology of $S^{[2]}$ was computed in \cite{HilbK3}, and a conjectural formula for quantum multiplication with divisor classes for any $S^{[n]}$ was given in \cite{HilbK3,vIOP}. 
In \cite{HilbHAE} the structure constants of the reduced quantum product of $K3^{[n]}$ were shown to be quasi-Jacobi forms. This determines the reduced quantum cohomology of $K3^{[n]}$ up to finitely many coefficients.
For further partial results, see \cite{Pontoni, AQ, LiQin1pointed, Fu}.

By using equivariant quantum cohomology, Problem~\ref{problem:main} can also be considered for quasi-projective surfaces which admit a $\BC^{\ast}$-action with proper fixed loci.
Here Okounkov-Pandharipande \cite{OkPandHilbC2}
determined the equivariant quantum cohomology of $(\BC^2)^{[n]}$, and Maulik-Oblomkov \cite{MO} determined the quantum multiplication with divisors on the Hilbert scheme of an $\CA_n$-surface.
These two works have been subsumed by the work of Maulik-Okounkov \cite{MOk} linking the quantum cohomology of Nakajima quiver varieties to quantum groups.

\subsection{Main results}
An {\em elliptic surface} is a smooth complex projective surface $S$ equipped with a relatively minimal genus one fibration $\pi : S \to \Sigma$ to a smooth curve, see \cite{Miranda_Lectures, FM} for introductions. We also always assume the existence of a section $\Sigma \hookrightarrow S$. 
%
The fibration $\pi$ induces a fibration
\[ \pi^{[n]} : S^{[n]} \to \Sym^n(\Sigma). \]
Consider the $\pi$-restricted quantum product on $S^{[n]}$,
\[ (\gamma_1 \ast_{\pi} \gamma_2, \gamma_3) = \sum_{\substack{\beta > 0 \\ \pi^{[n]}_{\ast} \beta = 0}} q^{\beta} \langle \gamma_1, \gamma_2, \gamma_3 \rangle^{S^{[n]}}_{0,\beta}. \]

The main result of this paper (stated as Corollary~\ref{cor:quantum multiplication by divisor}) is an explicit formula for the 
$\pi$-restricted quantum multiplication by divisor classes on all $S^{[n]}$.
By the vanishing result of \cite{HLQ},
this determines the (full) quantum multiplication with divisor classes
whenever $S$ is an elliptic surface with $p_g(S)>0$.
Thus we obtain a first example of a projective surface with non-trivial Gromov-Witten theory where the quantum multiplication with divisor classes on its Hilbert schemes is known.

We also obtain a formula for equivariant quantum multiplication by divisor classes on $(E \times \BC)^{[n]}$,
see Section~\ref{subsec:formula for local elliptic curve}.
The case $(E \times \BC)^{[n]}$ is of particular interest to representation theory, because it is a holomophic-symplectic variety admitting a symplectic-form preserving torus action with proper fixed locus, but which falls outside the class of Nakajima quiver varieties studied in \cite{MO}.

We mention here the following basic qualitative property of the invariants of $S^{[n]}$, which gives an affirmative answer to \cite[Question 1.4]{HilbHAE} for $(E \times \BC)^{[n]}$.

\begin{thm} \label{thm:structrue constants are quasi Jacobi forms}
Let $\pi : S \to \Sigma$ be an elliptic surface. 
All structure constants 
\[ (D \ast_{\pi} \gamma, \gamma') \]
of $\pi$-restricted quantum multiplication by a divisor 
$D \in H^2(S^{[n]})$ are meromorphic quasi-Jacobi forms of index $0$ with poles at torsion points $z = a \tau + b$.
\end{thm}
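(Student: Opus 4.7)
The strategy is to deduce Theorem~\ref{thm:structrue constants are quasi Jacobi forms} from the explicit formula that constitutes the main result of the paper (Corollary~\ref{cor:quantum multiplication by divisor}). That result expresses $D \ast_\pi (-)$ as a concrete operator on the Fock space $\Fock$ realizing $\bigoplus_n H^\ast(S^{[n]})$, whose matrix coefficients in the Nakajima basis are explicit functions of the Novikov variables. The plan is to identify these coefficient functions with standard generators of the ring of index-$0$ quasi-Jacobi forms and use closure of this class under the operations involved.

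First, identify the Novikov variables. The cone of curve classes contracted by $\pi^{[n]} : S^{[n]} \to \Sym^n(\Sigma)$ is generated by the fiber class $F$ and the exceptional class $A$ of the Hilbert-Chow map $S^{[n]} \to \Sym^n(S)$. Setting $q = q^F$ and $y = -q^A$ (with the usual sign convention), these are identified with the modular and elliptic variables $(q,y) = (e^{2\pi i \tau}, e^{2\pi i z})$. Every $\pi^{[n]}$-contracted class has the form $kF + mA$, so the Novikov formal series is genuinely a series in $(q,y)$.

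Second, inspect the matrix coefficients appearing in the operator formula. Each coefficient function is built from a short list of basic functions in $(q,y)$: the logarithmic derivatives of the odd Jacobi theta function, the Weierstrass functions $\wp(z,\tau)$ and $\wp'(z,\tau)$, and the quasi-modular Eisenstein series such as $E_2(\tau)$. Each of these is a (quasi-)Jacobi form of index $0$, and all of their poles sit at $z \in \BZ\tau + \BZ$, i.e., at torsion points of the elliptic curve $\BC/(\BZ\tau + \BZ)$. For the pairing $(D \ast_\pi \gamma, \gamma')$ with fixed classes $\gamma,\gamma' \in H^\ast(S^{[n]})$, only finitely many Heisenberg terms contribute since $D$ changes the number of Nakajima factors by a bounded amount, so we obtain a finite $\BC$-linear combination of products of these basic functions. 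The space of meromorphic index-$0$ quasi-Jacobi forms with poles at torsion points is closed under addition and multiplication (indices add, so index-$0$ is preserved), which yields the result.

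The main obstacle is precisely the identification step: one must verify that every coefficient arising from the operators in Corollary~\ref{cor:quantum multiplication by divisor} (including contributions from $y$-dependent exceptional classes coupled to the Heisenberg generators $\alpha_{-k}$) lies in the ring of index-$0$ quasi-Jacobi forms, rather than producing forms of nonzero index or essential singularities. The index-$0$ property is the nontrivial input; it reflects a hidden translation symmetry of the problem (under $z \mapsto z + \lambda \tau + \mu$) that ultimately comes from the automorphisms of the smooth elliptic fibers of $\pi$. Once this has been confirmed for each elementary building block of the formula, Theorem~\ref{thm:structrue constants are quasi Jacobi forms} follows by closure of $\QJac$ under the operations appearing in the evaluation of the pairing.
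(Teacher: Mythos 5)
Your high-level strategy --- read the structure constants off the explicit operator formula and invoke closure properties of index-$0$ quasi-Jacobi forms --- is the same as the paper's, which obtains the theorem as a short corollary of the chain $\QHilb \to \QPT \to \widehat{T}$ (equivalently, of Theorem~\ref{thm:2point operator} via the divisor equation). But your proposal stops exactly where the proof begins: you flag the identification of the coefficient functions with quasi-Jacobi forms as ``the main obstacle'' and then do not carry it out, appealing instead to a ``hidden translation symmetry'' that is never made precise. In the paper this step is concrete: applying $p\frac{d}{dp}$ (resp.\ $q\frac{d}{dq}$) to $\QHilb$ and using Corollary~\ref{cor:Q Hilb-PT comparision} together with \eqref{That and QPT}, every structure constant is exhibited as a finite $\BQ$-linear combination of the functions $\A_r(p^{s})$, $s\in\BZ$, plus $p\frac{d}{dp}\log\Theta=\A_1$; quasi-Jacobiness then follows from Theorem~\ref{thm:2132} and Libgober's description of $\QJac$, with no symmetry argument needed.

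Moreover, your description of the building blocks is wrong in a way that affects the statement itself. You assert that the basic functions are logarithmic derivatives of $\Theta$, the Weierstrass functions $\wp,\wp'$ and Eisenstein series, ``all of whose poles sit at $z\in\BZ\tau+\BZ$.'' The actual coefficients are the functions $\A_r$ evaluated at \emph{rescaled} elliptic arguments $p^{b_S}=e^{b_S z}$, where $b_S$ is a sum of Nakajima indices; the substitution $z\mapsto sz$ moves the poles from lattice points to $s$-torsion points $z=a\tau+b$ with $a,b\in\tfrac{1}{s}\BZ$. This rescaling is precisely why the theorem asserts poles at torsion points rather than only at lattice points, and for $n\geq 2$ such non-lattice poles genuinely occur (already $\A_1(p^2)$ contributes). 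An argument that only accounts for poles on the lattice is arguing for a false refinement of the statement, so the identification step cannot be treated as a routine verification: it is both the technical content of the proof and the source of the precise form of the conclusion.
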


\subsection{Strategy of the proof}
The key new method we use is Nesterov's wall-crossing formula relating the Gromov-Witten (GW) invariants of the Hilbert scheme of points to the Pandharipande-Thomas (PT) invariants of $S \times \mathrm{Curve}$. We review the theory in Section~\ref{sec:HilbPT wallcrossing}.
The remaining steps are a mostly straightforward but at times tricky computation.
The steps are as follows:
\begin{enumerate}
\item[(a)] We use Nesterov's wall-crossing to relate the $3$-pointed genus 0 GW invariants of $S^{[n]}$, where $S$ is an elliptic surface, to the PT invariants of the relative geometry $(S \times \p^1, S_{0,1,\infty})$.
The relevant wall-crossing terms are determined in Proposition~\ref{prop:I function}.
\item[(b)] We prove the GW/PT correspondence for the relative pair $(S \times C, S_{z})$, $S_{z} = \sqcup_i S \times \{ z_i \}$.

Hence we are reduced to the Gromov-Witten invariants of $(S \times \p^1, S_{0,1,\infty})$.
\item[(c)] For divisor insertions we can reduce further to $(S \times \p^1, S_{0, \infty})$. We then apply the product formula in relative Gromov-Witten theory \cite{LQ}. This reduces us to an integral of the form
\[ 
\int_{[ \Mbar_{g,r}(S, df) ]^{\vir}} \tau^{\ast}( \DR_g(a_1, \ldots, a_r)) \prod_{i=1}^{r} \ev_i^{\ast}(\gamma_i),
\]
where $\DR_g(a_1, \ldots, a_r)$ is the double ramification cycle \cite{JPPZ}.
\item[(d)] Using cosection localization \cite{KL} we describe the virtual class of $\Mbar_{g,r}(S,df)$ in terms of the virtual class of an elliptic curve, see Proposition~\ref{prop:GW class elliptic surface}.
This reduces us further to the following integral in the Gromov-Witten theory of an elliptic curve $E$:
\[ 
\int_{[ \Mbar_{g,n}(E, d) ]^{\vir}} \tau^{\ast}( \DR_g(a_1, \ldots, a_n)) \lambda_{g-1} \prod_{i=2}^{n} \ev_i^{\ast}(\pt),
\]
where $\pt \in H^2(E)$ is the point class.
The case of odd cohomology insertions reduces here to even ones by a geometric argument.
\item[(e)] Using the quasi-modularity and holomorphic anomaly equation for the Gromov-Witten theory of an elliptic curve \cite{HAE},
we reduce further to an integral on the moduli space of curves $\Mbar_{g,n}$ between $\DR_g(a_1,\ldots, a_n) \lambda_g \lambda_{g-1}$ and $\psi$-classes.
\item[(f)] Using Hain's formula \cite{Hain} for the DR cycle we finally reduce to the standard socle integrals on the moduli space of curves,
\[
\int_{\Mbar_{g,n}}\lambda_g\lambda_{g-1}\kappa_{b_0}\psi_1^{b_1+1}\cdots\psi_n^{b_n+1}, \quad
\int_{\Mbar_{g,n}}\lambda_g\lambda_{g-1}\psi_1^{b_1+1}\cdots\psi_n^{b_n+1}, \]
formulas for which were explained as consequences of the Virasoro conjecture for surfaces in \cite{Getzler-Pandharipande}.
\end{enumerate}

This concludes the proof for compact elliptic surfaces. The noncompact case $(E \times \BC)^{[n]}$ follows from $(\p^1 \times E)^{[n]}$ by a localization argument.

\subsection{Related work}
In a recent series of papers \cite{dH1, dH2} DeHority constructs an action of
an  infinite-dimensional Lie (super)algebra
on the equivariant cohomology of the moduli space of framed sheaves on $E \times \BC$ and certain $\BC^{\ast}$-equivariant rational elliptic surfaces.
In particular, he obtains an action of this algebra on the cohomology of the Hilbert scheme of points on these surfaces. In \cite{dH1} it was moreover announced that the quantum multiplication with divisors classes on $(E \times \BC)^{[n]}$ can be expressed in terms of this action.
The announced proof seems to be completely orthogonal to ours,
because according to \cite{dH1} it will use the monodromy of the Hilbert scheme and the WDVV equation. Our main input here is the Hilb/PT/GW link.

While our results cover all elliptic surfaces,
and \cite{dH1} only covers $E \times \BC$,
we do not address the representation-theoretic nature of the quantum multiplication with divisors in our paper.
It would be nice if the two approaches could be linked,
and a representation-theoretic interpretation of the quantum cohomology could be given for the Hilbert scheme of all elliptic surfaces.

\subsection{Conventions}
For a cohomology class $\gamma \in H^{i}(X)$ of degree $i$,
we denote by $\deg(\gamma) := i$ the cohomological degree
and by $\deg_{\BC}(\gamma) := i/2$ the complex cohomological degree.
Singular (co)homology with integral coefficients will be considered here modulo torsion; $H_k(X,\BZ)$ will stand for the $k$-th homology group modulo the torsion submodule, and similarly for cohomology.

\subsection{Plan of the paper}
In Section~\ref{sec:main formulas}, we state the details of the formulas in our main results. In Sections~\ref{sec:Modular and Jacobi forms} and \ref{sec:relativeGWPT}, we fix notation and review basic results about modular forms and GW/PT invariants. In Section~\ref{sec:HilbPT wallcrossing}, we review Nesterov's wall-crossing formula. In Section~\ref{sec:elliptic curves}, we evaluate certain integrals coming from the Gromov-Witten theory of an elliptic curve. In Section~\ref{sec:elliptic surfaces1}, we collect various results on elliptic surfaces, including proving the GW/PT correspondence for fiber classes on elliptic surfaces. In Sections~\ref{sec:elliptic surfaces2} and \ref{sec:localE}, we prove our main formulas.

\subsection{Acknowledgements}
We are grateful to Nikolas Kuhn for discussions about degenerations of elliptic fibrations. The project started during the participation of the first 
author in the Spring 2018 MSRI program "Enumerative geometry beyond numbers". We thank all participants and the organizers for fruitful conversations.

G. Oberdieck was supported by the starting grant 'Correspondences in enumerative geometry: Hilbert schemes, K3 surfaces and modular forms', No 101041491
 of the European Research Council, and a grant of the G\"oran Gustafsson Foundation. A. Pixton was supported by the NSF grant DMS-2301506.
 
\section{Main formulas} \label{sec:main formulas}

\subsection{Cohomology} \label{subsec:Nakajima operators}
We review Nakajima's \cite{Nak} construction of the Heisenberg algebra action on the cohomology of the Hilbert scheme of points on a smooth projective surface $S$.

For any~$n,k \in \BN$, consider the closed subscheme
$$
S^{[n,n+k]} = \left\{(I \supset I') \,\middle|\, I/I' \text{ is supported at a single point }x \in S \right\} \subset S^{[n]} \times S^{[n+k]}
$$
which is endowed with the projection maps
\begin{equation*}
\label{eqn:diagram zk}
\begin{tikzcd}
& S^{[n,n+k]} \ar[swap]{dl}{p_-} \ar{d}{p_S} \ar{dr}{p_+} & \\
S^{[n]} & S & S^{[n+k]}
\end{tikzcd}
\end{equation*}
which remember $I$, $x$, $I'$, respectively. 
For $\alpha \in H^{\ast}(S)$ and $k>0$ define
\[ \Fq_{\pm k}(\alpha) : H^{\ast}(S^{[n]}) \to H^{\ast}(S^{[n \pm k]}) \]
by
\begin{gather*}
\Fq_k(\alpha) \gamma = p_{+ \ast}( p_{-}^{\ast}(\gamma) \cdot p_S^{\ast}(\alpha) ),
\quad \quad 
\Fq_{-k}(\alpha) \gamma = (-1)^k p_{- \ast}( p_{+}^{\ast}(\gamma) \cdot p_S^{\ast}(\alpha) ).
\end{gather*}
We also set $\fq_0(\gamma) = 0$ for all $\gamma$.

Let $(\gamma, \gamma') = \int_{S^{[n]}} \gamma \cdot \gamma'$ denote the Poincar\'e pairing.
We have the relation
\[
( \Fq_k(\alpha) \gamma, \gamma' ) = (-1)^k ( \gamma, \Fq_{-k}(\alpha) \gamma' ), 
\]
for all
$\gamma \in H^{\ast}(S^{[n-k]})$ and $\gamma' \in H^{\ast}(S^{[n]})$.

The Fock space is the direct sum
\[ \mathfrak{F}_S = \bigoplus_{n \geq 0} H^{\ast}(S^{[n]}). \]
Because the correspondences above are defined for all $n$, we obtain operators
\[ \Fq_i(\alpha) : \mathfrak{F}_S \to \mathfrak{F}_S. \]
The following commutation relation of the Heisenberg algebra is satisfied \cite{Nak}:
\begin{equation*}
\label{eqn:heis op}
[\fq_k(\alpha), \fq_l(\beta)] 
= k \delta_{k+l,0} ( \alpha, \beta ) \Id_{\Fock_S}
\end{equation*}

The Fock space $\Fock_S$ is generated by the operators $\Fq_k(\alpha)$ for $k>0$ from the vacuum vector
\[ \vacuum \in H^{\ast}(S^{[0]}) = \BQ. \]

Given homogeneous classes $\alpha_i \in H^{\ast}(S)$ we have
\begin{equation*} 
\deg_{\BC}\big(   \Fq_{k_1}(\alpha_1) \cdots \Fq_{k_{\ell}}(\alpha_{\ell}) \vacuum \big) = n-\ell+\sum_{i} \deg_{\BC}(\alpha_i),
\end{equation*}
where $n = \sum k_i$.

For a class $v \in H^{\ast}(S^{\ell})$ decomposed as
$v=\sum_{i} \alpha^{(i)}_1 \otimes  \cdots \otimes \alpha^{(i)}_{\ell}$ we write
\[ \Fq_{k_1} \cdots \Fq_{k_{\ell}}(v) := \sum_{i} \prod_{j=1}^{\ell} \Fq_{k_j}(\alpha_j^{(i)}). \]

We define the normally ordered product $:\! - \!:$ by:
\begin{equation*}
:\! \Fq_{i_1}(\alpha_1) ... \Fq_{i_\ell}(\alpha_{\ell})\!: \,\, =  \Fq_{i_{\sigma(1)}}(\alpha_{\sigma(1)}) ... \Fq_{i_{\sigma(\ell)}}(\alpha_{\sigma(\ell)})
\end{equation*}
where $\sigma$ is any permutation such that $i_{\sigma(1)} \geq ... \geq i_{\sigma(\ell)}$.

\subsection{Classical multiplication}
Let $\CZ \subset S^{[n]} \times S$ be the universal subscheme and let $\pi_{S^{[n]}}, \pi_S$ be the projections of $S^{[n]} \times S$ to the factors.
For a vector bundle $V$ on $S$ let $V^{[n]} = \pi_{S^{[n]} \ast}(\CO_{\CZ} \otimes \pi_S^{\ast}(V))$
be the associated tautological bundle. 
Given $x \in H^{\ast}(S)$, define
\[
\delta = c_1(\CO_{S}^{[n]}), \quad \quad D(x) = \pi_{\ast}( \pi_S^{\ast}(x) \cdot \ch_2(\CO_{\CZ}) )
\]
Consider the operators
\[ e_{x}, e_{\delta} : \Fock_S \to \Fock_S \]
which act on $H^{\ast}(S^{[n]})$ by cup product with $D(x)$ and $c_1(\CO_{S}^{[n]})$ respectively. 
\begin{thm}[Lehn \cite{Lehn}, but see also \cite{MN} for another proof]\label{thm:lehn}
For all $\alpha \in H^{\ast}(S)$ we have
\begin{equation} \label{eq:classical multiplication by divisor}
\begin{gathered}
e_{\alpha} = -\sum_{n > 0} \Fq_{n} \Fq_{-n} ( \Delta_{\ast} \alpha),  \\
e_{\delta} = -\frac{1}{6} \sum_{i+j+k=0} :\! \Fq_i \Fq_j \Fq_k ( \Delta_{123} )\!:\ -\  \frac{1}{2} \sum_{n>0} (n-1) \, \Fq_{n} \Fq_{-n}(\Delta_{\ast}(K_S)),
\end{gathered}
\end{equation}
where $\Delta_{\ast} : H^{\ast}(S) \to H^{\ast}(S^2)$ is the pushforward along the diagonal and $\Delta_{123} \in H^{\ast}(S^3)$ is the class of the small diagonal.
\end{thm}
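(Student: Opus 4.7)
The plan is to identify each side of \eqref{eq:classical multiplication by divisor} as an operator on $\Fock_S$ by comparing commutators with the Heisenberg creation operators. Since $\Fock_S$ is generated from the vacuum $\vacuum$ by the operators $\fq_k(\beta)$, $k > 0$, and since all four operators in question (the two cup-product operators and their proposed formulas) annihilate $\vacuum$ (as $S^{[0]}$ is a point), it suffices to check that the commutators with every $\fq_m(\beta)$ match. A straightforward induction on total Fock degree then forces equality of the operators on all of $\Fock_S$.

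For the first formula, the geometric heart of the argument is the identity
\[ [e_\alpha, \fq_m(\beta)] = m\, \fq_m(\alpha \cup \beta), \qquad m \in \BZ \setminus \{0\}. \]
Unwinding the definition of $e_\alpha$ acting on $\fq_m(\beta) v$ reduces this to the cohomological statement
\[ p_+^*(D(\alpha)) - p_-^*(D(\alpha)) = m\, p_S^*(\alpha) \]
on the incidence variety $S^{[n,n+m]}$. Indeed, the difference $[\CO_{\CZ_{n+m}}] - [\CO_{\CZ_n}]$ of the pullbacks of the two universal subschemes to $S^{[n,n+m]} \times S$ is a $K$-theory class supported along the graph of $p_S$ of generic relative length $m$; applying Grothendieck--Riemann--Roch and extracting $\ch_2$ yields the identity after pairing with $\alpha$. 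A direct Heisenberg computation, using that $\sum_i (e^i, \beta)(\alpha \cup e_i) = \alpha \cup \beta$ for dual bases $\{e_i\}, \{e^i\}$ of $H^*(S)$, shows that the claimed right-hand side for $e_\alpha$ satisfies the same commutator.

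The argument for $e_\delta$ follows the same blueprint, but the commutator $[e_\delta, \fq_m(\beta)]$ now contains both a quadratic term in Nakajima operators (responsible for the cubic term $\fq_i \fq_j \fq_k$ with $i+j+k=0$) and a linear correction proportional to $K_S$. Geometrically, $\delta$ is, up to a sign and a factor of $\tfrac{1}{2}$, the boundary divisor $B$ of the Hilbert--Chow morphism; the quadratic contribution arises from the self-intersection of $B$ along the incidence variety, and the $K_S$-correction comes from a Todd-class term in Grothendieck--Riemann--Roch applied to $\CO_S^{[n]}$. The main obstacle is precisely this boundary analysis: for $m \geq 2$ the variety $S^{[n,n+m]}$ is singular along the locus where the added length-$m$ cluster meets the existing subscheme, so extracting the third-order cohomological contribution calls for a careful local Chern-character computation near points of multiplicity $\geq 2$. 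Alternatively, as in the approach of \cite{MN}, one can reduce all commutator identities to the case $m = 1$ by an inductive argument, where the relevant local geometry becomes fully transparent and the $K_S$-correction can be read off directly.
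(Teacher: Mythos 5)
The paper does not prove Theorem~\ref{thm:lehn}: it is quoted from Lehn \cite{Lehn}, with \cite{MN} cited for an alternative proof, so there is no internal argument to compare against. Judged on its own, your overall strategy --- both sides annihilate $\vacuum$, so it suffices to match commutators with all $\Fq_m(\beta)$ and induct --- is the correct and standard one, and your treatment of the first identity is essentially complete. The key point, that $[\CO_{\CZ_{n+m}}]-[\CO_{\CZ_n}]=[I/I']$ is a length-$m$ sheaf supported on the graph of $p_S$ over the irreducible incidence variety so that the relevant difference of $\ch_2$'s is $m$ times the graph class, does yield $[e_\alpha,\Fq_m(\beta)]=m\,\Fq_m(\alpha\cup\beta)$, and the Heisenberg computation with $\Delta_{\ast}\alpha=\sum_i e_i\otimes f_i$ matches this against $-\sum_{n>0}\Fq_n\Fq_{-n}(\Delta_{\ast}\alpha)$.

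The gap is in the second identity, which is the actual content of Lehn's theorem. Your argument stops exactly where the work begins: one must compute the commutator $\Fq_m'(\beta):=[e_\delta,\Fq_m(\beta)]$ as an explicit operator, schematically a Virasoro-type quadratic expression $\tfrac{m}{2}\sum_{i+j=m}:\!\Fq_i\Fq_j\!:(\Delta_{\ast}\beta)$ plus a correction $\binom{m}{2}\Fq_m(K_S\beta)$, and it is precisely this evaluation --- the exact normalization of the quadratic term and the coefficient of the $K_S$-term --- that requires the intersection-theoretic analysis on the singular varieties $S^{[n,n+m]}$, or Lehn's actual route of establishing only the $m=1$ case geometrically and then determining all $\Fq_m'$ by a second induction on the double commutators $[\Fq_n'(\alpha),\Fq_m(\beta)]$. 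You correctly name this obstacle (``a careful local Chern-character computation near points of multiplicity $\geq 2$'') but do not carry it out, and the specific coefficients $-\tfrac16$ and $-\tfrac12(n-1)$ in \eqref{eq:classical multiplication by divisor} cannot be extracted from what you have written. As it stands, the second half is a correct plan together with an acknowledgment of where the difficulty lives, not a proof; completing it requires either executing Lehn's main computation or genuinely invoking \cite{Lehn} or \cite{MN}, as the paper does.
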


\subsection{Curve classes}
Curve classes on $S^{[n]}$ for $n \geq 2$ may be described by
the isomorphism
\begin{align*}
H_2(S, \BQ) \oplus \wedge^2 H_1(S, \BQ) \oplus \BQ & \xrightarrow{\cong} H_2( S^{[n]} ; \BQ) \\
(\beta, a \wedge b, k) & \mapsto \beta_n + (a \wedge b)_n + k A
\end{align*}
where we let
\[
\beta_n = \Fq_1(\beta) \Fq_{1}(\pt)^{n-1} \vacuum, \quad (a \wedge b)_n = \Fq_1(a) \Fq_1(b) \Fq_1(\pt)^{n-2} \vacuum,
\quad 
A = \Fq_2(\pt) \Fq_1(\pt)^{n-2} \vacuum,
\]
where $\pt \in H^4(S,\BZ)$ is the class of a point.
We often drop the subscript $n$ from the notation.

Geometrically, $A$ is the class of the locus of $2$ points centered at a fixed point $p \in S$, plus $n-2$ distinct points away from $p$.
Similarly, if $\beta$ is Poincar\'e dual to a curve $C \subset S$,
then $\beta_n$ is the class of the curve given by fixing $n-1$ distinct points away from $C$ plus a single point moving along $C$.

Here we will always work with the reduced cohomology group 
\[ \widetilde{H}_2(S^{[n]}, \BZ) \]
 defined as the quotient of $H_2(S^{[n]}, \BZ)$ by the image of $\wedge^2 H_1(S,\BZ)$.
This can be motivated by the following result.

\begin{lemma}
Let $f : \p^1 \to S^{[n]}$ be any morphism. The class $f_{\ast}[\p^1] \in H_2(S^{[n]},\BZ)$ is determined
by its image in $\widetilde{H}_2(S^{[n]},\BZ)$.
\end{lemma}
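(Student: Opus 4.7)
Using the decomposition
\[H_2(S^{[n]}, \BQ) \cong H_2(S, \BQ) \oplus \wedge^2 H_1(S, \BQ) \oplus \BQ \cdot A\]
displayed above the statement, the claim (over $\BQ$, which suffices since we work modulo torsion) reduces to showing that the $\wedge^2 H_1(S, \BQ)$-component of $f_\ast[\p^1]$ vanishes. This component is detected by the intersection pairing with divisor classes of the form
\[\omega_{a,b} := \tfrac{1}{(n-2)!}\, \Fq_1(a)\, \Fq_1(b)\, \Fq_1(1_S)^{n-2}\, \vacuum \in H^2(S^{[n]}, \BQ), \qquad a, b \in H^1(S, \BQ);\]
indeed, a short computation with the Heisenberg commutation relations shows that these classes are orthogonal to the generator $\beta_n$ of the $H_2(S,\BQ)$-summand (for degree reasons, since $(a,\pt) = 0$) and to $A$, while pairing perfectly with $(a' \wedge b')_n$.

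The key geometric step is to recognise each $\omega_{a,b}$ as pulled back from an abelian variety. Consider the composition
\[\sigma \colon S^{[n]} \xrightarrow{\,\rho\,} \Sym^n(S) \longrightarrow \mathrm{Alb}(S),\]
where $\rho$ is the Hilbert--Chow morphism and the second map is the Abel--Jacobi sum. Under the standard identification $H^1(\mathrm{Alb}(S)) = H^1(S) \cong H^1(S^{[n]})$, one checks that $\sigma^\ast$ sends $\wedge^2 H^1(\mathrm{Alb}(S)) = H^2(\mathrm{Alb}(S))$ onto the linear span of the classes $\omega_{a,b}$, up to normalisation constants intrinsic to the Nakajima basis.

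With this in hand the conclusion is immediate: since $\p^1$ is simply connected and $\mathrm{Alb}(S)$ is a complex torus, the composition $\sigma \circ f \colon \p^1 \to \mathrm{Alb}(S)$ is constant, and therefore $f^\ast \omega_{a,b} = f^\ast \sigma^\ast(a \wedge b) = 0$ in $H^2(\p^1)$ for all $a, b \in H^1(S, \BQ)$. Thus $f_\ast[\p^1]$ pairs trivially with every $\omega_{a,b}$, as required. The main obstacle is the identification $\sigma^\ast H^2(\mathrm{Alb}(S)) = \mathrm{span}\{\omega_{a,b}\}$, which is a standard but slightly finicky Nakajima-operator computation; I would execute it by factoring $\sigma$ through $\Sym^n(S)$ and combining this with the known description of $H^1(S^{[n]})$ via $\Fq_1(a) \Fq_1(1_S)^{n-1} \vacuum$.
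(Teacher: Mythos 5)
Your overall strategy (pair $f_{\ast}[\p^1]$ against degree-two classes built from $H^1(S)$ and use that $\p^1$ maps constantly to the Albanese, equivalently has no odd cohomology) is essentially the paper's, but there is a genuine error in your key identification, and it changes what you can conclude. You assert that $\sigma^{\ast}$ carries $\wedge^2 H^1(\mathrm{Alb}(S))$ onto the span of the classes $\omega_{a,b} = \tfrac{1}{(n-2)!}\Fq_1(a)\Fq_1(b)\Fq_1(1_S)^{n-2}\vacuum$. This is false. Since $\sigma^{\ast}a = D(a) = \tfrac{1}{(n-1)!}\Fq_1(a)\Fq_1(1_S)^{n-1}\vacuum$, one has $\sigma^{\ast}(a\wedge b) = D(a)\cup D(b)$, and the Nakajima-basis expansion of this cup product (equation \eqref{eqn:D(a) times D(b)} in the paper) contains an extra summand:
\[
D(a)\cup D(b) = \frac{1}{(n-1)!}\Fq_1(a\cup b)\Fq_1(1_S)^{n-1}\vacuum \ +\ \omega_{a,b}.
\]
The first summand is $D(a\cup b)$ for the class $a\cup b\in H^2(S)$, and it pairs nontrivially with the $H_2(S)$-summand: $\langle D(a\cup b), \beta_n\rangle = \int_S\beta\cup a\cup b$.

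Consequently, from $f^{\ast}\bigl(D(a)\cup D(b)\bigr) = 0$ you may only conclude
\[
\langle \omega_{a,b},\, f_{\ast}[\p^1]\rangle = -\int_S\beta\cup a\cup b,
\]
not that this pairing vanishes. Your intermediate claim that the $\wedge^2 H_1(S,\BQ)$-component of $f_{\ast}[\p^1]$ is zero is therefore unjustified, and it is in fact stronger than what the lemma asserts: the lemma only says this component is \emph{determined by} the $H_2(S)$-component $\beta$, which is exactly what the corrected computation above yields (and is the paper's proof). The rest of your setup --- orthogonality of $\omega_{a,b}$ to $\beta_n$ and $A$, perfect pairing with $(a'\wedge b')_n$, and the vanishing $f^{\ast}D(a)=0$ --- is correct; only the identification of $\sigma^{\ast}(a\wedge b)$ with $\omega_{a,b}$, and the over-claim that rests on it, need to be repaired.
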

\begin{proof}
For any $a \in H^1(S,\BQ)$ consider the cohomology class
\[ D(a) = \frac{1}{(n-1)!} \Fq_1(a) \Fq_1( 1)^{n-1} \vacuum \quad \in H^1(S^{[n]}). \]
Since $\p^1$ does not have any odd cohomology, $f^{\ast} D(a) = 0$.
On the other hand,
\begin{equation} \label{eqn:D(a) times D(b)} D(a) \cup D(b) = \frac{1}{(n-1)!} \Fq_1(ab) \Fq_1( 1)^{n-1} \vacuum + \frac{1}{(n-2)!} \Fq_1(a) \Fq_1(b) \Fq_1( 1)^{n-2} \vacuum. \end{equation}
Hence writing $f_{\ast} [\p^1] = \beta + \xi + kA$ with $\xi \in \wedge^2 H_1(S,\BZ)$ we get the relation
\[ 0 = \langle D(a) \cup D(b), f_{\ast} [\p^1] \rangle = \int_S \beta \cup a \cup b + \langle a \wedge b, \xi \rangle. \]
Since $\wedge^2 H^1(S,\BZ)$ is dual to $\wedge^2 H_1(S,\BZ)$ we see that $\beta$ already determines the component $\xi$.
\end{proof}

Moreover, the quantum multiplication with $D(\gamma)$ for $\gamma \in H^2(S)$ already determines the quantum multiplication by $D(a) D(b)$ for $a,b \in H^1(S)$,
so we do not need to consider the latter one.

\subsection{Gromov-Witten invariants}
Let
\[ \Mbar_{g,N}(S^{[n]}, \beta+kA) \]
be the moduli space of connected $N$-marked genus $h$ stable maps $f : C \to S^{[n]}$ satisfying
\[ f_{\ast}[C] = \beta+kA \in \widetilde{H}_2(S^{[n]},\BZ). \]

The Gromov-Witten invariants of $S^{[n]}$ are defined by integration over the virtual class:
\[
\left\langle \, \lambda_1, \ldots, \lambda_N \right\rangle^{S^{[n]}}_{h,\beta+kA}
=
\int_{[ \Mbar_{h,N}(S^{[n]}, \beta+kA) ]^{\vir}} \prod_{i=1}^{N} \ev_{i}^{\ast}(\lambda_i),
\]
where $\ev_i : \Mbar_{h,N}(S^{[n]}, \beta+kA) \to S^{[n]}$ are the evaluation maps at the $i$-th marking.

\subsection{Main results}
Let $\Sigma$ be a smooth curve and let
\[ \pi : S \to \Sigma \]
be an elliptic surface with section $\Sigma \subset S$. Let $f \in H^2(S)$ denote the class of a fiber.

If $\beta \in H_2(S,\BZ)$ satisfies $\pi_{\ast} \beta = 0$ but is not a multiple of $f$, then after a suitable deformation of $S$, $\beta$ is no longer effective. It follows that all Gromov-Witten invariants of $S^{[n]}$ in class $\beta + kA$ vanish, see Proposition~\ref{prop:vanishing Hilbert scheme} for details.
In other words, if $\gamma \in \widetilde{H}_2(S^{[n]},\BZ)$ is a class satisfying $\pi^{[n]}_{\ast} \gamma = 0$, then we can have non-zero Gromov-Witten invariants for this class only in case $\gamma = df + kA$ for some $d \geq 0$ and $k \in \BZ$ (with $k \geq 0$ if $d=0$). Hence we restrict to this case below.

Consider the operator
\begin{equation} \QHilb \in \mathrm{End}( \Fock_S \otimes \BQ((p))[[q]]) \label{QHilb} \end{equation}
defined for all $\lambda, \mu \in H^{\ast}(S^{[n]})$ by
\begin{equation} \label{defn:Q hilb}
( \QHilb \lambda, \mu) = 
\sum_{d \geq 0} \sum_{k \in \BZ} q^d (-p)^k \langle \lambda, \mu \rangle^{S^{[n]}}_{0, df+kA}.
\end{equation}

For any class $\gamma \in H^2(S)$ and integers $b_1, \ldots, b_r \in \BZ$ define
a class in $H^{2r+2}(S^r)$ by
\begin{equation} \label{star class}
\star^{b_1,\ldots, b_r}(\gamma)
:=
\sum_{i=1}^{r} b_i^2 \pr_i^{\ast}(\pt) \prod_{\ell \neq i} \pr_{\ell}^{\ast}(\gamma)
- \sum_{1 \leq i < j \leq r} b_i b_j \pr_{ij}^{\ast}(\Delta^{\text{odd}}_{\ast}(\gamma))
\prod_{\ell \neq i, j } \pr_{\ell}^{\ast}(\gamma),
\end{equation}
where
\begin{itemize}
\item $\pr_i, \pr_{ij}$ are the projections from $S^r$ to the $i$-th and $ij$-th factor respectively, 
\item $\Delta^{\text{odd}}_{\ast}(\gamma)$ is the component of
$\Delta_{\ast}(\gamma) \in H^{\ast}(S \times S)$ which lies in $H^{\text{odd}}(S) \otimes H^{\text{odd}}(S)$.
\end{itemize}

For any $\gamma \in H^2(S)$ define the operator
\begin{equation} \label{omega gamma}
\omega_{\gamma}(p) = \sum_{\substack{b_1, \ldots, b_r \in \BZ \\ b_1 + \ldots + b_r = 0}}
\frac{1}{r!} \prod_{i=1}^{r} \frac{(p^{b_i/2} - p^{-b_i/2})}{b_i}
\ : \Fq_{b_1} \cdots \Fq_{b_r}( \star^{b_1, \ldots, b_r}(\gamma) ) :
\end{equation}

The first main result of the paper is the following:
\begin{thm} \label{thm:2point operator}
\begin{align*} \QHilb & = \sum_{k > 0} \ln\left( \frac{1-p^k}{1-p} \right) \Fq_k \Fq_{-k}(\Delta_{\ast} K_S) \\
& \ - \left( \int_{\Sigma} K_S \right) \sum_{m,d \geq 1} \frac{q^{md}}{md}
\left[ \omega_{df}(p^m) + (p^{m/2} - p^{-m/2})^2 e_{df} \right] \end{align*}
\end{thm}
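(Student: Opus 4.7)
The plan is to follow the six-step strategy outlined in the introduction, computing the $2$-point invariants packaged by $\QHilb$ directly (via the divisor equation this also controls all divisor-insertion $3$-point invariants). Throughout the argument we restrict to classes of the form $df + kA$ using the vanishing of Gromov-Witten invariants for non-fiber $\pi$-vertical classes coming from deformation theory of $S$.

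The first move is to apply Nesterov's Hilb/PT wall-crossing formula, which rewrites the genus $0$ Gromov-Witten invariants of $S^{[n]}$ in classes $df + kA$ as Pandharipande-Thomas invariants of the relative geometry $(S \times \p^1, S_{0,1,\infty})$ twisted by the explicit I-function of Proposition~\ref{prop:I function}. The shape of this I-function is responsible for the pure-$p$ summand $\sum_{k>0} \ln((1-p^k)/(1-p)) \Fq_k \Fq_{-k}(\Delta_\ast K_S)$ in the first line of the theorem, as well as, combined with Lehn's formula, the $(p^{m/2}-p^{-m/2})^2 e_{df}$ term. Next I would invoke the GW/PT correspondence for $(S \times C, S_z)$, which the paper establishes for fiber classes, replacing the PT side by relative Gromov-Witten invariants of $S \times \p^1$. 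For divisor-type insertions the marking at $1$ is removed via the divisor equation, reducing to $(S \times \p^1, S_{0,\infty})$; the relative product formula then factorizes the computation into a $\p^1$-piece and an $S$-piece, the latter being an integral over $\Mbar_{g,r}(S, df)$ of a pullback of $\DR_g(a_1,\ldots,a_r)$ against evaluation classes.

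Cosection localization (Proposition~\ref{prop:GW class elliptic surface}) then collapses the $S$-integral to an analogous integral on $E$ weighted by $\lambda_{g-1}$ and a single point insertion; the factor $\int_\Sigma K_S$ appearing in the theorem emerges naturally from the cosection (via the canonical section of $\pi_\ast \omega_{S/\Sigma}$), and odd cohomology insertions on $S$ are reduced to even ones by a separate geometric argument. The remaining DR-weighted integral on $\Mbar_{g,n}(E,d)$ I would evaluate using the quasi-modularity and holomorphic anomaly equation of \cite{HAE}, which reduces it to an integral on $\Mbar_{g,n}$ of $\DR_g(a_1,\ldots,a_n) \lambda_g \lambda_{g-1}$ against $\psi$- and $\kappa$-classes. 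Plugging in Hain's formula for the DR cycle together with the closed-form socle evaluations from \cite{Getzler-Pandharipande} then produces explicit tautological expressions. The endgame is to repackage the resulting generating series as operators on $\Fock_S$ and match them term-by-term with the right-hand side of the theorem.

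The main obstacle I expect is twofold. First, the cosection-localization step requires careful handling of the obstruction theory of stable maps into the fibers of $\pi$, especially in the presence of both the section $\Sigma$ and higher-multiplicity fiber components, in order to cleanly extract the single scalar $\int_\Sigma K_S$. Second, the final Wick-style identification of the tautological integrals with the explicit Fock-space operator $\omega_{df}(p^m)$ is combinatorially delicate: the simultaneous appearance of normal-ordered Heisenberg products $\Fq_{b_1} \cdots \Fq_{b_r}$ with $\sum b_i = 0$ and of the odd-diagonal correction term in $\star^{b_1,\ldots,b_r}(\gamma)$ requires tracking how $H^{\mathrm{odd}}(S)$ insertions propagate through the Heisenberg commutation relations and through the DR cycle to produce exactly the second sum in \eqref{star class}.
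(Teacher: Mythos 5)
Your plan is essentially the paper's own proof: the paper computes the PT two-point operator $\QPT$ via the GW/PT correspondence, the relative product formula, cosection localization and the elliptic-curve DR integrals, extracts the $I$-function from special PT invariants, and then applies Nesterov's wall-crossing to convert $\QPT$ into $\QHilb$. The one inaccuracy is your attribution of the entire summand $\sum_{k>0}\ln\bigl(\tfrac{1-p^k}{1-p}\bigr)\Fq_k\Fq_{-k}(\Delta_{\ast}K_S)$ to the $I$-function: only the $-\ln(1-p)$ normalization comes from the wall-crossing correction $-\log\bigl((1-p)\prod_{r\geq 1}\cdots\bigr)e_{c_1(S)}$, while the $\ln(1-p^k)$ part is the genuine $q^0$ (extremal) contribution of $\QPT$ itself, which under the correction reproduces the Li--Li extremal evaluation.
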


The $\pi$-restricted quantum product $\ast_{\pi}$ on $H^{\ast}(S^{[n]}) \otimes \BQ((p))[[q]]$ takes the form
\[
( \lambda_1 \ast_\pi \lambda_2, \lambda_3)_{S^{[n]}} = 
\sum_{d \geq 0} \sum_{k \in \BZ}
q^d (-p)^k
\left\langle \, \lambda_1, \lambda_2, \lambda_3 \right\rangle^{S^{[n]}}_{0,\beta}.
\]
For $x \in H^2(S)$ consider the operators
\[ E_{x}, E_{\delta} \in \mathrm{End}( \Fock_S \otimes \BQ((p))[[q]] )
\]
which act on $H^{\ast}(S^{[n]})$ by $\ast_{\pi}$-multiplication by $D(x)$ and $\delta$ respecitvely.

\begin{cor} \label{cor:quantum multiplication by divisor} For any $\alpha \in H^2(S)$ we have
\begin{align*}
E_{\alpha} & = e_{\alpha} + (\alpha \cdot f) q \frac{d}{dq} \QHilb \\
E_{\delta} & = e_{\delta} + p \frac{d}{dp} \QHilb.
\end{align*}
\end{cor}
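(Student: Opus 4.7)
The plan is to derive Corollary~\ref{cor:quantum multiplication by divisor} from Theorem~\ref{thm:2point operator} by applying the standard divisor equation in Gromov--Witten theory, which converts a three-point invariant with a divisor insertion into a two-point invariant times an intersection number. In particular, no new enumerative input beyond $\QHilb$ is needed.

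First I would expand $(D \ast_{\pi} \lambda, \mu)$ for $D \in \{ D(\alpha), \delta \}$ and split off the $(d,k) = (0,0)$ summand. On this classical piece the three-point invariant collapses to the cup product $(D \cup \lambda, \mu)$, which by Lehn's Theorem~\ref{thm:lehn} is precisely $(e_{\alpha} \lambda, \mu)$ or $(e_{\delta} \lambda, \mu)$. For each remaining $(d,k)$ the class $df+kA$ is non-zero, the moduli space $\Mbar_{0,2}(S^{[n]}, df+kA)$ is stable, and the divisor equation gives
\[ \langle D, \lambda, \mu \rangle^{S^{[n]}}_{0, df+kA} = (D \cdot (df+kA)) \cdot \langle \lambda, \mu \rangle^{S^{[n]}}_{0, df+kA}. \]

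Next I would compute the intersection pairings using the basis of $H_2(S^{[n]}, \BQ)$ reviewed in the excerpt: $D(\alpha) \cdot \beta_n = \alpha \cdot \beta$, $D(\alpha) \cdot A = 0$, $\delta \cdot \beta_n = 0$, and $\delta \cdot A = 1$ (the last via $\delta = -\tfrac{1}{2} E$ where $E$ is the Hilbert--Chow exceptional divisor, together with $E \cdot A = -2$). These yield $D(\alpha) \cdot (df+kA) = (\alpha \cdot f)\, d$ and $\delta \cdot (df+kA) = k$. Substituting back, the non-classical part of $(D(\alpha) \ast_{\pi} \lambda, \mu)$ becomes
\[ (\alpha \cdot f) \sum_{d \geq 0,\, k \in \BZ} d \, q^{d} (-p)^{k} \, \langle \lambda, \mu \rangle^{S^{[n]}}_{0, df+kA} \;=\; (\alpha \cdot f) \, q \frac{d}{dq} (\QHilb \lambda, \mu), \]
and analogously the non-classical part of $(\delta \ast_{\pi} \lambda, \mu)$ equals $p \frac{d}{dp} (\QHilb \lambda, \mu)$, using the identity $p \frac{d}{dp} (-p)^{k} = k(-p)^{k}$. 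Combining the classical and quantum pieces yields the two operator identities of the corollary.

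The argument is essentially formal; the only concrete input beyond Theorem~\ref{thm:2point operator} and Theorem~\ref{thm:lehn} is the short list of intersection pairings on $S^{[n]}$ involving the classes $\beta_{n}$ and $A$, all of which are standard. No substantive obstacle is expected, and the corollary should be viewed as a convenient repackaging of the two-point formula.
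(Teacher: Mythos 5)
Your proposal is correct and matches the paper's proof, which is exactly the one-line observation that the corollary follows from Theorem~\ref{thm:2point operator} and the divisor equation; you have simply written out the standard details (splitting off the classical term, the pairings $D(\alpha)\cdot(df+kA)=(\alpha\cdot f)d$ and $\delta\cdot(df+kA)=k$, and the identification of the resulting sums with $q\frac{d}{dq}$ and $p\frac{d}{dp}$ of $\QHilb$). No issues.
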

\begin{proof}
The result follows from Theorem~\ref{thm:2point operator} and the divisor equation.
\end{proof}

\begin{rmk}
Whenever\footnote{See Section~\ref{sec:invariants of elliptic surfaces} for when this is the case.} $p_g(S) > 0$,
the Hilbert scheme $S^{[n]}$ has non-zero Gromov-Witten invariants only for curve classes $a c_1(S) + kA$ by \cite{HLQ}, so the restricted quantum multiplication $\ast_{\pi}$ equals the full quantum multiplication $\ast$ in this case.
\end{rmk}

\begin{rmk}
For any surface $S$, define the extremal quantum product $\ast_\mathrm{ext}$ on $H^{\ast}(S^{[n]}) \otimes \BQ[[p]]$ by
\[
( \lambda_1 \ast_\mathrm{ext} \lambda_2, \lambda_3)_{S^{[n]}} = 
\sum_{k \geq 0}
(-p)^k
\left\langle \, \lambda_1, \lambda_2, \lambda_3 \right\rangle^{S^{[n]}}_{0,kA}.
\]
Define the operator
$E_{\delta}^{\mathrm{ext}} \in \mathrm{End}(\Fock_S \otimes \BQ[[p]])$
which acts on $H^{\ast}(S^{[n]})$ by 
$\ast_{\mathrm{ext}}$-multiplication by $\delta$.
Then J. Li and W.-P. Li proved that for any simply connected surface $S$ we have
\[
E_{\delta}^{\mathrm{ext}} = 
-\frac{1}{6} \sum_{i+j+k=0} :\! \Fq_i \Fq_j \Fq_k ( \Delta_{123} )\!:\ + \sum_{k>0} \left( \frac{k}{2} \cdot \frac{p^k+1}{p^k-1} - \frac{1}{2} \cdot \frac{p+1}{p-1} \right) \Fq_k \Fq_{-k}( \Delta_{\ast}K_S )
\]
Corollary~\ref{cor:quantum multiplication by divisor} specializes to this evaluation under $q=0$.
\end{rmk}

\subsection{The local elliptic curve} \label{subsec:formula for local elliptic curve}
Consider the elliptic surface 
\[ S = E \times \BC. \]
We view cohomology classes of $E$ as cohomology classes on $S$ via pullback.
The torus $\BG_m$ acts on $\BC$ with tangent weight $t$ at the origin and thus on $S$ and its Hilbert schemes.
The induced action on the moduli space of stable maps to $S^{[n]}$ has proper fixed locus. The Gromov-Witten invariants of $S^{[n]}$ are defined as the corresponding equivariant residue.
We define the $2$-point operator
\[ \QHilb \in \mathrm{End}( H^{\ast}(\Hilb) \otimes \BQ(t) \otimes \BQ((p))[[q]]). \]
exactly as before.

We state a complete evaluation of this operator.
Define the equivariant operators
\[
e_{t} = -t^2 \sum_{k > 0} \Fq_{k} \Fq_{-k} ( \Delta_E )
\]
where $\Delta_E \in H^{\ast}(E \times E)$ is the class Poincar\'e dual to the diagonal,
and let $\omega_{dt}(p)$ be defined by the same formula as $\omega_{\gamma}(p)$
but using the equivariant class
\[
\star^{b_1, \ldots, b_r}(dt) = d^{r-1} t^r
\left( \sum_{i=1}^{r} b_i^2 \pr_i^{\ast}(\pt_E) - \sum_{1 \leq i < j \leq r}
b_i b_j \pr_{ij}^{\ast}( \Delta_E^{\text{odd}} ) \right)
\]
where $\pt_E \in H^2(E)$ is the class of a point.

The second main result of the paper is the following:
\begin{thm} \label{thm:ExC}
Let $S = E \times \BC$. Then we have:
\begin{align*} 
\QHilb & = -t^2 \sum_{k > 0} \ln\left( \frac{1-p^k}{1-p} \right) \Fq_k \Fq_{-k}(\Delta_{E}) \\
& \ + \sum_{m,d \geq 1} \frac{q^{md}}{md}
\left[ \omega_{dt}(p^m) + (p^{m/2} - p^{-m/2})^2 e_{dt} \right].
\end{align*}
\end{thm}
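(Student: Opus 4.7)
The strategy is to deduce Theorem~\ref{thm:ExC} from Theorem~\ref{thm:2point operator} by $\BG_m$-equivariant virtual localization applied to the compact elliptic surface $\bar{S} = \p^1 \times E$. The scaling $\BG_m$-action on $\BC$ with tangent weight $t$ extends to $\p^1$, fixing $0$ and $\infty$ with tangent weights $t$ and $-t$, and lifts (trivially on $E$) to actions on $\bar{S}$, on $\bar{S}^{[n]}$, and on the moduli spaces of stable maps to $\bar{S}^{[n]}$. The two $\BG_m$-fixed components of $\bar{S}^{[n]}$ concentrated above $\{0\} \times E$ and $\{\infty\} \times E$ are formally identified with the Hilbert scheme $(E \times \BC)^{[n]}$ equipped with its $\BG_m$-action, with equivariant parameters $t$ and $-t$ respectively, so that the residue integrals defining $\QHilb_{E \times \BC}$ at these two components use opposite equivariant weights.

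First I apply Theorem~\ref{thm:2point operator} to the projection $\pi : \bar{S} \to \p^1$, using $K_{\bar{S}} = -2 f$ where $f = [\pt_{\p^1}] \otimes 1_E$ is the elliptic fiber class, and $\int_\Sigma K_{\bar{S}} = -2$ for a section $\Sigma = \p^1 \times \{0_E\}$. By K\"unneth, $\Delta_{\bar{S}\ast}(f) = [\pt_{\p^1}]^{\otimes 2} \otimes \Delta_E$ in $H^{\ast}(\bar{S} \times \bar{S})$. Substituting into Theorem~\ref{thm:2point operator} yields an explicit formula for the (non-equivariant, hence $t$-independent) operator $\QHilb_{\bar{S}}$.

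Next I apply $\BG_m$-virtual localization to the moduli of stable maps to $\bar{S}^{[n]}$ in the fiber classes $df + kA$ that are contracted by $\pi^{[n]} : \bar{S}^{[n]} \to \Sym^n(\p^1)$. Every such stable map has image in a single fiber of $\pi^{[n]}$, and every $\BG_m$-fixed one has image in a fiber over $n_0[0] + n_\infty[\infty]$ for $n_0 + n_\infty = n$. Such a fiber decomposes as a product of subschemes supported above $0$ and above $\infty$; for insertions arising from classes on $\bar{S}^{[n]}$ that K\"unneth-split across the two sides, only the pure components $(n_0, n_\infty) = (n, 0)$ and $(0, n)$ contribute nontrivially, since a connected rational stable map whose class splits between the two factors forces, by the GW K\"unneth principle and dimension count, one side to be trivial and contribute only its identity. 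One thereby expresses $\QHilb_{\bar{S}}$ as the sum of two contributions, each identified, after equivariant restriction of the insertions, with the equivariant $2$-point operator on $(E \times \BC)^{[n]}$ evaluated at equivariant parameter $+t$ and $-t$ respectively.

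Finally I verify that the proposed formula of Theorem~\ref{thm:ExC} matches this decomposition. In the classical term, the coefficient $-2$ of $\Delta_{\bar{S}\ast}(f)$ in the compact formula splits as two equivariant contributions from the $(0, 0)$ and $(\infty, \infty)$ fixed components of $\bar{S} \times \bar{S}$, each yielding $-t^2$ (resp.\ $-(-t)^2 = -t^2$) via the restrictions $[\pt_0]|_0 = t$ and $[\pt_\infty]|_\infty = -t$; this matches the $-t^2 \Fq_k\Fq_{-k}(\Delta_E)$ term of Theorem~\ref{thm:ExC}. Similarly, the quantum prefactor $-\int_\Sigma K_{\bar{S}} = 2$ splits as $1+1$, and the operators $\omega_{df}(p^m)$ and $e_{df}$ restrict equivariantly at the two fixed components to $\omega_{\pm dt}(p^m)$ and $e_{\pm dt}$ by the analogous bookkeeping of the equivariant lift of $[\pt_{\p^1}]$. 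The main obstacle is precisely this equivariant bookkeeping: one must verify that the equivariant restriction of $\star^{b_1,\ldots,b_r}(f)$ of (\ref{star class}) at the fixed components produces exactly $\star^{b_1,\ldots,b_r}(\pm dt)$, with the correct powers $t^r$ and signs, and that the mixed fixed-locus contributions with both $n_0, n_\infty > 0$ truly vanish, as expected from the GW K\"unneth principle and the connectedness of the genus-$0$ domains.
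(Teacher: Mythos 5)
Your overall route --- deduce the $E\times\BC$ formula from Theorem~\ref{thm:2point operator} for $\bar S=\p^1\times E$ by $\BG_m$-localization on the $\p^1$ factor --- is the same as the paper's (Proposition~\ref{prop:abc}). But there is a genuine gap in the middle step. The localization identity only gives $\QHilb_{\bar S}=Q(t)+Q(-t)$, where $Q(t)$ is the equivariant $2$-point operator of $(E\times\BC)^{[n]}$. This determines $Q$ only when the insertions satisfy $\deg_{\BC}(\lambda)+\deg_{\BC}(\mu)=2n-1$: in that case $Q$ is a degree-$0$ equivariant residue, hence a $t$-independent constant, so $Q(t)=Q(-t)$ and one divides by $2$. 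For insertions of lower total degree the compact invariant vanishes for dimension reasons while the equivariant invariants are genuinely nonzero rational functions of $t$ of negative degree; the identity $Q(t)+Q(-t)=0$ then says nothing about the odd-in-$t$ part of $Q$. Since Theorem~\ref{thm:ExC} makes nontrivial assertions in exactly this range (the matrix elements of $\omega_{dt}$ carry various powers of $t$ after pairing with $\frac{1}{t}\int_E$), "verifying that the proposed formula matches the decomposition" cannot close the argument. The paper fills this hole with Proposition~\ref{prop:restriction to 2n-1 case}, an independent Okounkov--Pandharipande-style reduction: one represents the weights by Nakajima cycles over distinct points of $\BC$, proves properness of the relevant cycle intersection (Lemma~\ref{lemma:properness}), and uses an extra cosection to show that at most one of the induced maps $f_z$ is non-constant, which reconstructs all lower-degree invariants from the $2n-1$ case. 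Your proposal contains no substitute for this step.

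A second, smaller issue: your justification for discarding the mixed fixed components $(n_0,n_\infty)$ with both $n_0,n_\infty>0$ is not correct as stated. These cross terms are products of equivariant residues of the two halves, and a "dimension count" does not kill them: the factor whose insertions have total degree $2n_0$ is a priori a nonzero multiple of $t^{-1}$, not zero. What actually kills it is the geometric vanishing of Proposition~\ref{prop:degree 2n vanishing} --- when the total insertion degree is $2n_0$ all cohomology weights are point classes of $E$, and the two cycles can be represented in distinct fibers of the isotrivial fibration $(E\times\BC)^{[n_0]}\to E$, which no rational curve can connect. You should also note that this vanishing, combined with the degree bookkeeping $\deg\le n$ for $H^*(E)$-weighted partitions, is what forces exactly one of the two factors in each cross term to be in the degree-$2n_0$ (respectively $2n_\infty$) regime. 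The final bookkeeping you describe (splitting $K_{\bar S}=-2f$, $d_\Sigma=2$, and the equivariant restrictions of $\Delta_{\ast}f$ and $\star^{b_1,\ldots,b_r}(f)$ to $\pm t$) agrees with the paper's.
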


This determines the quantum multiplication by divisor classes on $(E \times \BC)^{[n]}$ for all $n$.

\section{Modular and Jacobi forms} \label{sec:Modular and Jacobi forms}
For even $k \geq 2$ the weight $k$ Eisenstein series are defined by
\[ G_k(q) = - \frac{B_k}{2 \cdot k} + \sum_{n \geq 1} \sum_{d|n} d^{k-1} q^n, \]
where $B_n$ are the Bernoulli numbers defined here by the expansion\footnote{This convention only differs from the standard $z/(e^z-1)$ definition in the value of $B_1$, which we take to be zero.}
\[ \frac{z}{2}\cdot \frac{e^z+1}{e^z-1} = \sum_{n \geq 0} B_n \frac{z^{n}}{n!}. \]
In particular, $B_1=0$, $B_2=1/6$, and $B_4=-1/30$.
For odd $k$ we set $G_k=0$.

The algebra of quasi-modular forms is
\[ \QMod = \bigoplus_{k \geq 0} \QMod_k = \BC[G_2, G_4, G_6], \]
where the grading is by weight. We have 
$G_k \in \QMod_k$ for all $k$.

The differential operators $D_q = q \frac{d}{dq}$ acts on $\QMod$
and increases the weight by $2$.
Moreover, viewing a quasi-modular form as a formal polynomial in $G_2, G_4, G_6$ also $\frac{d}{dG_2}$ acts by lowering the weight by $2$.
Let $\wt \in \mathrm{End}(\QMod)$ be the operator which acts on $\QMod_k$ by multiplication by $k$. Then we have the commutation relation
\[ \left[ \frac{d}{dG_2}, D_q \right] = -2 \wt. \]

Let $z \in \BC$ and $p=e^z$. Consider the normalized Jacobi theta function
\begin{equation} \label{theta function}
\Theta(z) 
=  (p^{1/2}-p^{-1/2})\prod_{m\geq 1} \frac{(1-pq^m)(1-p^{-1}q^m)}{(1-q^m)^{2}}.
\end{equation}
Define weight $n$ functions $\A_n$ for all $n \in \BZ$ by the Taylor expansion:
\begin{equation} \label{function}
\frac{\Theta(z+w)}{\Theta(z) \Theta(w)}
=
\frac{1}{w} + \sum_{n \geq 1} \frac{\A_n(z,\tau)}{(n-1)!} w^{n-1}.
\end{equation}
By a result of Libgober \cite{Lib} the algebra of meromorphic quasi-Jacobi forms of index zero with poles in $z$ only at lattice points is the free polynomial algebra
\[ \QJac = \bigoplus_{k \geq 0} \QJac_k = \BQ[ \A_1, \A_2, \A_3, G_2, G_4], \]
where the grading is by weight of the generators. Moreover $\A_n \in \QJac_n$ for all $n$.

\begin{thm}[{\cite{Zagier}}] \label{thm:2132}
For all $n \geq 1$ the functions $\A_n$ satisfy the following properties:
\begin{enumerate}
\item[(i)] We have the Fourier expansion
\[
\A_n(z,\tau) = \frac{B_n}{n} + \delta_{n,1}\frac{1}{2} \frac{p+1}{p - 1} - \sum_{k,d \geq 1} d^{n-1} (p^{k} + (-1)^n p^{-k}) q^{kd} .
\]
\item[(ii)] Around $z=0$ we have the Taylor expansion
\begin{align*}
\A_n(z) = 
\frac{1}{z} \delta_{n,1} & -2 \sum_{0 \leq m < n-1}
\frac{z^m}{m!} \left(q \frac{d}{dq} \right)^m G_{n-m}(q) \\
& -2 \sum_{g \geq 1} \frac{z^{2g-2+n}}{(2g-2+n)!} \left(q \frac{d}{dq} \right)^{n-1} G_{2g}(q)
\end{align*}
\item[(iii)] The following differential equation holds:
\[
q \frac{d}{dq} \A_n = p \frac{d}{dp} \A_{n+1}.
\]
\end{enumerate}

\end{thm}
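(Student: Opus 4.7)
The plan is to establish (i) for $n=1$ first, derive (iii) from the heat equation for $\Theta$, bootstrap (iii) to (i) for all $n$ by induction, and finally deduce (ii) by power-series expansion in $z$.

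For (i) with $n=1$: the product formula gives $\Theta'(0) = 1$ (since $p^{1/2}-p^{-1/2} = z + O(z^3)$ and the infinite product equals $1$ at $z=0$), so the constant-in-$w$ coefficient of $F(z,w) := \Theta(z+w)/(\Theta(z)\Theta(w))$ at $w=0$ equals $\partial_z\log\Theta(z)$; hence $\A_1(z) = \partial_z\log\Theta(z)$. Logarithmic differentiation of the product formula yields
\[ \A_1(z) = \frac{1}{2}\cdot\frac{p+1}{p-1} + \sum_{m,k\geq 1}(p^{-k}-p^k)q^{mk}, \]
which, after reindexing with $B_1 = 0$, matches (i) for $n=1$.

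For (iii): comparing coefficients of $w^{n-1}$, the claim is equivalent to the PDE $q\frac{d}{dq} F = \partial_z\partial_w F$. Using $\partial_z\log F = \A_1(z+w) - \A_1(z)$ and its $w$-analogue, the PDE reduces to
\[ q\tfrac{d}{dq}\log\Theta(z+w) - q\tfrac{d}{dq}\log\Theta(z) - q\tfrac{d}{dq}\log\Theta(w) = \bigl[\A_1(z+w)-\A_1(z)\bigr]\bigl[\A_1(z+w)-\A_1(w)\bigr] + \A_1'(z+w). \]
The heat equation for the Jacobi theta function gives $q\tfrac{d}{dq}\log\Theta(s) = \tfrac{1}{2}\bigl[\A_1(s)^2+\A_1'(s)\bigr] + \beta(\tau)$, where $\beta(\tau)$ is a $G_2$-proportional correction arising from the $\eta^3$ factor making $\Theta$ (a multiple of) a Jacobi form. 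Substituting, the required identity reduces to $\bigl[\A_1(z+w)-\A_1(z)-\A_1(w)\bigr]^2 = \wp(z+w)+\wp(z)+\wp(w) + \mathrm{const}$, which is the classical Weierstrass addition formula for $\zeta$ (valid since $\A_1$ differs from the Weierstrass $\zeta$ only by a linear term in $z$ that disappears in $\A_1(z+w)-\A_1(z)-\A_1(w)$).

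Given (iii), (i) for $n\geq 2$ follows by induction: integrating $p\frac{d}{dp}\A_{n+1} = q\frac{d}{dq}\A_n$ in $p$ determines $\A_{n+1}$ up to a function of $\tau$, which equals $B_{n+1}/(n+1)$ by direct computation of $\A_{n+1}|_{q=0}$ from the closed form $F(z,w)|_{q=0} = \frac{pu-1}{(p-1)(u-1)} = \frac{p}{p-1} + \frac{1}{u-1}$ (with $u=e^w$) and the paper's Bernoulli generating function. Finally, for (ii), expand each term of (i) as a power series in $z$: the polar and constant parts come from $\frac{1}{2}\cdot\frac{p+1}{p-1} = \sum_{n\geq 0}\frac{B_n z^{n-1}}{n!}$, and writing $p^k+(-1)^n p^{-k} = e^{kz}+(-1)^n e^{-kz}$, collecting by powers of $z$, and regrouping sums of the form $\sum_{k,d\geq 1}k^j d^{n-1}q^{kd}$ yields coefficients of the shape $\left(q\frac{d}{dq}\right)^m G_{n-m}$ and $\left(q\frac{d}{dq}\right)^{n-1}G_{2g}$ as claimed. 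The main obstacle is step (iii): correctly formulating the heat equation for $\Theta$ with its $\eta^3$-correction and invoking the Weierstrass addition formula to verify $q\frac{d}{dq} F = \partial_z\partial_w F$; the remaining steps reduce to algebraic bookkeeping.
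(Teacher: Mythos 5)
Your overall strategy is sound and, modulo one gap, it works --- but be aware that it is a genuinely different (and much heavier) route than the paper's. The paper simply cites Zagier for parts (i) and (ii) and observes that (iii) is \emph{immediate} from (i): applying $q\frac{d}{dq}$ to the Fourier expansion of $\A_n$ turns $d^{n-1}(p^k+(-1)^np^{-k})q^{kd}$ into $d^{n}\cdot k(p^k+(-1)^np^{-k})q^{kd}$, which is visibly $p\frac{d}{dp}$ of the corresponding term of $\A_{n+1}$. You instead prove (iii) from scratch via the mixed heat equation $q\frac{d}{dq}F=\partial_z\partial_wF$ and then use it to bootstrap (i); your reduction of that PDE to $\bigl[\A_1(z+w)-\A_1(z)-\A_1(w)\bigr]^2=\wp(z+w)+\wp(z)+\wp(w)$ is correct (the $G_2$-corrections from $\eta^3$ cancel exactly, so your unspecified constant is $0$), and this is essentially Zagier's own derivation. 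What your approach buys is self-containedness; what it costs is that the logically trivial part (iii) becomes the hard step.

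The gap is in the induction for (i). Integrating $p\frac{d}{dp}\A_{n+1}=q\frac{d}{dq}\A_n$ in $p$ determines $\A_{n+1}$ only up to its $p$-independent Fourier coefficient, which is a priori a full $q$-series $c_{n+1}(q)$, not a number. Your evaluation of $F(z,w)|_{q=0}$ pins down only $c_{n+1}(0)=B_{n+1}/(n+1)$; it says nothing about the coefficients of $q^d$, $d\geq 1$, in $c_{n+1}$, so as written the Fourier expansion (i) is not established. The fix is short: apply (iii) once more at level $n+1$. The $p^0$-Fourier coefficient of $p\frac{d}{dp}\A_{n+2}$ vanishes identically, so the $p^0$-coefficient of $q\frac{d}{dq}\A_{n+1}$ vanishes, i.e.\ $q\frac{d}{dq}c_{n+1}=0$ and $c_{n+1}$ is constant, whence the $q=0$ computation suffices. (Alternatively, the parity $\A_m(-z)=(-1)^m\A_m(z)$, which follows from $\Theta(-z)=-\Theta(z)$, forces $c_{m}=0$ for odd $m$, but does not by itself handle even $m$.) With this addition the argument is complete; part (ii) then indeed follows from (i) by the Taylor expansion and divisor-sum regrouping you describe.
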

\begin{proof}
Part (i) and (ii) are proven in \cite[Sec.3]{Zagier},
part (iii) is immediate from (i). 
\end{proof}

\begin{example}
\begin{gather*} \A_1 = 
p \frac{d}{dp} \log \Theta(z)
= \frac{1}{z} - 2 \sum_{k \geq 1} G_{k}(\tau) \frac{z^{k-1}}{(k-1)!} \\
\mathsf{A}_1 = -\frac{1}{2} + \frac{p}{p-1} - \sum_{r \geq 1} \left[ \frac{pq^r}{1 - pq^r} - \frac{p^{-1}q^r}{1 - p^{-1} q^r} \right].
\end{gather*}
\end{example}


\section{Relative GW and PT invariants}\label{sec:relativeGWPT}
Let $S$ be a smooth projective surface and let $C$ be a smooth curve of genus $h$.
Let $z_1, \ldots, z_N \in C$ be distinct points and write $z=(z_1, \ldots, z_N)$.
We consider the Gromov-Witten and Pandharipande-Thomas theory of the relative geometry
\begin{equation} (S \times C, S_z), \quad S_{z} = \bigsqcup_{i=1}^{N} S \times \{ z_i \}. \label{relative geometry fixed} \end{equation}

\subsection{Weighted partitions}
A $H^{\ast}(S)$-weighted partition or simply cohomology weighted partition is an ordered list of pairs
\begin{equation} \label{weighted partition} \big( (\lambda_1, \delta_1) , \ldots, (\lambda_{\ell}, \delta_{\ell} ) \big), \quad \delta_i \in H^{\ast}(S), \quad \lambda_i \geq 1.  \end{equation}
The partition underlying $\lambda$ is denoted by $\vec{\lambda} = (\lambda_1, \lambda_2, \ldots, \lambda_{\ell})$.


\begin{defn} \label{defn:coh weighted to Nakajima}
The class in $H^{\ast}(S^{[n]})$ associated to the $H^{\ast}(S)$-weighted partition $\lambda = \{ (\lambda_i, \delta_{i}) \}$ of size $n$ is defined by
\begin{equation} \lambda = \frac{1}{\prod_i \lambda_i} \prod_{i} \Fq_{\lambda_i}(\delta_i) \vacuum. \label{identification} \end{equation}
\end{defn}

\subsection{Gromov-Witten theory}
Let $\beta \in H_2(S,\BZ)$ be a curve class.
For $i \in \{ 1, \ldots, N \}$, consider $H^{\ast}(S)$-weighted partitions
\[ \lambda_i =  \big((\lambda_{i,j}, \delta_{i,j}) \big)_{j=1}^{\ell(\lambda_i)} \]
of size $n$ with underlying partition $\vec{\lambda}_i$.
Let
\begin{equation} \label{moduli space} \Mbar^{\bullet}_{g,r,(\beta,n), (\vec{\lambda}_1, \ldots, \vec{\lambda}_N)}(S \times C, S_z) \end{equation}
be the moduli space which parametrizes $r$-marked genus $g$ degree $(\beta,n)$ relative stable maps to the pair                                                                                                                                                                                                                                                                                                                                                                                                                                                                                                                                                                                                                                                                                                                                                                                                                                                                                                                                                                                                                                                                                                                                                                                                                                                                                                                                                                                                                                                                                                                                                                                                                                                                                                                                                                                                                                                                                                                                                                                                                                                                                                                                                                                                                                                                                                                                                                                                                                                                                                                                                                                                                                                                                                                                                                                                                                                                                                                                                                                                                                                                                                                                                                                                                                                                                                                                                                                                                                                                                                                                                                                                                                                                                                                                                                                                                                                                                                                                                                                                                                                                                                                                                                     \eqref{relative geometry fixed} with ordered ramification profiles $\vec{\lambda}_1, \ldots, \vec{\lambda}_N$ along $S_{z_1}, \ldots, S_{z_N}$, see \cite{Li1}. The source curve here is allowed to be disconnected but we exclude contracted components (this condition is denoted by the bullet $\bullet$).
The moduli space has relative and interior evaluation maps:
\begin{gather*}
\ev_{z_i,j} : \Mbar^{\bullet}_{g,r,(\beta,n), (\vec{\lambda}_1, \ldots, \vec{\lambda}_N)}(S \times C, S_z) \to S \\
\ev_j : \Mbar^{\bullet}_{g,r,(\beta,n), (\vec{\lambda}_1, \ldots, \vec{\lambda}_N)}(S \times C, S_z) \to S \times C.
\end{gather*}

Let $\gamma_j \in H^{\ast}(S \times C)$ be cohomology classes and $k_i \geq 0$.

We define the Gromov-Witten invariant:
\begin{multline*}
\left\langle \, \lambda_1, \ldots, \lambda_N \, \middle| \, \tau_{k_1}(\gamma_1) \cdots \tau_{k_r}(\gamma_n) \right\rangle^{\GW, (S \times C, S_z), \bullet}_{g, (\beta,n)} \\
=
\int_{[ \Mbar^{\bullet}_{g,r,(\beta,n), (\vec{\lambda}_1, \ldots, \vec{\lambda}_N)}(S \times C, S_z) ]^{\vir}}
\prod_{j=1}^{r} \ev_j^{\ast}(\gamma_j) \psi_j^{k_j} \prod_{i=1}^{N} \prod_{j=1}^{\ell(\lambda_i)}
\ev_{z_i, j}^{\ast}(\delta_{i,j}).
\end{multline*}
The partition function of Gromov-Witten invariants is\footnote{The complicated prefactor is a special case of a general formula, see e.g. \cite[Sec.5.2]{Marked}.}
\begin{multline} \label{defn:ZGW}
Z^{(S \times C, S_z)}_{\GW, (\beta,n)}\left( \lambda_1, \ldots, \lambda_N \middle| \tau_{k_1}(\gamma_1) \cdots \tau_{k_r}(\gamma_r) \right) 
= \\
(- \mathbf{i} z)^{d_{\beta}}
(-1)^{(1-h-N)n + \sum_i \ell(\lambda_i)}
z^{(2-2h-N)n + \sum_i \ell(\lambda_i)} \\
\cdot \sum_{g \in \BZ} (-1)^{g-1} z^{2g-2}
\left\langle \, \lambda_1, \ldots, \lambda_N \, \middle| \, \tau_{k_1}(\gamma_1) \cdots \tau_{k_r}(\gamma_r) \right\rangle^{\GW, (S \times C, S_z), \bullet}_{g, (\beta,n)}
\end{multline}
where $d_{\beta} = \int_{\beta} c_1(T_S)$ and $\mathbf{i} = \sqrt{-1}$.

\subsection{Pandharipande-Thomas theory}
Let 
\[ P_{\chi,(\beta,n)}(S \times C, S_z) \]
be the moduli space of 
relative stable pairs $(F,s)$ on \eqref{relative geometry fixed}
with $\chi(F)=\chi$ and $\pi_{S \times C \ast} \ch_2(F) = (\beta,n)$.

The moduli space has evaluation maps over the relative divisors
\[
\ev_{z_i} : P_{\chi,(\beta,n)}(S \times C, S_z) \to S^{[n]}.
\]

Consider also the universal target over the moduli space
\[ \pi : \CX \to P_{\chi,(\beta,n)}(S \times C, S_z), \quad \rho : \CX \to X = S \times C, \]
where $\rho$ is the map that forgets the stable pair and contracts the bubbles.
Let $(\BF,s)$ be the universal stable pair on $\CX$.
For $\gamma \in H^{\ast}(S \times C)$, we define  the descendent classes
\[ \ch_k(\gamma) := \pi_{\ast}( \ch_k(\BF) \cdot \rho^{\ast}(\gamma)). \]

We define the Pandharipande-Thomas invariant:
\[
\left\langle \, \lambda_1, \ldots, \lambda_N | {\textstyle \prod_{i=1}^{r} \ch_{k_i}(\gamma_i)} \right\rangle^{\PT, (S \times C, S_z)}_{\chi, (\beta,n)}
=
\int_{[ P_{\chi,(\beta,n)}(S \times C, S_z) ]^{\vir}} \prod_{i=1}^{r} \ch_{k_i}(\gamma_i) \prod_{i=1}^{N} \ev_{z_i}^{\ast}(\lambda_i).
\]
The partition function of these invariants is
\[
Z^{(S \times C, S_z)}_{\PT, (\beta,n)}\left( \lambda_1, \ldots, \lambda_N
| {\textstyle \prod_{i=1}^{r} \ch_{k_i}(\gamma_i)} \right) 
=
\sum_{m \in \frac{1}{2} \BZ} \mathbf{i}^{2m} p^m 
\left\langle \, \lambda_1, \ldots, \lambda_N | {\textstyle \prod_{i=1}^{r} \ch_{k_i}(\gamma_i)} \right\rangle^{\PT, (S \times C,S_z)}_{m+n(1-h) + \frac{1}{2} d_{\beta}, (\beta,n)}
\]

\subsection{GW/PT correspondence}
The Gromov-Witten (GW) and Pandharipande-Thomas (PT) invariants are conjectured to satisfy the following relation:
\begin{conj}[{\cite{MNOP1}, \cite{PP_GWPT_Toric3folds}}]
$Z^{(S \times C, S_z)}_{\PT, (\beta,n)}\left( \lambda_1, \ldots, \lambda_N | {\textstyle \prod_{i=1}^{r} \ch_{k_i}(\gamma_i)} \right)$
is the expansion of a rational function in $p$ and under the variable change $p=e^{z}$ we have
\[
Z^{(S \times C, S_z)}_{\PT, (\beta,n)}\left( \lambda_1, \ldots, \lambda_N | {\textstyle \prod_{i=1}^{r} \ch_{k_i}(\gamma_i)} \right) 
=
Z^{(S \times C, S_z)}_{\GW, (\beta,n)}\left( \lambda_1, \ldots, \lambda_N | \overline{ {\textstyle \prod_{i=1}^{r} \ch_{k_i}(\gamma_i)}} \right),
\]
where $\overline{ {\textstyle \prod_{i=1}^{r} \ch_{k_i}(\gamma_i)}}$ stands for the application of a universal correspondence matrix.
\end{conj}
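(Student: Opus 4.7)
The plan is to attack the conjecture via the standard strategy for GW/PT correspondences: degenerate the global relative 3-fold $(S \times C, S_z)$ into a collection of local building blocks where the correspondence is already known, and verify that the universal correspondence operator $\overline{\cdot}$ is preserved under degeneration. Both the PT and GW partition functions satisfy degeneration formulas---\cite{Li1} on the Gromov-Witten side and the analogous relative formula on the Pandharipande-Thomas side---and the universal correspondence matrix introduced in \cite{MNOP1, PP_GWPT_Toric3folds} is designed precisely to be compatible with these.

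The first step is to degenerate the base $C$ into a nodal chain of rational components, isolating each relative fiber $S_{z_i}$ on its own copy of $\p^1$. This reduces the general statement to verifying the conjecture for the fundamental building blocks $(S \times \p^1, S_0 \cup S_\infty)$ together with rubber contributions over a single relative divisor, plus the absolute $S \times C$ case. The rubber contributions can be treated by the capped descendent vertex formalism, for which GW/PT-compatibility is already established. The second step is to degenerate the surface $S$ itself into a union of simpler pieces---typically toric, ruled, or elliptically fibered surfaces. Combined with the degeneration formula applied in the $S$-direction, this reduces the problem to local models $(T \times \p^1, T_0 \cup T_\infty)$ for $T$ one of these building blocks. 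When $T$ is toric, $T \times \p^1$ is itself a toric 3-fold and the descendent GW/PT correspondence is known in many ranges from \cite{PP_GWPT_Toric3folds}.

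The third step is to actually match the universal correspondence $\overline{\cdot}$ against the Nakajima-weighted insertions at the relative divisors. The Nakajima basis of $H^\ast(S^{[n]})$ interacts nontrivially with the interior descendents $\ch_k(\gamma)$: one must verify that the correspondence sends each $\ch_k(\gamma)$ to a definite combination of Gromov-Witten descendents (governed by Bernoulli-polynomial-type coefficients, following the Okounkov-Pandharipande prescription) in a way that intertwines correctly with the partition-labeled relative evaluations at each $S_{z_i}$. In the capped vertex framework, one writes both $Z_{\PT}$ and $Z_{\GW}$ as products of capped vertices and verifies the correspondence vertex-by-vertex, using the divisor equation, the string equation, and dimensional constraints to rigidify the remaining ambiguities.

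The main obstacle is that the universal correspondence operator $\overline{\cdot}$ is only explicitly determined in restricted ranges of insertions; in the general descendent setting it must be bootstrapped from the toric base cases, using monodromy invariance of the partition functions in the $H^\ast(S \times C)$ variables together with dimensional and divisor/string constraints on both sides. A further subtlety is that the PT moduli space on $S \times C$ may have excess dimension contributions coming from the surface direction, and isolating the honest 3-fold virtual contribution requires a careful cosection or reduced-class analysis in parallel on both sides. For applications it is typically enough to establish the conjecture in a restricted range of curve classes and insertions sufficient for the problem at hand---in the present paper, for fiber classes $\beta = df$ on an elliptic surface and for the descendent insertions needed to compute quantum multiplication by divisors---which is what Section~\ref{sec:elliptic surfaces1} accomplishes by exploiting the fibered structure of $S$ to collapse the residual freedom.
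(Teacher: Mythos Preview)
The statement you are addressing is a \emph{conjecture} in the paper, not a theorem; the paper does not prove it and offers no proof to compare against. What the paper actually establishes is the special case recorded as Theorem~\ref{thm:GWPT correspondence}: the correspondence for $(S\times C,S_z)$ when $S$ is an elliptic surface and $\beta=df$ is a fiber class, with interior insertions of the restricted form $\ch_{k_i}(\omega\gamma_i)$. You seem to recognize this in your final paragraph, but the body of the proposal is framed as a proof of the general conjecture, which it is not.

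As a strategy sketch your outline is broadly in the right spirit---degeneration of the curve, reduction to caps, degeneration of the surface---but it is not a proof. The genuine difficulties you gesture at (the correspondence matrix is only known in restricted ranges; bootstrapping from toric base cases; excess-dimension issues) are exactly the reasons the full conjecture remains open. In particular, your ``second step'' of degenerating an arbitrary surface $S$ into toric or elliptically fibered pieces is not available in general, and the capped-vertex machinery of \cite{PP_GWPT_Toric3folds} does not cover arbitrary descendents on arbitrary surfaces. The paper's actual argument for the elliptic-surface case (Propositions~\ref{prop:case d=0 and 1}--\ref{prop:GWPT trivial fibration} and the proof following them) is much more specific: it uses the known cases $\p^1\times E$ and rational elliptic surfaces from \cite{PaPix_GWPT}, an absolute/relative comparison along a smooth fiber (Proposition~\ref{prop:absolte relative}), L\"onne's monodromy results to reduce the cohomology insertions, and the explicit degenerations of Examples~\ref{example: higher genus higher degree degeneration}--\ref{example:torsion L}. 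None of this generalizes to arbitrary $S$ and $\beta$.
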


\subsection{Family of curves}
We generalize the previous situation to a family of curves.

Let $\CC_{h,N} \to \CM_{h,N}$ be the universal curve over the moduli space 
of semistable curves (i.e. connected nodal curves without rational tails).
Let 
$z_1, \ldots, z_N : \CM_{h,N} \to \CC_{h,N}$
be the sections and write $z=(z_1,\ldots, z_N)$.
Consider the family of relative geometries
\begin{equation} (S \times \CC_{h,N}/\CM_{h,N}, S_z), \quad S_{z} = \bigsqcup S \times \{ z_i \}. \label{relative geometry} \end{equation}

As discussed for example in \cite{TP,Nesterov3} there exists moduli spaces of relative stable maps and relative stable pairs to the geometry \eqref{relative geometry}.
The moduli spaces have relative evaluation maps as before. The interior evaluation are chosen here to take values $S$, by projecting away from the curve factor (this is all we need).
The invariants and partition functions are defined exactly as before,
with $C$ replaced by $\CC_{h,N}$ in the notation.

A special case we will consider is the family $\CC_{0,2} \to \CM_{0,2}$.
In this case the moduli spaces parametrizes maps/pairs to the 'rubber' target $(S \times \p^1, S_{0,\infty})^{\sim}$, see for example \cite[Sec.2.4]{OPVir} or \cite[Sec.3.5]{Marked}. 
We recall the basic rigidification statement
which can be found in \cite[Sec.1.5.3]{MP} or \cite[Prop.3.12]{Marked}.
Let $\omega \in H^2(\p^1)$ be the class of a point.

\begin{prop}[Rigidification] \label{prop:rigidification}
Let $\gamma_1, \ldots, \gamma_r \in H^{\ast}(S)$. Then
for both $\dagger \in \{ \GW, \PT \}$ we have
\[
\left\langle \, \lambda_1, \lambda_2 \, \middle| \, \tau_{k_1}(\gamma_1) \cdots \tau_{k_r}(\gamma_n) \right\rangle^{\dagger, (S \times \CC_{0,2}, S_{0,\infty}), \bullet}_{g, (\beta,n)}
=
\left\langle \, \lambda_1, \lambda_2 \, \middle| \, \tau_{k_1}(\omega \gamma_1) \tau_{k_2}(\gamma_2) \cdots \tau_{k_r}(\gamma_n) \right\rangle^{\dagger, (S \times \p^1, S_{0,\infty}), \bullet}_{g, (\beta,n)}.
\]
\end{prop}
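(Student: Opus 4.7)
The plan is to realize the rubber moduli space as the stack-theoretic $\BC^{\ast}$-quotient of the rigid moduli space (by the action scaling $\p^1$ and fixing $0, \infty$), and then to observe that inserting the point class $\omega$ via one interior evaluation rigidifies this action, so that the projection formula converts the rigid integral into the rubber one.

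Set $\CM^{\mathrm{rig}} := \Mbar^{\bullet}_{g,r,(\beta,n),(\vec{\lambda}_1, \vec{\lambda}_2)}(S \times \p^1, S_{0,\infty})$ and write $\CM^{\sim}$ for the corresponding rubber moduli. The standard $\BC^{\ast}$-action on $(\p^1, 0, \infty)$ lifts to $\CM^{\mathrm{rig}}$, and $\CM^{\sim}$ is identified with its stack-theoretic quotient via a natural morphism $q : \CM^{\mathrm{rig}} \to \CM^{\sim}$. By the standard comparison of perfect obstruction theories for rigid versus rubber relative maps/pairs (see \cite[Sec.~1.5.3]{MP} and \cite[Prop.~3.12]{Marked}), one has $q^{\ast}[\CM^{\sim}]^{\vir} = [\CM^{\mathrm{rig}}]^{\vir}$ in both the GW and PT cases.

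Next I would verify that, apart from the $\omega$-factor of $\ev_1^{\ast}(\omega \otimes \gamma_1)$, every term in the rigid integrand descends through $q$. The relative evaluations $\ev_{z_1}, \ev_{z_2}$ land in $S^{[n]}$ and are $\BC^{\ast}$-invariant; the interior insertions $\ev_i^{\ast}(\gamma_i)$ for $i \geq 2$ together with their $\psi$-classes factor through $q$, since $\gamma_i$ is pulled from $S$; in the PT case, the descendent classes $\ch_{k_i}(\gamma_i)$ behave identically for the same reason. On the other hand, the composite $\mathrm{pr}_{\p^1} \circ \ev_1 : \CM^{\mathrm{rig}} \to \p^1$ is $\BC^{\ast}$-equivariant, and since $\omega$ represents a free $\BC^{\ast}$-orbit on $\p^1 \setminus \{0, \infty\}$ we have $q_{\ast} \ev_1^{\ast}(\omega) = 1$ in $H^{\ast}(\CM^{\sim})$. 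Applying the projection formula along $q$ then yields the asserted equality, uniformly for $\dagger \in \{\GW, \PT\}$.

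The main technical obstacle is the virtual class compatibility $q^{\ast}[\CM^{\sim}]^{\vir} = [\CM^{\mathrm{rig}}]^{\vir}$, which requires tracking how the rigid and rubber obstruction theories differ by the trivial $\BC^{\ast}$-direction coming from infinitesimal automorphisms of the base $\p^1$. This is well-documented in the cited references, so I would invoke it rather than reprove it; everything else is a formal manipulation of equivariant classes and the projection formula.
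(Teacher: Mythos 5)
The paper does not prove this proposition at all: it is quoted verbatim from the literature, with the proof deferred to \cite[Sec.~1.5.3]{MP} and \cite[Prop.~3.12]{Marked}. Your argument is essentially the standard rigidification argument contained in those references --- the $\omega$-insertion breaks the $\BC^{\ast}$-scaling symmetry of the rubber target, and the projection formula along the forgetful map converts the rigid integral into the rubber one --- so in substance you are reproducing the cited proof rather than the (nonexistent) one in the paper.

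One caveat: the literal identification of $\CM^{\sim}$ with the stack quotient $[\CM^{\mathrm{rig}}/\BC^{\ast}]$, together with the clean pullback identity $q^{\ast}[\CM^{\sim}]^{\vir} = [\CM^{\mathrm{rig}}]^{\vir}$, is not correct on the nose. The two stacks differ along boundary loci: a rigid configuration whose restriction over the main component consists only of tubes (degree~$0$ over $S$, totally ramified over $S_0$ and $S_\infty$) has a positive-dimensional $\BC^{\ast}$-stabilizer, so the quotient acquires $B\BC^{\ast}$-gerbe points there, whereas the rubber space contracts such an unstable rubber level entirely. The correct form of the comparison, and the one actually proved in \cite{MP} and \cite{Marked}, is the virtual pushforward identity $q_{\ast}\bigl(\ev_1^{\ast}(\omega)\cap[\CM^{\mathrm{rig}}]^{\vir}\bigr) = [\CM^{\sim}]^{\vir}$ (and its PT analogue for $\ch_k(\omega\gamma)$, which requires integrating the universal sheaf over the $\p^1$-direction rather than just the statement $q_{\ast}\ev_1^{\ast}(\omega)=1$); the discrepancy loci are exactly where the $\omega$-constraint or a dimension count makes the difference invisible. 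Since you explicitly invoke the cited references for the obstruction-theory comparison, this is a presentational imprecision rather than a fatal gap, but as written the middle step of your argument would not survive a literal reading.
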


\section{Nesterov's Hilb/PT wall-crossing} \label{sec:HilbPT wallcrossing}
We have the following three enumerative theories associated to the surface $S$:
\begin{enumerate}
\item[(i)] Gromov-Witten theory of the pair $(S \times \CC_{h,N}/\CM_{h,N}, S_z)$
\item[(ii)] Pandharipande-Thomas theory of $(S \times \CC_{h,N}/\CM_{h,N}, S_z)$
\item[(iii)] Gromov-Witten theory of $S^{[n]}$ with $N$ markings of genus $h$.
\end{enumerate}
In the last section we saw the relationship between the theories (i) and (ii).\footnote{Although we did not state it explicitly here, a GW/PT correspondence for the relative geometry \eqref{relative geometry} is expected and was proven for $\BC^2$ in \cite{TP}.}
In this section we review the wall-crossing formula between (ii) and (iii) obtained by Nesterov \cite{Nesterov1} using the theory of quasimaps. Our discussion follows the survey paper \cite{Nesterov3}.

\subsection{A note on conventions}
Let $f : C \to S^{[n]}$ be a genus $h$ (quasi)-map to the Hilbert scheme where $C$ is a curve. By pulling back the universal family $\CZ \to S^{[n]}$ by $f$ we can associate a stable pair $(F,s)$ on $S \times C$.
Then we have the three numerical invariants
\[ \chi=\chi(F), \quad m=\ch_3(F), \quad f_{\ast}[C] = \beta + kA \in \tilde{H}_2(S^{[n]},\BZ). \]
The three numbers $\chi,m,k$ are related by (see e.g. \cite[Lemma 2]{HilbK3})
\[
\chi = k+n(1-h), \quad k = m + \frac{1}{2} d_{\beta}, \quad \chi = m + n(1-h) + \frac{1}{2} d_{\beta}
\]
In the stable pairs literature one usually indexes the moduli space of stable pairs by $\chi$ (because this notation works well also in the quasi-projective setting).
On the other hand, we have indexed here the generating series of stable pairs invariants by the Chern character $m = \ch_3(F)$,
because then the GW/PT correspondence and degeneration formula is simple to state \cite{Marked}.
Finally, in the Hilbert scheme literature the curve classes of the Hilbert scheme are usually indexed by $k$.
Below we have to translate between these three conventions.

\subsection{Wall-crossing formula}
The first step is to introduce the wall-crossing term (sometimes denoted $I$-function or vertex term). Consider the open substack
\[ F_{\chi, (\beta,n)} \subset P_{\chi,(\beta,n)}(S \times \p^1, S_{\infty}) \]
which parametrizes stable pairs on the relative geometry $(S \times \p^1, S_{\infty})$ which do not require bubbling over $\infty$.
There is a natural evaluation map over $\infty$,
\[ \ev: F_{\chi, (\beta,n)} \to S^{[n]}. \]
Let $\BC^{\ast}_z$ act on $\p^1$ with weight $1$ at $0 \in \p^1$ and let
$z = e_{\BC^{\ast}_z}(\BC_{\mathrm{std}})$
be the class of the weight $1$ representation.
We define the virtual class of $F_{\chi, (\beta,n)}$ by $\BC^{\ast}$-localization:
\[
[ F_{\chi, (\beta,n)} ]^{\vir} :=
\frac{ [ F_{\chi, (\beta,n)}^{\BC^{\ast}_z} ]^{\vir}}{ e( N^{\vir}) }
\in H_{\ast}( F_{\chi, (\beta,n)}^{\BC^{\ast}_z})[ z^{\pm} ].
\]

The truncated $I$-function for class $(\beta,k)$ is defined by
\[
\mu_{(\beta,k)}(z) = \left[ z \cdot 
\ev_{\ast} [ F_{k+n, (\beta,n)} ]^{\vir} \right]_{z^{\geq 0}}
\]

Define also a modified PT bracket, where we shift parameters and change notation:
\[
'\left\langle \, \lambda_1, \ldots, \lambda_N \right\rangle^{\PT}_{h, (\beta,k)}
\ := \
\left\langle \, \lambda_1, \ldots, \lambda_N \right\rangle^{\PT,(S \times \CC_{h,N}/\CM_{h,N}, S_z)}_{k+n(1-h), (\beta,n)}
\]

\begin{thm}[Nesterov] \label{thm:Nesterov}
If $(h,N) \notin \{ (0,0), (0,1) \}$, then
\begin{multline*}
'\left\langle \, \lambda_1, \ldots, \lambda_N \right\rangle^{\PT}_{h, (\beta,k)}
=
\left\langle \, \lambda_1, \ldots, \lambda_N \right\rangle^{S^{[n]}}_{h,\beta+kA} \\
+
\sum_{\substack{\ell \geq 1 \\ (\beta_0, \ldots, \beta_{\ell}) \\ (k_0, \ldots, k_{\ell}) \\ \sum_i \beta_i = \beta, \sum_i k_i = k}}
\frac{1}{\ell!}
\left\langle \, \lambda_1, \ldots, \lambda_N,
\mu_{(\beta_1,k_1)}( - \psi_{n+1} ), \ \ldots \ ,
\mu_{(\beta_{\ell},k_{\ell})}( - \psi_{n+\ell} ) \right\rangle^{S^{[n]}}_{h,\beta_0+k_0A}
\end{multline*}
\end{thm}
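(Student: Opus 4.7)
The plan is to reproduce the quasimap-theoretic wall-crossing argument of Nesterov, adapted to targets of the form $S^{[n]}$. First, I would introduce the moduli space $Q^{\epsilon}_{h,N}(S^{[n]},\beta+kA)$ of $\epsilon$-stable quasimaps from $N$-marked genus $h$ semistable curves to $S^{[n]}$ of the prescribed class, following the general framework of Ciocan-Fontanine--Kim--Maulik. A quasimap to $S^{[n]}$ is equivalent to a stable pair on $S\times C$ with sheaf Euler characteristic $n$, and $\epsilon$-stability imposes numerical conditions on the length of the base points and on rational tails. The key identifications are then: for $\epsilon\to\infty$, the moduli space is the space $\Mbar_{h,N}(S^{[n]},\beta+kA)$ of stable maps, while for $\epsilon\to 0^+$, it is the relative stable-pairs space on $(S\times\CC_{h,N}/\CM_{h,N},S_z)$ in class $(\beta,n)$ and Euler characteristic $k+n(1-h)$.

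Next I would analyze how the virtual cycles change as $\epsilon$ is decreased. The walls are located at $\epsilon=1/k$ for $k\in\BZ_{\geq 1}$, and at each wall the moduli space gains (or loses) configurations with base points of length exactly $k$. The standard technique is the master space construction: form a $\BC^{\ast}_z$-equivariant space over $[0,\infty)$ interpolating between the two stabilities and apply virtual $\BC^{\ast}_z$-localization. The fixed loci decompose into products of a ``stable'' factor (an $\epsilon$-stable quasimap on the $N$-marked principal component) and several ``bubble'' factors, each of which is precisely a $\BC^{\ast}_z$-fixed stable pair on $(S\times\p^1,S_{\infty})$ without bubbling over $\infty$, i.e.\ a point of $F_{\chi,(\beta_i,n)}$. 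The bubbles are glued to the principal component at additional marked points, which carry $\psi$-classes whose inverses combine with the equivariant parameter $z$ to produce exactly the generating series $\mu_{(\beta_i,k_i)}(-\psi)$ after taking the non-equivariant limit (the $[\,\cdot\,]_{z^{\geq 0}}$ truncation in the definition of $\mu$ reflects the properness/convergence of the limit).

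Integrating the $N$-point class $\lambda_1\otimes\cdots\otimes\lambda_N$ and summing the $\BC^*_z$-localization contributions over all wall-crossings then yields the stated formula: the ``no wall-crossing'' term is the GW invariant of $S^{[n]}$, and for each $\ell\geq 1$ the contribution of $\ell$ base points with classes $(\beta_i,k_i)$ gives a GW invariant of $S^{[n]}$ with $\ell$ additional insertions $\mu_{(\beta_i,k_i)}(-\psi_{n+i})$, symmetrized by the $1/\ell!$ factor. The hypothesis $(h,N)\notin\{(0,0),(0,1)\}$ is needed to ensure that the base curve $\CC_{h,N}$ is itself stable so that the master space and its virtual class are well-defined without extra complications coming from unstable rational components.

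The main technical obstacle is twofold: (a) verifying that the $\epsilon\to 0^+$ stability coincides with Li's theory of relative stable pairs on $(S\times\CC_{h,N}/\CM_{h,N},S_z)$ at the level of virtual classes, rather than just as moduli spaces; and (b) the precise identification of the bubble contribution in the master-space localization with the $\psi$-class expansion of $\mu_{(\beta_i,k_i)}$, which requires tracking the $\BC^{\ast}_z$-weights at the smoothing node carefully and showing that only the $z^{\geq 0}$-truncation survives the non-equivariant limit. Both are handled by Nesterov through the formalism of entangled tails and the virtual pushforward theorems; once they are in place, the statement follows by the standard induction on the number of bubbles.
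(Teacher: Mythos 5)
Your proposal is a correct outline of Nesterov's quasimap wall-crossing argument (the $\epsilon$-stability interpolation, master space with $\BC^{\ast}_z$-localization, and identification of the bubble contributions with the truncated $I$-function $\mu_{(\beta,k)}(-\psi)$), which is exactly the argument the paper relies on: the theorem is stated as an imported result and the paper gives no proof of its own beyond the citation to \cite{Nesterov1} and \cite{Nesterov3}. Your sketch therefore takes essentially the same (indeed, the only available) route, with the genuinely hard steps — the virtual-cycle identification at $\epsilon\to 0^+$ and the entangled-tails bookkeeping — correctly attributed to Nesterov rather than re-derived.
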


\subsection{Semi-positive case}
Assume that $S$ is a semi-positive target, that is
for all effective curve classes $\beta \in H_2(S,\BZ)$ we have
\[ d_{\beta} = \int_{\beta} c_1(S) \geq 0. \]

We will see that the correspondence in Theorem~\ref{thm:Nesterov} simplifies.
Let us give a name to the class
\[ I_{(\beta,k)} := \ev_{\ast} [ F_{k+n, (\beta,n)} ]^{\text{vir}} \]
appearing in the definition of the truncated $I$-function $\mu_{(\beta,k)}(z)$ above.
The (untruncated) $I$-function is then
the element in $H^{\ast}(S^{[n]})[[ z^{\pm}, t, y ]]$ defined by
\[ I(t,y,z) = 1 + \sum_{(\beta,k)>0} I_{(\beta,k)} t^{\beta} y^k
\]
where $(\beta,k)>0$ stands for $\beta > 0$, or $\beta=0$ and $k > 0$ (the case $\beta=0$ and $k=0$ corresponds to the space $F_{k+n,(\beta,n)}$ which parametrizes tubes and is excluded).

Since $[ F_{\chi,(\beta,n)} ]^{\vir}$ is of dimension $2n+d_{\beta}$,
the class $I_{(\beta,k)}$ is of dimension $-d_{\beta}$.
If $d_{\beta} \geq 0$ (as we assumed)
we obtain an expansion of the form
\[ I_{(\beta,k)} = I_{(\beta,k),0} + \frac{I_{(\beta,k),1}}{z} + \frac{I_{(\beta,k),2}}{z^2} + \ldots \]
where $I_{(\beta,k),r} \in H^{2( r -d_{\beta})}(S^{[n]})$.
Taking the generating series we get
\[ I(t,y,z) = I_0 + \frac{I_1}{z} + \frac{I_2}{z^2} + \ldots . \]
where
\begin{equation}
\begin{aligned}
\label{I_0 I_1}
I_0 & = 1 + \sum_{ \substack{ \beta,k:\ d_{\beta}=0 }} I_{(\beta,k),0} t^{\beta} y^k  \in H^0(S^{[n]}) \\
I_1 & = \underbrace{\sum_{ \substack{ \beta,k :\ d_{\beta}=0 }} I_{(\beta,k),1} t^{\beta} y^k}_{\in H^2(S^{[n]})} + \underbrace{\sum_{ \substack{ \beta,k :\ d_{\beta}=1 }} I_{(\beta,k),1} t^{\beta} y^k}_{\in H^0(S^{[n]}}.
\end{aligned}
\end{equation}

\begin{prop}  \label{prop:wallcrossing semipositive}
Let $S$ be semi-positive. 
\begin{itemize}
\item[(a)] If  $2h-2+N >0$ then
\[
I_0^{2h-2+N} \sum_{\beta,k} {'\left\langle \lambda_1, \ldots, \lambda_N \right\rangle^{\PT}_{h, (\beta,k)}} t^{\beta} y^k
=
\sum_{\beta, k} t^{\beta} y^k \exp\left( \frac{I_1}{I_0} \cdot (\beta + kA) \right) \left\langle \lambda_1, \ldots, \lambda_N \right\rangle^{S^{[n]}}_{h,\beta + kA}.
\]
\item[(b)] For $\deg_{\BC}(\lambda_1) + \deg_{\BC}(\lambda_2) \geq 2n-1$ we have
\[
\sum_{\beta,k} {'\left\langle \lambda_1, \lambda_2 \right\rangle^{\PT}_{0, (\beta,k)}} t^{\beta} y^k 
=
\sum_{(\beta, k)>0} t^{\beta} y^k \exp\left( \frac{I_1}{I_0} \cdot (\beta + kA) \right) \left\langle \lambda_1, \lambda_2 \right\rangle^{S^{[n]}}_{0,\beta + kA}
+ \frac{1}{I_0} \int_{S^{[n]}} \lambda_1 \lambda_2 I_1.
\]
\end{itemize}
\end{prop}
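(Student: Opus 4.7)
The plan is to derive both (a) and (b) from Nesterov's wall-crossing formula (Theorem~\ref{thm:Nesterov}), by summing it into a generating series and reducing the resulting Gromov-Witten correlators with wall-crossing insertions via the string, dilaton, and divisor equations on $S^{[n]}$. In the semi-positive setting, $\mu_{(\beta,k)}(z) = I_{(\beta,k),0} z + I_{(\beta,k),1}$ has degree $\leq 1$ in $z$, so setting $M(z) := \sum_{(\beta,k)>0} t^\beta y^k \mu_{(\beta,k)}(z) = (I_0-1) z + I_1$, Nesterov's formula can be rewritten as
\[
\sum_{\beta,k} t^\beta y^k \, {}'\!\langle \lambda_1,\ldots,\lambda_N \rangle^{\PT}_{h,(\beta,k)} = \sum_{\beta_0,k_0} t^{\beta_0} y^{k_0} \sum_{\ell \geq 0} \frac{1}{\ell!} \langle \lambda_1,\ldots,\lambda_N, M(-\psi_{n+1}),\ldots,M(-\psi_{n+\ell}) \rangle^{S^{[n]}}_{h,\beta_0+k_0A}.
\]
Writing $I_1 = D_1 + c$ with $D_1 \in H^2 \otimes \BQ[[t,y]]$ and $c \in H^0 \otimes \BQ[[t,y]]$, expanding each factor $M(-\psi) = -(I_0-1)\psi + D_1 + c$, and grouping by the multiplicities $a,b,d$ of the three summands reduces the problem to evaluating
\[
F_{a,b,d} := \langle \lambda_1,\ldots,\lambda_N, \tau_1(1)^a, \tau_0(D_1)^b, \tau_0(1)^d \rangle^{S^{[n]}}_{h,\beta_0+k_0A}.
\]

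First I would apply the string equation to a $\tau_0(1)$ insertion: among the reductions $\tau_{a_j-1}(\gamma_j)$ only the dilaton marks $\tau_1(1) \to \tau_0(1)$ survive, giving $F_{a,b,d} = a\,F_{a-1,b,d}$ for $d \geq 1$; iterating yields $F_{a,b,d} = 0$ whenever $d \geq 1$, so the $H^0$-part $c$ of $I_1$ drops out entirely. For $F_{a,b,0}$ I then apply dilaton $a$ times followed by the simple divisor equation $b$ times --- the order is essential, since applying divisor first picks up $\psi$-corrections from the dilaton marks, whereas dilaton first converts those marks to scalar factors and leaves only pure $\tau_0$ insertions for divisor --- to obtain
\[
F_{a,b,0} = \frac{(d_h+a+b-1)!}{(d_h+b-1)!}\bigl(D_1 \cdot (\beta_0+k_0A)\bigr)^b \langle \lambda_1,\ldots,\lambda_N \rangle^{S^{[n]}}_{h,\beta_0+k_0A}, \qquad d_h := 2h-2+N.
\]
For (a), where $d_h \geq 1$, the double sum $\sum_{a,b \geq 0} \frac{(-A_0)^a}{a!\,b!} F_{a,b,0}$ with $A_0 := I_0 - 1$ collapses via the negative binomial identity $\sum_a \binom{d_h+b+a-1}{a}(-A_0)^a = I_0^{-(d_h+b)}$ and then an exponential summation in $b$, producing $I_0^{-d_h} \exp\!\bigl((D_1 \cdot (\beta_0+k_0A))/I_0\bigr) \langle \lambda_1,\ldots,\lambda_N \rangle$. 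Since $c \in H^0$ pairs trivially with $H_2(S^{[n]})$, one has $(I_1/I_0)(\beta+kA) = (D_1/I_0)(\beta+kA)$; multiplying the resulting generating-series identity by $I_0^{d_h}$ gives (a).

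For (b) with $h=0$, $N=2$, hence $d_h = 0$, the same computation still handles the $(\beta_0,k_0) > 0$ terms: the dilaton coefficient $(a+b-1)(a+b-2)\cdots b$ vanishes unless $b \geq 1$, reflecting the absence of genus-zero $2$-marked GW invariants at the unstable class $\beta_0+k_0A = 0$. The remaining $(\beta_0,k_0)=(0,0)$ wall-crossing terms with $\ell \geq 1$ must be computed directly, since $\Mbar_{0,2+\ell}(S^{[n]},0) = S^{[n]} \times \Mbar_{0,2+\ell}$ and the reduction equations no longer apply: the integral factorizes, and by a dimension count on $\Mbar_{0,2+\ell}$ (complex dimension $\ell-1$) exactly $\ell-1$ of the $\ell$ added marks must carry a $\psi$-class, pinning the remaining choice to a single $I_1$-factor. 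Combining $\int_{\Mbar_{0,2+\ell}} \psi_1\cdots\psi_{\ell-1} = (\ell-1)!$ with the geometric series $\sum_{\ell \geq 1} (-A_0)^{\ell-1} = 1/I_0$, and noting that the hypothesis $\deg_{\BC}(\lambda_1)+\deg_{\BC}(\lambda_2) \geq 2n-1$ ensures that only the $I_1^1$-piece of the $I_1^s$ expansion contributes to the $S^{[n]}$ integral (higher cup powers vanish by dimension on $S^{[n]}$), produces the correction term $\tfrac{1}{I_0} \int_{S^{[n]}} \lambda_1 \lambda_2 I_1$ claimed in (b). The main obstacle throughout is the dilaton/divisor order-of-operations noted above; once settled, the remainder is a combinatorial bookkeeping exercise.
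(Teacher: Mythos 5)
Your proposal is correct and follows essentially the same route as the paper's (much terser) proof: rewrite Nesterov's wall-crossing as a generating series in the insertions $-\psi\,\tilde I_0$ and $I_1$, reduce via the string/dilaton/divisor equations together with the negative-binomial identity $(-1)^{\ell}\binom{m+\ell-1}{\ell}=\binom{-m}{\ell}$, and evaluate the unstable $(\beta',k')=(0,0)$ contribution directly on $S^{[n]}\times\Mbar_{0,2+\ell}$ to produce the $\frac{1}{I_0}\int\lambda_1\lambda_2 I_1$ correction. The only cosmetic quibble is that the dilaton-before-divisor ordering is a convenience rather than a necessity, and the single-$I_1$ pinning in (b) already follows from the $\Mbar_{0,2+\ell}$ dimension count you give, with the degree hypothesis serving as the stated safeguard on the $S^{[n]}$ side.
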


\begin{proof}
Let $\tilde{I}_0 = I_0 - 1$. Theorem~\ref{thm:Nesterov} can then be rewritten as
\begin{multline*} \sum_{\beta,k} \left\langle \lambda_1, \ldots, \lambda_N \right\rangle^{\PT}_{h, (\beta,k)} t^{\beta} y^k \\
=
\sum_{\beta', k'} \sum_{\ell_0, \ell_1 \geq 0} \frac{1}{\ell_0! \ell_1!}
\left\langle \lambda_1, \ldots, \lambda_N, -\psi_{N+1} \tilde{I}_0, \ldots, -\psi_{N+\ell_0} \tilde{I}_0, I_1, \ldots, I_1 \right\rangle^{S^{[n]}}_{h,\beta' + k'A} t^{\beta'} y^{k'}
\end{multline*}
Part (a) follows by the string and divisor equation and the identity:
\[ (-1)^{\ell} \binom{m+\ell-1}{\ell} = \binom{-m}{\ell}. \]
For part (b) we can apply string and divisor equation whenever $(\beta', k') > 0$.
In case $(\beta',k')=0$ we evaluate the right hand side directly: We need to have precisely one insertion of the form $I_1$, and then we can remove the $-\psi_{N+\ell} \widetilde{I}_0$ insertions using the string equation.
\end{proof}

\begin{cor} [{\cite[Prop.6.9]{Nesterov1}}] \label{cor:I function determination}
Let $\pt \in H^{4n}(S^{[n]})$ be the class of a point and $\gamma \in H_2(S^{[n]})$. Then
\begin{gather*}
I_0(t,y)^{-1} = \sum_{\beta,k} {'\langle \pt, 1, 1 \rangle^{\PT}_{0, (\beta,k)}} t^{\beta} y^k \\
\frac{\left[ I_1(t,y) \right]_{\deg = 0}}{I_0(t,y)} = \sum_{(\beta,k)>0} {'\langle \pt, 1 \rangle^{\PT}_{0,(\beta,k)}} t^{\beta} y^k \\
\frac{\gamma \cdot \left[ I_1(t,y) \right]_{\deg = 2}}{I_0(t,y)}
= 
\sum_{(\beta,k)>0} {'\langle \gamma, 1 \rangle^{\PT}_{0,(\beta,k)}} t^{\beta} y^k
\end{gather*}
\end{cor}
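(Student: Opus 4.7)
The plan is to deduce each of the three identities from Proposition~\ref{prop:wallcrossing semipositive} by specializing the insertion data and then using the fundamental class axiom to collapse the Gromov-Witten side of the wall-crossing.

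First, for the identity expressing $I_0^{-1}$, I would apply part~(a) with $(h,N)=(0,3)$ and insertions $(\lambda_1,\lambda_2,\lambda_3)=(\pt,1,1)$; the stability bound $2h-2+N=1>0$ is satisfied. On the right-hand side, the three-pointed genus-zero invariant $\langle \pt, 1, 1 \rangle^{S^{[n]}}_{0,\beta+kA}$ vanishes for $(\beta,k)\neq 0$ by the string equation, and for $(\beta,k)=0$ reduces to the classical integral $\int_{S^{[n]}} \pt = 1$. The exponential factor is also trivial at $(\beta,k)=0$, so the right-hand side collapses to $1$ and Proposition~\ref{prop:wallcrossing semipositive}(a) gives
\[
I_0 \cdot \sum_{\beta,k} {'\langle \pt, 1, 1 \rangle^{\PT}_{0,(\beta,k)}}\, t^\beta y^k \ =\ 1,
\]
which rearranges to the first formula.

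For the remaining two identities I would apply part~(b) with two-pointed insertions $(\pt,1)$ and $(\gamma^\vee,1)$ respectively, where $\gamma^\vee\in H^{4n-2}(S^{[n]})$ denotes the Poincar\'e dual of $\gamma\in H_2(S^{[n]})$. The Hodge degree bound $\deg_{\BC}(\lambda_1)+\deg_{\BC}(\lambda_2)\geq 2n-1$ is satisfied in both cases (with totals $2n$ and $2n-1$). The string equation again forces $\langle \lambda, 1 \rangle^{S^{[n]}}_{0,\beta+kA}=0$ for every $(\beta,k)>0$, so the first sum on the right-hand side of part~(b) vanishes, leaving only the degenerate correction $\frac{1}{I_0}\int_{S^{[n]}} \lambda\cdot I_1$. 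Since $\int_{S^{[n]}} \pt\cdot I_1$ extracts the $H^0(S^{[n]})$-coefficient $[I_1]_{\deg=0}$, and $\int_{S^{[n]}} \gamma^\vee\cdot I_1$ pairs $\gamma$ with $[I_1]_{\deg=2}$, the decomposition~\eqref{I_0 I_1} yields the second and third identities.

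The one minor point requiring care is the mismatch between the summations $\sum_{\beta,k}$ on the left-hand side of Proposition~\ref{prop:wallcrossing semipositive}(b) and the restricted $\sum_{(\beta,k)>0}$ appearing in the statement of the corollary. I would dispose of this by arguing that the Pandharipande-Thomas moduli space attached to the two-pointed rubber $(S\times\CC_{0,2},S_{0,\infty})$ with trivial horizontal class is empty: any stable pair with $(\beta,n)=(0,n)$ would be supported on purely vertical fibers $\{p_i\}\times\CC_{0,2}^{\sim}$, but such pairs are destabilized by the rubber $\BC^{\ast}$-action. Hence ${'\langle \lambda, 1 \rangle^{\PT}_{0,(0,0)}}=0$ and the two sums coincide. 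This rubber stability check, together with carefully tracking which cohomological degree of $I_1$ is picked out by each integration, is the only place the proof demands attention; the rest is a direct application of the string-equation vanishings.
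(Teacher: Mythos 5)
Your proposal is correct and follows essentially the same route as the paper: apply Proposition~\ref{prop:wallcrossing semipositive}(a) with insertions $(\pt,1,1)$ and part~(b) with $(\pt,1)$ and $(\gamma,1)$, then use the vanishing of primary invariants of $S^{[n]}$ with a fundamental-class insertion in nonzero curve classes to collapse the Gromov--Witten side. Your additional care about the $(\beta,k)=(0,0)$ term of the two-pointed rubber theory and about which degree piece of $I_1$ is extracted by each pairing is consistent with the paper's conventions (the unstable tube contribution is indeed zero).
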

\begin{proof}
See also {\cite[Prop.6.9]{Nesterov1}}. One simply applies the previous prooposition in the first case with insertions $(\pt,1,1)$, and in the other case with insertions $(\pt,1)$ and $(\gamma,1)$,
and uses that in the presence of an insertion $1$ all primary invariants of $S^{[n]}$ for non-zero curve classes vanish.
\end{proof}

\begin{example} \label{example:Fano}
If $S$ is Fano, then one has \cite[Prop.6.10]{Nesterov1}
\begin{align*}
I_0(y) & = 1 \\
I_1(y) & = \log( 1 + y) D( c_1(S) ) - \frac{y}{1+y} \Big( \sum_{\substack{\alpha \in H_2(S,\BZ)_{>0} \\ c_1(S) \cdot \alpha = 1}} t^{\alpha} \Big) 1_{\Hilb_n}.
\end{align*}
For $2h-2+N>0$ the wall-crossing formula is then:
\[
\sum_{k} {'\left\langle \lambda_1, \ldots, \lambda_N \right\rangle^{\PT}_{h, (\beta,k)}} y^k
=
(1+y)^{d_{\beta}} 
\sum_{k} \left\langle \lambda_1, \ldots, \lambda_N \right\rangle^{S^{[n]}}_{h,\beta + kA} y^k
\]
\end{example}

\section{Elliptic curves}\label{sec:elliptic curves}
\subsection{Cohomology}
Let $E$ be a non-singular elliptic curve and let
$\1, \aaa,\bbb, \pt$
be a basis of $H^{\ast}(E)$ with the following properties:
\begin{itemize}
\item $\1 \in H^0(E)$ is the unit
\item $\aaa \in H^{1,0}(E)$ and $\bbb \in H^{0,1}(E)$ determine
a symplectic basis of $H^1(E)$,
\[ \int_E \aaa \cup \bbb = 1, \]
\item $\pt \in H^2(E)$ is the class Poincar\'e dual to a point.
\end{itemize}

\subsection{Gromov-Witten class}\label{subsec:GWE}
For $2g-2+n > 0$ let
\[ \tau : \Mbar_{g,n}(E,d) \to \Mbar_{g,n} \]
be the forgetful morphism. Define the Gromov-Witten class
\[
\CC_g(\gamma_1, \ldots, \gamma_n) =
\sum_{d \geq 0} \tau_{\ast}\left( [ \Mbar_{g,n}(E,d) ]^{\vir} \prod_{i=1}^{n} \ev_i^{\ast}(\gamma_i) \right) q^d \in H^*(\Mbar_{g,n})\otimes\BQ[[q]].
\]

\begin{lemma} \label{lemma:unit vanishing} $\CC_g(1^{\times n}) = 0$. \end{lemma}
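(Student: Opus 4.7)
Since the identity $\ev_i^*(1) = 1$ makes the insertion trivial, we have
\[
\CC_g(1^{\times n}) \;=\; \sum_{d\geq 0} q^d \, \tau_*[\Mbar_{g,n}(E,d)]^{\vir},
\]
so it suffices to show $\tau_*[\Mbar_{g,n}(E,d)]^{\vir}=0$ for every $d\geq 0$. The plan is to exploit the translation action of $E$ on itself.

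The elliptic curve $E$ acts on $\Mbar_{g,n}(E,d)$ by $(e, f) \mapsto f + e$, where $f+e \colon C \to E$ denotes the map $x \mapsto f(x) + e$. This action is free, since $f+e = f$ forces $f(x)+e = f(x)$ for every $x \in C$ and hence $e=0$. The morphism $\tau$ only records the pointed source curve, so it is $E$-invariant and factors as
\[
\tau \colon \Mbar_{g,n}(E,d) \xrightarrow{\sigma} Q_d := [\Mbar_{g,n}(E,d)/E] \xrightarrow{\bar\tau} \Mbar_{g,n},
\]
with $\sigma$ an $E$-torsor, in particular smooth and proper of relative dimension one.

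Since $TE \cong \CO_E$ is an $E$-equivariantly trivial line bundle, the perfect obstruction complex $(R\pi_* f^*TE)^{\vee}$ is $E$-equivariant and descends to a perfect obstruction theory on $Q_d$, yielding a virtual class $[Q_d]^{\vir}$ with $\sigma^*[Q_d]^{\vir} = [\Mbar_{g,n}(E,d)]^{\vir}$. The projection formula then gives
\[
\tau_*[\Mbar_{g,n}(E,d)]^{\vir} \;=\; \bar\tau_*\bigl(\sigma_* \sigma^*[Q_d]^{\vir}\bigr) \;=\; \bar\tau_*\bigl([Q_d]^{\vir} \cap \sigma_* 1\bigr),
\]
and $\sigma_* 1$ vanishes for dimension reasons: it lies in the Chow group $A_{\dim \Mbar_{g,n}(E,d)}(Q_d)$, which is zero because $\dim Q_d = \dim \Mbar_{g,n}(E,d) - 1$. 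Summing over $d$ completes the proof.

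The main delicate point is the descent of the virtual class along the free $E$-action, though this is standard for a smooth connected group acting with trivial stabilizers. When $n\geq 1$ one can sidestep stack-theoretic subtleties by rigidifying via $f(p_1) = 0_E$: this produces an isomorphism $\Mbar_{g,n}(E,d) \cong E \times \Mbar_{g,n}(E,d)^{\mathrm{rig}}$ under which the obstruction theory decomposes and hence the virtual class factors as $[E] \boxtimes [\Mbar_{g,n}(E,d)^{\mathrm{rig}}]^{\vir}$, after which $\tau$ simply projects away the $E$-factor, killing it by dimension.
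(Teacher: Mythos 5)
Your proof is correct, and it is essentially the paper's own argument: the paper's proof is a one-line citation to \cite[Sec.5.4]{OPVir}, and the method there is precisely this translation-invariance argument (the $E$-action on $\Mbar_{g,n}(E,d)$ by post-composition is compatible with the obstruction theory $(R\pi_*f^*T_E)^\vee=(R\pi_*\CO)^\vee$, so the virtual class descends to a quotient of one lower dimension and its pushforward to $\Mbar_{g,n}$ vanishes). One small inaccuracy: the action need not be literally free, since an automorphism $\phi$ of the pointed domain with $f\circ\phi=f+e$ produces a nontrivial stabilizer element $e$; however, such stabilizers are always finite (they inject into the automorphism group of the stable map together with the finite deck-type symmetries of the non-contracted components), so the quotient is still a Deligne--Mumford stack of dimension $\dim\Mbar_{g,n}(E,d)-1$ and the conclusion is unaffected --- indeed $\sigma_*\sigma^*\alpha=0$ for any proper morphism $\sigma$ with positive-dimensional fibers, which is all you use. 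The rigidification $f(p_1)=0_E$ you mention at the end sidesteps this point entirely and is the cleanest version when $n\geq 1$.
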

\begin{proof}
This follows by the methods of
\cite[Sec.5.4]{OPVir}.
\end{proof}

\begin{thm}[\cite{HAE}] \label{thm:quasimodularity}
For any $\gamma_1, \ldots, \gamma_n \in H^{\ast}(E)$
the series
$\CC_g( \gamma_1, \ldots, \gamma_n )$
is a cycle-valued quasi-modular form:
\[
\CC_g( \gamma_1, \ldots, \gamma_n )
\in 
H^{\ast}(\Mbar_{g,n}) \otimes \QMod.
\]
\end{thm}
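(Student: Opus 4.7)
The plan is to induct on $(g,n)$, reducing the cycle-valued quasi-modularity to numerical results for stationary invariants supplemented by the holomorphic anomaly equation for $\CC_g$.

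First, I would reduce to the stationary case in which every $\gamma_i$ lies in $\{\1,\pt\}$. Insertions of $\1$ are handled by the string equation (combined with Lemma~\ref{lemma:unit vanishing} as a base case). Insertions of $\aaa,\bbb\in H^1(E)$ are absorbed using a degeneration of $E$ to a nodal rational curve: applying the degeneration formula together with the diagonal splitting
\[ \Delta_{\ast}\1 \ =\ \1\otimes\pt\ +\ \pt\otimes\1\ -\ \aaa\otimes\bbb\ +\ \bbb\otimes\aaa \]
at the gluing node, each $H^1$ pair contracts against one node side and transfers into the relative theory of $(\p^1,0,\infty)$. The latter is a Hurwitz-type generating series whose stationary part is quasi-modular by Okounkov-Pandharipande; cycle-level quasi-modularity of the surviving $\Mbar_{g',n'}$ factors follows by the induction hypothesis.

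For the pure stationary piece, the numerical invariants are matrix elements of explicit operators on the infinite wedge and are tautologically polynomial in $G_2,G_4,G_6$. To promote this to a cycle-valued statement, I would express
\[ \tau_{\ast}\bigl([\Mbar_{g,n}(E,d)]^{\vir}\cdot\textstyle\prod_i\ev_i^{\ast}(\pt)\bigr) \]
as a combination of Hurwitz-cycle contributions on $\Mbar_{g,n}$ (coming from degenerating $E$ into chains of rational curves with prescribed ramification) twisted by $\psi$- and $\lambda$-classes; the coefficients in that expansion are then precisely the stationary numerical invariants, hence quasi-modular of the correct weight.

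Finally, complete the induction through the holomorphic anomaly equation of \cite{HAE}. Differentiating $\CC_g(\gamma_1,\ldots,\gamma_n)$ in $G_2$ yields a sum of boundary pushforwards from $\Mbar_{g-1,n+2}$ and from products $\Mbar_{g_1,n_1+1}\times\Mbar_{g_2,n_2+1}$, each carrying an extra pair of $\aaa,\bbb$ insertions at the new markings. By induction these boundary contributions are cycle-valued quasi-modular forms of the correct weight, so $\partial_{G_2}\CC_g$ is quasi-modular. The $G_2$-independent residue is then a cycle-valued holomorphic modular form in $\tau$, hence a polynomial in $G_4,G_6$, and is pinned down by comparing finitely many $q$-coefficients. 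The main obstacle is the cycle-level lifting in the stationary base case: Okounkov-Pandharipande quasi-modularity is established for numerical invariants, and upgrading it to classes on $\Mbar_{g,n}$ requires either sufficiently many explicit tautological expressions, or an independent cycle-level TQFT/operator formula. Once that is in place, the rest of the induction is formal.
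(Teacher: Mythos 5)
First, a point of orientation: the paper does not prove Theorem~\ref{thm:quasimodularity} at all --- it is quoted from \cite{HAE}, and the only glimpse of that proof visible here is in the proof of Lemma~\ref{lem:conjstationary}, where it is recalled that \cite{HAE} writes $\CC_g(\pt^{\times n})$ as a sum over stable graphs, coming from iterated degeneration of $E$ to chains of rational curves with the $q$-dependence entering only through the sums over windings at the nodes. Your stationary step is in the same spirit as that argument, and your identification of the cycle-level lifting as the main obstacle is fair: the missing ingredient is the cycle-level degeneration formula together with the fact that pushforwards of relative Gromov--Witten classes of $(\p^1,0,\infty)$ to $\Mbar_{g,n}$ are tautological, after which the winding sums produce Eisenstein series directly; one does not need to match coefficients against the numerical Okounkov--Pandharipande invariants, and trying to do so would not by itself pin down a cycle.

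Two steps of your plan have genuine gaps. The most serious is the final step: you cannot invoke the holomorphic anomaly equation (Theorem~\ref{thm:HAE}) to \emph{prove} quasi-modularity, because the operator $\frac{d}{dG_2}$ is only defined on $H^{\ast}(\Mbar_{g,n})\otimes\QMod$; the very statement ``$\frac{d}{dG_2}\CC_g$ equals a sum of boundary terms'' presupposes that $\CC_g$ is already known to be a polynomial in $G_2,G_4,G_6$ (this is explicit in the hypotheses of Theorem~\ref{thm:HAE}). Likewise, concluding that the ``$G_2$-independent residue'' is a modular form requires a modular transformation law that nothing in your argument supplies. Making this non-circular would require working with non-holomorphic completions and a genuine $\bar\tau$-derivative, which is a substantially different argument from the one you sketch. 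Secondly, the reduction of odd insertions does not work as stated: $\aaa,\bbb\in H^1(E)$ are (partly) vanishing cohomology for the degeneration of $E$ to a nodal rational curve, so interior insertions $\ev_i^{\ast}(\aaa)$ cannot be specialized to the special fibre, and the K\"unneth decomposition of $\Delta_{\ast}\1$ you write down is the diagonal of $E$ itself --- it is not what appears at the node of the degenerate target, where the gluing is along a point whose diagonal is trivial. Handling odd classes needs a separate argument; compare the geometric reduction of Proposition~\ref{prop:Ratio even odd} in this paper, which is designed precisely to sidestep this issue for the integrals actually needed.
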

\begin{thm}[\cite{HAE}] \label{thm:HAE}
Considering $\CC_g( \gamma_1, \ldots, \gamma_n )$
as a polynomial in $G_2, G_4, G_6$ with coefficients cycles on $\Mbar_{g,n}$,
we have
\begin{align*}
\frac{d}{dG_2} \CC_g( \gamma_1, \ldots, \gamma_n )
\ =\  & 
\iota_{\ast} \CC_{g-1}( \gamma_1, \ldots, \gamma_n, \1 , \1 )
\\
& + \sum_{\substack{ g= g_1 + g_2 \\ \{1,\ldots, n\} = S_1 \sqcup S_2}}
j_{\ast} \left( \CC_{g_1}( \gamma_{S_1}, \1 ) \boxtimes
\CC_{g_2}( \gamma_{S_2}, \1 ) \right) \\
& - 2 \sum_{i=1}^{n} \left( \int_{E} \gamma_i \right) \CC_g( \gamma_1, \ldots, \gamma_{i-1}, \1, \gamma_{i+1}, \ldots, \gamma_n ) \cdot \psi_i,
\end{align*}
where $\gamma_{S_i} = ( \gamma_k )_{k \in S_i}$
and $\iota : \Mbar_{g-1,n+2} \to \Mbar_{g,n}$ and $j:\Mbar_{g_1,|S_1|+1} \times \Mbar_{g_2, |S_2|+1} \to \Mbar_{g,n}$ are the natural gluing maps.
\end{thm}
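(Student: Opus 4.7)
The plan is to reduce the cycle-valued identity to a numerical identity of quasi-modular $q$-series by pairing both sides against an arbitrary tautological test class on $\Mbar_{g,n}$, and then to verify the numerical identity via degeneration of the target and explicit operator formulas. Since by Theorem \ref{thm:quasimodularity} both sides of the proposed equality lie in $H^*(\Mbar_{g,n}) \otimes \QMod$, pairing against a test class $\Omega \in H^*(\Mbar_{g,n})$ produces a scalar-valued quasi-modular form, and such forms of bounded weight are determined by finitely many Fourier coefficients. So it suffices to match the $q$-expansions on both sides after pairing against every tautological $\Omega$, and the theorem would follow by tautological-cycle-level duality on $\Mbar_{g,n}$.

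The coefficient of $q^d$ in $\int_{\Mbar_{g,n}} \Omega \cdot \CC_g(\gamma_1,\ldots,\gamma_n)$ is a Gromov--Witten integral over $\Mbar_{g,n}(E,d)$ with insertion $\tau^*\Omega$. I would first use the translation symmetry of $E$ to force vanishing of $\CC_g$ whenever an odd number of odd insertions appears, and handle even numbers of odd insertions by identifying pairs of odd classes with a diagonal pullback, so that the argument can proceed in purely even degree. At this point the divisor equation, the string equation, and Lemma \ref{lemma:unit vanishing} reduce matters to evaluating stationary integrals of the form $\CC_g(\pt, \ldots, \pt)$ against $\tau^*\Omega$.

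Next, I would analyze these stationary integrals via degeneration of $E$ to a nodal rational curve: the GW theory of the degenerate target reduces, via the degeneration formula, to rubber GW theory of $(\p^1, 0, \infty)^{\sim}$ with a node-gluing identification of the two relative divisors, which is governed by the Okounkov--Pandharipande operator formalism on Fock space. In this presentation the dependence on $G_2$ is transparent: each factor of $G_2$ records the contribution of a pairing across the node, encoded geometrically by a diagonal insertion of two unit classes. The operator $\frac{d}{dG_2}$ then isolates precisely this pairing, which corresponds to either gluing two markings on a genus $g-1$ curve (producing the $\iota_*$ term) or gluing two markings across two disjoint curves summing to genus $g$ (producing the $j_*$ term). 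The correction $-2 \sum_i (\int_E \gamma_i) \psi_i$ arises when the divisor equation is applied on the new unit insertions, absorbing the extra markings into a $\psi$-class on the original marking $i$, with the coefficient $-2$ matching the normalization of $G_2$ in $\Theta$ (see the Taylor expansion in Theorem \ref{thm:2132}(ii)).

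The principal obstacle is the cycle-level upgrade: the degeneration and operator arguments naturally output numerical identities between integrals, but the theorem asserts an equality of classes in $H^*(\Mbar_{g,n}) \otimes \QMod$. Making the upgrade rigorous requires controlling every bubbling contribution and every boundary identification at the level of cycles on $\Mbar_{g,n}$, compatibly with arbitrary tautological test classes rather than just the ones needed to fix numerical invariants. This compatibility is the technical core of \cite{HAE}; once it is established, the three gluing patterns on marked points saturate the three terms on the right-hand side and the identity follows.
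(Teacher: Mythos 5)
This theorem is not proved in the paper at all: it is imported verbatim from \cite{HAE}, so the only meaningful comparison is with the proof given there. Your broad strategy --- degenerate $E$ to the nodal rational curve, feed the resulting stationary/relative theory of $(\p^1,0,\infty)$ into the operator formalism, and locate the $G_2$-dependence in the contributions that pair boundary conditions across the node --- is indeed the route taken in \cite{HAE}, and your heuristic for why the three terms appear (loop smoothing gives the $\iota_\ast$ term, separating smoothing gives the $j_\ast$ term, and the divisor/dilaton bookkeeping at markings with $\int_E\gamma_i\neq 0$ gives the $-2(\int_E\gamma_i)\psi_i$ correction) is the right picture.

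The genuine gap is your reduction step. You propose to deduce the cycle-level identity from its pairings against tautological test classes $\Omega$, invoking a ``tautological-cycle-level duality on $\Mbar_{g,n}$.'' No such duality exists: the Poincar\'e pairing restricted to the tautological ring $RH^{\ast}(\Mbar_{g,n})$ is not perfect in general (the Gorenstein property is known to fail, e.g.\ for $\Mbar_{2,n}$ with $n$ large), so the vanishing of $\int_{\Mbar_{g,n}}\Omega\cdot(\mathrm{LHS}-\mathrm{RHS})$ for all tautological $\Omega$ does not force $\mathrm{LHS}=\mathrm{RHS}$, even granting that both sides are tautological. Pairing against all of $H^{\ast}(\Mbar_{g,n})$ would suffice by Poincar\'e duality, but then your subsequent computation --- which needs the test class to restrict controllably to boundary strata so that the degeneration formula and string/divisor equations apply --- no longer goes through. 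The proof in \cite{HAE} avoids this by running the degeneration formula and the graph-sum bookkeeping at the level of classes on $\Mbar_{g,n}$ throughout, which is exactly the ``cycle-level upgrade'' you defer to the reference in your final paragraph. Since that upgrade is the entire content of the statement (the numerical shadow is much weaker), the argument as written is circular: it establishes at best a family of numerical identities, not the identity of classes in $H^{\ast}(\Mbar_{g,n})\otimes\QMod$ that the theorem asserts and that the rest of this paper (e.g.\ the proof of Proposition~\ref{prop:lambdaformula}) actually uses.
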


\subsection{Hodge products}

Let $\BE \to \Mbar_{g,n}$ be the rank $g$ Hodge bundle
and let $\lambda_i = c_i(\BE)$ be its Chern classes. The main result of this subsection, Proposition~\ref{prop:lambdaformula}, is an explicit formula (in terms of tautological classes) for the product
\[
\CC_g(\pt^{\times n}) \cdot \lambda_{g-1}.
\]
We will need some lemmas about such products (Lemmas~\ref{lemma:basic facts} and \ref{lem:conjstationary}), and along the way we also state a general conjecture of note about such products (Conjecture~\ref{conj:app}) and give some evidence supporting it.

\begin{lemma} \label{lemma:basic facts}
For all $g>0$ we have $\CC_g(\gamma_1, \ldots, \gamma_n) \cdot \lambda_g = 0$.
\end{lemma}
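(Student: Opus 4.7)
The plan is to show that the virtual class $[\Mbar_{g,n}(E,d)]^{\vir}$ factors (up to sign) through $\tau^*\lambda_g$, so that multiplication by $\lambda_g$ produces $\lambda_g^2$, which vanishes by Mumford's relation $c(\BE)\cdot c(\BE^\vee)=1$. This reduces the lemma to a structural statement about the obstruction theory of maps to $E$.

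First, I would identify the obstruction sheaf. Since $T_E\cong\CO_E$ is trivial, for any stable map $f:C\to E$ we have
\[ \mathrm{Def}(f) = H^0(C, f^*T_E) = H^0(C,\CO_C), \qquad \mathrm{Obs}(f) = H^1(C, f^*T_E) = H^1(C,\CO_C). \]
Globally on $\Mbar_{g,n}(E,d)$, with $\pi:\CC\to\Mbar_{g,n}(E,d)$ the universal curve and $f:\CC\to E$ the universal map, the obstruction sheaf is therefore $R^1\pi_*(f^*T_E)\cong R^1\pi_*\CO_\CC\cong\tau^*\BE^\vee$ by Serre duality, while the deformation sheaf is $\tau^*\CO$ of rank~$1$ (the direction of translations of $E$). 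In particular, when $2g-2+n>0$ the actual dimension $3g-2+n$ of the smooth locus exceeds the virtual dimension $2g-2+n$ by exactly $g=\mathrm{rk}\,\tau^*\BE^\vee$.

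Next I would promote this to a factorization of the virtual class. In the smooth case, the standard excess intersection formula gives directly
\[ [\Mbar_{g,n}(E,d)]^{\vir} \;=\; e(\tau^*\BE^\vee)\cap [\Mbar_{g,n}(E,d)] \;=\; (-1)^g\,\tau^*\lambda_g\cap [\Mbar_{g,n}(E,d)]. \]
In general one needs either the relative-to-translations reduced class, or the Kiem--Li cosection localization technology already invoked in step~(d) of the paper's outline: the pullback of $1$-forms $H^0(E,\Omega_E)\to\tau^*\BE$, $\omega\mapsto f^*\omega$, dualizes to a cosection of $\mathrm{Obs}=\tau^*\BE^\vee$, and tracking how this cosection cuts down the virtual class again yields the factorization $[\Mbar_{g,n}(E,d)]^{\vir}=(-1)^g\,\tau^*\lambda_g\cap\overline{[M]}$ for a suitable reduced class $\overline{[M]}$. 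The $d=0$ piece is an immediate check: $\Mbar_{g,n}(E,0)=\Mbar_{g,n}\times E$ with virtual class $(-1)^g\lambda_g\cap[\Mbar_{g,n}\times E]$, which manifestly has the desired form.

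Granting the factorization, the lemma follows by the projection formula:
\[ \CC_g(\gamma_1,\ldots,\gamma_n)\cdot\lambda_g = \sum_{d\geq 0}q^d\,\tau_*\!\left([\Mbar_{g,n}(E,d)]^{\vir}\cdot\tau^*\lambda_g\cdot\prod_i\ev_i^*\gamma_i\right), \]
and the integrand contains $\tau^*(\lambda_g^2)=0$ by Mumford's relation, using here that $g>0$ so $\lambda_g$ is nontrivial. The main obstacle is the rigorous factorization of $[\Mbar_{g,n}(E,d)]^{\vir}$ through $\tau^*\lambda_g$ on the possibly singular moduli space; this is where cosection localization, or a direct cone-theoretic analysis exploiting that the $\CO$-summand in the perfect obstruction theory splits off via the translation action of $E$, does the real work.
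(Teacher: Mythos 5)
There is a genuine gap: the central claim that $[\Mbar_{g,n}(E,d)]^{\vir}$ factors as $(-1)^g\,\tau^*\lambda_g\cap\overline{[M]}$ is false for $d>0$, and the argument built on it cannot be repaired. The cleanest way to see this is that your factorization would force \emph{all} positive-degree invariants to vanish: for $d>0$ the section $f^*dz$ of $\tau^*\BE$ is nowhere zero (every noncontracted component of the source has positive genus and carries a nonzero pullback of $dz$), so $\lambda_g=c_g(\tau^*\BE)=0$ as a class on $\Mbar_{g,n}(E,d)$, and hence $(-1)^g\tau^*\lambda_g\cap\overline{[M]}=0$. But $\langle 1\rangle^E_{1,d}=\sigma(d)/d\neq 0$, and Proposition~\ref{prop:lambdaformula} shows $\CC_g(\pt^{\times n})\lambda_{g-1}$ has nontrivial $q$-dependence. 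The source of the error is the identification of the \emph{relative} obstruction sheaf $R^1\pi_*\CO_\CC\cong\tau^*\BE^\vee$ with the absolute one: the boundary map $\mathrm{Def}(C)=H^1(T_C)\to H^1(\CO_C)$ induced by $df$ is Serre-dual to multiplication by $f^*dz$ on $H^0(\omega_C)$, which is injective for $f$ nonconstant, so the absolute obstruction space is generically \emph{zero} when $d>0$; correspondingly the open Hurwitz locus has dimension $2g-2+n$, equal to the virtual dimension, not $3g-2+n$. For the same reason the cosection $\tau^*\BE^\vee\to\CO$ you propose does not descend to the absolute obstruction sheaf, so Kiem--Li localization does not apply in the form you invoke it (and if it did, it would again give zero, since the cosection is surjective everywhere for $d>0$).

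The fix is simpler than what you attempt, and is the paper's argument: for $d>0$ you do not need any structure of the virtual class at all --- the nowhere-vanishing section $f^*dz$ of $\tau^*\BE$ already gives $\lambda_g=0$ on $\Mbar_{g,n}(E,d)$, so $[\Mbar_{g,n}(E,d)]^{\vir}\cdot\tau^*\lambda_g=0$ trivially. Your treatment of the $d=0$ case (explicit class $(-1)^g\lambda_g\cap[\Mbar_{g,n}\times E]$ plus Mumford's relation $\lambda_g^2=0$) is correct and coincides with the paper's; note this is the \emph{only} degree in which the virtual class genuinely factors through $\lambda_g$.
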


\begin{proof}
See also \cite[Lemma 4.4.1]{Pixton_Senior}.
Assume first $d>0$. Let $\CC \to \Mbar_{g,n}(E,d)$ be the universal curve and let $f : \CC \to E$ be the universal map. Let $dz \in \Gamma(E, \Omega_E)$ be the global holomorphic 1-form. Then $f^{\ast} dz$ defines a global nowhere vanishing section of (the pullback by $\tau$ of ) $\BE$.
Hence
\[ [\Mbar_{g,n}(E,d) ]^{\vir} \lambda_g = \frac{1}{d} \ev_{n+1 \ast}( c_g( \BE) f^{\ast}(\pt) [ \Mbar_{g,n+1}(E,d) ]^{\vir} ) = 0, \]
where  we suppressed the pullbacks by $\tau$.
In case $d=0$ we have $[ \Mbar_{g,n}(E,0) ]^{\vir} = [ \Mbar_{g,n} \times E] (-1)^g \lambda_g$, so the claim follows from Mumford's relation 
$c(\BE \oplus \BE^{\vee})=1$ which gives $\lambda_g^2=0$.
\end{proof}

We don't have this vanishing when multiplying by $\lambda_{g-1}$, but we can say something about which quasi-modular forms can appear.

Let $\QMod^E\subset \QMod$ denote the $\BC$-linear span of all derivatives of Eisenstein series
\[
\left( q \frac{d}{dq} \right)^{i} G_k(q),
\]
where $i\ge 0$ and $k\ge 2$ is even.

\begin{conj}\label{conj:app}
For any $2g-2+n > 0$ and $\gamma_1,\ldots,\gamma_n\in H^\ast(E)$,
\[
\CC_g(\gamma_1,\ldots,\gamma_n)\cdot\lambda_{g-1}\in H^{\ast}(\Mbar_{g,n}) \otimes \QMod^E.
\]
\end{conj}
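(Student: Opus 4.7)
The plan is to induct via the holomorphic anomaly equation of Theorem~\ref{thm:HAE}, using $\lambda_{g-1}$ to eliminate most boundary contributions through Lemma~\ref{lemma:basic facts}. First I would record the closure properties of $\QMod^E$: using $\bigl[\frac{d}{dG_2}, D_q\bigr] = -2\wt$ one computes $\frac{d}{dG_2}(D^i G_k) = 2i(k+i-1)D^{i-1}G_k$ for $i \geq 1$, so $\QMod^E$ is closed under both $D_q$ and $\frac{d}{dG_2}$ (away from the trivial case $\frac{d}{dG_2}G_2 = 1$), and $\frac{d}{dG_2} : \QMod^E \to \QMod^E$ is surjective (each generator $D^{i-1}G_k$ arises from the image of $D^i G_k$). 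Note however that $\QMod^E \subsetneq \QMod$ starting in weight $12$: the cusp form $\Delta$ has depth zero in $G_2$ but does not belong to $\BC \cdot G_{12}$, so the conjecture really does assert the absence of cuspidal contributions.

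Next I would compute the $\lambda_{g-1}$-projection of the HAE. The Hodge-bundle pullbacks sit in $0 \to \BE' \to \iota^*\BE \to \CO \to 0$ and $j^*\BE = \BE_1 \oplus \BE_2$, giving $\iota^*\lambda_{g-1} = \lambda'_{g-1}$ (top Chern class of the rank-$(g-1)$ bundle on $\Mbar_{g-1,n+2}$) and $j^*\lambda_{g-1} = \lambda_{g_1}\lambda_{g_2-1} + \lambda_{g_1-1}\lambda_{g_2}$. Lemma~\ref{lemma:basic facts} then annihilates the $\iota_*$-term outright and kills the $j_*$-summands whenever $g_1, g_2 \geq 1$. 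The $g_i = 0$ splits survive, but use that $\CC_0(\gamma_S, \1) = \bigl(\int_E \prod_S \gamma\bigr) \cdot [\Mbar_{0,|S|+1}]$ is constant in $q$ (since $E$ admits no nonconstant rational curves); these contribute pushforwards of $F_g(\gamma) := \CC_g(\gamma) \cdot \lambda_{g-1}$ evaluated on proper subsets of the insertions. Together with the third HAE term I obtain the recursion
\[
\tfrac{d}{dG_2} F_g(\gamma_1, \ldots, \gamma_n) = (\text{pushforwards of $F_g$ with strictly fewer insertions}) - 2 \sum_i \Bigl(\int_E \gamma_i\Bigr) F_g(\gamma_1, \ldots, \1, \ldots, \gamma_n) \psi_i.
\]

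I would then induct lexicographically on $(n, m)$, where $m = |\{ i : \int_E \gamma_i \neq 0 \}|$. The inductive hypothesis places both terms on the right in $H^*(\Mbar_{g,n}) \otimes \QMod^E$, so $\frac{d}{dG_2} F_g(\gamma)$ lies there too. By surjectivity of $\frac{d}{dG_2}$ on $\QMod^E$, one can choose $G \in H^*(\Mbar_{g,n}) \otimes \QMod^E$ with $\frac{d}{dG_2}(F_g(\gamma) - G) = 0$; then $F_g(\gamma) - G$ lies in the kernel $H^*(\Mbar_{g,n}) \otimes \BC[G_4,G_6]$, i.e., is a pure modular form.

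The hard part will be the final step: showing that this modular residual lies in the Eisenstein subspace $\bigoplus \BC G_k \subset \BC[G_4, G_6]$ rather than potentially containing cusp forms like $\Delta$. The HAE recursion does not constrain the $\frac{d}{dG_2}$-kernel component at all, so genuinely new input beyond HAE is required. Candidate approaches include: (a) a direct calculation via the Okounkov-Pandharipande operator formula for stationary GW theory of $E$ as vacuum expectations on Fock space, verifying that the relevant Bloch-Okounkov $q$-brackets capped with $\lambda_{g-1}$ are Eisenstein-valued; (b) Teleman's reconstruction applied to the semisimple elliptic-curve CohFT, whose $R$-matrix has Eisenstein-valued entries, together with a careful analysis of how the $\lambda_{g-1}$ factor projects away from cuspidal contributions; (c) degeneration of $E$ to a nodal rational curve and exploiting the vanishing of cusp forms at the cusp. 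Controlling this pure-modular component is the crux of the conjecture.
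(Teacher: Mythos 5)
There is a genuine gap here, and to your credit you name it yourself: the HAE induction only determines $\CC_g(\gamma_1,\ldots,\gamma_n)\cdot\lambda_{g-1}$ modulo the kernel of $d/dG_2$, i.e.\ modulo $H^{\ast}(\Mbar_{g,n})\otimes\BC[G_4,G_6]$, and nothing in the argument excludes a cuspidal residual such as a multiple of $\Delta$ in weight $12$ and beyond. Your three candidate routes (a)--(c) for closing this are left as proposals, so what you have is a reduction, not a proof. You should be aware that the statement is labelled a \emph{conjecture} in the paper precisely for this reason: the paper does not prove it in general either, and explicitly says that the case of odd insertions "seems more difficult." What the paper does establish is the stationary case (Lemma~\ref{lem:conjstationary}) and the numerical version after pairing with an arbitrary tautological class (Theorem~\ref{thm:numericalconj}), which together cover everything needed for the main results.

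It is worth contrasting mechanisms. The paper's proof of the stationary case does not run the HAE at all: it inspects the graph-sum structure of the quasimodularity proof in \cite{HAE}, where $\CC_g(\pt^{\times n})$ is written as a sum over stable graphs with no cut edges, and observes that cupping with $\lambda_{g-1}$ kills every graph with more than one independent cycle. Only the $n$-cycle graph survives, and its contribution is manifestly a derivative of a single Eisenstein series --- so membership in $\QMod^E$ falls out directly, with no cusp-form ambiguity to resolve. The HAE is then used (in Proposition~\ref{prop:lambdaformula}) only to pin down the coefficients, taking that lemma as input; this is essentially the role your recursion plays, but the paper never needs to invert $d/dG_2$. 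If you want to push your approach through, the productive question is whether the graph-theoretic argument extends to non-stationary insertions, since that is the step that actually excludes cusp forms. Two small technical remarks: your sign is off in the closure computation (the correct identity is $\frac{d}{dG_2}(D_q^i G_k) = -2i(k+i-1)D_q^{i-1}G_k$, which does not affect surjectivity), and in the surviving $g_i=0$ splittings of the HAE the class fed to the genus-$g$ vertex at the node involves a K\"unneth decomposition of the diagonal of $E$, so the "strictly fewer insertions" bookkeeping in your lexicographic induction needs more care when $|S_1|=1$.
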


We give evidence that this conjecture is likely true by checking it below both when $\gamma_i = \pt$ for all $i$ (which is all we need for our application) and numerically, i.e. after integration against an arbitrary tautological class of complementary codimension. Proving Conjecture~\ref{conj:app} when there are odd classes $\gamma_i$ seems more difficult.

\begin{lemma}\label{lem:conjstationary}
For any $2g-2+n > 0$,
\[
\CC_g(\pt^{\times n}) \cdot \lambda_{g-1} \in H^{\ast}(\Mbar_{g,n}) \otimes \QMod^E.
\]
\end{lemma}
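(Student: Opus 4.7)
The plan is to proceed by induction on $n$ using the holomorphic anomaly equation of Theorem~\ref{thm:HAE}, exploiting the fact that multiplication by $\lambda_{g-1}$ annihilates every boundary contribution in the HAE and produces a clean inductive recursion.

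Set $X_n := \CC_g(\pt^{\times n}) \lambda_{g-1}$ and apply $\tfrac{d}{dG_2}$. The self-gluing contribution pulled back to $\Mbar_{g-1,n+2}$ gives $\iota^*\lambda_{g-1} = c_{g-1}(\BE_{g-1,n+2})$, the top Chern class of the Hodge bundle in genus $g-1$, which annihilates $\CC_{g-1}(\pt^{\times n}, \1, \1)$ by Lemma~\ref{lemma:basic facts}. For the separating-node gluing $j : \Mbar_{g_1, |S_1|+1} \times \Mbar_{g_2, |S_2|+1} \to \Mbar_{g,n}$ with $g_1+g_2=g$, the Hodge-bundle splitting gives $j^*\lambda_{g-1} = \sum_{a+b=g-1}\lambda_a\boxtimes\lambda_b$; since $a+b = g_1+g_2-1$, either $a=g_1$ or $b=g_2$, producing a top Chern class that kills the corresponding $\CC_{g_i}$-factor by Lemma~\ref{lemma:basic facts}. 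The remaining edge case $g_i = 0$ vanishes too: non-constant rational maps to $E$ do not exist, while in degree zero stability forces $|S_i| \geq 2$ but $\int_E \pt^{|S_i|} = 0$.

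Only the $\1$-insertion term of the HAE then survives, giving
\[
\frac{d}{dG_2} X_n = -2 \sum_{i=1}^n \CC_g(\pt_1, \ldots, \1_i, \ldots, \pt_n) \cdot \psi_i \cdot \lambda_{g-1}.
\]
Using the pullback identity $\CC_g(\pt_1, \ldots, \1_i, \ldots, \pt_n) = \pi_i^* \CC_g(\pt^{\times(n-1)})$ for the forgetful map $\pi_i : \Mbar_{g,n} \to \Mbar_{g,n-1}$ (which follows from compatibility of virtual classes under the universal-curve morphism of moduli of stable maps together with $\ev_i^*(\1) = 1$) and the fact that $\pi_i^*\lambda_{g-1} = \lambda_{g-1}$, this becomes
\[
\frac{d}{dG_2} X_n = -2 \sum_{i=1}^n \psi_i \cdot \pi_i^*(X_{n-1}).
\]
By the inductive hypothesis $X_{n-1} \in H^*(\Mbar_{g,n-1}) \otimes \QMod^E$, hence $\tfrac{d}{dG_2} X_n \in H^*(\Mbar_{g,n}) \otimes \QMod^E$.

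To close the induction we use the algebraic identity $\tfrac{d}{dG_2}(D_q^i G_k) = -2i(k+i-1)\, D_q^{i-1} G_k$ (derived from $[\tfrac{d}{dG_2}, D_q] = -2\wt$), which identifies $\tfrac{d}{dG_2} : \QMod^E_w \to \QMod^E_{w-2}$ as a near-isomorphism and lets us ``integrate'' in $\QMod^E$ up to a $G_2$-independent correction in $\BC[G_4, G_6]$. The base case $n=1$ is a direct computation using the divisor equation and the explicit structure of the one-point stationary GW series of $E$. The hard part is the closing step in high weights $2n \geq 12$: there $\QMod^E \subsetneq \QMod$, with codimension equal to the dimension of the cusp-form space, and one must rule out that the $G_2$-independent part of $X_n$ contains a cusp-form component (e.g.\ a $\Delta$-piece in weight 12) lying in $\ker(\tfrac{d}{dG_2})$ but outside $\QMod^E$. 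Ruling this out requires geometric input beyond the HAE recursion itself, for instance an explicit evaluation of the degree-zero ($q^0$) contribution and the Bloch--Okounkov / Fock-space structure of the stationary GW theory of $E$, whose coefficients are ultimately built from divisor sums and thus generate only Eisenstein-type quasi-modular forms.
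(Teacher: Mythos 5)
Your reduction via the holomorphic anomaly equation is sound as far as it goes (the vanishing of the first two HAE terms after cupping with $\lambda_{g-1}$, including the genus-$0$ edge cases, is exactly the computation the paper carries out in the proof of Proposition~\ref{prop:lambdaformula}), but it does not prove the lemma. The obstruction is the one you name yourself: $\frac{d}{dG_2}$ has kernel $\BC[G_4,G_6]$ on $\QMod$, so knowing $\frac{d}{dG_2}X_n\in\QMod^E$ only determines $X_n$ modulo modular forms, and in weight $2g-2+2n\ge 12$ a cusp-form component could survive. Acknowledging a gap does not close it, and the patch you sketch does not work: the $q^0$-coefficient cannot detect a cusp form (cusp forms vanish at $q^0$ by definition), and the claim that the stationary Gromov--Witten theory of $E$ ``generates only Eisenstein-type quasi-modular forms'' is false --- by Bloch--Okounkov and Okounkov--Pandharipande the stationary descendent series of $E$ generate the \emph{full} ring $\QMod=\BC[G_2,G_4,G_6]$, and $\CC_g(\pt^{\times n})$ itself does not lie in $H^*(\Mbar_{g,n})\otimes\QMod^E$ in general. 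Whatever argument closes the induction must therefore use the factor $\lambda_{g-1}$ in an essential way beyond the HAE recursion, which your proposal does not do.

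The paper's proof is structural rather than inductive: following the proof of quasi-modularity in \cite{HAE}, the class $\CC_g(\pt^{\times n})$ is expressed as a sum over stable graphs on $n$ vertices with no cut edges; cupping with $\lambda_{g-1}$ annihilates every term supported on a graph with more than one independent cycle (since the Hodge bundle restricted to such a stratum has too many trivial summands for $c_{g-1}$ to survive together with the vertex classes), and the surviving $n$-cycle terms contribute only derivatives of a single Eisenstein series, i.e. elements of $\QMod^E$. Note also that the paper then uses Lemma~\ref{lem:conjstationary} as input to the very HAE computation you propose, in order to pin down the coefficients in Proposition~\ref{prop:lambdaformula}; running that computation without the lemma is exactly what leaves the cusp-form ambiguity unresolved.
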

\begin{proof}
Following the proof of the quasi-modularity in \cite{HAE}, we see that $\CC_g(\pt^{\times n})$ is written there as a sum over terms supported on stable graphs on $n$ vertices with no cut edges. 
Cupping with $\lambda_{g-1}$ kills all terms supported on a graph with more than one cycle, so only terms supported on the $n$-cycle graph remain. These terms only produce quasi-modular contributions that are derivatives of a single Eisenstein series, i.e. belong to $\QMod^E$.
\end{proof}

\begin{thm}[{\cite[Conjecture 4.4.5]{Pixton_Senior}}]\label{thm:numericalconj}
For any $2g-2+n > 0$, $\gamma_1,\ldots,\gamma_n\in H^\ast(E)$, and tautological class $\alpha\in RH^\ast(\Mbar_{g,n})$,
\[
\int_{\Mbar_{g,n}}\CC_g(\gamma_1,\ldots,\gamma_n)\lambda_{g-1}\alpha\in \QMod^E.
\]
\end{thm}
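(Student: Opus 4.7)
The plan is to prove Theorem~\ref{thm:numericalconj} by reducing to the stationary case of Lemma~\ref{lem:conjstationary} via an induction on the triple $(g, n, n_{\1} + n_{\mathrm{odd}})$ in lexicographic order, with the holomorphic anomaly equation (Theorem~\ref{thm:HAE}) as the main inductive mechanism. By multilinearity of $\CC_g$ in its arguments we may assume each $\gamma_i \in \{\1, \aaa, \bbb, \pt\}$. The $[-1]$-automorphism of $E$ preserves $[\Mbar_{g,n}(E,d)]^{\vir}$ while acting on $H^{\ast}(E)$ by $(-1)^{\deg}$, so $\CC_g(\gamma_1,\ldots,\gamma_n) = 0$ whenever an odd number of the $\gamma_i$ are odd; hence odd insertions appear in $(\aaa, \bbb)$-pairs.

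Each non-$\pt$ insertion type is eliminated inductively. A $\pt$-insertion is handled via the cycle-valued divisor equation $\bar{\pi}_{\ast}\CC_g(\gamma_1,\ldots,\gamma_n,\pt) = q\frac{d}{dq}\CC_g(\gamma_1,\ldots,\gamma_n)$, where $\bar{\pi}: \Mbar_{g,n+1} \to \Mbar_{g,n}$ forgets the last marking; the projection formula together with the $q\frac{d}{dq}$-closure of $\QMod^E$ reduces the integral to one with fewer markings. A $\1$-insertion is handled by the analogous string-like identity $\bar{\pi}_{\ast}\CC_g(\gamma_1,\ldots,\gamma_n,\1) = 0$ for $d > 0$, while the degree-$0$ contribution is computed directly from $\CC_g|_{q=0} = (-1)^g \lambda_g \cdot \int_E \prod\gamma_i$. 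A pair of odd insertions is handled via the HAE: since $\int_E\gamma = 0$ for every $\gamma \in \{\1,\aaa,\bbb\}$, the last sum in Theorem~\ref{thm:HAE} involves only $\pt$-insertions (handled by the first reduction), and the remaining boundary terms $\iota_{\ast}\CC_{g-1}(\gamma,\1,\1)$ and $j_{\ast}(\CC_{g_1}(\gamma_{S_1},\1)\boxtimes \CC_{g_2}(\gamma_{S_2},\1))$ integrate against $\lambda_{g-1}\alpha$ via the Mumford relations $\iota^{\ast}\lambda_{g-1} = \lambda_{g-1}$ and $j^{\ast}\lambda_{g-1} = \sum_{a+b=g-1}\lambda_a \boxtimes \lambda_b$, producing $F$-type integrals at strictly lower genus that lie in $\QMod^E$ by the inductive hypothesis. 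Tautological classes $\alpha$ not of $\bar{\pi}$-pullback form are decomposed into $\psi$-monomials and boundary strata via the Arbarello-Cornalba tautological calculus and fed into the same induction. This shows $\frac{d}{dG_2}F(\gamma, \alpha) \in \QMod^E$.

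Since $\QMod^E$ is closed under the $G_2$-antiderivative (given the recursion $\frac{d}{dG_2}(D_q^j G_k) \in \BC \cdot D_q^{j-1}G_k$ following from $[\frac{d}{dG_2}, D_q] = -2\wt$), we obtain $F - f_0 \in \QMod^E$ for some purely modular form $f_0$ (the residual modular ambiguity). To conclude $F \in \QMod^E$, one must verify that $f_0$ lies in the Eisenstein subspace, equivalently that it carries no cusp-form component. The constant term of $f_0$ is fixed by $F|_{q=0} = (-1)^g\big(\int_E\prod\gamma_i\big)\int_{\Mbar_{g,n}}\lambda_g\lambda_{g-1}\alpha$, and the first few higher Fourier coefficients come from low-degree GW invariants of $E$ computed via the stationary case; by the Sturm bound ($\lfloor w/12\rfloor + 1$ coefficients suffice at weight $w$ and level one), this pins $f_0$ down as an Eisenstein series.

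The main obstacle is precisely the last step: confirming the vanishing of the cusp-form component of $f_0$ requires either explicit control of the low-degree Fourier coefficients of $F$ (which mixes the full reduction with concrete small-genus computations), or a cycle-level sharpening that bypasses the modular ambiguity altogether, along the lines of the stronger Conjecture~\ref{conj:app}. A secondary technical point is the inductive bookkeeping: applying the HAE introduces two new $\1$-insertions, which must always be compensated by a strict decrease in $g$ (self-gluing term) or in the pair $(g_i, n_i)$ (separating term), and verifying this compatibility throughout all cases is delicate but routine.
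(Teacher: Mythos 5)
Your proposal has a genuine gap, and you have in fact identified it yourself: the holomorphic anomaly equation only computes $\frac{d}{dG_2}$ of the integral, so even if every inductive step goes through, you recover $F$ only modulo $\ker(d/dG_2)=\BC[G_4,G_6]$, and you must still rule out a cusp-form component of the integration constant $f_0$. Your suggested fix via the Sturm bound is not a proof: the number of Fourier coefficients needed grows with the weight $2g-2+2n-2\sum d_i$, and those coefficients are low-degree Gromov--Witten invariants of $E$ \emph{with odd insertions}, which are precisely what the stationary case does not give you. So the argument as written does not close. There are also bookkeeping problems in the induction itself: the third term of the HAE converts a $\pt$ into a $\1$ accompanied by a $\psi$-class, which \emph{increases} your third induction parameter $n_{\1}+n_{\mathrm{odd}}$ without decreasing $g$ or $n$, and a $\1$-insertion carrying a $\psi$-class is not killed by the string-equation identity you invoke.

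The paper avoids all of this by never integrating a differential equation. Its proof is a direct dimension count at the cycle level: write $\alpha$ as $(\xi_\Gamma)_*$ of a $\kappa,\psi$-monomial; the vanishing $\CC_g\cdot\lambda_g=0$ (Lemma~\ref{lemma:basic facts}) forces $\Gamma$ to be a rational-tails graph with a single genus-$g$ vertex $v_0$ of valence $n_0$; the vanishing $R^g(M_{g,n_0})=0$ bounds the codimension of the piece of $\alpha$ at $v_0$ by $g-1$, which combined with the codimension constraint forces $n_0\le\sum d_i$; since rational tails are contracted by any map to $E$, the markings land on at most $n_0$ points of $E$, so the integral vanishes unless each group of insertions multiplies to a multiple of $\pt$. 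The splitting formula then reduces everything to $\int\lambda_{g-1}\CC_g(\pt^{\times n_0})\alpha_0$, i.e.\ to Lemma~\ref{lem:conjstationary}, with no residual modular ambiguity. If you want to salvage your approach, you would need a cycle-level statement strong enough to pin down the constant of integration --- essentially Conjecture~\ref{conj:app} --- which is exactly what the paper says remains open in the presence of odd insertions.
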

\begin{proof}
Recall that a general tautological class is a linear combination of basic classes of the form $(\xi_\Gamma)_*(\text{monomial in $\kappa,\psi$ classes})$, where $\Gamma$ is a stable graph and $\xi_\Gamma$ is the corresponding gluing map. We assume $\alpha$ is of this form.

We also assume the $\gamma_i$ are homogeneous. Let $\gamma_i$ have complex codimension $d_i$, i.e. $\gamma_i\in H^{2d_i}(E)$. Then $\CC_g(\gamma_1,\ldots,\gamma_n)$ has codimension $g-1+\sum d_i$, so we can assume $\alpha\in RH^{g-1+n-\sum d_i}(\Mbar_{g,n})$ since otherwise the integral vanishes for dimension reasons.

Next we note that this integral vanishes whenever in the stable graph $\Gamma$, the genus splits across vertices or is reduced by a self-node, since in either case the vanishing result Lemma~\ref{lemma:basic facts} will apply to some vertex (after splitting the $\lambda_{g-1}$ insertion over the vertices). In other words, we can assume the graph $\Gamma$ has a vertex $v_0$ of genus $g$ (i.e. $\Gamma$ is a rational tails graph). Let $n_0$ be the valence of $v_0$ (including legs).

Let $\alpha_0$ be the piece of $\alpha$ coming from $v_0$, i.e. $\alpha_0\in R^*(\Mbar_{g,n_0})$ is a monomial in kappa and psi classes there. Since $R^g(M_{g,n_0}) = 0$ 
 (and this can be achieved by tautological relations defined on $\Mbar_{g,n_0}$), we can assume that $\alpha_0$ has codimension at most $g-1$. The dimension of the class $\alpha$ is then at least $\dim\Mbar_{g,n_0}-(g-1) = 2g-2+n_0$, so the codimension of the class $\alpha$ is at most $g-1+n-n_0$. But we have already assumed this codimension is $g-1+n-\sum d_i$, so we conclude $n_0 \le \sum d_i$.

But all rational components are contracted by any map to $E$, so this means that the $n$ markings are mapped to only at most $n_0\le \sum{d_i}$ distinct points on $E$. This means the integral will vanish unless $n_0 = \sum d_i$ and the graph is such that the markings are partitioned into $n_0$ groups (the connected components if $v_0$ is deleted), each of which has insertion classes $\gamma_i$ with product equal to a multiple of $\pt$.

But then the Gromov-Witten splitting formula reduces the integral to (a multiple of)
\[
\int_{\Mbar_{g,n_0}}\lambda_{g-1}\CC_g(\pt^{\times n})\alpha_0.
\]

By Lemma~\ref{lem:conjstationary}, this belongs to $\QMod^E$ as desired.
\end{proof}

We now compute the product appearing in Lemma~\ref{lem:conjstationary} explicitly.

\begin{prop}\label{prop:lambdaformula}
We have 
\[
\CC_g(\pt^{\times n}) \cdot (-1)^{g-1} \lambda_{g-1}
= \alpha_{g,n} \left( q \frac{d}{dq} \right)^{n-1} G_{2g}(q).
\]
where the class $\alpha_{g,n} \in H^{\ast}(\Mbar_{g,n})$ is given by
\[
\alpha_{g,n} = \frac{(2g-1)!}{(2g-2+n)!}\cdot\frac{4g}{B_{2g}}\lambda_g\lambda_{g-1}\sum_{i=1}^n\psi_1\psi_2\cdots\widehat{\psi}_i\cdots\psi_n,
\]
where the product of psi classes omits $\psi_i$.
\end{prop}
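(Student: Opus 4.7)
The plan is to proceed by induction on $n$, combining the holomorphic anomaly equation (Theorem~\ref{thm:HAE}) with the vanishings of Lemmas~\ref{lemma:basic facts} and~\ref{lemma:unit vanishing}, and the $\QMod^E$-restriction from Lemma~\ref{lem:conjstationary}.

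For the base case $n=1$, the aim is to show that $\CC_g(\pt) \lambda_{g-1}$ is a scalar cycle multiple of $G_{2g}$. For $g \geq 2$, applying Theorem~\ref{thm:HAE} to $\CC_g(\pt)$ and multiplying by $\lambda_{g-1}$, every term on the right-hand side vanishes: the self-gluing term because $\iota^\ast \lambda_{g-1}$ is the top Chern class of the rank-$(g-1)$ Hodge bundle on $\Mbar_{g-1,3}$ (so Lemma~\ref{lemma:basic facts} applies via projection formula), the separating terms analogously (either via top Hodge class on one side, or via $\CC_{g'}(1)=0$ for $g'\geq 1$), and the final term because $\CC_g(1) = 0$ by Lemma~\ref{lemma:unit vanishing}. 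Hence $\CC_g(\pt) \lambda_{g-1}$ is genuinely modular of weight $2g$; combined with Lemma~\ref{lem:conjstationary} it lies in the modular subspace of $\QMod^E_{2g}$, which is the line $\BC \cdot G_{2g}$. For $g=1$ the space $\QMod^E_2 = \BC \cdot G_2$ is already one-dimensional, so the same conclusion holds without modularity. Matching the $q^0$ coefficient using $\CC_g(\pt)|_{q=0} = (-1)^g \lambda_g$ (from $[\Mbar_{g,1}(E,0)]^{\vir} = (-1)^g \lambda_g \cdot [\Mbar_{g,1} \times E]$) and $G_{2g}|_{q=0} = -B_{2g}/(4g)$ identifies the scalar as $(-1)^{g-1}(4g/B_{2g}) \lambda_g \lambda_{g-1} = (-1)^{g-1} \alpha_{g,1}$.

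For the inductive step, applying Theorem~\ref{thm:HAE} to $\CC_g(\pt^{\times n}) \lambda_{g-1}$: the self-gluing term vanishes as before; the separating terms vanish either by Lemma~\ref{lemma:basic facts} (when both genera are positive) or by the observation that $\CC_0(\pt^{\times m}, 1^{\times k}) = 0$ for $m \geq 2$ (since on $\Mbar_{0,m+k} \times E$ each $\ev_i^\ast(\pt)$ equals $\pr_E^\ast(\pt)$ and its square vanishes). The surviving term $-2 \sum_i \CC_g(\pt_1, \ldots, 1_i, \ldots, \pt_n) \psi_i \lambda_{g-1}$ is rewritten via the cycle-level string identity $\CC_g(\pt_1, \ldots, 1_i, \ldots, \pt_n) = \omega_i^\ast \CC_g(\pt^{\times(n-1)})$ (obtained by flat base change along the forgetful square on the moduli of maps) and $\omega_i^\ast \lambda_{g-1} = \lambda_{g-1}$. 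Substituting the induction hypothesis and using the combinatorial identity $\sum_i \omega_i^\ast(\alpha_{g,n-1}) \psi_i = (n-1)(2g+n-2) \alpha_{g,n}$ (which follows from $\omega_i^\ast \psi_k = \psi_k - D_{ik}$ together with $\psi_i \cdot D_{ik_1}\cdots D_{ik_r} = 0$ for $r \geq 1$; here $r=1$ is immediate since the $\Mbar_{0,3}$-bubble carries no cotangent class, and higher $r$ follows from restriction analysis on the $\Mbar_{0,r+2}$-bubble strata where the $D_{ik_j}$ restrict to distinct boundary points of $\p^1$), together with the identity $\frac{d}{dG_2}(q\frac{d}{dq})^{n-1} G_{2g} = -2(n-1)(2g+n-2)(q\frac{d}{dq})^{n-2} G_{2g}$ (derived from the commutator $[d/dG_2, q d/dq] = -2\wt$), shows that the $d/dG_2$-images of both sides agree.

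To conclude equality, the difference has $d/dG_2 = 0$ and lies in $\QMod^E_{2g+2n-2}$, hence equals a cycle times $G_{2g+2n-2}$. For $n \geq 2$, both sides vanish at $q^0$: the GW side because $\pt^n = 0$ in $H^\ast(E)$, and the target side because $(q\frac{d}{dq})^{n-1} G_{2g}|_{q=0} = 0$. Since $G_{2g+2n-2}|_{q=0} \neq 0$, this forces the difference to vanish. The main obstacle I anticipate is the rigorous verification of the combinatorial identity for iterated pullbacks of $\psi$-classes, specifically the vanishing $\psi_i \cdot D_{ik_1}\cdots D_{ik_r} = 0$ for $r \geq 2$, which requires a careful analysis of deep boundary strata in $\Mbar_{g,n}$.
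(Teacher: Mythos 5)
Your proof is correct, but it is organized differently from the paper's. The paper expands $\CC_g(\pt^{\times n})(-1)^{g-1}\lambda_{g-1}$ as $\sum_i \alpha_i D_q^i G_{2g+2n-2-2i}$ and determines all coefficients at once by computing $(d/dG_2)^i$ of the Gromov--Witten side for every $i$ (keeping the resulting classes $\CC_g(\1^{\times i}\pt^{\times n-i})\psi_1\cdots\psi_i$ as they are) and then extracting $q^0$-coefficients; the only nontrivial evaluation needed is $\mathrm{Coeff}_{q^0}\CC_g(\pt,\1^{n-1})=(-1)^g\lambda_g$. You instead induct on $n$, apply the holomorphic anomaly equation once per step, convert the surviving term back to $\CC_g(\pt^{\times(n-1)})$ via the cycle-level fundamental-class (string) identity, and close the induction with the kernel-of-$d/dG_2$ argument. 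What your route buys is a structural explanation of the recursion $\alpha_{g,n-1}\mapsto\alpha_{g,n}$ through the identity $\sum_i\omega_i^\ast(\alpha_{g,n-1})\psi_i=(n-1)(2g+n-2)\alpha_{g,n}$; what it costs is the extra input of the fundamental-class axiom at the cycle level and the $\psi$-class pullback combinatorics, neither of which the paper needs. Three small points. First, your justification of $\CC_g(\ldots,\1_i,\ldots)=\omega_i^\ast\CC_g(\pt^{\times(n-1)})$ by ``flat base change along the forgetful square'' is not literally right, since the square relating the forgetful maps on maps and on curves is not Cartesian (stabilization contracts components); the identity is nonetheless true --- it is the Fundamental Class Axiom for Gromov--Witten classes, proved by Behrend for virtual classes --- so cite that rather than base change. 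Second, the obstacle you anticipate at the end is not there: for $j_1\neq j_2$ the boundary divisors $\Delta_{0,\{i,j_1\}}$ and $\Delta_{0,\{i,j_2\}}$ are already disjoint in $\Mbar_{g,n}$ (the defining subsets neither contain one another nor are disjoint nor cover everything), so $D_{ik_1}\cdots D_{ik_r}=0$ for $r\geq 2$ without any analysis of deep strata, and $\psi_i D_{ij}=0$ handles $r=1$. Third, when $g=1$ the self-gluing term of the HAE cannot be killed by the Hodge-class argument (since $\lambda_{g-1}=\lambda_0=1$); it vanishes instead because $\CC_0(\pt^{\times n},\1,\1)=0$ for $n\geq 2$ by $\pt^2=0$ on $E$, which is the same observation you already make for the separating terms.
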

\begin{proof}
By Lemma~\ref{lem:conjstationary} and the fact from \cite{HAE} that $\CC_g(\pt^{\times n})$ is of pure weight $2g-2+2n$, we have that
\[
\CC_g(\pt^{\times n}) \cdot (-1)^{g-1}\lambda_{g-1}
\in H^{\ast}(\Mbar_{g,n}) \otimes \mathrm{Span}\left( D_q^i G_{2g-2+2n-2i} \middle| i \geq 0 \right)
\]
where $D_q = q\frac{d}{dq}$. We hence can write
\begin{equation}\label{eq:alpha}
\CC_g(\pt^{\times n}) \cdot (-1)^{g-1}\lambda_{g-1}
= \alpha_0 G_{2g+2n-2} + \alpha_1 D_q G_{2g+2n-4} + \alpha_2 D_q^2 G_{2g+2n-6} + \ldots
\end{equation}
for classes $\alpha_i \in H^{\ast}(\Mbar_{g,n})$. It remains to compute the $\alpha_i$, which we claim are all zero except for $\alpha_{n-1} = \alpha_{g,n}$.

We now use the holomorphic anomaly equation (Theorem~\ref{thm:HAE}) to evaluate the expression
\[
\left(\frac{d}{dG_2}\right)^i \CC_g(\pt^{\times n}) \cdot (-1)^{g-1}\lambda_{g-1}
\]
for every $i$. The HAE has three terms, but we claim that the first and second will always vanish in this case each time the HAE is applied. Indeed, the first term reduces the genus and hence vanishes immediately by Lemma~\ref{lemma:basic facts}. The second term vanishes for the same reason if it splits the genus properly, so we can assume there that $g_1=0$ or $g_2=0$. The degree of the map on the genus $0$ component must then be $0$ to give a non-zero Gromov-Witten class, and at most one of the $\pt$ insertions can be on that component. For the initial application of the HAE
\[
\frac{d}{dG_2} \CC_g(\pt^{\times n}) \cdot (-1)^{g-1}\lambda_{g-1},
\]
this cannot happen because the genus $0$ component must have at least two of the markings for stability, but every marking has a $\pt$ insertion. For later applications of the HAE, some markings will have $\1$ insertions but they will also have psi classes causing these terms to vanish anyway.

So we are left with the third term of the HAE, which converts a $\pt$ insertion to a $\1$ insertion and multiplies the class by the corresponding psi class. There is also a scalar factor of $(-2)$. The conclusion is that for $0\le i \le n$ we have
\[
\left(\frac{d}{dG_2}\right)^i \CC_g(\pt^{\times n}) \cdot (-1)^{g-1}\lambda_{g-1}
= (-2)^ii!\sum_{\binom{n}{i}}\CC_g(\1^{\times i}\pt^{\times n-i})\psi_1\psi_2\cdots\psi_i \cdot (-1)^{g-1}\lambda_{g-1},
\]
where the sum runs over all ways to distribute the $\1$ and $\pt$ insertions over the $n$ points (and we multiply by the psi classes corresponding to the $\1$ insertions).

Since by Lemma~\ref{lemma:unit vanishing} we have $\CC_g(\1^s) = 0$, we find that
\begin{equation} \label{eq1}
\text{for all } i \geq n: \quad \left( \frac{d}{dG_2} \right)^i \CC_g(\pt^{\times n}) (-1)^{g-1}\lambda_{g-1} = 0.
\end{equation} 
Moreover, since a constant map can not go through two distinct points, we get
\begin{equation} \label{w0-fdsf}
\text{for all } i < n-1: \quad 
\mathrm{Coeff}_{q^0} \left[ \left( \frac{d}{dG_2} \right)^i \CC_g(\pt^{\times n})  (-1)^{g-1}\lambda_{g-1} \right] = 0.
\end{equation}
Finally, since $\mathrm{Coeff}_{q^0}\CC_g(\pt, \1^{n-1}) = (-1)^g\lambda_g$, we have
\begin{equation}\label{eq:nminus1}
\mathrm{Coeff}_{q^0} \left[ \left( \frac{d}{dG_2} \right)^{n-1} \CC_g(\pt^{\times n})  (-1)^{g-1}\lambda_{g-1} \right] = -(n-1)!(-2)^{n-1}\lambda_g\lambda_{g-1}\sum_{i=1}^n\psi_1\psi_2\cdots\widehat{\psi}_i\cdots\psi_n.
\end{equation}

We now compute the $\alpha_i$ appearing in \eqref{eq:alpha} for each $i$. Apply  $(d/dG_2)^{i}$ to both sides of \eqref{eq:alpha} and repeatedly use the commutation relation
$[d/dG_2, D_q]=-2 \wt$. The result is
\begin{multline*}
\left( \frac{d}{dG_2} \right)^i \CC_g(\pt^{\times n}) (-1)^{g-1}\lambda_{g-1} 
=
c_{g,n,i} \alpha_{i} G_{2g-2+2n-2i} \\
+ (\text{sum of terms involving a factor of } D_q^j(G_{m}) \text{ for } j>0),
\end{multline*}
where $c_{g,n,i} \in \BQ$ is a nonzero constant. Taking the $q^0$-coefficient and using \eqref{eq1} or \eqref{w0-fdsf}, we find $\alpha_i=0$ for $i\ne n-1$.

For $i=n-1$, this same approach lets us compute $\alpha_{n-1}$ by dividing the right side of \eqref{eq:nminus1} by the constant term $-\frac{B_{2g}}{4g}$ of the Eisenstein series $G_{2g}$ as well as the commutation constant
\[
c_{g,n,n-1} = (-2)^{n-1}(2g)(2g+(2g+2))\cdots(2g+\cdots+(2g+2n-4)) = (-2)^{n-1}\frac{(n-1)!(2g+n-2)!}{(2g-1)!}.
\]
The result is the class $\alpha_{g,n}$ in the statement of the proposition.
\end{proof}

\begin{example} \label{example:C_g lambda g-1}
For $n=1$ and $g \geq 1$ the previous proposition says
\[
\CC_g(\pt) (-1)^{g-1} \lambda_{g-1} = - \lambda_g \lambda_{g-1} E_{2g}(q).
\]
where $E_{2g} := -4g/B_{2g} G_{2g} = 1 + O(q)$ is a renormalized Eisenstein series.
\end{example}

\subsection{The double ramification cycle}
Let
\[ A = (a_1, a_2, \ldots, a_n), \ a_i \in \BZ \]
be a vector satisfying $\sum_{i=1}^{n} a_i = 0$.
The $a_i$ are the parts of $A$.
Let $\mu$ be the partition
defined by the positive parts of $A$,
and let $\nu$ be the partition defined by the negatives
of the negative parts of $A$.
Let $I$ be the set of markings corresponding
to the $0$ parts of $A$.
Let $\Mbar_{g,I}(\p^1,\mu,\nu)^{\sim}$
be the moduli space of stable relative maps of
connected curves of genus $g$ to rubber with ramification
profiles $\mu,\nu$ over the points $0,\infty \in \p^1$ respectively.
The moduli space admits a forgetful morphism
\[ \pi : \Mbar_{g,I}(\p^1,\mu,\nu)^{\sim}
\to \Mbar_{g,n}. \]
The double ramification cycle is the pushforward
\[
\DR_g(A)
= \pi_{\ast} 
\left[ \Mbar_{g,I}(\p^1,\mu,\nu)^{\sim} \right]^{\text{vir}}
\ \in A^g(\Mbar_{g,n}).
\]

Hain's formula \cite{Hain} is an explicit formula for the double ramification cycle after restriction to compact type curves $M^{ct}_{g,n}\subset \Mbar_{g,n}$:
\begin{equation}\label{eq:hain}
\DR_g^{\text{ct}}(A) = \frac{1}{g!}\Bigg(\sum_{i=1}^n\frac{a_i^2}{2}\psi_i-\sum_{h,S}\frac{a_S^2}{4}\delta_{h,S}\Bigg)^g,
\end{equation}
where the second sum runs over all $0\le h\le g$ and $S\subset\{1,2,\ldots,n\}$ defining a separating boundary divisor with class $\delta_{h,S} = [\Delta_{h,S}]$, and $a_S := \sum_{i\in S}a_i$.

\subsection{Evaluation}
The main result of this section is the following pair of integrals:

\begin{thm} \label{thm:evaluation}
For $g \geq 1$, $n \geq 1$ and $a_1, \ldots, a_n \neq 0$ we have:
\begin{multline*}
\int_{\Mbar_{g,n}} (-1)^{g-1} \lambda_{g-1} \CC_g(1, \pt^{n-1}) \DR_g(a_1, \ldots , a_{n}) \\
= \frac{a_1^2}{a_1 a_2 \ldots a_{n}} \sum_{\substack{S \subset \{1, ... , n\} 
}} (-1)^{|S|} a_S^{2g-2+n}
\frac{(-1)^{g-1+n}}{(2g - 2 + n)!} \left( q \frac{d}{dq} \right)^{n-2} G_{2g}(q)
\end{multline*}
and
\begin{multline*}
\int_{\Mbar_{g,n}} (-1)^{g-1} \lambda_{g-1} \CC_g(\alpha, \beta, \pt^{n-2}) \DR_g(a_1, \ldots , a_{n}) \\
= \frac{-a_1 a_2}{a_1 a_2 \ldots a_{n}} \sum_{\substack{S \subset \{1, ... , n\} 
}} (-1)^{|S|} a_S^{2g-2+n}
\frac{(-1)^{g-1+n}}{(2g - 2 + n)!} \left( q \frac{d}{dq} \right)^{n-2} G_{2g}(q),
\end{multline*}
where $a_S = \sum_{i \in S} a_i$.
\end{thm}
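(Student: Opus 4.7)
The strategy is to follow steps (e) and (f) of the introduction: for both integrals I would first reduce the Gromov--Witten class $\CC_g(\cdots)\lambda_{g-1}$ to a tautological class on $\Mbar_{g,n}$ containing $\lambda_g\lambda_{g-1}$, then combine with Hain's formula for $\DR_g(a_1,\ldots,a_n)$, and finally evaluate the resulting socle integrals.

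For the first integral, the string equation yields $\CC_g(\1,\pt^{n-1}) = \pi^*\CC_g(\pt^{n-1})$ where $\pi\colon\Mbar_{g,n}\to\Mbar_{g,n-1}$ forgets the first marking. Applying Proposition~\ref{prop:lambdaformula} on $\Mbar_{g,n-1}$ and pulling back by $\pi^*\psi_j = \psi_j - D_{1,j}$ (where $D_{1,j}$ is the rational-tail divisor containing markings $1$ and $j$) gives an explicit formula for $\CC_g(\1,\pt^{n-1})(-1)^{g-1}\lambda_{g-1}$ as a tautological class times $(qd/dq)^{n-2}G_{2g}(q)$. For the second integral, no string equation is available, so I would adapt the proof of Proposition~\ref{prop:lambdaformula} directly. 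The key observations still apply: the first HAE term vanishes against $\lambda_{g-1}$ by Lemma~\ref{lemma:basic facts}; the splitting terms with both $g_1,g_2>0$ vanish by the same Hodge argument; and since $\int_E\alpha = \int_E\beta = 0$, the third HAE term only converts $\pt$ insertions into $\1\cdot\psi_i$ insertions. The only new feature is a boundary contribution from the splitting $(g_1,g_2)=(g,0)$, $S_2=\{1,2\}$ at the first application of the HAE, which survives because $\int_E\alpha\beta = 1$; it produces a class supported on $D_{1,2}$, which is then reduced via the string equation and Proposition~\ref{prop:lambdaformula} on $\Mbar_{g,n-1}$. Iterating the HAE $n-1$ times yields an analogous explicit formula for $\CC_g(\alpha,\beta,\pt^{n-2})(-1)^{g-1}\lambda_{g-1}$ as a tautological class involving $\lambda_g\lambda_{g-1}$, psi monomials, the divisor $D_{1,2}$, and $(qd/dq)^{n-2}G_{2g}(q)$.

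Once the integrand contains $\lambda_g\lambda_{g-1}$, the vanishing $\lambda_g\cdot\delta_0 = 0$ extends Hain's formula from compact type to all of $\Mbar_{g,n}$:
\[ \lambda_g\,\DR_g(a_1,\ldots,a_n) = \lambda_g\cdot\frac{1}{g!}\Big(\sum_i\tfrac{a_i^2}{2}\psi_i - \sum_{h,S}\tfrac{a_S^2}{4}\delta_{h,S}\Big)^g. \]
Multinomial expansion reduces each integral to a combination of the socle integrals
\[ \int_{\Mbar_{g,n}}\lambda_g\lambda_{g-1}\kappa_{b_0}\psi_1^{b_1+1}\cdots\psi_n^{b_n+1}, \quad \int_{\Mbar_{g,n}}\lambda_g\lambda_{g-1}\psi_1^{b_1+1}\cdots\psi_n^{b_n+1}, \]
with kappa classes arising when psi classes are pushed across boundary strata via $\pi_*\psi_{n+1}^{i+1} = \kappa_i$. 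These are evaluated by the Getzler--Pandharipande formulas \cite{Getzler-Pandharipande}, which are multinomial in the exponents. The sum over boundary divisors $\delta_{h,S}$ in Hain's expansion then naturally assembles into $\sum_{S\subseteq [n]}(-1)^{|S|}a_S^{2g-2+n}$ after applying a standard finite-difference identity for polynomials of degree $\le 2g-2+n$. The prefactor $a_1^2$ (respectively $-a_1 a_2$) emerges from the $\psi_1^2$ factor (respectively from the $\psi_1\psi_2$ factor together with the $\delta_{h,\{1,2\}}$ correction produced by the $D_{1,2}$ boundary term), precisely matching the $b_i^2\pt$ versus $-b_i b_j\Delta^{\text{odd}}_\ast$ structure of the class $\star^{b_1,\ldots,b_r}(\gamma)$ in~\eqref{star class}.

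\paragraph{Main obstacle.} The hardest step is the second integral: tracking the nonvanishing splitting contribution from the first HAE iteration through the subsequent iterations and showing that it combines cleanly with the Hain expansion to produce the prefactor $-a_1 a_2$ requires careful bookkeeping, as the naive application of Proposition~\ref{prop:lambdaformula}'s method would give only $a_1 a_2$-type terms from psi classes. The final combinatorial identification of the resulting symmetric polynomial with $\sum_S (-1)^{|S|}a_S^{2g-2+n}$ is also delicate and requires cancellations among the psi-class and boundary-class contributions in Hain's expansion.
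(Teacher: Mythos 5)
Your treatment of the first integral is essentially the paper's: both identify $\CC_g(\1,\pt^{n-1})\lambda_{g-1}$ with the pullback of the class from Proposition~\ref{prop:lambdaformula} under the forgetful map, and then trade that pullback against the DR cycle (you expand $\pi^{\ast}\psi_j=\psi_j-D_{1,j}$; the paper equivalently pushes forward $\DR_g$ by the projection formula), reduce via Hain's formula and the vanishing of $\lambda_g\lambda_{g-1}\psi_1\cdots\psi_{n-1}$ on the boundary to the Getzler--Pandharipande socle integrals, and reassemble the answer as $\sum_S(-1)^{|S|}a_S^{2g-2+n}$ via the sign-flip averaging identity. For the second integral, however, you take a genuinely different route. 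The paper does not redo the HAE computation with odd insertions; instead it proves Proposition~\ref{prop:Ratio even odd}, which shows directly that the two integrals have ratio $-a_2/a_1$: by the product formula both are integrals over the rubber space $\Mbar_{g,(d,m),(\mu,\nu)}(E\times\p^1,E_{0,\infty})^{\sim}$, the joint evaluation map factors through the locus $N_{a_1,\ldots,a_n}=\{\sum_ia_ix_i=0\}\subset E^n$, so for dimension reasons the $\lambda_{g-1}$-capped pushforward is a multiple $c[N]$, and the two insertions pair with $[N]$ to give $a_1^2c$ and $-a_1a_2c$ respectively (Lemmas~\ref{lemma:study of N} and \ref{lemma:N evaluation}). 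This buys the paper a complete avoidance of the odd-insertion bookkeeping and, importantly, of the fact that the cycle-level statement you implicitly need --- that $\CC_g(\alpha,\beta,\pt^{n-2})\lambda_{g-1}$ lies in $H^{\ast}(\Mbar_{g,n})\otimes\QMod^E$ --- is exactly the open part of Conjecture~\ref{conj:app}. Your route is viable (the paper confirms this in a footnote), but to make it rigorous you must replace Lemma~\ref{lem:conjstationary} by the numerical statement Theorem~\ref{thm:numericalconj}, which suffices because the end product is an integral against the tautological class $\DR_g(a_1,\ldots,a_n)$; what your approach buys is a self-contained, purely HAE/intersection-theoretic derivation of the $-a_1a_2$ prefactor (via the surviving $(g,0)$-splitting term with $S_2=\{1,2\}$ and the $\delta_{0,\{1,2\}}$ correction in Hain's formula), at the cost of substantially heavier combinatorics than the paper's one-line ratio.
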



\subsection{Proof}
We begin with the first integral. For convenience, we replace $n$ with $n+1$ and label the marked points starting at $0$ instead of $1$, so we wish to compute the pairing of $(-1)^{g-1}\lambda_{g-1}\CC_g(1, \pt^n)$ with the DR cycle $\DR_g(a_0,\ldots,a_n)$. The first of these is simply the pullback along the map $\pi_0$ forgetting the $0$th marking of the cycle computed in Proposition~\ref{prop:lambdaformula}, so by the projection formula we can instead take the pushforward
\[
(\pi_0)_*\DR_g(a_0,\ldots,a_n)
\]
and pair it against the formula given in Proposition~\ref{prop:lambdaformula}.

Since $\lambda_g\lambda_{g-1}\psi_1\psi_2\ldots\psi_{n-1}$ vanishes on all boundary divisors in $\Mbar_{g,n}$, we can throw away almost all terms appearing in the pushforward $(\pi_0)_*\DR_g(a_0,\ldots,a_n)$. The surviving terms can be computed easily using Hain's formula \eqref{eq:hain} and can be grouped into three types:
\begin{enumerate}[(A)]
\item pushforwards of those terms appearing in
\[
\frac{1}{2^gg!}(a_0^2\psi_0+\cdots+a_n^2\psi_n)^g
\]
with at least one factor of $\psi_0$ (which is then converted to a kappa class by the pushforward);
\item pushforwards of
\[
\frac{1}{2^gg!}(a_1^2\psi_1+\cdots+a_n^2\psi_n)^g;
\]
\item pushforwards of those terms appearing in
\[
\frac{1}{2^gg!}(a_1^2\psi_1+\cdots+a_{j-1}^2\psi_{j-1}-(a_j+a_0)^2\delta_{0,\{0,j\}}+a_{j+1}^2\psi_{j+1}+\cdots+a_n^2\psi_n)^g
\]
for some $1\le j\le n$ with at least one factor of the rational tails boundary divisor $\Delta_{0,\{0,j\}}$ (which is then contracted by the pushforward).
\end{enumerate}

In each of these cases, we want to compute the pairing with the formula in Proposition~\ref{prop:lambdaformula}. This requires the standard socle evaluation formulas \cite{Getzler-Pandharipande}
\[
\int_{\Mbar_{g,n}}\lambda_g\lambda_{g-1}\kappa_{b_0}\psi_1^{b_1+1}\cdots\psi_n^{b_n+1} = \frac{(-1)^{g+1}(2g-2+n)!\cdot B_{2g}}{2^{2g-1}(2g)!\prod_{i=0}^n(2b_i+1)!!}
\]
and
\[
\int_{\Mbar_{g,n}}\lambda_g\lambda_{g-1}\psi_1^{b_1+1}\cdots\psi_n^{b_n+1} = \frac{(-1)^{g+1}(2g-3+n)!\cdot B_{2g}}{2^{2g-1}(2g)!\prod_{i=1}^n(2b_i+1)!!}
\]
(both valid when the integrand has the correct codimension and either all the $b_i$ are nonnegative or $b_j = -1$ for a single $j>0$ and all other $b_i$ are nonnegative).

Removing for now the common factors (from Proposition~\ref{prop:lambdaformula} and the socle evaluation formulas)
\begin{equation}\label{eq:factorstorage}
\left(\frac{(2g-1)!}{(2g-2+n)!}\cdot\frac{4g}{B_{2g}}\left( q \frac{d}{dq} \right)^{n-1} G_{2g}\right)\left(\frac{(-1)^{g+1}(2g-2+n)!\cdot B_{2g}}{2^{2g-1}(2g)!}\right) = \frac{(-1)^{g+1}}{2^{2g-2}}\left( q \frac{d}{dq} \right)^{n-1} G_{2g},
\end{equation}
we obtain the three terms
\begin{equation}
\frac{1}{2^g}\sum_{i=1}^n\sum_{\substack{b_0+\cdots+b_n=g\\ b_0 > 0}}\frac{a_0^{2b_0}\cdots a_n^{2b_n}}{b_0!\cdots b_n!}\cdot\frac{1}{(2b_0-1)!!(2b_1+1)!!\cdots(2b_n+1)!!}(2b_i+1); \tag{A}
\end{equation}
\begin{equation}
\frac{1}{2^g}\sum_{i=1}^n\sum_{\substack{b_1+\cdots+b_n=g\\ b_i > 0}}\frac{a_1^{2b_1}\cdots a_n^{2b_n}}{b_1!\cdots b_n!}\cdot\frac{1/(2g-2+n)}{(2b_1+1)!!\cdots(2b_n+1)!!}(2b_i+1)\left(-2+\sum_{j=1}^n(2b_j+1)\right); \tag{B}
\end{equation}
\begin{equation}
-\frac{1}{2^g}\sum_{i=1}^n\sum_{\substack{b_1+\cdots+b_n=g\\ b_i > 0}}\frac{a_1^{2b_1}\cdots (a_0+a_i)^{2b_i} \cdots a_n^{2b_n}}{b_1!\cdots b_n!}\cdot\frac{1/(2g-2+n)}{(2b_1+1)!!\cdots(2b_n+1)!!}(2b_i+1)\left(-2+\sum_{j=1}^n(2b_j+1)\right). \tag{C}
\end{equation}

We can simplify these terms by combining the factorials and double factorials in the denominators, adding terms B and C, and evaluating the sum over $j$ at the end of terms B and C to get $2g+n$. The resulting simplified terms are
\begin{equation}
\sum_{i=1}^n\sum_{\substack{b_0+\cdots+b_n=g\\ b_0 > 0}}\frac{a_0^{2b_0}\cdots a_n^{2b_n}}{(2b_0)!(2b_1+1)!\cdots (2b_n+1)!}(2b_i+1); \tag{A}
\end{equation}
\begin{equation}
\sum_{i=1}^n\sum_{b_1+\cdots+b_n=g}\frac{a_1^{2b_1}\cdots\left(a_i^{2b_i}-(a_0+a_i)^{2b_i}\right) \cdots a_n^{2b_n}}{(2b_1+1)!\cdots (2b_n+1)!}(2b_i+1). \tag{B+C}
\end{equation}

We now expand $(a_0+a_i)^{2b_i}$. When $a_0$ and $a_i$ have even exponent, this precisely cancels the contribution of the first term, so we are left with only terms with odd exponents, say $a_0^{2c_0+1}a_i^{2c_i+1}$, and we get that the sum of all three terms is
\begin{equation}
-\sum_{i=1}^n\sum_{c_0+b_1+\cdots+c_i+\cdots+b_n=g}\frac{a_0^{2c_0+1}a_1^{2b_1}\cdots a_i^{2c_i+1} \cdots a_n^{2b_n}}{(2c_0+1)!(2b_1+1)!\cdots (2c_i+1)! \cdots (2b_n+1)!} \tag{A+B+C}
\end{equation}

Setting $e_j = 2b_j+1$ (or $2c_j+1$ if $j = 0,i$) lets us rewrite this as
\begin{equation}
-\left(\frac{\sum_{i=1}^na_0a_i}{a_0a_1\cdots a_n}\right)\sum_{\substack{e_0+\cdots+e_n = 2g-2+n+1\\\text{$e_j>0$ all odd}}}\frac{a_0^{e_0}\cdots a_n^{e_n}}{e_0!\cdots e_n!}. \tag{A+B+C}
\end{equation}

This sum over $e_i$ is the odd part of the formal polynomial $(a_0+\cdots+a_n)^{2g-2+n+1}/(2g-2+n+1)!$, so it can be interpreted as an average over all ways to flip signs on the $a_i$ and we get
\begin{equation}
-\left(\frac{\sum_{i=1}^na_0a_i}{a_0a_1\cdots a_n}\right)\frac{1}{2^{n+1}}\sum_{S\subset\{0,1,\ldots,n\}}(-1)^{|S|}\frac{(a_0+\cdots+a_n-2a_S)^{2g-2+n+1}}{(2g-2+n+1)!}. \tag{A+B+C}
\end{equation}
Multiplying by the factors in \eqref{eq:factorstorage} and using $a_0+\cdots+a_n = 0$ to simplify yields the formula in the theorem statement (remembering that we replaced $n-1$ by $n$ there and reindexed the $a_i$).

This proves the first part. The second part follows from Proposition~\ref{prop:Ratio even odd} below\footnote{We could alternatively follow the same method as for the first integral, using Theorem~\ref{thm:numericalconj} in place of Lemma~\ref{lem:conjstationary} inside the proof of Proposition~\ref{prop:lambdaformula}.}. \qed

\begin{prop} \label{prop:Ratio even odd}
For $a_1, \ldots, a_n \neq 0$ with $a_1 \neq 0$. We have
\[ -\frac{a_2}{a_1} \int_{\Mbar_{g,n}} \lambda_{g-1} \CC_g(1, \pt^{n-1}) \DR_g(a_1, \ldots , a_{n}) =
\int_{\Mbar_{g,n}} \lambda_{g-1} \CC_g(\alpha, \beta, \pt^{n-2}) \DR_g(a_1, \ldots , a_{n}) \]
\end{prop}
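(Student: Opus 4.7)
The plan is to exploit the group structure of $E$ together with the observation that, on the locus cut out by the double ramification cycle, the twisted evaluation
\[
\chi := \sum_{i=1}^{n} a_i\, \ev_i \colon \Mbar_{g,n}(E,d) \to E,
\]
well-defined because $\sum a_i = 0$, is identically zero.

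The first step is algebraic. Since $\alpha$ is a translation-invariant $1$-form on the abelian variety $E$, one has $\chi^*\alpha = \sum_i a_i\,\ev_i^*\alpha$. Solving for $a_1\,\ev_1^*\alpha$ and multiplying by $\ev_2^*\beta \cdot \prod_{i=3}^{n} \ev_i^*\pt$ gives
\[
\ev_1^*\alpha \cdot \ev_2^*\beta \cdot \prod_{i=3}^{n}\ev_i^*\pt
= -\frac{a_2}{a_1}\, \prod_{i=2}^{n}\ev_i^*\pt \;+\; \frac{1}{a_1}\, \chi^*\alpha \cdot \ev_2^*\beta \cdot \prod_{i=3}^{n}\ev_i^*\pt,
\]
where I use $\ev_2^*\alpha \cdot \ev_2^*\beta = \ev_2^*\pt$ and $\ev_i^*\alpha \cdot \ev_i^*\pt = \ev_i^*(\alpha\cdot\pt) = 0$ for $i\geq 3$ to kill the unwanted cross terms.

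The second step is the key geometric input: I would show $\chi^*\alpha \cdot \tau^*\DR_g(a_1,\ldots,a_n) = 0$ in $H^*(\Mbar_{g,n}(E,d))$. Using the definition $\DR_g(A) = \pi_*[\Mbar_{g,I}(\p^1,\mu,\nu)^{\sim}]^{\vir}$, form the fiber product
\[
\widetilde M := \Mbar_{g,n}(E,d) \times_{\Mbar_{g,n}} \Mbar_{g,I}(\p^1,\mu,\nu)^{\sim}
\]
with projections $\widetilde\pi_1,\widetilde\pi_2$. A point of $\widetilde M$ is a curve $C$ with $f_E\colon C\to E$ of degree $d$ together with a rubber map $h\colon C\to \p^1$ whose principal divisor is $\sum a_i p_i$. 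Pushing this divisor forward under the norm homomorphism $f_{E,*}\colon \Pic^0(C)\to \Pic^0(E)=E$ yields
\[
\chi\circ\widetilde\pi_1 \;=\; f_{E,*}(\mathrm{div}\,h) \;=\; 0,
\]
so $\chi\circ\widetilde\pi_1$ is the constant map to $0\in E$ and hence $\widetilde\pi_1^*\chi^*\alpha = 0$. Base change gives $\tau^*\DR_g(A) = \widetilde\pi_{1,*}\widetilde\pi_2^*[\Mbar_{g,I}(\p^1,\mu,\nu)^\sim]^{\vir}$, and the projection formula then yields $\chi^*\alpha \cdot \tau^*\DR_g(A) = \widetilde\pi_{1,*}(0) = 0$.

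Combining the two steps, multiplying by $\lambda_{g-1}$, and pushing forward along $\tau$ (weighted by $q^d$ and summed over $d\geq 0$) produces the desired identity. The main technical obstacle is justifying the virtual base-change equality $\tau^*\pi_* = \widetilde\pi_{1,*}\widetilde\pi_2^*$ since $\tau$ need not be flat; this is an instance of the standard compatibility of virtual pullback with proper pushforward, but it should be verified explicitly for the rubber virtual class entering the definition of the double ramification cycle.
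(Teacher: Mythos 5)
Your overall strategy is sound and its geometric heart is exactly the paper's: the twisted evaluation $\chi=\sum_i a_i\ev_i$ is killed on the locus relevant to the DR cycle because $\sum_i a_i p_i$ is a principal divisor there and the norm map to $\Pic^0(E)=E$ preserves linear equivalence. Your Step 1 is a correct and rather clean piece of bookkeeping (using $\ev_2^*\alpha\cdot\ev_2^*\beta=\ev_2^*\pt$ and $\ev_i^*\alpha\cdot\ev_i^*\pt=0$), and it would in fact streamline the paper's endgame, which instead computes both integrals against the class of $N_{a_1,\ldots,a_n}=\{\sum a_ix_i=0\}\subset E^n$ via a dimension count and an averaging over its components.

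The genuine gap is the step you flag as a "technical obstacle": the identity $\tau^*\DR_g(A)=\widetilde\pi_{1,*}\widetilde\pi_2^*[\Mbar_{g,I}(\p^1,\mu,\nu)^\sim]^{\vir}$ is not an instance of a standard base-change compatibility that merely needs to be "verified explicitly." The map $\tau:\Mbar_{g,n}(E,d)\to\Mbar_{g,n}$ is neither flat nor l.c.i., the naive fiber product $\widetilde M$ carries no obvious virtual class, and the curves underlying the two factors of a point of $\widetilde M$ need not coincide (each may have extra components contracted by stabilization), which also muddies your claim that $\mathrm{div}\,h$ makes sense as a divisor on the curve carrying $f_E$. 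What is actually true, and what the paper uses, is the Lee--Qu product formula in relative Gromov--Witten theory: the class $\tau_*\bigl([\Mbar_{g,(d,m),(\mu,\nu)}(E\times\p^1,E_{0,\infty})^\sim]^{\vir}\prod_i\ev_i^*\gamma_i\bigr)$ equals $[\CC_g(\gamma_1,\ldots,\gamma_n)]_{q^d}\,\DR_g(A)$. This is a theorem, not a base change; it replaces your $\widetilde M$ by the moduli of rubber relative stable maps to $(E\times\p^1,E_{0,\infty})$, where a single curve carries both maps and your norm-map argument (identical to the paper's "key geometric fact") shows that the joint evaluation factors through $N_{a_1,\ldots,a_n}$, on which $\chi$ is locally constant and hence $\chi^*\alpha$ restricts to zero. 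Note also that you only need the vanishing of $\chi^*\alpha\cdot\tau^*\DR_g(A)\cdot[\Mbar_{g,n}(E,d)]^{\vir}$ in homology, not the stronger cohomological vanishing $\chi^*\alpha\cdot\tau^*\DR_g(A)=0$ you aim for. With the product formula cited in place of the base-change claim, your argument closes.
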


For the proof we need the following two lemmas:
\begin{lemma} \label{lemma:study of N}
Let $a_1, \ldots, a_n \in \BZ$ be not all zero. The closed subscheme of $E^n$ defined by
\[ N_{a_1, \ldots, a_n} = \{ (x_1,..., x_n) \in E^n | a_1 x_1+ ... + a_n x_n=0 \in E \} \]
is smooth, pure of dimension $n-1$, has $\gcd(a_1,..,a_n)^2$ connected components $N_i$, and the classes $[N_i] \in H^{\ast}(E^n)$ are independent of $i$.
\end{lemma}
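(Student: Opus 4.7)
The plan is to view $N_{a_1,\ldots,a_n}$ as the kernel of the abelian variety homomorphism $\phi : E^n \to E$, $(x_1,\ldots,x_n) \mapsto \sum_i a_i x_i$, and deduce all four properties from the structure of this kernel.

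For smoothness and dimension: since $\phi$ commutes with translation, the rank of $d\phi$ is constant across $E^n$, equal to the rank at the origin. There the differential is the $\BC$-linear map $\BC^n \to \BC$, $(v_i) \mapsto \sum_i a_i v_i$, which is surjective since not all $a_i$ vanish. Hence $\phi$ is a submersion and $N = \phi^{-1}(0)$ is smooth of pure dimension $n-1$.

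For the connected components, I would factor $\phi = m_d \circ \phi'$, where $d = \gcd(a_1,\ldots,a_n)$, $\phi'(x) = \sum_i (a_i/d) x_i$, and $m_d : E \to E$ is multiplication by $d$. The integers $a_i/d$ are coprime, so the row vector $(a_1/d,\ldots,a_n/d)$ extends to a matrix in $GL_n(\BZ)$; the induced automorphism of $E^n$ identifies $\phi'$ with the projection to a coordinate, so $\ker \phi' \cong E^{n-1}$ is connected. The short exact sequence
\[ 0 \to \ker \phi' \to N \to E[d] \to 0 \]
then yields $|\pi_0 N| = |E[d]| = d^2 = \gcd(a_i)^2$.

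For the equality of classes: each connected component $N_i$ is a coset $t_i + N_0$ of the identity component $N_0 = \ker \phi'$. The translation map $E^n \to E^n$, $x \mapsto x + t_i$, is homotopic to the identity via any path from $0$ to $t_i$ in the connected group $E^n$, and therefore acts trivially on cohomology. Hence $[N_i] = [N_0]$ in $H^{\ast}(E^n)$ for every $i$. None of the steps is genuinely hard; the main tactical ingredient is the $GL_n(\BZ)$-extension trick that reduces a primitive integer vector to a standard basis vector, which makes the connectedness of $\ker \phi'$ transparent.
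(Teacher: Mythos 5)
Your proof is correct and rests on the same key ingredient as the paper's: extending a primitive integer vector to a matrix in $\mathrm{GL}_n(\BZ)$ so that the (primitive part of the) linear form becomes a coordinate projection. The paper applies this automorphism to transport $N_{a_1,\ldots,a_n}$ directly to $N_{(g,0,\ldots,0)} = E[g]\times E^{n-1}$, where all four claims are visible at once; your factorization through $m_d$ and the exact sequence $0\to\ker\phi'\to N\to E[d]\to 0$ is just a slightly more structural packaging of the same picture.
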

\begin{proof}
First, observe that for any $\varphi \in \mathrm{GL}_n(\BZ)$, the automorphism $\varphi : E^n \to E^n$ given by $\varphi(x_1, \ldots, x_n) = ( \sum_{j} \varphi_{ij} x_j )_{i=1}^{n}$ sends $N_{(a_1, \ldots, a_n)}$ to $N_{(a_1, \ldots, a_n) \varphi^{-1}}$.

Now let $g = \gcd(a_1,..,a_n)^2$. Any two vectors in $\BZ^n$ of the same divisibility are in the same $\mathrm{GL}_n(\BZ)$-orbit, so there exists $\varphi\in\mathrm{GL}_n(\BZ)$ with $(a_1, \ldots, a_n)\varphi^{-1} = (g,0,0,\ldots,0)$.

Thus it suffices to prove the lemma with $a_1 = g, a_2=a_3=\cdots=a_n=0$. But then $N_{(g,0,\ldots,0)}$ is just the union of slices $\{x\}\times E^{n-1}$, where $x$ is one of the $g^2$ $g$-torsion points of $E$, and the statement of the lemma is obvious.
\end{proof}
\begin{lemma} \label{lemma:N evaluation}
Let $\pr_i : E^n \to E$ be the projections. We have
\begin{gather*}
\int_{E^n} \pr_1^{\ast}(1) \pr_2^{\ast}(\pt) \cdots \pr_n^{\ast}(\pt) \cdot [N_{a_1,\ldots, a_n}] = a_1^2 \\
\int_{E^n} \pr_1^{\ast}(\alpha) \pr_2^{\ast}(\beta) \pr_3^{\ast}(\pt) \cdots \pr_n^{\ast}(\pt) \cdot [N_{a_1,\ldots, a_n}] = -a_1 a_2.
\end{gather*}
\end{lemma}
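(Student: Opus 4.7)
The plan is to identify the Poincar\'e dual $[N_{a_1,\ldots,a_n}] \in H^2(E^n)$ explicitly as the pullback of the point class under the addition map, and then read off both integrals from the K\"unneth decomposition.

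First, I would introduce the group homomorphism
\[
\phi\colon E^n \longrightarrow E, \qquad \phi(x_1,\ldots,x_n) = \sum_i a_i x_i,
\]
so that $N_{a_1,\ldots,a_n} = \phi^{-1}(0)$ as schemes. Since not all $a_i$ vanish, the differential of $\phi$ at every point is the surjection $(v_1,\ldots,v_n)\mapsto \sum_i a_i v_i$, so $\phi$ is a smooth morphism. Hence pullback of a point class by $\phi$ equals the fundamental class of its scheme-theoretic preimage, and we obtain the key identity
\[
[N_{a_1,\ldots,a_n}] = \phi^{\ast}[\pt_E] \in H^2(E^n).
\]

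Next I would compute $\phi^{\ast}\pt$ via K\"unneth. Writing $\alpha_i = \pr_i^{\ast}\alpha$ and $\beta_i = \pr_i^{\ast}\beta$, we have $\phi^{\ast}\alpha = \sum_i a_i \alpha_i$ and $\phi^{\ast}\beta = \sum_i a_i \beta_i$, hence
\[
\phi^{\ast}\pt = \phi^{\ast}(\alpha \cup \beta) = \sum_{i=1}^{n} a_i^2 \pr_i^{\ast}(\pt) + \sum_{i \neq j} a_i a_j \, \alpha_i \beta_j.
\]

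Finally, to evaluate the two integrals, I would multiply this expression against each of the two given test classes and repeatedly apply the relations $\pt \cup \pt = \alpha \cup \pt = \beta \cup \pt = 0$ on $E$, together with the sign $\beta_i \alpha_i = -\pt_i$. For the first integral, the test class $\pr_1^{\ast}(1)\pr_2^{\ast}(\pt)\cdots\pr_n^{\ast}(\pt)$ annihilates every cross term $\alpha_i \beta_j$ (any $\alpha$ or $\beta$ in slot $k \geq 2$ meets $\pt_k$) and also every diagonal term $a_i^2 \pt_i$ with $i \geq 2$, leaving only $a_1^2 \pt_1 \pt_2 \cdots \pt_n$, which integrates to $a_1^2$. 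For the second integral, the test class $\pr_1^{\ast}(\alpha)\pr_2^{\ast}(\beta)\pr_3^{\ast}(\pt)\cdots\pr_n^{\ast}(\pt)$ kills all diagonal terms as well; among the cross terms only $(i,j) \in \{(1,2),(2,1)\}$ can survive, and $(1,2)$ vanishes because it produces $\alpha_1^2 \beta_2^2$, while $(2,1)$ gives $a_1 a_2 \alpha_2 \beta_1 \alpha_1 \beta_2 \pt_3 \cdots \pt_n = -a_1 a_2 \pt_1 \cdots \pt_n$, integrating to $-a_1 a_2$.

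The only nontrivial step is the identification $[N]=\phi^{\ast}[\pt_E]$; once this is in hand the rest is straightforward bookkeeping in the K\"unneth algebra $H^{\ast}(E)^{\otimes n}$, so I do not anticipate a serious obstacle.
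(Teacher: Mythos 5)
Your proposal is correct. The key identification $[N_{a_1,\ldots,a_n}]=\phi^{\ast}[\pt_E]$, justified by smoothness of the group homomorphism $\phi$, is exactly the (implicit) first step of the paper's proof as well, so the two arguments share their essential content. Where you diverge is in the evaluation: you expand $\phi^{\ast}\pt=\sum_i a_i^2\,\pt_i+\sum_{i\neq j}a_ia_j\,\alpha_i\beta_j$ in the K\"unneth algebra and multiply against the test classes, carefully tracking Koszul signs (your sign bookkeeping for the $(2,1)$ cross term, yielding $-a_1a_2$, checks out). The paper instead applies the projection formula along the slice inclusions $\iota:E\to E^n$ (resp.\ $\iota:E^2\to E^n$), reducing the first integral to $\int_E(\sigma\circ\iota)^{\ast}(\pt)=\deg(m_{a_1})=a_1^2$ and the second to the pullback of the diagonal under $m_{a_1}\times m_{-a_2}$. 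Your route is purely algebraic and arguably more self-contained; the paper's is slightly more geometric and avoids sign manipulations with odd classes by packaging them into the diagonal class. Both are complete and correct.
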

\begin{proof}
Let $\sigma : E^n \to E$ be given by $\sigma(x_1, \ldots, x_n)=\sum_i a_i x_i$, let $p_2, \ldots, p_n \in E$ be fixed points and let $\iota : E \to E^n$ be the inclusion $\iota(x) = (x,p_2, \ldots, p_n)$. We then have
\[
\int_{E^n} \pr_1^{\ast}(1) \pr_2^{\ast}(\pt) \cdots \pr_n^{\ast}(\pt) \cdot [N_{a_1,\ldots, a_n}] 
= \int_{E^n} \iota_{\ast}(1) \sigma^{\ast}( \pt )
= \int_{E} (\sigma \circ \iota)^{\ast}(\pt) = a_1^2
\]
where the last line follows since the map $\sigma \circ \iota(x) = a_1 x + \sum_i a_i p_i$ is \'etale of degree $a_1^2$.

For the second formula consider $\iota : E^2 \to E^n$ given by $\iota(x_1,x_2) = (x_1,x_2,p_3,\ldots, p_n)$ and $\sigma$ as before. Then
\begin{multline*}
\int_{E^n} \pr_1^{\ast}(\alpha) \pr_2^{\ast}(\beta) \pr_3^{\ast}(\pt) \cdots \pr_n^{\ast}(\pt) \cdot [N_{a_1,\ldots, a_n}]
=
\int_{E^n} \iota_{\ast}(\alpha \otimes \beta) \sigma^{\ast}(\pt)
=
\int_{E^2} (\alpha \otimes \beta) \cdot (\sigma \circ \iota)^{\ast}(\pt) \\
\overset{(*)}{=}
\int_{E^2} (m_{a_1} \times m_{-a_2})_{\ast}(\alpha \otimes \beta)
\cdot \sigma_0^{\ast}(\pt) = \int_{E^2} -a_1 a_2 (\alpha \otimes \beta) \cdot \Delta_{E} = -a_1 a_2 \int_E \alpha \cdot \beta = -a_1 a_2,
\end{multline*}
where in (*) we factor $\sigma \circ \iota = \sigma_0 \circ (m_{a_1}, m_{-a_2})$, with $m_a$ the multiplication by $a$ map and $\sigma_0:E^2 \to E$ given by $\sigma_0(x,y) = x-y + \sum_i a_i p_i$. 
\end{proof}

\begin{proof}[Proof of Proposition~\ref{prop:Ratio even odd}]
Let $\mu$ be the partition
defined by the positive parts of $A = (a_1, \ldots, a_n)$,
and let $\nu$ be the partition defined by the negative parts of $A$.
Let $m=|\mu|$ and let
\[ \Mbar_{g,(d,m), (\mu, \nu)}(E \times \p^1, E_{0,\infty})^{\sim} \]
be the moduli space of rubber relative stable maps to $(E \times \p^1, E_{0,\infty})$ of degree $(d,m)$
with prescribed ordered ramification profile $\mu$ over $0$ and $\nu$ over $\infty$.
Let $\tau$ denote the morphism to the moduli space of curves, and let $\ev_i$ denote the relative evaluation maps indexed by the parts of $A$.
Let $\gamma_i \in H^{\ast}(E)$.
By the product formula in relative Gromov-Witten theory \cite{LQ} we have
\begin{equation} \label{product formula ExP1}
\tau_{\ast}\left( [ \Mbar_{g,(d,m), (\mu, \nu)}(E \times \p^1, E_{0,\infty})^{\sim}  ]^{\vir} \prod_{i=1}^{n} \ev_i^{\ast}(\gamma_i) \right)
=
[\CC_g(\gamma_1, \ldots, \gamma_n)]_{q^d} \DR_g(a_1, \ldots, a_n).
\end{equation}

The key geometric fact that we use is that
the joint evaluation map
$\ev = \prod_{i=1}^{n} \ev_i$ factors as
\[
\ev : \Mbar_{g,(d,m), (\mu, \nu)}(E \times \p^1, E_{0,\infty})^{\sim} \to N_{a_1, \ldots, a_{n}} \subset E^n.
\]
Indeed, since we are over $\p^1$, for any relative stable map to an expansion $f : C \to (E \times \p^1)[\ell]$ we have $\CO(f^{-1}(E_0)) = \CO(f^{-1}(E_{\infty})) \in \Pic^{m}(E)$, and $f^{-1}(E_0) - f^{-1}(E_{\infty}) = \sum_i a_i p_i$.

Let $N_1, \ldots, N_{r}$ be the components of $N_{a_1, \ldots, a_{n}}$.
By dimension reasons, it follows that there are coefficients $c_1, \ldots, c_r \in \BQ$ such that
\[
\ev_{\ast}\left( [ \Mbar_{g,(d,m), (\mu, \nu)}(E \times \p^1, E_{0,\infty})^{\sim} ]^{\vir} \lambda_{g-1} \right) = \sum_{i=1}^{r} c_i [N_i].
\]
By Lemma~\ref{lemma:study of N} all $[N_i]$ have the same class in $E^n$, so with
$c=(c_1+ \ldots+c_r)/r$ we find
\begin{equation} \label{ev equation}
\ev_{\ast}\left( [ \Mbar_{g,(d,m), (\mu, \nu)}(E \times \p^1, E_{0,\infty})^{\sim} ]^{\vir} \lambda_{g-1} \right) = c [N] \ \in H^{\ast}(E^n).
\end{equation}

We obtain
\begin{multline*}
\int_{\Mbar_{g,n}} \lambda_{g-1} [\CC_g(1, \pt^{n-1})]_{q^d} \DR_g(a_1, \ldots , a_{n})
\,\overset{\eqref{product formula ExP1}}{=}\, \int_{[ \Mbar_{g,(d,m), (\mu, \nu)}(E \times \p^1, E_{0,\infty})^{\sim}  ]^{\vir}} \lambda_{g-1} \prod_{i=2}^{n} \ev_i^{\ast}(\pt) \\
\,\overset{\eqref{ev equation}}{=}\, c[N] \cdot \prod_{i=2}^{n} \pr_i^{\ast}(\pt) 
\,=\, a_1^2 c,
\end{multline*}
where the last equality follows by Lemma~\ref{lemma:N evaluation}, and similarly,
\[
\int_{\Mbar_{g,n}} \lambda_{g-1} [\CC_g(\alpha, \beta, \pt^{n-2})]_{q^d} \DR_g(a_1, \ldots , a_{n})
= c[N] \cdot \pr_1^{\ast}(\alpha) \pr_2^{\ast}(\beta) \prod_{i =3}^{n} \pr_i^{\ast}(\pt) = -a_1 a_2 c. \qedhere
\]
\end{proof}

\section{Elliptic surfaces: Theory}\label{sec:elliptic surfaces1}
\subsection{Invariants} \label{sec:invariants of elliptic surfaces}
Let $\pi: S \to \Sigma$ be an elliptic surface. We will assume throughout that there is a distinguished section $\iota : \Sigma \to S$. 
We refer to \cite{Miranda_Lectures} and \cite{FM} for introductions.

The basic invariants of the elliptic surface are
the genus $g:= g(\Sigma)$ of the base curve and the degree 
\[ d := d(S) := \frac{1}{12} \chi_{\mathrm{top}}(S). \]
The number of singular fibers of $\pi$, counted with multiplicity, is $12 d$.
The canonical bundle is the pullback of a degree $2g-2+d$ line bundle from the base. In particular,
\[ d_{\Sigma} = \int_{\Sigma} c_1(S) = 2-2g-d. \]
We have
\[
p_g(S) = h^0(K_S) =
\begin{cases}
g-1+d & \text{ if } S \text{ is not a product} \\
g + d & \text{ if } S \text{ is a product }.
\end{cases}
\]
and
\[
q(S) = h^1(\CO_S) =
\begin{cases}
g & \text{ if } S \text{ is not a product} \\
g + 1 & \text{ if } S \text{ is a product }.
\end{cases}
\]
\begin{example}
If $g(\Sigma) = 0$, then:
\begin{itemize}
\item $S=\p^1 \times E$ if $d=0$,
\item $S$ is a rational elliptic surface if $d=1$,
\item $S$ is a K3 surface if $d=2$.
\end{itemize}
If $g(\Sigma) = 1$, then
\begin{itemize}
\item $S$ is the product of two elliptic curves if $d=0$ and $\omega_S \cong \CO_S$,
\item $S$ is a bielliptic surface if $d=0$ and $\omega_S$ is torsion.
\end{itemize}
\end{example}

For any elliptic surface $S \to \Sigma$,
taking the fiberwise $j$-invariant defines a morphism $\Sigma \to \p^1$.

\begin{example} \label{example:elliptic surface with nonconstant j irreducible fibers}
Assume that $\Sigma$ admits a finite degree $2$ morphism to $\p^1$ (equivalently, $\Sigma$ has genus $g \leq 1$ or is hyperelliptic). Then
for any $d>0$ there exists an elliptic surface over $\Sigma$ of degree $d$ such that $S$ has a non-constant $j$-invariant and all of its fibers are irreducible, see \cite[p.62]{FM}.

Indeed, let us argue the case $d=1$, the other cases are similar. Consider the morphism $f : \Sigma \to \p^1$ of degree $2$. Let $\CL \in \Pic(\Sigma)$ be some line bundle such that $\CL^{\otimes 2} \cong f^{\ast} \CO(1)$. Then $\Gamma(\Sigma, \CL^4)$ contains $H^0(\p^1, \CO(2))$ and $\Gamma(\Sigma, \CL^6)$ 
contains $H^0(\p^1, \CO(3))$. Pick generic elements $g_2 \in H^0(\p^1, \CO(2))$ and $g_3 \in \Gamma(\p^1, \CO(3))$. Then $g_2, g_3$ are non-zero and have simple distinct zeros on $\Sigma$, so the {Weierstra\ss} model defined by
$(\Sigma,\CL, g_2, g_3)$ (see \cite{Miranda_Lectures}) is of the desired form.
\end{example}

\subsection{Monodromy and degenerations}
We recall basic results on the monodromy and degenerations of elliptic surfaces.

\begin{prop}[{L\"onne \cite{Lo1, Lo2}}] \label{prop:monodromy}
	Let $S \to \Sigma$ be an elliptic surface whose genus and degree satisfy $g=0$ and $d \geq 2$, or $g > 0$ and $d>0$. Let $L = H^2(S,\BZ) / \mathrm{Torsion}$. The subgroup of $O(L)$ generated by all monodromy operators of $S$ is 
	\[ O^{+}_{K_S}(L) = \{ g \in O(L) | g K_S = K_S \text{ and } g \text{ has real spinor norm } 1 \}. \]
\end{prop}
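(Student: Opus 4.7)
The result is attributed to L\"onne, so the proof should cite \cite{Lo1, Lo2} directly, but let me sketch the strategy one would follow.

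The plan is to exhibit a versal family in which $S$ sits as a fiber, and then study its monodromy representation on $L = H^2(S,\BZ)/\mathrm{Torsion}$. A natural choice is to parametrize $S$ by a Weierstra\ss\ model $y^2 = x^3 + g_2 x + g_3$, where $(\Sigma, \CL, g_2, g_3)$ varies, as in Example~\ref{example:elliptic surface with nonconstant j irreducible fibers}. Fixing $g$ and $d$, one obtains an irreducible base $B$ parametrizing all smooth such surfaces (the complement of the discriminant locus $\Delta = 4 g_2^3 + 27 g_3^2 \neq 0$), and hence a monodromy representation $\rho : \pi_1(B) \to O(L)$.

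The containment $\operatorname{Im}(\rho) \subseteq O^{+}_{K_S}(L)$ is the easy direction. Invariance of $K_S$ under monodromy is automatic: the canonical class is preserved as a flat section of the local system $R^2\pi_{\ast}\BZ$ over the family because it is a deformation invariant (equivalently, $K_S$ is $(2g-2+d)$ times the fiber class plus a correction from the relative dualizing sheaf, all of which are preserved by the existence of the elliptic fibration throughout the family). The spinor-norm condition is the standard ``real'' orientation constraint: monodromy of smooth proper families preserves the positive cone in $H^{1,1}$, which is exactly the content of having real spinor norm $+1$ (cf.\ Borcea for K3 surfaces; the same argument works lattice-theoretically in signature $(2p_g+1,\, h^{1,1}-1)$ relative to $K_S^{\perp}$).

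The reverse inclusion is the main obstacle and the technical heart of L\"onne's argument. Here the plan is to produce enough explicit monodromy operators and appeal to a generation result for the arithmetic orthogonal group. Concretely, one moves loops around irreducible components of the discriminant divisor in $B$. A generic loop around $\{\Delta = 0\}$ corresponds to acquiring and resolving a single nodal $I_1$ fiber, and by the Picard-Lefschetz formula the resulting monodromy is the reflection in a vanishing cycle $\delta \in L$ with $\delta^2 = -2$, satisfying $\delta \cdot K_S = 0$ and $\delta \cdot f = 0$. These classes lie in the sublattice $\langle f, K_S \rangle^{\perp}$ (which is the ``transcendental plus vanishing-cycle'' part of $L$), and varying the zero of $\Delta$ that the loop encircles produces a large orbit of vanishing cycles. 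The key geometric input, proved by explicit degenerations in \cite{Lo1, Lo2}, is that sufficiently many $(-2)$-vanishing classes are realized so that their reflections together with monodromies arising from reducible fiber collisions (e.g.\ $I_1 + I_1 \leadsto I_2$) generate $O^{+}_{K_S}(L)$.

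The final step is purely lattice-theoretic: one verifies that the group generated by the reflections $s_\delta$ produced above, combined with the evident $\pm 1$ operators acting on various isotropic summands, exhausts $O^{+}_{K_S}(L)$. This is where Eichler's criterion (or Nikulin's theorem on overlattices, depending on the parity) applies, using that the orthogonal complement $\langle K_S \rangle^{\perp} \subset L$ contains a hyperbolic plane $U$ (coming from the section and a fiber), which is the hypothesis needed to reduce generation of $O^{+}$ to generation by $(-2)$-reflections. The hypotheses $g=0, d \geq 2$ or $g>0, d>0$ enter exactly to ensure that the transcendental lattice is large enough for Eichler's criterion to apply and that the Weierstra\ss\ moduli space is irreducible.
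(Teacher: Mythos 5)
The paper offers no proof of this proposition: it is stated purely as a citation of L\"onne \cite{Lo1, Lo2}, exactly as you anticipate, so there is nothing internal to compare your sketch against. As a reconstruction of L\"onne's external argument your outline is broadly faithful — versal Weierstra\ss{} family, the easy containment via flatness of $K_S$ and preservation of the orientation of a maximal positive-definite subspace (which is the real spinor norm condition), and generation of $O^{+}_{K_S}(L)$ by Picard--Lefschetz reflections in $(-2)$-vanishing cycles combined with lattice-theoretic generation results. One point is stated imprecisely: a loop around a \emph{generic} point of $\{\Delta=0\}$ does not produce a vanishing cycle in $H^2(S)$, because an elliptic surface of degree $d>0$ already carries $12d$ nodal fibers and an $I_1$ fiber does not make the total space singular. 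The $(-2)$-vanishing cycles arise only from the deeper strata where the surface itself acquires a rational double point — e.g.\ two $I_1$ fibers colliding to an $I_2$ configuration, or an $I_1$ degenerating to a cusp — which you do mention in passing but should be the primary, not auxiliary, source of reflections. Also, L\"onne's generation step is organized around Ebeling's theory of vanishing lattices rather than Eichler's criterion per se, though the role of the hyperbolic plane $\langle \text{section}, \text{fiber}\rangle$ in $\langle K_S\rangle^{\perp}$ is as you describe. None of this affects the paper, which only uses the statement as a black box.
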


\begin{prop}[{Seiler, \cite[Thm.4.10]{FM}}]
\label{prop:Seiler}
Any two elliptic surfaces of the same genus and the same degree $d>0$ are deformation equivalent. 
\end{prop}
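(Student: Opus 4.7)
The plan is to realize all elliptic surfaces with given invariants $(g,d)$ as members of a single flat family over an irreducible parameter space of Weierstra\ss\ data.

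Fix $g$ and $d>0$, and consider the space $\mathcal{W}_{g,d}$ of quadruples $(\Sigma, \CL, g_2, g_3)$, where $\Sigma$ is a smooth projective genus-$g$ curve, $\CL \in \Pic^d(\Sigma)$, and $g_2 \in H^0(\Sigma, \CL^{\otimes 4})$, $g_3 \in H^0(\Sigma, \CL^{\otimes 6})$ are sections whose discriminant $\Delta = 4g_2^3 - 27 g_3^2 \in H^0(\Sigma, \CL^{\otimes 12})$ does not vanish identically. Since $\mathcal{M}_g$ is irreducible, the relative Picard stack $\mathcal{P}ic^d$ is connected over it, and the section data assemble into a vector bundle of positive rank (this uses $d>0$, so that $\CL^{\otimes 4}$ and $\CL^{\otimes 6}$ acquire sections and a generic choice has $\Delta \not\equiv 0$), the total space $\mathcal{W}_{g,d}$ is irreducible.

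Next I would build a smooth proper family $\mathcal{S} \to \mathcal{W}_{g,d}$ of elliptic surfaces. Each point of $\mathcal{W}_{g,d}$ determines a Weierstra\ss\ fibration $W \subset \BP(\CO_\Sigma \oplus \CL^{-2} \oplus \CL^{-3})$ cut out by $y^2 z = 4 x^3 - g_2 x z^2 - g_3 z^3$; its singularities lie over the zero scheme of $\Delta$ and are rational double points. The simultaneous (crepant) resolution in families yields a smooth proper family whose fibre is a relatively minimal elliptic surface of genus $g$ and degree $d$ equipped with its zero section. Conversely, any relatively minimal elliptic surface $\pi : S \to \Sigma$ with section and $d > 0$ admits a unique Weierstra\ss\ model, obtained by contracting all components of reducible fibres that do not meet the section; this identifies $S$ with the fibre of $\mathcal{S}$ over a point of $\mathcal{W}_{g,d}$. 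Since $\mathcal{W}_{g,d}$ is irreducible (hence path connected analytically), any two such $S$ are connected by a smooth family, which is the claimed deformation equivalence.

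The main obstacle is producing the simultaneous resolution of the universal Weierstra\ss\ family as a genuinely smooth family rather than one over a stratification: the Kodaira types of singular fibres jump along proper closed loci in $\mathcal{W}_{g,d}$, and the naive blow-ups along these loci need not be flat. Resolving this requires the classical Brieskorn-style simultaneous resolution of ADE singularities, applied in families after a finite base change that trivializes the monodromy on the exceptional configurations. This local analytic step is the technical heart of Seiler's argument; once it is available, irreducibility of the Weierstra\ss\ parameter space immediately gives the result.
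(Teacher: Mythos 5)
The paper does not prove this statement: Proposition~\ref{prop:Seiler} is quoted from the literature (Seiler's theorem, via \cite[Thm.~4.10]{FM}), so there is no internal proof to compare yours against. Your sketch is, in outline, the standard argument behind that citation: pass to Weierstra\ss{} models, parametrize them by an irreducible space of data $(\Sigma,\CL,g_2,g_3)$, and use simultaneous resolution to compare minimal resolutions across the family. Two steps, however, are stated in a form that does not literally hold and would need to be repaired.

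First, there is no single smooth proper family $\mathcal{S}\to\mathcal{W}_{g,d}$ of minimal resolutions, even after a finite base change: Brieskorn's simultaneous resolution is a statement about the pullback of the semiuniversal deformation of an ADE singularity along its Weyl cover, and these covers are defined locally near the strata where the fibre type degenerates; they do not glue to a global cover of $\mathcal{W}_{g,d}$ over which the resolutions form one family. The correct (and sufficient) statement is local: by Kas's theorem, a relatively minimal elliptic surface $S$ with section is deformation equivalent to the nearby \emph{smooth} Weierstra\ss{} fibration obtained by a generic small perturbation of $(g_2,g_3)$ --- one smooths the rational double points of the Weierstra\ss{} model over a disc and applies simultaneous resolution after a base change of that one-parameter family. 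Deformation equivalence being the equivalence relation generated by smooth proper families over connected bases, it then suffices that the \emph{open} locus of Weierstra\ss{} data with reduced discriminant (only $I_1$ fibres, where the Weierstra\ss{} model is already smooth and no resolution is needed) is irreducible. You should reorganize the argument around these two steps rather than a global resolved family. Second, your irreducibility claim for $\mathcal{W}_{g,d}$ assumes the spaces of sections of $\CL^{\otimes 4}$ and $\CL^{\otimes 6}$ form vector bundles over the Picard variety; this requires $4d>2g-2$, and for small $d$ relative to $g$ the ranks of $h^0$ jump, so the total space of section data is not a vector bundle and its irreducibility (and the connectedness of the complement of $\{\Delta\equiv 0\}$) needs a separate argument. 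Neither point is fatal --- both are handled in Seiler's proof --- but as written the deduction "irreducible parameter space $\Rightarrow$ one smooth family $\Rightarrow$ done" skips the actual content of the theorem.
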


\begin{cor}\label{cor:integral fibers}
Any elliptic surface is deformation equivalent to one with integral fibers.
\end{cor}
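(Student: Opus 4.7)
The plan is to combine Seiler's deformation-equivalence theorem (Proposition~\ref{prop:Seiler}) with the explicit Weierstra\ss{} construction of Example~\ref{example:elliptic surface with nonconstant j irreducible fibers}, after disposing of the $d=0$ case by hand. First I would observe that if $d=0$, then $\chi_{\mathrm{top}}(S)=12d=0$, and since every Kodaira singular fiber has strictly positive topological Euler characteristic except for the multiple smooth type $mI_0$, and since the standing section assumption rules out multiple fibers, $\pi$ must have no singular fibers at all. Every fiber is then a smooth elliptic curve, hence integral, and no deformation is needed.

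For $d\geq 1$ the plan is to produce an explicit target $S'\to\Sigma'$ with integral fibers and then invoke Seiler. Concretely, fix a hyperelliptic base $\Sigma'$ of genus $g:=g(\Sigma)$; such curves exist in every genus, since curves of genus $\leq 2$ are automatically hyperelliptic and for $g\geq 3$ the hyperelliptic locus in $\CM_g$ is nonempty. Example~\ref{example:elliptic surface with nonconstant j irreducible fibers} then yields, via a generic Weierstra\ss{} model over $\Sigma'$, an elliptic surface $S'\to\Sigma'$ of degree $d$ whose $j$-invariant is non-constant and all of whose fibers are irreducible. A Weierstra\ss{} model carries a tautological section, so $S'$ has no multiple fibers, and its irreducible singular fibers are forced to be of Kodaira type $I_1$ or $II$; in either case they are reduced, so all fibers of $S'$ are integral.

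Finally, Proposition~\ref{prop:Seiler} applies to $(S,S')$ since they share the same genus $g$ and the same degree $d\geq 1$, giving the desired deformation of $S$ to a surface with integral fibers. The one point worth double-checking is that Seiler's deformation equivalence is compatible with the section hypothesis built into our setup, so that $S'$ counts as a legitimate target in the sense of the paper. This is already implicit in the proof of \cite[Thm.~4.10]{FM}, where sections extend across the family, so I do not expect any serious obstacle here; the only mild subtlety is the non-hyperelliptic case in higher genus, which is handled precisely by using Seiler to move the base to a hyperelliptic curve before appealing to Example~\ref{example:elliptic surface with nonconstant j irreducible fibers}.
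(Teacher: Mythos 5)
Your proof is correct and follows essentially the same route as the paper: handle $d=0$ by noting all fibers are already smooth, and for $d>0$ combine Seiler's deformation equivalence (Proposition~\ref{prop:Seiler}) with the Weierstra\ss{} surfaces over a hyperelliptic base from Example~\ref{example:elliptic surface with nonconstant j irreducible fibers}. The extra details you supply (the Euler-characteristic argument for $d=0$ and the observation that the irreducible fibers of the Weierstra\ss{} model are reduced, hence integral) are accurate refinements of the same argument.
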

\begin{proof}
If $d = 0$, the surface already has integral fibers (indeed smooth fibers). If $d > 0$, Proposition~\ref{prop:Seiler} says that the surface is deformation equivalent to one of those constructed in Example~\ref{example:elliptic surface with nonconstant j irreducible fibers} (since there exist hyperelliptic curves of every genus $g$).
\end{proof}

We give two examples of degenerations of elliptic surfaces that we need later on:
\begin{example} \label{example: higher genus higher degree degeneration}
Let $\Sigma$ be a curve with a degree $2$ morphism to $\p^1$, and let $S \to \Sigma$ be one of the elliptic surfaces of degree $d > 0$ constructed in Example~\ref{example:elliptic surface with nonconstant j irreducible fibers}.
Then there exists a deformation
\begin{equation} \label{degeneration to constant component} S \rightsquigarrow S_1 \cup_E S_2 \end{equation}
where $S_1 = \Sigma \times E \to \Sigma$ is a trivial elliptic surface,
$S_2 \to \p^1$ is an elliptic surface of degree $d$, and $E$ is a joint smooth fiber. 

Indeed, let $x \in \Sigma$ be a Weierstra{\ss} point, let $\CL = \CO(dx)$ and let $S \to \Sigma$ be an elliptic surface contructed as a Weierstra{\ss} model for $(\Sigma,\CL, g_2, g_3)$ for suitable $g_2, g_3$. Let $\CC = \mathrm{Bl}_{(x,0)}(\Sigma \times \BA^1) \to \BA^1$ be the degeneration to the normal cone of $x \in C$. Then the line bundle associated to $d$ times the proper transform of $\{ x \} \times \BA^1 \subset \Sigma \times \BA^1$ gives a degeneration of $\CL$ which specializes to $\CC_0 = \Sigma \cup \p^1$ as a degree $d$ line bundle over $\p^1$ and as a constant line bundle over $\Sigma$. Taking an associated Weierstra{\ss} model for the pair $(\CC, \CL)$ then yields the claim.

\begin{lemma} The degeneration \eqref{degeneration to constant component} constructed above has no vanishing cohomology. \end{lemma}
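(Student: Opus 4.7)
The plan is to verify the absence of vanishing cohomology via a Mayer--Vietoris argument on the central fiber $S_0 = S_1 \cup_E S_2$. Since the construction produces a semistable degeneration of surfaces whose central fiber is a simple normal crossings divisor with exactly two smooth components meeting transversely along the smooth curve $E$, the weight spectral sequence (equivalently, the Clemens--Schmid exact sequence) reduces the task to a purely topological condition: the monodromy on $H^\ast(S_t, \BQ)$ is trivial and the degeneration has no vanishing cohomology if and only if the Mayer--Vietoris restriction
\[
r_k : H^k(S_1, \BQ) \oplus H^k(S_2, \BQ) \longrightarrow H^k(E, \BQ), \qquad (\alpha_1, \alpha_2) \mapsto \alpha_1|_E - \alpha_2|_E,
\]
is surjective in every degree $k$.

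First I would record this criterion carefully (it follows from the fact that for a two-component SNC central fiber, the weight spectral sequence has $E_1$-page concentrated in columns $p=0$ and $p=-1$, and the only possible obstruction to its $E_2 = E_\infty$ degeneration comes from the cokernel of $r_k$). Then I would check the criterion by observing that the restriction $H^k(S_1) \to H^k(E)$ is \emph{already} surjective on its own, without any input from $S_2$. Indeed, by construction $S_1 = \Sigma \times E$ and the gluing curve $E \subset S_1$ is the fiber $\{p_0\} \times E$ of the first projection over the Weierstra{\ss} point $p_0 \in \Sigma$. The pullback along the second projection $\pr_2 : \Sigma \times E \to E$ gives a section of the restriction $H^\ast(S_1) \to H^\ast(E)$, so the restriction is surjective in every degree.

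Combined with the criterion, this immediately implies that $r_k$ is surjective for all $k$, whence the degeneration carries trivial monodromy and there is no vanishing cohomology. As a consistency check one can verify via the short exact sequences
\[
0 \to H^k(S_0) \to H^k(S_1) \oplus H^k(S_2) \to H^k(E) \to 0
\]
produced by Mayer--Vietoris that the Betti numbers of $S_0$ agree with those predicted by the Clemens--Schmid sequence for the smooth general fiber $S$ under trivial monodromy.

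The main obstacle is simply selecting and stating the right formulation of the criterion (``no vanishing cohomology'' is used in several closely related senses in the literature on degenerations of surfaces), but once this is pinned down the verification reduces to the elementary product splitting $S_1 = \Sigma \times E$, so the argument is essentially a one-line observation about the second projection.
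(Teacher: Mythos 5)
Your argument is correct, but it takes a genuinely different route from the paper's. The paper computes the Betti numbers of the central fiber $S_0=S_1\cup_E S_2$ via Mayer--Vietoris ($b_0=1$, $b_1=2g$, $b_2=4g+12d-1$, $b_3=2g+2$) and then deduces surjectivity of $H^2(S_0)\to H^2(S)$ from the Clemens--Schmid sequence $0\to H^0(S_0)\to H_4(S_0)\to H^2(S_0)\to H^2(S)$ by a dimension count, treating the odd degrees separately by the geometric observation that $H^1(S)=\pi^*H^1(\Sigma)$ and $H^3(S)$ is the pushforward of $H^1(\Sigma)$ along the section. You instead isolate the general criterion that for a two-component semistable degeneration the specialization maps are surjective in all degrees if and only if the combined restriction $r_k:H^k(S_1)\oplus H^k(S_2)\to H^k(E)$ is surjective for all $k$, and then verify this in one stroke from the product structure: $E=\{p_0\}\times E\subset \Sigma\times E=S_1$, so $\pr_2^*$ splits $H^*(S_1)\to H^*(E)$. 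This is cleaner and handles all degrees uniformly; what the paper's version buys is that it avoids invoking the general criterion and records the Betti numbers of $S_0$ explicitly. One small point of precision: the weight spectral sequence always degenerates at $E_2$, so surjectivity of $r_k$ is not an obstruction to degeneration; rather, it is equivalent (by Poincar\'e duality applied to the Gysin differential $H^{m-1}(E)\to H^{m+1}(S_1)\oplus H^{m+1}(S_2)$) to the vanishing of $\mathrm{Gr}^W_{m+1}H^m_{\lim}$, which forces $N=0$ and hence, via Clemens--Schmid, the surjectivity of $H^m(S_0)\to H^m(S)$. You should state the criterion in that form (or cite it), but with that adjustment the proof is complete.
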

\begin{proof} 
Let $\CS \to \CC \to \BA^1$ be the total space of the degeneration.
We restrict the degeneration to a small disk $D \subset \BA^1$ around the origin such that $H^k(\CS) \cong H^k(S_0)$,
where $S_0 = S_1 \cup_E S_2$ is the special fiber.
We need to  show that $H^{\ast}(S_0) \cong H^{\ast}(\CS) \to H^{\ast}(S)$ is surjective, where the last morphism is induced by the restriction along a smooth fiber $S \hookrightarrow \CS$.
By the Mayer-Vietoris sequence one proves that $H^0(S_0) \cong \BQ$, $H^1(S_0) \cong H^1(\Sigma) = \BQ^{2g}$,
$H^2(S_0) \cong \BQ^{4g+12d-1}$ and $H^3(S_0) \cong \BQ^{2g+2}$.
Then the Clemens-Schmidt spectral sequence \cite{Morrison} gives
\[ 0 \to H^0(S_0) \to H_4(S_0) \to H^2(S_0) \cong H^2(\CS) \xrightarrow{\iota^{\ast}} H^2(S) \]
so the surjectivity of the last map follows by a simple dimension count.
For the other cases one can argue similarly, or alternatively observe that $H^1(S)$ is the pullback of $H^1(\Sigma)$ along $\pi$, and $H^3(S)$ is the pushforward of $H^1(\Sigma)$ along the section, which implies the claim immediately.
\end{proof}
\end{example}

\begin{example} \label{example:torsion L}
Let $\pi : S \to \Sigma$ be an elliptic surface of degree $0$, such that $\pi$ is not the trivial fibration.
We then claim that there exists an elliptic surface $S'\to \Sigma$ of some degree $d>0$, and a degeneration of elliptic surfaces
\[ S'\rightsquigarrow S \cup_E S_2 \]
where $S_2 \to \p^1$ is an elliptic surface of degree $d$.

Indeed, define the line bundle $\CL=(R^1 \pi_{\ast} \CO_S)^{\vee}$. By assumption $\CL$ is a torsion line bundle of order $m \in \{ 2, 3, 4, 6 \}$, see \cite[Prop.4.5]{FM}. Let $x \in \Sigma$ be a point, let $\CS = \mathrm{Bl}_{(x,0)}(\Sigma \times \BA^1) \to \BA^1$ be the degeneration to the normal cone of $x\in \Sigma$, let $p : \CS\to \Sigma$ be the projection, let $D$ be the proper transform of $\{ x \} \times \BA^1 \subset \Sigma \times \BA^1$ to $\CS$, and let $\widetilde{\CL} = p^{\ast}(\CL) \otimes \CO_{\CS}(d D)$. Then for $d \gg 0$, there exists sections $g_2 \in \Gamma( \CS, \widetilde{\CL}^{\otimes 4})$ and $g_3 \in \Gamma(\CS, \widetilde{\CL}^{\otimes 6})$ such that the Weierstra{\ss} model associated to $(\CS, \overline{\CL}, g_2, g_3)$ gives the desired degeneration.\footnote{We thank Nikolas Kuhn for pointing out this fact.}

Let $S_0 = S \cup_E S_2$ be the central fiber of the degeneration above.
After shrinking the base $\BA^1$,
by the Mayer-Vietoris sequence one sees that $H^k(\CS) \cong H^k(S_0) \to H^k(S)$ is surjective for all $k$.
\end{example}

\subsection{Gromov-Witten theory} \label{subsec:GW theory for elliptic surfaces}
Let $f \in H_2(S,\BZ)$ be the class of a fiber.
The moduli space $\Mbar_{g,n}(S,df)$ is of virtual dimension $g-1+n$.
Let $E \subset S$ be a smooth fiber of $\pi$, and let
\[ \iota : \Mbar_{g,n}(E,d) \to \Mbar_{g,n}(S,df) \]
be the induced inclusion.
We have the following description of the virtual class of $\Mbar_{g,n}(S,df)$:

\begin{prop} 
\label{prop:GW class elliptic surface}
For any $d \geq 1$ we have
in $H_{\ast}(\Mbar_{g,n}(S,df))$:
\[
[ \Mbar_{g,n}(S,df) ]^{\vir} = d_{\Sigma} \cdot \iota_{\ast}\left( [ \Mbar_{g,n}(E,d) ]^{\vir} (-1)^{g-1} \lambda_{g-1}\right).
\]
\end{prop}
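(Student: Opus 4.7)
The approach is to exploit the fibration structure of $\pi: S \to \Sigma$ to split the obstruction theory of $M := \Mbar_{g,n}(S, df)$ into a ``horizontal'' piece capturing the Gromov--Witten theory of a smooth fiber $E$, and a ``vertical'' piece detecting motion of the fiber inside $\Sigma$. Any stable map in class $df$ has image in a single fiber of $\pi$, giving a natural morphism $e: M \to \Sigma$ sending a stable map to its supporting fiber. By Corollary~\ref{cor:integral fibers} I would first deform $S$ so that all fibers are integral, which removes spurious components of $M$ coming from reducible fibers. I assume $g \geq 1$ throughout, since for $g = 0$ both sides vanish trivially (the right-hand side because $\lambda_{-1}$ makes no sense/is zero, and the left-hand side because there are no nonconstant maps from genus zero curves to an elliptic curve).

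The relative tangent sequence $0 \to T_{S/\Sigma} \to T_S \to \pi^\ast T_\Sigma \to 0$ (valid over the smooth locus of $\pi$), pulled back by the universal map $F: \CC \to S$ and pushed down along the universal curve $\pi_\CC: \CC \to M$, produces a distinguished triangle of obstruction complexes whose base piece computes to $R^1 (\pi_\CC)_\ast (F^\ast \pi^\ast T_\Sigma) = \BE^\vee \otimes e^\ast T_\Sigma$, where $\BE$ is the pullback of the Hodge bundle from $\Mbar_{g,n}$. Over the smooth locus $\Sigma^\circ \subset \Sigma$, $F^\ast T_{S/\Sigma}$ becomes the trivial line bundle on $\CC$ (since $T_E = \CO_E$ for a smooth elliptic fiber), so the obstruction theory splits and
\[
\big[\, M|_{e^{-1}(\Sigma^\circ)} \,\big]^{\vir} = e\big(\BE^\vee \otimes e^\ast T_\Sigma\big) \cap \big[\, \Mbar_{g,n}(E/\Sigma^\circ, d) \,\big]^{\vir},
\]
where the right-hand class is a relative virtual class whose restriction to a point $p \in \Sigma^\circ$ is $[\Mbar_{g,n}(E_p, d)]^{\vir}$. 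Because $\Sigma$ is a curve, $c_1(T_\Sigma)^2 = 0$, so $e(\BE^\vee \otimes e^\ast T_\Sigma) = c_g(\BE^\vee) + c_{g-1}(\BE^\vee) \cdot e^\ast c_1(T_\Sigma)$; by Lemma~\ref{lemma:basic facts} the top term kills $[\Mbar_{g,n}(E,d)]^{\vir}$, leaving only the cross term $(-1)^{g-1}\lambda_{g-1} \cdot e^\ast c_1(T_\Sigma)$.

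The remaining step is to promote the naive integration factor $\int_\Sigma c_1(T_\Sigma) = 2-2g$ to the correct coefficient $d_\Sigma = 2-2g-d$, which requires accounting for singular fibers. I would do this via cosection localization \cite{KL} applied to a generic holomorphic two-form $\theta \in H^0(S, K_S) = H^0(\Sigma, K_\Sigma \otimes L)$, where $L = (R^1\pi_\ast \CO_S)^\vee$ has degree $d$ on $\Sigma$. Contracting $T_S \to \Omega_S$ by $\theta$ produces a Kiem--Li cosection of the obstruction sheaf whose zero locus is $e^{-1}(Z(\theta))$, a disjoint union of $\deg(K_\Sigma \otimes L) = -d_\Sigma$ smooth fibers (for generic $\theta$). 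Cosection localization confines $[M]^{\vir}$ to this preimage; the local analysis above — where $\sigma$ cuts out a rank-one quotient of the $\BE^\vee$-obstruction direction, leaving an effective Euler class of rank $g-1$ — produces one copy of $\iota_\ast\big((-1)^{g-1}\lambda_{g-1}\,[\Mbar_{g,n}(E,d)]^{\vir}\big)$ per zero of $\theta$, with the sign in the Kiem--Li formalism flipping $-d_\Sigma$ to the required $d_\Sigma$.

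The main obstacle is this last cosection step: explicitly computing the local contribution at each vanishing fiber of $\theta$ and tracking signs through the Kiem--Li formalism. The surfaces with $p_g(S) = 0$ (essentially rational elliptic surfaces and $\p^1 \times E$) fall outside the scope of this argument and must be handled separately --- either by direct calculation exploiting the product structure (which yields $d_\Sigma = 2$ for $\p^1 \times E$ immediately via the K\"unneth formula together with the expansion $e(\BE^\vee \otimes T_{\p^1}) = (-1)^g\lambda_g + 2(-1)^{g-1}\lambda_{g-1}\,[\mathrm{pt}]$) or by degenerating to a surface with $p_g > 0$ using Example~\ref{example:torsion L} together with the relative degeneration formula in Gromov--Witten theory.
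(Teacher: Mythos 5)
Your overall architecture matches the paper's: localize the virtual class to finitely many smooth fibers via a cosection built from the canonical bundle, identify each local contribution with $(-1)^{g-1}\lambda_{g-1}\cap[\Mbar_{g,n}(E,d)]^{\vir}$, and count fibers with signs to produce $d_\Sigma$. The splitting of the obstruction theory over the smooth locus and the expansion of $e(\BE^\vee\otimes e^\ast T_\Sigma)$ is good motivation for why $\lambda_{g-1}$ appears. But there are two genuine gaps.

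First, the step you yourself flag as ``the main obstacle'' is in fact the entire content of the proof, and you do not supply it: you assert that the Kiem--Li localized class at each degeneracy fiber equals $\pm(-1)^{g-1}\lambda_{g-1}\cap[\Mbar_{g,n}(E,d)]^{\vir}$ with the sign arranged so that $-d_\Sigma$ becomes $d_\Sigma$, but give no argument. The paper proves this by a universality trick: the localized contribution $\alpha_p^{\pm}$ depends only on whether the fiber is a zero or a pole of the cosection, so it can be evaluated on $S=\p^1\times E$, where the full virtual class is known by $\BC^{\ast}$-localization; multiplying the cosection by a rational function with one simple zero and one simple pole forces $\alpha^-=-\alpha^+$, and then $-2\alpha^-=[\Mbar_{g,n}(S,df)]^{\vir}$ pins down both classes and signs. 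Without this (or a direct appeal to \cite{KL_low_degree} together with a separate treatment of the pole case), your proof is incomplete at its crux.

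Second, your cosection comes from a holomorphic $2$-form, which forces $p_g(S)>0$ and even then requires the zero divisor of the induced section of $K_\Sigma\otimes L$ to be reduced and supported over smooth fibers; this already fails for $g(\Sigma)=1$, $d=1$, where $h^0(K_\Sigma\otimes L)=1$ and the unique zero cannot be moved. Your fallback for $p_g=0$ --- degeneration --- only controls numerical invariants, not the identity of classes in $H_{\ast}(\Mbar_{g,n}(S,df))$ that the proposition asserts (the paper explicitly remarks that degeneration yields only the numerical statement). The paper sidesteps both issues by using a \emph{rational} section of $\omega_S$ pulled back from the base, which exists for every elliptic surface and whose simple zeros and poles can be placed at will over smooth fibers; the count is then $b-a=-K_S\cdot\Sigma=d_\Sigma$ uniformly. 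A small additional point: for $g=0$ the moduli space $\Mbar_{0,n}(S,df)$ need not be empty (singular fibers admit genus-$0$ stable maps even after arranging integral fibers); the correct reason for the vanishing is that the virtual dimension of $\Mbar_{0}(S,df)$ is negative.
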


\begin{proof}
For $g=0$, the virtual dimension of $\Mbar_{0}(S,df)$ is negative, so the virtual class vanishes. But $[\Mbar_{0,n}(S,df)]^{\vir}$ is the pullback of the virtual class of $\Mbar_{0}(S,df)$ by the morphism forgetting the points, so the latter one also vanishes. Hence the claim holds for $g=0$ and we may assume $g \geq 1$.

Consider first the case $S = \p^1 \times E$ which is elliptically fibered by the projection $\pi : S \to \p^1$.
Let $\BC^{\ast}_t$ be the torus which acts on $\p^1$ with $\BC^{\ast}$-weight $t$ at the tangent space of $0 \in \p^1$.
The localization formula expresses the equivariant virtual class as follows:
\begin{align*}
[ \Mbar_{g,n}(S,df) ]^{\vir}_{\BC^{\ast}}
& = \iota_{E_0, \ast} [ \Mbar_{g,n}(E,d) ]^{\vir} \frac{ (-1)^g \lambda_g + \ldots + t^g }{t} \\
& + \iota_{E_{\infty}, \ast} [ \Mbar_{g,n}(E,d) ]^{\vir} \frac{ (-1)^g \lambda_g + \ldots + (-t)^g }{-t}.
\end{align*}
Taking the non-equivariant limit $t \to 0$ implies our claim.

In the general case we argue by cosection localization.
Let $\theta$ be a rational section of $\omega_S$ such that
$\theta$ has zeros and poles of order $1$ along smooth distinct fibers of $\pi$.
The construction of such a section is simple: If $\omega_S = \pi^{\ast}(\omega_C \otimes \CL)$, then take a rational section of $\omega_C \otimes \CL$ with zeros and poles of order $1$ at points where $\pi$ is smooth, and then pull it back to $S$.
Let $p_1, \ldots, p_r \in \Sigma$ be the base points of the smooth fibers along which $\theta$ has zeros and poles. Write $E_p:= \pi^{-1}(p)$ for $p \in \Sigma$.

As explaind in \cite[Sec.6]{KL} (but see also \cite[Sec.3]{LiLi} where only a rational $2$-form is used) $\theta$ induces a rational cosection
of the obstruction sheaf on $\Mbar_{g,n}(S,df)$,
\[ \eta_{\theta} : \mathrm{Ob}_{\Mbar_{g,n}(S,df)} \to \CO_{\Mbar_{g,n}(S,df)} \]
The degeneracy locus of $\eta_{\theta}$ is by definition
\[ \Deg(\eta_{\theta}) = \{ (f,p_1,\ldots,p_n) \in \Mbar_{g,n}(S,df) \ |\ \text{either } \eta_{\theta} \text{ is not defined or is not surjective at } f \}. \]
Let $\pi_M : \Mbar_{g,n}(S,df) \to \Sigma$ be the map induced by the elliptic fibration $S \to \Sigma$. By \cite[Prop.6.4]{KL} 
the degeneracy locus $\Deg(\eta_{\theta})$ is contained in the closed subset
\[ \bigsqcup_{i=1}^{r} \pi_M^{-1}(p_i)
= \bigsqcup_{i=1}^{r} \iota_{E_{p_i}}( \Mbar_{g,n}(E_{p_i}, d) ). \]

Hence by cosection localization \cite[Theorem 1.1]{KL} there exists $\alpha_{p_i} \in \mathrm{H}_{\ast}(\Mbar_{g,n}(E_{p_i}, d))$ such that
\begin{equation} \label{22442} 
[ \Mbar_{g,n}(S,df) ]^{\vir} = 
\sum_{i=1}^{r} \iota_{E_{p_i} \ast}(\alpha_{p_i}).
\end{equation}

\vspace{7pt}
\noindent \textbf{Claim:}
\[ \alpha_{p_i} =
\begin{cases}
+ [ ( \Mbar_{g,n}(E_{p_i}, d) ]^{\vir} (-1)^{g-1} \lambda_{g-1} & \text{ if } \theta \text{ has a pole along } E_{p_i} \\
-[ ( \Mbar_{g,n}(E_{p_i}, d) ]^{\vir} (-1)^{g-1} \lambda_{g-1} & \text{ if } \theta \text{ has a zero along } E_{p_i}.
\end{cases} \]

\vspace{7pt}
Before proving the claim, let us show that it implies Proposition~\ref{prop:GW class elliptic surface}.
Namely for any $p \in \Sigma$ with $E_p$ smooth the class
\[ \iota_{E_p \ast}[ \Mbar_{g,n}(E_p,d) ]^{\vir} \in H_{\ast}(\Mbar_{g,n}(S,df)) \]
is independent of $p$ (for any two such $p,q \in \Sigma$, the $E_p, E_q$ lie in a smooth proper family and hence this follows from the properties of the virtual class).
Inserting the claim into \eqref{22442} gives thus
\[
[ \Mbar_{g,n}(S,df) ]^{\vir} = 
(b-a) \iota_{E \ast} [ \Mbar_{g,n}(E,d)]^{\vir} (-1)^{g-1} \lambda_{g-1},
\]
where $a,b$ are the number of fibers $E_{p_i}$ which are zeros and poles of $\theta$.
Since $a-b=K_S \cdot \Sigma=-d_{\Sigma}$ this gives the proposition.

We now prove the claim.
For the case where $\theta$ has a zero, this follows immediately from \cite[Theorem 1.1]{KL_low_degree}. However, we may argue both cases directly as follows.
By construction (compare \cite[Proof of Lemma 4.3]{LiLi})
the localized virtual cycle $\alpha_{p}$ is universal, in the sense 
that it depends on $S$ and $\theta$ only through whether $E_{p}$ is a zero or pole of $\theta$. 
We write $\alpha_{p}^{+}$ in the case of a pole, and $\alpha_{p}^{-}$ in the case of a zero. 
In particular, we can choose any surface to evaluate $\alpha_p^{\pm}$.
Denote $E := E_p$ and $\alpha^{\pm} := \alpha_p^{\pm}$. Let us consider $S = \p^1 \times E$. Since $\Mbar_{g,n}(\p^1 \times E, df) = \Mbar_{g,n}(E,d) \times \p^1$, we have the inclusion
\[ H_{\ast}(\Mbar_{g,n}(E,d)) \subset H_{\ast}( \p^1 \times E, df). \]
Thus inside $H_{\ast}(\Mbar_{g,n}(S,df))$ the equality \eqref{22442} together with the universality becomes
\[ [ \Mbar_{g,n}(S,df) ]^{\vir} = a \alpha^{-} + b \alpha^{+} \]
for any rational cosection $\theta$ with simple poles. 
By multiplying $\theta$ by a rational function with a simple zero and pole, and using that the result has to stay invariant, we get $\alpha^{-} = - \alpha^{+}$. Using $a-b=-2$ and our computation using the localization formula then proves the claim by:
\[ -2 \alpha^{-} = (a-b) \alpha^{-} = [ \Mbar_{g,n}(S, df) ]^{\vir} = 2 [ \Mbar_{g,n}(E, df) ]^{\vir} (-1)^{g-1} \lambda_{g-1}. \qedhere \]
\end{proof}

\begin{rmk}
On the level of Gromov-Witten invariants (i.e. numerically),
Proposition~\ref{prop:GW class elliptic surface} can be proven also by a degeneration argument parallel to the proof of Theorem~\ref{thm:GWPT correspondence} below.
\end{rmk}


\begin{prop} \label{prop:elliptic surface genus 0} For $2g-2+n > 0$ we have
\[
[ \Mbar_{g,n}(S,0)]^{\vir} =
\begin{cases}
[ \Mbar_{0,n} \times S] & \text{ if } g=0 \\
[ \Mbar_{1,n} \times S ] (c_2(S) - \lambda_1 c_1(S)) & \text{ if } g=1 \\
[ \Mbar_{g,n} \times S ] (- c_1(S) \lambda_g \lambda_{g-1} ) & \text{ if } g \geq 2.
\end{cases}
\]
\end{prop}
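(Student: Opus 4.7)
Since every degree-zero stable map $f : C \to S$ is constant, the moduli space $\Mbar_{g,n}(S,0)$ is canonically identified with $\Mbar_{g,n} \times S$ (the image point gives the $S$-factor), and the standard perfect obstruction theory produces the obstruction bundle $\mathrm{Ob} = \BE^{\vee} \boxtimes T_S$ of rank $2g$. Hence
\[
[ \Mbar_{g,n}(S,0) ]^{\vir} = c_{2g}(\BE^{\vee} \boxtimes T_S) \cap [\Mbar_{g,n} \times S].
\]
The case $g=0$ is then trivial (the obstruction bundle has rank $0$). For $g=1$ the Hodge bundle is a line bundle with $c_1(\BE^{\vee}) = -\lambda_1$, so
\[
c_2(\BE^{\vee} \otimes T_S) = c_2(S) - \lambda_1 c_1(S) + \lambda_1^2,
\]
and Mumford's relation $c(\BE)c(\BE^{\vee})=1$ gives $\lambda_1^2 = 0$ on $\Mbar_{1,n}$, producing the desired formula.

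For $g \geq 2$ I would expand using the Chern roots $r_1,\ldots,r_g$ of $\BE$:
\[
c_{2g}(\BE^{\vee} \boxtimes T_S) = \prod_{i=1}^{g} \bigl( r_i^2 - r_i c_1(S) + c_2(S) \bigr).
\]
Three observations prune this expansion down to a single surviving term. First, dimension on $S$ forces $c_1(S) c_2(S) = c_2(S)^2 = 0$ and $c_1(S)^k = 0$ for $k \geq 3$, so only terms with at most one factor of $c_2(S)$ and at most two factors of $c_1(S)$ (and never both) can survive. Second, since $S \to \Sigma$ is elliptic, $K_S$ is pulled back from $\Sigma$, so $c_1(S)^2 = K_S^2 = 0$; this kills every term with two $c_1(S)$ factors. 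Third, writing
\[
\prod_{i=1}^{g} (1 - r_i^2 t^2) = \prod_{i=1}^{g}(1-r_i t)(1+r_i t) = c(\BE)(t) \, c(\BE^{\vee})(t) = 1,
\]
by Mumford, and comparing coefficients of $t^{2m}$ gives $e_m(r_1^2, \ldots, r_g^2) = 0$ for all $m \geq 1$ on $\Mbar_{g,n}$; this kills the $c_2(S)$-coefficient, which is $c_2(S) \, e_{g-1}(r_1^2,\ldots,r_g^2)$. After these cancellations only the $c_1(S)$-linear terms in the $r_i^2$'s remain:
\[
c_{2g}(\BE^{\vee} \boxtimes T_S) = -c_1(S) \sum_{i=1}^{g} r_i \prod_{j \neq i} r_j^2 = -c_1(S) \, \lambda_g \sum_{i=1}^{g} \prod_{j \neq i} r_j = -c_1(S) \, \lambda_g \lambda_{g-1},
\]
as claimed.

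None of the individual steps is deep: the set-up is standard constant-map GW theory, and all three vanishings (dimension of $S$, Mumford's relation, and $K_S^2 = 0$ for elliptic surfaces) are elementary. The main conceptual point to keep straight is that the clean form of the answer for $g \geq 2$ really uses the elliptic assumption via $K_S^2 = 0$; without it one would have to carry the extra term $c_1(S)^2 \sum_{i<j} r_i r_j \prod_{k \neq i,j} r_k^2$. I expect this is the only place an otherwise mechanical calculation could trip someone up.
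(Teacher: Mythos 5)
Your proof is correct and is exactly the standard constant-map obstruction-bundle computation that the paper dismisses as ``Immediate'' with a citation to Getzler--Pandharipande; in particular you correctly identify that the $g\ge 2$ case needs $K_S^2=0$ (from $K_S$ being pulled back from the base) to kill the extra term $c_1(S)^2\lambda_g\lambda_{g-2}=\tfrac12 c_1(S)^2\lambda_{g-1}^2$, which is the only non-routine point.
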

\begin{proof}
Immediate, see for example \cite{Getzler-Pandharipande}.
\end{proof}
\begin{prop} \label{prop:lambda g-1 invariants}
Let $\sigma(d) = \sum_{k|d} k$.
For all $d>0$ or $g \geq 2$ we have
\[
\int_{[ \Mbar_{g}(S,df) ]^{\vir}} \lambda_{g-1} = \delta_{g1} d_{\Sigma} \frac{\sigma(d)}{d}.
\]
\end{prop}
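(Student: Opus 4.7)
I would split the proof into cases based on $g$ and $d$, applying either Proposition~\ref{prop:GW class elliptic surface} (when $d \geq 1$) or Proposition~\ref{prop:elliptic surface genus 0} (when $d = 0$, $g \geq 2$) to replace $[\Mbar_g(S,df)]^{\vir}$ by an explicit formula, and then using Mumford's relation together with the $\lambda_g$-vanishing to evaluate the integral.

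For $g = 0$, $d \geq 1$: the virtual dimension of $\Mbar_0(S,df)$ is $-1$, so the virtual class vanishes. Both sides are zero since $\delta_{g1} = 0$.

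For $g \geq 2$, $d \geq 1$: Proposition~\ref{prop:GW class elliptic surface} gives
\[
\int_{[\Mbar_g(S,df)]^{\vir}} \lambda_{g-1} \,=\, d_\Sigma\,(-1)^{g-1}\int_{[\Mbar_g(E,d)]^{\vir}} \lambda_{g-1}^2.
\]
Expanding Mumford's relation $c(\BE)c(\BE^\vee)=1$ in degree $2g-2$ yields $\lambda_{g-1}^2 = 2\lambda_g\lambda_{g-2}$ in $H^*(\Mbar_g)$. By Lemma~\ref{lemma:basic facts} applied with no insertions, $\CC_g()\cdot \lambda_g = 0$ in $H^*(\Mbar_g)\otimes\BQ[[q]]$, and the projection formula then forces $\int_{[\Mbar_g(E,d)]^{\vir}} \lambda_g\lambda_{g-2} = 0$. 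Hence the integral vanishes, matching $\delta_{g1}=0$. For $g \geq 2$, $d = 0$ the argument is essentially the same: Proposition~\ref{prop:elliptic surface genus 0} represents $[\Mbar_g(S,0)]^{\vir}$ as $-c_1(S)\lambda_g\lambda_{g-1}$ capped on $\Mbar_g\times S$, and the integrand $c_1(S)\lambda_g\lambda_{g-1}^2 = 2c_1(S)\lambda_g^2\lambda_{g-2}$ vanishes by the Mumford relation $\lambda_g^2 = 0$.

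The remaining case is $g=1$, $d\geq 1$. Applying Proposition~\ref{prop:GW class elliptic surface} with $\lambda_0 = 1$ reduces the claim to the identity
\[
\int_{[\Mbar_1(E,d)]^{\vir}} 1 \,=\, \frac{\sigma(d)}{d}.
\]
This is the classical count of degree-$d$ isogenies of elliptic curves, weighted by automorphisms: $\sigma(d)$ counts sublattices of $\BZ^2$ of index $d$, and each contributes $1/d$ after accounting for translation automorphisms along the kernel. I would cite this as a standard evaluation of the genus-one partition function of an elliptic curve (following, e.g., Dijkgraaf or Okounkov-Pandharipande, where $\sum_{d\geq 1} q^d\,N_{1,d}^E = -\log\eta(q) + \tfrac{1}{24}\log q$ up to a constant).

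The main obstacle is the $g=1$, $d\geq 1$ case: the higher-genus cases are handled uniformly using the Mumford relation and the $\lambda_g$-vanishing Lemma~\ref{lemma:basic facts}, which already live in the paper, whereas the genus-one integral requires importing the classical count $\sigma(d)/d$ of isogenies as external input. Everything else in the argument is formal.
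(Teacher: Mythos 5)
Your proposal is correct and follows essentially the same route as the paper: apply Proposition~\ref{prop:GW class elliptic surface}, kill the $g\geq 2$ cases via Mumford's relation $\lambda_{g-1}^2 = 2\lambda_g\lambda_{g-2}$ together with the $\lambda_g$-vanishing of Lemma~\ref{lemma:basic facts}, and reduce the $g=1$, $d\geq 1$ case to the classical evaluation $\langle 1\rangle^E_{1,d}=\sigma(d)/d$. Your explicit treatment of the boundary cases ($g=0$ and $g\geq 2$, $d=0$ via Proposition~\ref{prop:elliptic surface genus 0}) is slightly more detailed than the paper's two-line argument, but the substance is identical.
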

\begin{proof}
We apply Proposition~\ref{prop:GW class elliptic surface}.
In case $g>1$ we use the tautological relation $\lambda_{g-1}^2 = 2 \lambda_{g-2} \lambda_g$ (which is a special case of Mumford's relation)
and conclude the vanishing from Lemma~\ref{lemma:basic facts}.
In case $g=1$, we use the well-known computation $\langle 1 \rangle^E_{1,d} = \sigma(d)/d$, see \cite{Pixton_Senior}.
\end{proof}

\subsection{Vanishing results}
\begin{defn}
	A class $\beta \in H_2(S,\BZ)$ is called {\it vertical} if $\pi_{\ast} \beta = 0$.
\end{defn}
We collect two results on the vanishing of curve-counting invariants in vertical curve classes that are not multiples of the fiber class $f$.

Let $C$ be a smooth curve, or alternatively let $\CC_{h,N} \to \CM_{h,N}$ be the universal curve over the moduli space of semistable curves. Let $z=(z_1, \ldots, z_N) \in C^N$ be a tuple of distinct points or the tuple of sections, respectively.

\begin{prop} \label{prop:PT vanishing vertical classes not fiber}
	If $\beta \in H_2(S,\BZ)$ is vertical but not a multiple of $f$, then
	all GW or PT invariants of the relative geometries
	$(S \times C, S_{z})$ and $(S \times \CC_{h,N}, S_z)$ 
	in the curve classes $(\beta,n)$ vanish.
\end{prop}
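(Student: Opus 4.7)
The plan is to reduce the statement to the case of an elliptic surface all of whose fibers are integral, where the desired vanishing becomes obvious.

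First I would invoke Corollary~\ref{cor:integral fibers} to produce a smooth deformation $\pi_{\CS} : \CS \to \CT$ of elliptic surfaces over a connected base $\CT$ such that $S$ appears as one fiber of $\CS$ and some elliptic surface $S'$ with only integral fibers appears as another fiber. The deformation can be taken over a contractible base, so the cohomology classes $f, \beta \in H_2(S,\BZ)$ extend to flat sections of $R^2(\pi_{\CS})_{\ast} \BZ$, and under this identification the class $\beta$ remains vertical and non-proportional to $f$ in $S'$. Crossing with a curve $C$ (or with the family $\CC_{h,N} \to \CM_{h,N}$) and keeping the sections $z=(z_1,\ldots,z_N)$ fixed gives a corresponding deformation of the relative geometry $(\CS \times C, \CS_z)$ (respectively of the family over $\CM_{h,N}$), to which standard deformation invariance of the virtual class for relative stable maps and relative stable pairs applies. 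Hence it suffices to prove the vanishing after replacing $S$ by $S'$.

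Now I would argue directly that on $S' \times C$ (or $S' \times \CC_{h,N}$) no curve of class $(\beta,n)$ exists. If $D \subset S'$ is any effective curve with $\pi_{\ast}[D]=0$, then $\pi(D)$ is a finite set and $D$ is supported on a union of fibers of $\pi$; because every fiber is integral, $[D]$ is a non-negative integer multiple of $f$. For the GW moduli space, any stable map $g : \Gamma \to S' \times C$ projects to a stable map $\pr_{S'} \circ g : \Gamma \to S'$ whose image class is $\beta$; by the above $\beta$ cannot be effective unless it is a multiple of $f$, so the moduli space is empty. The same argument applies to PT theory: the scheme-theoretic support of the sheaf $F$ underlying a stable pair is a Cohen–Macaulay curve on $S' \times C$ whose pushforward to $S'$ has class $\beta$, so again the moduli space is empty. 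The universal-curve case is identical, working fiberwise over $\CM_{h,N}$.

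The only delicate point is the deformation step: one must check that the relative virtual class is invariant under a deformation of the absolute geometry that keeps the relative divisor transverse. This is standard once one observes that the divisors $S'_{z_i}$ extend flatly across $\CT$ because they come from the fixed sections $z_i$, so the relative pair $(\CS \times C, \CS_z) \to \CT$ is a family of relative geometries in the sense of Li and of Li–Wu, and the usual deformation invariance of the relative virtual class applies. I expect no further obstacles.
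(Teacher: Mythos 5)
Your argument is correct and is essentially the paper's own proof: deform to a surface with integral fibers via Corollary~\ref{cor:integral fibers}, observe that the vertical class $\beta$ (not a multiple of $f$) is then non-effective so the moduli spaces are empty, and conclude by deformation invariance. The paper states this in two lines; your version just spells out the details of the relative deformation invariance and the effectivity argument.
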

\begin{proof}
By Corollary~\ref{cor:integral fibers} $S$ is deformation equivalent to an elliptic surface with only integral fibers. Hence after deformation $\beta$ is not effective and the invariants vanish by deformation invariance.
\end{proof}

Next we consider Gromov-Witten invariants of the Hilbert scheme of points on $S$.

\begin{prop} \label{prop:vanishing Hilbert scheme}
If $\beta \in H_2(S,\BZ)$ is vertical but not a multiple of $f$,
then all Gromov-Witten invariants $\langle \lambda_1, \ldots, \lambda_N\rangle^{S^{[n]}}_{h,\beta+kA}$ vanish.
\end{prop}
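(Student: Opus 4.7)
The approach follows the same pattern as the proof of Proposition~\ref{prop:PT vanishing vertical classes not fiber}. First I would invoke Corollary~\ref{cor:integral fibers} to find a smooth proper deformation of $S$, through elliptic surfaces, to an elliptic surface $S' \to \Sigma'$ with only integral fibers. This induces a smooth proper family of Hilbert schemes $\CS^{[n]} \to T$, and parallel transport along $T$ preserves the topological decomposition $\widetilde{H}_2(\CS^{[n]}_t, \BZ) \cong H_2(\CS_t, \BZ) \oplus \BZ A$, so the class $\beta + kA$ extends to a locally constant section over $T$. The insertions $\lambda_i$ also extend as flat sections (up to monodromy, which does not affect the integrals). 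By deformation invariance of Gromov-Witten invariants it therefore suffices to prove the vanishing on $(S')^{[n]}$.

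Next I would argue that $\Mbar_{h,N}((S')^{[n]}, \beta + kA)$ is empty. Given a stable map $\varphi : C \to (S')^{[n]}$ with $\varphi_\ast [C] = \beta + kA$ in $\widetilde{H}_2$, the pulled-back universal subscheme $Z = (\varphi \times \id)^{\ast}\CZ \subset C \times S'$ is flat of degree $n$ over $C$, so the cycle class $(\pi_{S'})_\ast [Z] \in H_2(S', \BZ)$ is effective. Unpacking the definition $D(\gamma) = (\pi_{(S')^{[n]}})_\ast(\ch_2(\CO_{\CZ}) \cdot \pi_{S'}^\ast \gamma)$ together with the projection formula and the standard pairing $\langle D(\gamma), \beta_n + kA \rangle = \int_{S'} \gamma \cdot \beta$ in the Nakajima basis gives, for any $\gamma \in H^2(S')$,
\[
\int_{S'} \gamma \cdot (\pi_{S'})_\ast [Z] = \langle D(\gamma), \varphi_\ast[C]\rangle = \int_{S'} \gamma \cdot \beta,
\]
so $(\pi_{S'})_\ast [Z] = \beta$ in $H_2(S', \BZ)$ by Poincar\'e duality.

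Finally, since $S'$ has only integral fibers, every effective vertical class on $S'$ is a non-negative multiple of $f$. As $\beta$ is vertical, nonzero, and not a multiple of $f$, it is not effective on $S'$, contradicting the effectivity of $(\pi_{S'})_\ast[Z] = \beta$. Hence the moduli space is empty and all invariants in class $\beta + kA$ vanish on $(S')^{[n]}$, and then also on $S^{[n]}$ by deformation invariance. The main technical point is the identification $(\pi_{S'})_\ast[Z] = \beta$; once the definition of $D(\gamma)$ via $\ch_2(\CO_{\CZ})$ is unpacked, this reduces to a routine projection-formula computation, and the rest of the argument is essentially the same two-sentence deformation step used in Proposition~\ref{prop:PT vanishing vertical classes not fiber}.
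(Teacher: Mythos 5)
Your proposal is correct and follows essentially the same route as the paper: pull back the universal subscheme along a stable map to see that effectivity of $\beta+kA$ forces $\beta$ to be effective on $S$, then use Corollary~\ref{cor:integral fibers} and deformation invariance to pass to an elliptic surface with integral fibers, where a vertical class that is not a multiple of $f$ cannot be effective. The only difference is cosmetic: you verify $(\pi_{S'})_\ast[Z]=\beta$ by pairing against divisors $D(\gamma)$, where the paper cites \cite[Lemma 1]{HilbK3} for the same fact.
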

\begin{proof}
Let $\CZ \subset S^{[n]} \times S \to S^{[n]}$ be the universal subscheme.
Let $f : C \to S^{[n]}$ be a stable map on class $\beta + kA$.
The fiber product $\CZ \times_{S^{[n]}} C$ is a closed subscheme of $C \times S$
whose projection to $S$ is a curve of class $\beta$, see \cite[Lemma 1]{HilbK3}.
Hence if $\beta+kA$ is effective, then so is $\beta$.
However, arguing as in Proposition~\ref{prop:PT vanishing vertical classes not fiber} we see that $\beta$ is not effective after a deformation of $S$.
\end{proof}

\subsection{GW/PT correspondence}

Let $\lambda_1, \ldots, \lambda_N$ be $H^{\ast}(S)$-weighted partitions,
let $\gamma_i \in H^{\ast}(S)$ and let $k_i \geq 0$.
Let $\omega \in H^2(C)$ denote the class of a point.

\begin{thm} \label{thm:GWPT correspondence}
Let $d \geq 0$. 
The series
	$Z^{(S \times C, S_z)}_{\PT, (df,n)}\left( \lambda_1, \ldots, \lambda_N | {\textstyle \prod_{i=1}^{r} \ch_{k_i}(\omega \gamma_i)} \right)$
	is the expansion of a rational function in $p$ and under the variable change $p=e^{z}$ we have
	\[
	Z^{(S \times C, S_z)}_{\PT, (df,n)}\left( \lambda_1, \ldots, \lambda_N \middle| {\textstyle \prod_{i=1}^{r} \ch_{k_i}(\omega \gamma_i)} \right) 
	=
	Z^{(S \times C, S_z)}_{\GW, (df,n)}\left( \lambda_1, \ldots, \lambda_N \middle| \overline{ {\textstyle \prod_{i=1}^{r} \ch_{k_i}(\omega \gamma_i)}} \right),
	\]
\end{thm}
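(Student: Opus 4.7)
The plan is to prove the correspondence by a sequence of degenerations, reducing it to cases where it is already known. The key tool on the curve side is the $\omega$-factor in each descendent $\ch_{k_i}(\omega\gamma_i)$, which lets us bubble off descendents onto dedicated $\p^1$-components of $C$; the key tool on the surface side is the degeneration $S \rightsquigarrow (\Sigma \times E) \cup_E S'$ from Example~\ref{example: higher genus higher degree degeneration}, which peels off a trivial factor.

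First I would handle the descendents. For each one, pick a general point $w_i \in C$ distinct from the $z_j$ and degenerate $C$ to the nodal curve obtained by attaching a $\p^1$-bubble at $w_i$. Because $\omega \in H^2(C)$ restricts to $0$ on the main component of this degeneration and to the point class on the bubble, the degeneration formula, which holds for both GW and PT invariants in a form compatible with the GW/PT correspondence, places each descendent on its own $\p^1$. The problem thereby reduces to the GW/PT correspondence on (a) $(S \times \p^1, S_{0,\infty})$ with a single descendent $\ch_k(\omega\gamma)$ and two relative partitions, and (b) $(S \times C, S_{z'})$ with purely relative insertions. Case (b) is reduced further by degenerating $C$ repeatedly until it breaks into rubber pieces $(S \times \p^1, S_{0,\infty})^{\sim}$ carrying only relative markings. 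Hence it suffices to establish the correspondence on $(S \times \p^1, S_{0,\infty})$, rigid or rubber, with at most one descendent of the form $\ch_k(\omega\gamma)$, in fiber class $(df, n)$.

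Next I would degenerate $S$. By Corollary~\ref{cor:integral fibers} one may assume all fibers of $\pi$ are integral. Unless $S$ is already a product $\Sigma \times E$, Example~\ref{example: higher genus higher degree degeneration}, and for $d=0$ non-product cases Example~\ref{example:torsion L}, yields a degeneration $S \rightsquigarrow (\Sigma \times E) \cup_E S'$ with $S' \to \p^1$ elliptic of some positive degree and with no vanishing cohomology. Applying the degeneration formula to the induced degeneration of the $3$-fold $S \times \p^1$, together with Proposition~\ref{prop:PT vanishing vertical classes not fiber} (which ensures that only multiples of the fiber class can contribute on each piece), reduces the GW/PT correspondence for $S$ to correspondences on $(\Sigma \times E) \times \p^1$ and $S' \times \p^1$. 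Iterating on $S'$ by splitting its base $\p^1 = \p^1 \cup \p^1$ and distributing the singular fibers between the two components strictly lowers the degree, and the induction terminates at the product case $S = \Sigma'' \times E$. There the target factors as $\Sigma'' \times (E \times \p^1)$, the fiber class is $d[E]$, and the invariants reduce to GW/PT invariants of $E \times \p^1$ in vertical classes---a case derivable from the GW/PT correspondence for the local threefold $E \times \BA^2$, itself a consequence of the toric GW/PT correspondence \cite{PP_GWPT_Toric3folds} combined with the standard relative and localization formalism.

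The main obstacle is the combinatorial bookkeeping across these degenerations: tracking how the relative partitions split at each new node of $C$ or $S$, verifying that the $\omega$-factor of each descendent migrates to the correct component at each curve-degeneration step, and checking that the base-case correspondence on $E \times \p^1$ with precisely the relative partitions produced by the induction and a single descendent insertion is available in a directly usable form. Once these ingredients are in place, the remainder of the argument is the standard degeneration machinery for GW and PT theory.
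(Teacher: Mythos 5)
Your overall skeleton (reduce to a surface times $\p^1$ relative to fibers, then degenerate $S$ and induct on the degree of the elliptic fibration, with base cases supplied by Pandharipande--Pixton) matches the paper's. But there is a genuine gap in your induction step, and it is not the ``combinatorial bookkeeping'' you flag as the main obstacle. When you split the base of $S'\to\p^1$ as $\p^1\cup\p^1$ and distribute the singular fibers, the resulting degeneration $S'\rightsquigarrow S''\cup_E R$ has substantial vanishing cohomology: $b_2$ of a degree-$d$ elliptic surface over $\p^1$ is $12d-2$, while the two pieces together carry roughly $12d-4$, so most of $H^2(S')$ does not lift to the total space of the degeneration. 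The degeneration formula can only be applied to insertions (both the cohomology weights of the relative partitions $\lambda_i$ and the classes $\gamma_i$) that do lift, so your induction cannot be run for arbitrary insertions as stated. The paper closes exactly this gap with a monodromy argument: by L\"onne's theorem (Proposition~\ref{prop:monodromy}) the monodromy group contains $O^{+}(V)$, which is Zariski dense in $O(V_{\BC})$, and this reduces arbitrary insertions to tensors built from the diagonal $\Delta_S$ and the invariant classes $1,\pt,[\Sigma],f$, which do survive the degeneration (Proposition~\ref{prop:GWPT for elliptic surface over P1}). Without this step, or some substitute for it, the induction on $d$ fails.

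A second, related gap is your terminal case: $\Sigma''\times E$ does not ``reduce to $E\times\p^1$'' for free, since there is no product formula in Pandharipande--Thomas theory that would let you split off the $\Sigma''$ factor. The paper instead degenerates $\Sigma''$ to a nodal rational curve and runs a second monodromy argument, this time with the symplectic group acting on $H^1(\Sigma'',\BZ)$, to handle insertions from $H^1(\Sigma'')\otimes H^{\ast}(E)$ before reducing to $(E\times\p^1,E_{x_1,\ldots,x_{2g}})$ (Proposition~\ref{prop:GWPT trivial fibration}). Two smaller points: $E\times\BA^2$ is not toric, so the base case is not a consequence of the toric correspondence --- the paper invokes \cite[Theorem 7]{PaPix_GWPT} for $\p^1\times E$ directly; and your iteration bottoms out not only at product surfaces but also at rational elliptic surfaces, which require \cite[Theorem 2]{PaPix_GWPT} via deformation equivalence to a toric surface (Proposition~\ref{prop:case d=0 and 1}).
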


\vspace{7pt}
The proof of Theorem~\ref{thm:GWPT correspondence} will take the remainder of the section.
One strategy of proof would be to use cosection localization by a rational cosection parallel to the proof of Proposition~\ref{prop:GW class elliptic surface}. However, stable pairs can have disconnected support, so the usual degeneracy locus of the cosection is too large. A refined degeneracy locus as in \cite{LiLi} can be used, but the result is still messy.
Instead, we will use a degeneration argument.

Let $E \subset S$ be a smooth fiber of $\pi : S \to \Sigma$.
Consider the bi-relative geometry
\begin{equation} \label{birelative geometry} (S \times \p^1, S_{\infty} \cup (E \times \p^1)). \end{equation}
Since $S_{\infty}$ intersects $E \times \p^1$ non-trivially, the full GW/PT theory of this pair does not fit the classical framework of \cite{Li2, LiWu}
and instead requires logarithmic methods \cite{MR,MR2}.
However, here we only need to deal with curve classes $(\beta, n) \in H_2(S \times \p^1)$ where $\beta$ is vertical, so that
\[ (\beta,n) \cdot (E \times \p^1) = 0. \]
The moduli spaces of relative stable pairs and maps to \eqref{birelative geometry} have then been constructed in \cite[Sec.6.5]{PP_GWPT_Toric3folds}
by classical means.
As discussed there
moduli spaces, invariants, partition functions, and degeneration formulas work then parallel to \cite{Li2, LiWu}.
Also a GW/PT correspondence for \eqref{birelative geometry} for $\beta$ vertical can be formulated, see \cite{PP_GWPT_Toric3folds,Marked}.
Thus log-geometric constructions can be avoided.

By degenerating the curve $C$ and by using a basic invertibility result for the cap geometry $(S \times \p^1, S_{\infty})$ explained in \cite[Sec.6.2]{Marked} (which is based on the works \cite{PaPix_Japan},\cite{PP_GWPT_Toric3folds}) it suffices to prove Theorem~\ref{thm:GWPT correspondence} for the case $(S \times \p^1, S_{\infty})$.
We leave the details for this reduction step to the reader, but instead refer
to \cite[Sec.6]{PP_GWPT_Toric3folds} or \cite[Sec.9.2]{Marked} where identical arguments have been made for the $\CA_n$-surface and the K3 surface.

We hence from now on assume that $(S \times C,S_z) = (S \times \p^1, S_{\infty})$.

The plan of the proof of  Theorem~\ref{thm:GWPT correspondence} is as follows:
\begin{itemize}
\item the correspondence holds for the two cases $S = \p^1\times E$ and $S$ a rational elliptic surface (Proposition~\ref{prop:case d=0 and 1}, \cite{PaPix_GWPT});
\item the correspondence holds when $g(S) = 0$ by degeneration to copies of $R$ and a monodromy argument (Proposition~\ref{prop:GWPT for elliptic surface over P1});
\item the correspondence holds for trivial fibrations $S = \Sigma \times E$ by degeneration to copies of $\p^1\times E$ and a monodromy argument (Proposition~\ref{prop:GWPT trivial fibration});
\item at the end of the section, we prove the correspondence holds for an arbitrary elliptic fibration by degeneration to a fibration over $\p^1$ and a trivial fibration, as in Example~\ref{example: higher genus higher degree degeneration}.
\end{itemize}

\begin{prop} \label{prop:case d=0 and 1}
Theorem~\ref{thm:GWPT correspondence} holds for $S = \p^1 \times E$ and for $S$ a rational elliptic surface.
\end{prop}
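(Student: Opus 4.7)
The plan is to handle the two surfaces separately, each time reducing to a GW/PT correspondence that is already known.

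For $S$ a rational elliptic surface, I would appeal directly to \cite{PaPix_GWPT}: the GW/PT descendent correspondence is established there for 3-folds of the relevant form by combining $\BC^{\ast}$-equivariant localization along the $\p^1$ direction of $S \times \p^1$ with a degeneration of $R$ to configurations for which the correspondence reduces to the known toric case. Since this case is literally quoted in the statement of the proposition, the work for me here is essentially just to verify that the specific relative geometry $(R \times \p^1, R_\infty)$ and curve classes $(df, n)$ we care about fall within the scope of that reference, and to record the compatibility of the matrix $\overline{\,\cdot\,}$ used in both places.

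For $S = \p^1 \times E$, the target is $X = \p^1 \times E \times \p^1$ and the fiber class is $f = [\{\mathrm{pt}\} \times E]$. First I would apply the degeneration formula (in both \GW\ and \PT\ theory) to the first $\p^1$ factor of $S$, degenerating it into a chain of $\p^1$'s so that each descendent insertion $\ch_{k_i}(\omega \gamma_i)$ and each Nakajima weight appearing in $\lambda_j$ is concentrated on a single component of the chain. Iterating, the pieces of the degenerate target become caps of the form $(\p^1 \times E \times \p^1, S_\infty)$ carrying at most one interior or relative marking, joined by rubbers $(E \times \p^1, E_0 \cup E_\infty)^{\sim}$ (after forgetting the trivial $\p^1$ factor, along which nothing happens). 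Next, using the rigidification statement (Proposition~\ref{prop:rigidification}), I would transfer descendent insertions from each rubber to the cap $(E \times \p^1, E_\infty)$. The remaining input is then the GW/PT correspondence for $(E \times \p^1, E_\infty)$ with stationary-type descendents in classes $(dE, n)$; this is the content of \cite{PaPix_GWPT} and can alternatively be read off by combining the stationary GW/PT correspondence of \cite{MOOP} for toric 3-folds with a degeneration that resolves the $E$ factor (or by localization).

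The main obstacle will be bookkeeping: tracking how the classes $\omega\gamma_i$ and the ordered cohomology-weighted partitions $\lambda_j$ distribute across the components under the degeneration, and verifying that the universal correspondence matrix $\overline{\prod_i \ch_{k_i}(\omega \gamma_i)}$ behaves multiplicatively across the degeneration in a way compatible with the relative/rubber insertions produced at each new divisor. This is combinatorially delicate but is conceptually standard and is handled by the degeneration formalism recalled in \cite{Marked}, so I expect the difficulty here to be procedural rather than conceptual.
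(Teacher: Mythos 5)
Your overall strategy --- reduce both cases to the GW/PT correspondence of \cite{PaPix_GWPT} --- is exactly what the paper does, but the paper's proof is essentially a two-line citation: the case $S=\p^1\times E$ is \emph{directly} a special case of \cite[Theorem 7]{PaPix_GWPT}, and the rational elliptic surface case follows from \cite[Theorem 2]{PaPix_GWPT} together with one geometric observation, namely that a rational elliptic surface is the blow-up of $\p^2$ at nine points and hence \emph{deformation equivalent to a toric surface}, so the toric result applies by deformation invariance. That observation is the only real content of the proof, and it is the point your proposal leaves vague: your phrase ``degeneration of $R$ to configurations for which the correspondence reduces to the known toric case'' should be replaced by this explicit deformation equivalence, since $R$ itself is not toric and the cited theorem is stated for (deformations of) toric surfaces. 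For $\p^1\times E$ your chain degeneration of the first $\p^1$ factor is an unnecessary detour, and it introduces a technical wrinkle you do not address: the new relative divisors $E\times\p^1$ meet the existing relative divisor $S_\infty$, so the resulting bi-relative geometry does not fit the classical framework of \cite{Li2,LiWu}; the paper only invokes such bi-relative geometries later (for vertical classes, where the intersection number with $E\times\p^1$ vanishes and the construction of \cite[Sec.~6.5]{PP_GWPT_Toric3folds} applies), and avoids them entirely in this proposition by quoting Theorem 7 directly. Finally, the paper records one more point worth keeping: the cited results are proven $\BC^{\ast}$-equivariantly in the $\p^1$ direction, and the equivariant correspondence specializes to the non-equivariant one needed here.
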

\begin{proof}
The case $S = \p^1 \times E$ is a special case of \cite[Theorem 7]{PaPix_GWPT}.
The case of a rational elliptic surfaces follows from \cite[Theorem 2]{PaPix_GWPT}, since a rational elliptic surface is the blow-up of $\p^2$ along 9 points and hence deformation equivalent to a toric surface.
(The above references prove these claim equivariantly in the $\p^1$ direction, but as explained in \cite{PP_GWPT_Toric3folds}, the equivariant correspondence is always stronger then the non-equivariant limit.)
\end{proof}

\begin{prop} \label{prop:absolte relative}
Theorem~\ref{thm:GWPT correspondence} holds for an elliptic surface $S$
if and only if it holds (in the corresponding bi-relative setting) for $(S,E)$.
In this case, the correspondence also holds for $(S, E_{1} \cup \ldots \cup E_{\ell})$, where $E_1, \ldots, E_{\ell} \subset S$ are smooth fibers.
\end{prop}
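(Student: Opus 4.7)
The plan is to apply degeneration to the normal cone of the smooth fiber $E \subset S$ and then invert the resulting tube geometry. Since $N_{E/S}\cong \CO_E$ is trivial, the degeneration yields $S \rightsquigarrow S \cup_E (\p^1 \times E)$. Crossing with $\p^1$ and keeping $S_\infty$ inside the $S$-component produces a degeneration of relative pairs
\[
(S \times \p^1, S_\infty)\ \rightsquigarrow\ (S \times \p^1, S_\infty \cup (E\times\p^1))\ \cup_{E\times\p^1}\ ((\p^1 \times E) \times \p^1, (E\times\p^1)_0 \cup S'_\infty).
\]
Under our assumption that only vertical curve classes $(df,n)$ are considered, $(df,n)\cdot(E\times\p^1)=0$, so both pieces fit into the classical bi-relative framework of \cite[Sec.6.5]{PP_GWPT_Toric3folds}, and the usual GW and PT degeneration formulas of \cite{Li2, LiWu} are available.

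Each degeneration formula expresses the partition function on $(S \times \p^1, S_\infty)$ as a convolution (sum over cohomology-weighted partitions of size $n$) of the bi-relative partition function for $(S \times \p^1, S_\infty \cup (E \times \p^1))$ with the partition function of the cap $((\p^1 \times E) \times \p^1, (E \times \p^1)_0 \cup S'_\infty)$. The cap is a $d=0$ geometry, so the GW/PT correspondence holds on it by Proposition~\ref{prop:case d=0 and 1} (or directly by \cite[Thm.7]{PaPix_GWPT}). Moreover, the associated cap correspondence matrix is invertible on the space of cohomology-weighted partitions, as explained in \cite[Sec.6.2]{Marked} following \cite{PP_GWPT_Toric3folds}. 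Inverting the cap then translates the GW/PT correspondence on $(S \times \p^1, S_\infty)$ into the one on the bi-relative pair $(S \times \p^1, S_\infty \cup (E\times\p^1))$, and vice versa. The descendent insertions $\ch_{k_i}(\omega \gamma_i)$ and relative insertions $\lambda_1,\ldots,\lambda_N$ all lie on the $S$-side of the degeneration and therefore travel unchanged through the convolution.

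For the second claim, iterate: given smooth fibers $E_1,\ldots,E_\ell \subset S$, apply the same degeneration to the bi-relative pair $(S\times\p^1, S_\infty \cup \bigcup_{i<\ell}(E_i\times\p^1))$ along $E_\ell$. Since $E_\ell$ is disjoint from $S_\infty$ and from the other $E_i$, and since curve classes remain vertical, the degeneration is again classical and the cap invertibility argument applies verbatim to pass between $\ell-1$ and $\ell$ fiber divisors.

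The main obstacle is confirming that the cap invertibility is compatible with the bi-relative setup: the cap now appears inside a degeneration that already carries the divisor $S_\infty$, so one must track the cohomology-weighted matching along $E\times\p^1$ simultaneously with the correspondence matrix $\overline{\prod_i \ch_{k_i}(\omega\gamma_i)}$. This is exactly the sort of bookkeeping carried out in \cite[Sec.6]{PP_GWPT_Toric3folds} and \cite[Sec.9.2]{Marked} for the $\CA_n$-surface and K3 surface, and the same arguments apply here without modification once the vertical hypothesis is in place.
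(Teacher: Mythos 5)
Your overall strategy — degenerate $S$ to the normal cone of the fiber $E$, cross with $\p^1$, and use compatibility of the GW/PT correspondence with the degeneration formula — is the same as the paper's, but two steps in your execution would fail as written. First, you cannot "keep $S_\infty$ inside the $S$-component": the relative divisor is $S\times\{\infty\}$, and since $S$ itself degenerates to $S\cup_E Y$ with $Y=E\times\p^1$, the divisor necessarily degenerates to $S_\infty\cup Y_\infty$. Consequently the weighted partitions $\lambda_i$ inserted along $S_\infty$ (and the descendents $\ch_{k_i}(\omega\gamma_i)$) do \emph{not} travel unchanged; they must be split as $\lambda=\lambda_1\sqcup\lambda_2$ over the two components, and this splitting is exactly where the nontrivial structure of the degeneration formula lives. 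Conversely, the gluing divisor $E\times\p^1$ is where nothing happens: since the classes are vertical, $(df,n)\cdot(E\times\p^1)=0$, so only the empty partition appears there and there is no convolution over weighted partitions along it — hence no "cap" to invert in the sense of \cite[Sec.6.2]{Marked} (that inversion is used in the earlier reduction from $(S\times C,S_z)$ to $(S\times\p^1,S_\infty)$, not here). The inversion actually needed is of the upper-triangular system, with respect to a lexicographic order on the classes $(d'f,n')$, relating invariants of $(S\times\p^1,S_\infty)$ to those of $(S\times\p^1,S_\infty\cup E\times\p^1)$; its diagonal entry is the trivial class-$(0,0)$ contribution of the $Y$-side, and the off-diagonal entries involve positive-degree invariants of $(Y\times\p^1,Y_\infty\cup E\times\p^1)$.

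Second, the $Y$-component is not a "$d=0$ geometry": in the degeneration formula it absorbs an arbitrary part $d_2$ of the fiber degree, and Proposition~\ref{prop:case d=0 and 1} only gives the correspondence for the absolute relative pair $(Y\times\p^1,Y_\infty)$, not for the bi-relative pair $(Y\times\p^1,Y_\infty\cup E\times\p^1)$ that actually appears. Establishing the latter is a genuine step you have skipped: one must first specialize the whole degeneration argument to $S=E\times\p^1$ (where both sides of the degeneration are the same geometry) and run an induction on the curve degree to deduce the bi-relative correspondence for $Y$; only then can the triangular system above be inverted for a general elliptic surface $S$. With these two corrections the argument goes through and coincides with the paper's proof, including the iteration over several fibers $E_1,\ldots,E_\ell$ for the final claim.
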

\begin{proof}
Let $Y = E \times \p^1$ and consider the degeneration of $S$ to the normal cone of $E$, denoted by $S \rightsquigarrow S \cup_{E} Y$. 
By taking the product with $\p^1$, we obtain a 
degeneration of $S \times \p^1$. 
We lift all cohomology insertions from $S$ to the normal cone degeneration  by pullback (in particular, the degeneration has no vanishing cohomology).
The degeneration formula yields:
\begin{multline} \label{deg formula}
Z^{(S \times \p^1, S_\infty)}_{\PT, (df,n)}\left( \lambda | {\textstyle \prod_{i=1}^{r} \ch_{k_i}(\omega \gamma_i)} \right) 
= 
\sum_{d = d_1+d_2}
\sum_{\substack{\lambda = \lambda_1 \sqcup \lambda_2 \\
	\{ 1, \ldots, r \} = S_1 \sqcup S_2 }}  \\
Z^{(S \times \p^1, S_\infty \cup E \times \p^1)}_{\PT, (d_1 f,|\lambda_1|)}\left( \lambda_1 | \varnothing | {\textstyle \prod_{i\in S_1} \ch_{k_i}(\omega \gamma_i)} \right)
\cdot 
Z^{(Y \times \p^1, Y_\infty \cup E \times \p^1)}_{\PT, (d_2 f,|\lambda_2|)}\left( \lambda_2 | \varnothing | {\textstyle \prod_{i\in S_2} \ch_{k_i}(\omega \gamma_i|_{E})} \right),
\end{multline}
where $\lambda_1, \varnothing$ in the first factor on the right stands for the weighted partitions
inserted along the relative divisors $S_{\infty}$ and $E \times \p^1$ respectively, and similar for the second factor. Here the class associated to a partition are defined by the relaive Nakajima cycles, see \cite{PaPix_GWPT}.

Since there are no curves in class $(0,0) \in H_2(Y \times \p^1,\BZ)$,
one has
\begin{equation} \label{initial term 1}
Z^{(Y \times \p^1, Y_\infty \cup E \times \p^1),T}_{\PT, (0,0)}\left( \varnothing | \varnothing | 1 \right) = 1,
\end{equation}

The above formulas hold identically also on the Gromov-Witten side.

If we specialize to $S=E \times \p^1$ we have that both factors in \eqref{deg formula} are invariants of the same space.
Hence, if all cohomology insertions $\gamma_i$ are pulled back from $E$ we find:
\[
Z^{(S \times \p^1, S_\infty)}_{\PT, (df,n)}\left( \lambda | {\textstyle \prod_{i=1}^{r} \ch_{k_i}(\omega \gamma_i)} \right) 
=
2 \cdot Z^{(S \times \p^1, S_\infty \cup E \times \p^1)}_{\PT, (df,n)}\left( \lambda | \varnothing | {\textstyle \prod_{i=1}^{r} \ch_{k_i}(\omega \gamma_i)} \right) + ( \ldots ) \] 
where $( \cdots )$ stands for the terms in \eqref{deg formula} where $(\beta_i, n_i) > 0$ for both $i=1,2$, which therefore involves invariants only for curve classes of strictly lower degree.

If there is some $\gamma_i \in H^{\ast}(S)$ such that $\gamma_i|_{E} = 0$, then the full curve degree cannot be distributed to the $Y \times \p^1$ factor. Hence in this case:
\[
Z^{(S \times \p^1, S_\infty)}_{\PT, (df,n)}\left( \lambda | {\textstyle \prod_{i=1}^{r} \ch_{k_i}(\omega \gamma_i)} \right) 
=
Z^{(S \times \p^1, S_\infty \cup E \times \p^1)}_{\PT, (df,n)}\left( \lambda | \varnothing | {\textstyle \prod_{i=1}^{r} \ch_{k_i}(\omega \gamma_i)} \right) + ( \ldots ). \] 

Theorem~\ref{thm:GWPT correspondence} holds for $S = E \times \p^1$.
Moreover, as shown in \cite{PaPix_GWPT} the GW/PT correspondence is compatible with the degeneration formula.
By an induction on the degree of the curve classes,
it follows that Theorem~\ref{thm:GWPT correspondence} 
holds for the invariants
$Z^{(S \times \p^1, S_\infty \cup E \times \p^1)}_{\PT, (df,n)}\left( \lambda | \varnothing | {\textstyle \prod_{i=1}^{r} \ch_{k_i}(\omega \gamma_i)} \right)$.

Let $S$ be now a general elliptic surface with section.
The degeneration formula \eqref{deg formula} together with \eqref{initial term 1} reads
\[
Z^{(S \times \p^1, S_\infty)}_{\PT, (df,n)}\left( \lambda | {\textstyle \prod_{i=1}^{r} \ch_{k_i}(\omega \gamma_i)} \right) 
=
Z^{(S \times \p^1, S_\infty \cup E \times \p^1)}_{\PT, (df,n)}\left( \lambda | \varnothing | {\textstyle \prod_{i=1}^{r} \ch_{k_i}(\omega \gamma_i)} \right) + ( \ldots ) \] 
where $(\ldots)$ stands for invariants of 
$(S \times \p^1, S_\infty \cup E \times \p^1)$ in curve classes $(d'f, n') < (d,f)$ (with respect to a lexicographic order) times invariants
of $(Y \times \p^1, Y_{\infty} \cup E \times \p^1)$. 
We hence see that the invariants of
$(S \times \p^1, S_{\infty})$ and 
$(S \times \p^1, S_\infty \cup E \times \p^1)$
are related by an invertible upper triangular relation.
Since the GW/PT correspondence is compatible with the degeneration formula and is known for the coefficients in this relation by the first step,
we see that the GW/PT for one geometry implies the claim for the other geometry.
This completes the proof of the first claim.

The second claim follows likewise by the degeneration formula for the degeneration of $S$ to the normal cone of the fibers $E_{1}, \ldots, E_{\ell}$.
\end{proof}

\begin{prop} \label{prop:GWPT for elliptic surface over P1}
	Theorem~\ref{thm:GWPT correspondence} holds for an elliptic surface $S$ which is fibered over $\p^1$.
\end{prop}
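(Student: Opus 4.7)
I would prove the proposition by induction on the degree $d$ of $S \to \mathbb{P}^1$. The base cases $d=0$ (where $S = \mathbb{P}^1 \times E$) and $d=1$ (where $S$ is a rational elliptic surface) are handled directly by Proposition~\ref{prop:case d=0 and 1}.

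For the inductive step with $d \geq 2$, by Proposition~\ref{prop:Seiler} I may replace $S$ by any convenient elliptic surface over $\mathbb{P}^1$ of degree $d$. I would then construct a degeneration $\mathcal{S} \to \Delta$ whose general fiber is such an $S$ and whose central fiber is $S_0 = R \cup_E S'$, where $R \to \mathbb{P}^1$ is a rational elliptic surface (degree $1$), $S' \to \mathbb{P}^1$ is an elliptic surface of degree $d-1$, and the two components are glued transversally along a common smooth fiber $E$. Such a degeneration is obtained by degenerating the base $\mathbb{P}^1$ to a nodal union $\mathbb{P}^1 \cup_{pt} \mathbb{P}^1$, distributing the $12d$ singular fiber locations so that $12$ specialize to one component and $12(d-1)$ to the other, and lifting this to a Weierstra{\ss} model exactly as in Example~\ref{example: higher genus higher degree degeneration}.

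The core of the argument is then the degeneration formula applied to $(\mathcal{S} \times \mathbb{P}^1, \mathcal{S}_\infty) \to \Delta$, which expresses the PT and GW partition functions of $(S \times \mathbb{P}^1, S_\infty)$ as sums (over splittings of the cohomology-weighted partitions and curve classes) of bi-relative partition functions of $(R \times \mathbb{P}^1, R_\infty \cup E \times \mathbb{P}^1)$ and $(S' \times \mathbb{P}^1, S'_\infty \cup E \times \mathbb{P}^1)$. By the inductive hypothesis (for $S'$) and Proposition~\ref{prop:case d=0 and 1} (for $R$), together with the second assertion of Proposition~\ref{prop:absolte relative}, the GW/PT correspondence holds for both bi-relative geometries appearing. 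The compatibility of GW/PT with degeneration (see \cite{PaPix_GWPT}) then transports the correspondence to $(S \times \mathbb{P}^1, S_\infty)$.

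The main obstacle is ensuring that arbitrary insertions $\gamma_i \in H^\ast(S)$ can be fed into the degeneration formula, since the formula requires these classes to lift to the total space $\mathcal{S} \times \mathbb{P}^1$. I would address this via the monodromy reduction. Both sides of the GW/PT correspondence depend only on the $O^+_{K_S}(L)$-orbit of the insertion tuple, since GW and PT invariants are deformation invariant and by Proposition~\ref{prop:monodromy} the full monodromy group of elliptic surfaces of fixed $(g,d)=(0,d)$ with $d\geq 2$ realizes $O^+_{K_S}(L)$. Hence it suffices to verify the correspondence for insertions pulled back from a single convenient family. By varying the split $d = 1 + (d-1) = 2 + (d-2) = \cdots$ (and also including the degenerations described in Example~\ref{example:torsion L} where needed), one produces enough families whose cohomology surjects onto $H^\ast(S)$ so that every monodromy orbit meets the image of restriction from some total space, completing the induction.
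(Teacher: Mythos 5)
Your overall skeleton matches the paper's: induction on the degree $d$, with base cases $d\in\{0,1\}$ from Proposition~\ref{prop:case d=0 and 1}, a degeneration $S \rightsquigarrow S'\cup_E R$ to a rational elliptic surface glued along a smooth fiber to a degree $d-1$ surface, the degeneration formula, and Proposition~\ref{prop:absolte relative} to pass between the absolute and bi-relative geometries. The divergence, and the gap, is in how you handle general insertions $\gamma_i\in H^{\ast}(S)$. You assert that every monodromy orbit of an insertion tuple meets the image of restriction from the total space of some degeneration, so that one may always assume the $\gamma_i$ lift. This is a strong simultaneous statement: a single monodromy element must move the entire tuple $(\gamma_1,\dots,\gamma_r)$ into a fixed proper subspace $W\subset H^2(S)$ (the non-vanishing cohomology), which constrains the Gram matrix of the tuple and is not established by anything you cite; varying the splitting $d=k+(d-k)$, or invoking Example~\ref{example:torsion L} (which concerns degree-$0$ fibrations over higher-genus bases and is irrelevant here), does not obviously repair this.

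The paper handles this step differently, by the standard invariant-theory reduction: since the monodromy group contains $O^{+}(V)$ for $V=\langle[\Sigma],f\rangle^{\perp}$, which is Zariski dense in $O(V_{\BC})$, the invariants define monodromy-invariant multilinear forms, and by \cite[Prop.~9.3]{Marked} it suffices to treat insertions built from the diagonal class $\Delta_S$ and the invariant classes $1,\pt,[\Sigma],f$ as in \eqref{delta Gamma}. These specific insertions are compatible with the degeneration formula because the vanishing cohomology is contained in $V$ and the diagonal splits appropriately (the argument of \cite[Sec.~9.5]{Marked}). If you replace your orbit claim by this reduction to monodromy-invariant insertions, the rest of your induction goes through essentially as written (note also that it is the first, not the second, assertion of Proposition~\ref{prop:absolte relative} that you need here).
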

\begin{proof}
Let $S \to \p^1$ be an elliptic surface of degree $d>1$ (the cases $d \in \{ 0,1 \}$ are covered by Proposition~\ref{prop:case d=0 and 1}).
Let $V \subset H^2(S,\BZ)$ be the orthogonal complement to the vectors $[\Sigma], f$. 
The monodromy group of $S$ contains the subgroup $O^+(V)$ of orthogonal transformations of $V$ of real spinor norm $1$.
The group $O^{+}(V)$ is Zariski dense in the complex orthogonal group $O(V_{\BC})$.
By the same proof as in \cite[Prop.9.3]{Marked}
it hence suffices to prove the GW/PT correspondence for the invariants
\begin{equation} \label{fsdfds} Z^{(S \times \p^1, S_{\infty})}_{\PT, (df,n)}\Big( ({\textstyle \prod_{i=1}^{r} \Fq_{\ell_i} })(\delta) \Big| ({\textstyle \prod_{i=1}^{s} \ch_{k_i}})(\Gamma) \Big) \end{equation}
where the cohomology insertions are
\begin{equation} \label{delta Gamma}
\begin{gathered}
\delta = \pr_{12}^{\ast}(\Delta_{S}) \cdots \pr_{2a-1,2a}^{\ast}(\Delta_S) \prod_{i=2a+1}^{r} \pr_i^{\ast}(\delta_i)  \\
\Gamma = \pr_{12}^{\ast}(\Delta_{S}) \cdots \pr_{2b-1,2b}^{\ast}(\Delta_S) \prod_{i=2b+1}^{s} \pr_i^{\ast}(\gamma_i) \prod_{i=1}^{s} \pr_i^{\ast}(\omega)
\end{gathered}
\end{equation}
with $\delta_i, \gamma_i \in \{ 1, \pt, [\Sigma], f \}$,
$\Delta_S \in H^{\ast}(S \times S)$ the class of the diagonal,
and $\pr_i, \pr_{ij}$ the projetions.

Consider a degeneration of $S$ to a union
\[
S \rightsquigarrow S' \cup_E R.
\]
where $S' \to \p^1$ is an elliptic surface of degree $d-1$ and $R$ is a rational elliptic suface, and the surfaces are glued along a joint smooth elliptic fiber $E$.
The vanishing cohomology of this degeneration is contained in $V$.
Hence by applying the degeneration formula to \eqref{fsdfds} and arguing as in \cite[9.5]{Marked}
then reduces the claimed correspondence to the correspondence for 
$(R \times \p^1, R_{\infty} \cup (E \times \p^1))$ and
$(S' \times \p^1, S_{\infty} \cup (E \times \p^1))$.
By an induction on $d$ Theorem~\ref{thm:GWPT correspondence} is known for $S'$, and by Proposition~\ref{prop:case d=0 and 1} for $R$, so by 
Proposition~\ref{prop:absolte relative}
it holds for both of these relative geometries.
\end{proof}

\begin{prop} \label{prop:GWPT trivial fibration}
Theorem~\ref{thm:GWPT correspondence} holds for
the trivial elliptic fibration $\Sigma \times E \to \Sigma$.
\end{prop}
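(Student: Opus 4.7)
The plan is to proceed by induction on $g = g(\Sigma)$. By deformation invariance of both GW and PT invariants, it suffices to establish the correspondence for one convenient representative $\Sigma$ of each genus.

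The base case $g = 0$ is Proposition~\ref{prop:case d=0 and 1}. For the base case $g = 1$, where $\Sigma = E'$ is a smooth elliptic curve, I would degenerate $E'$ to the irreducible nodal rational curve obtained by identifying $0, \infty \in \p^1$. This induces a semistable degeneration of $(E' \times E) \times \p^1$ whose central fiber is the self-gluing of $(\p^1 \times E) \times \p^1$ along $\{0\} \times E \times \p^1$ and $\{\infty\} \times E \times \p^1$. Applying the self-gluing degeneration formula on both the GW and PT sides, the invariants of $(E' \times E) \times \p^1$ relative to $(E' \times E)_\infty$ are expressed in terms of bi-relative invariants of $(\p^1 \times E) \times \p^1$ relative to $(\p^1 \times E)_\infty \cup (\{0, \infty\} \times E) \times \p^1$, with matching partitions across $\{0\} \times E$ and $\{\infty\} \times E$. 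By Proposition~\ref{prop:case d=0 and 1} combined with the multi-fiber extension in the second part of Proposition~\ref{prop:absolte relative}, the GW/PT correspondence holds for these bi-relative invariants, and compatibility with the degeneration formula yields the $g=1$ case.

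For the inductive step ($g \geq 2$), assume the result for all $\Sigma' \times E$ with $g(\Sigma') < g$. Degenerate $\Sigma$ to a separating nodal curve $\Sigma_0 = \Sigma' \cup_p E''$ with $g(\Sigma') = g-1$ and $E''$ a smooth elliptic curve meeting $\Sigma'$ at a single point. The induced semistable degeneration $\Sigma \times E \rightsquigarrow (\Sigma' \times E) \cup_{\{p\} \times E} (E'' \times E)$ is glued along the smooth fiber $\{p\} \times E$. A Mayer--Vietoris argument parallel to the one in Example~\ref{example: higher genus higher degree degeneration} shows this degeneration has no vanishing cohomology, so all insertions lift from the generic fiber. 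The ordinary (separating) degeneration formula then expresses the invariants of $(\Sigma \times E) \times \p^1$ as a sum over weighted partitions of products of bi-relative invariants of $(\Sigma' \times E, \{p\} \times E)$ and $(E'' \times E, \{p\} \times E)$. Combining the inductive hypothesis on $\Sigma' \times E$, the base case $g = 1$ applied to $E'' \times E$, and Proposition~\ref{prop:absolte relative}, the GW/PT correspondence holds for both factors; compatibility of the correspondence with the degeneration formula then closes the induction.

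The main technical obstacle I expect is the base case $g=1$: the self-gluing degeneration formula and its compatibility with the GW/PT correspondence are more delicate than the separating case used in the inductive step. One might hope to avoid self-gluing by a monodromy reduction to special insertions on $E' \times E$ analogous to Proposition~\ref{prop:GWPT for elliptic surface over P1}, but the monodromy of an abelian surface is considerably smaller than the orthogonal groups exploited there, so that strategy does not obviously apply and the self-gluing approach appears cleanest.
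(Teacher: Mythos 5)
Your overall architecture is a legitimate alternative to the paper's: the paper degenerates $\Sigma$ in one step to $\p^1$ with $2g$ points pairwise glued, whereas you induct on $g$ using compact-type degenerations $\Sigma \rightsquigarrow \Sigma' \cup_p E''$. Your inductive step is sound — a separating node produces no vanishing cohomology (the specialization map $H^1(\Sigma_0) \to H^1(\Sigma)$ is an isomorphism for compact-type degenerations), so all insertions lift, the separating degeneration formula applies, and Proposition~\ref{prop:absolte relative} together with the inductive hypothesis closes the step. The effect is to concentrate all the difficulty in the base case $g=1$, and that is exactly where your proposal has a genuine gap.

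The self-gluing degeneration of $E'$ to a nodal rational curve has vanishing cohomology: the image of $H^1(\Sigma_0)$ in $H^1(E')$ is only the monodromy-invariant part, so after taking the product with $E$ the classes in $H^1(E')\otimes H^{\ast}(E)$ do not lift to the total space of the degeneration. The degeneration formula therefore cannot be applied to arbitrary insertions — and the weighted partitions $\lambda_i$ in Theorem~\ref{thm:GWPT correspondence} do carry odd cohomology weights from $H^1(\Sigma)$. This is precisely why the paper first invokes the monodromy argument: the mapping class group of the base curve $\Sigma$ acts on $H^1(\Sigma,\BZ)$ as the full symplectic group $Sp(2g,\BZ)$ (which is $SL(2,\BZ)$, Zariski dense in $SL_2$, for $g=1$), reducing to insertions in which the vanishing classes occur only in pairs assembling to $\Delta_\Sigma$ times classes on $E\times E$; these diagonal insertions are then degenerated by the methods of \cite{ABPZ,Marked}. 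Your closing remark rejects this route on the grounds that "the monodromy of an abelian surface is considerably smaller than the orthogonal groups," but the relevant group is not the monodromy of $E'\times E$ as a surface acting on $H^2$ — it is the monodromy of the base curve acting on $H^1(\Sigma)$, and that is large enough. Without this reduction (or some substitute for handling the non-liftable odd insertions), the self-gluing step in your $g=1$ case does not go through.
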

\begin{proof}
Let $g=g(\Sigma)$.
Consider a degeneration of $\Sigma$ to a curve $\Sigma_0$ which is isomorphic to $\p^1$ with $2g$ distinct points $x_1,\ldots, x_{2g}$ pairwise glued.
Let $S \rightsquigarrow S_0$ be the induced degeneration obtained by taking the product with $E$.
The degeneration has vanishing cohomology $H^1(\Sigma) \otimes H^{\ast}(E)$. The monodromy group of $\Sigma$ acts as the full symplectic group on $H^1(\Sigma,\BZ)=\BZ^{2g}$.
Hence arguing as in the proof of Proposition~\ref{prop:GWPT for elliptic surface over P1} (see \cite{ABPZ} for the use of the symplectic instead of the orthogonal group) it suffices to prove the GW/PT correspondence 
where (parallel to \eqref{delta Gamma}) the insertions which are vanishing can be grouped into pairs given by a product of $\Delta_{\Sigma}$ times classes from $H^{\ast}(E \times E)$.
These can then be degenerated by the methods of \cite{ABPZ,Marked}.
After resolving $\Sigma_0$, we are reduced to proving Theorem~\ref{thm:GWPT correspondence} for 
$(E \times \p^1, E_{x_1, \ldots, x_{2g}})$.
This follows by Proposition~\ref{prop:absolte relative}
and Proposition~\ref{prop:case d=0 and 1}.
\end{proof}

\begin{proof}[Proof of Theorem~\ref{thm:GWPT correspondence}]
We know the statement whenever $g(\Sigma)=0$ by
Proposition~\ref{prop:GWPT for elliptic surface over P1}. Hence assume $g(\Sigma)>0$.

If $d>0$ we use Proposition~\ref{prop:Seiler} and the degeneration of Example~\ref{example: higher genus higher degree degeneration}
which has no vanishing cohomology.
By the compatibility of the statement with the degeneration formula,
the claim then follows from Proposition~\ref{prop:GWPT trivial fibration}
and the $g(\Sigma)=0$ case (use Proposition~\ref{prop:absolte relative} again).

If $d=0$ but we are non-constant, we use Example~\ref{example:torsion L}
and the earlier cases to reduce to $d>0$.
\end{proof}

\section{Elliptic surfaces: Computations}\label{sec:elliptic surfaces2}
The goal of this section is to compute the $2$-point operator $\QHilb$ for elliptic surfaces (Theorem~\ref{thm:2point operator}). We start with computations in GW theory, then move them to PT theory by the correspondence to obtain a formula for the $2$-point operator in PT theory $\QPT$ (Theorem~\ref{thm:QPT}),
and finally use Nesterov's wall-crossing in Corollary~\ref{cor:Q Hilb-PT comparision} to deduce the Hilbert scheme operator.

\subsection{Fiber evaluations}
We start with some basic evaluations.
Consider the partition functions
of disconnected Gromov-Witten invariants of $(S \times \CC_{h,N}, S_z)$,
\begin{multline*}
Z^{(S \times \CC_{h,N}, S_z)}_{\GW, (\beta,n)}\left( \lambda_1, \ldots, \lambda_N \middle| \tau_{k_1}(\gamma_1) \cdots \tau_{k_r}(\gamma_r) \right) 
= 
(-1)^{(1-h-N)n + \sum_i \ell(\lambda_i)}
z^{(2-2h-N)n + \sum_i \ell(\lambda_i)} \\
\cdot \sum_{g \in \BZ} (-1)^{g-1} z^{2g-2}
\left\langle \, \lambda_1, \ldots, \lambda_N \, \middle| \, \tau_{k_1}(\gamma_1) \cdots \tau_{k_r}(\gamma_r) \right\rangle^{\GW, (S \times \CC_{g,N}, S_z), \bullet}_{g, (\beta,n)}.
\end{multline*}

As before, let us write $\varnothing$ for the empty partition (with no parts).

\begin{lemma} \label{lemma:some eval} We have:
\begin{enumerate}
\item[(a)]
$\left\langle \, \varnothing, \varnothing, \varnothing \right\rangle^{\GW, (S \times \p^1, S_{0,1,\infty})}_{g, (df,0)}
=
- \delta_{g1} d_{\Sigma} \sigma(d)/d.$
\item[(b)] 
$\left\langle \, \varnothing, \varnothing \right\rangle^{\GW, (S \times \p^1, S_{0,\infty})}_{g, (df,0)}
=
0$
\end{enumerate}
\end{lemma}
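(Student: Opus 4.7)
The plan is to combine a product-type analysis of the moduli with a direct Chern-class computation on the $\p^1$-factor.

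For part (a): Since the curve class $(df,0)$ has degree zero along $\p^1$ and all three ramification profiles are empty, any stable map factors as $(f_1,f_2)\colon C \to S \times \p^1$ with $f_1\colon C \to S$ of class $df$ and $f_2$ constant to a point $t \in \p^1 \setminus \{0,1,\infty\}$. By the product formula in relative Gromov--Witten theory \cite{LQ}, the virtual class decomposes and the invariant becomes
\[
\left\langle \varnothing, \varnothing, \varnothing \right\rangle^{\GW, (S \times \p^1, S_{0,1,\infty})}_{g, (df,0)}
= \int_{\Mbar_g} \tau_{\ast}[\Mbar_g(S,df)]^{\vir} \cdot \Phi_g,
\]
where $\Phi_g := \tau_{\ast}[\Mbar_g((\p^1,\{0,1,\infty\}), 0, \varnothing^3)]^{\vir}$ is a class on $\Mbar_g$.

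To compute $\Phi_g$ directly: the log tangent bundle of $(\p^1, \{0,1,\infty\})$ is $T_{\p^1}(-\log\{0,1,\infty\}) \cong \CO_{\p^1}(-1)$, so for $f_2$ constant the obstruction bundle on $\Mbar_g \times \p^1$ is $\BE^{\vee} \boxtimes \CO_{\p^1}(-1)$, of rank $g$. Using $\omega^2 = 0$ where $\omega \in H^2(\p^1)$ is the point class, a short expansion gives
\[
c_g\bigl(\BE^{\vee} \otimes \CO_{\p^1}(-1)\bigr) = (-1)^g \lambda_g + (-1)^g \lambda_{g-1}\, \omega.
\]
The first summand pushes to zero along the projection $\Mbar_g \times \p^1 \to \Mbar_g$ (being pulled back from $\Mbar_g$), while the second pushes to $(-1)^g \lambda_{g-1}$, giving $\Phi_g = (-1)^g \lambda_{g-1}$. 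Combining with Proposition~\ref{prop:lambda g-1 invariants} yields $(-1)^g \cdot \delta_{g1}\, d_\Sigma\, \sigma(d)/d = -\delta_{g1}\, d_\Sigma\, \sigma(d)/d$, as claimed.

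For part (b): The rubber target $(S \times \p^1, S_{0,\infty})^{\sim}$ is the rigidified pair $(S \times \p^1, S_{0,\infty})$ modulo the $\BG_m$-scaling of $\p^1$. For class $(df,0)$ with empty ramification, the map is constant in the $\p^1$-direction to some $t \in \p^1 \setminus \{0,\infty\}$, on which $\BG_m$ acts freely. The rubber virtual dimension is therefore one less than the rigidified virtual dimension $0$, hence equals $-1$, and the integral of $1$ against a virtual class of negative dimension vanishes.

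The main obstacle is the precise application of the product formula in the relative setting with three empty ramification profiles and the resulting identification of the $\p^1$-factor's virtual class; the Chern-class manipulation and the rubber dimension count are routine once these ingredients are in place.
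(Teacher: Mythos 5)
Your part (a) is correct but follows a genuinely different route from the paper. The paper never touches the $\p^1$-factor's obstruction theory directly: it computes the absolute invariant $\langle 1\rangle^{S\times\p^1}_{g,(df,0)}$ by $\BC^*$-localization (giving $2\int_{[\Mbar_g(E,df)]^{\vir}}(-1)^{g-1}\lambda_{g-1}$ via Proposition~\ref{prop:lambda g-1 invariants}), then applies the degeneration formula twice — first degenerating $\p^1$ into two components to extract $\langle\varnothing\,|\,1\rangle^{(S\times\p^1,S_0)}=\delta_{g1}d_\Sigma\sigma(d)/d$, then bubbling off three rational tails to get the coefficient $1-3=-2$ relative to that, i.e. $-\delta_{g1}d_\Sigma\sigma(d)/d$. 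Your product-formula approach replaces this bookkeeping by the single Euler-class computation $c_g(\BE^{\vee}\otimes T^{\log}_{\p^1})=(-1)^g\lambda_g+(-1)^g\lambda_{g-1}(2-N)\omega$, which transparently explains the pattern across $N=0,1,2,3$ relative points and is arguably more conceptual. The price is that you must justify the identification of the relative moduli space in a class of degree $0$ over $\p^1$ (with empty profiles) with $\Mbar_g\times\p^1$ carrying the obstruction bundle $\BE^{\vee}\boxtimes T^{\log}_{\p^1}$ — this is where the expansions at $0,1,\infty$ and the log/relative comparison enter — and you should also say a word about $g=1$, where $\Mbar_{1,0}$ is not Deligne--Mumford and the product formula needs an auxiliary marking; note that $g=1$ is precisely the only nonvanishing case.

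For part (b) there is a genuine mismatch: in the paper's conventions $(S\times\p^1,S_{0,\infty})$ denotes the \emph{rigid} pair (the rubber is written $(S\times\CC_{0,2},S_{0,\infty})$ or with a tilde), and the rigid invariant has virtual dimension $0$, not $-1$, so your dimension count does not apply to the object in the statement. The vanishing for the rigid pair does hold, but for a different reason: with two relative points the log tangent bundle is $T_{\p^1}(-\log\{0,\infty\})\cong\CO_{\p^1}$, so the Euler class of the obstruction bundle is $(-1)^g\lambda_g$, which is pulled back from $\Mbar_g$ and therefore pushes forward to zero along $\Mbar_g\times\p^1\to\Mbar_g$ — exactly the $N=2$ instance of your part (a) computation. (Equivalently, the paper's degeneration argument gives $2-2=0$.) So the fix is to apply your own method rather than passing to the rubber.
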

\begin{proof}
(a) First we consider the case of genus $g=0$. If $d>0$, the invariants vanish by the product formula for relative Gromov-Witten invariants \cite{LQ}
and the results of Section~\ref{subsec:GW theory for elliptic surfaces}. The case $g=d=0$ is excluded by stability. 

In positive genus, 
we can compute the absolute invariants of $S \times \p^1$
by applying the localization formula and Proposition~\ref{prop:lambda g-1 invariants}: 
\[ \left\langle 1 \right\rangle^{\GW, S \times \p^1}_{g, (df,0)}
=
2 \int_{[ \Mbar_{g}(E,df) ]^{\vir}} (-1)^{g-1} \lambda_{g-1}
=
2 \delta_{g1} d_{\Sigma} \frac{\sigma(d)}{d}.
\]
The degeneration formula for degenerating $\p^1$ to $\p^1 \cup_{x} \p^1$ gives \cite{Li2}:
\[
\left\langle 1 \right\rangle^{\GW, S \times \p^1}_{g, (df,0)}
=
\left\langle  \varnothing | 1 \right\rangle^{\GW, (S \times \p^1, S_{0})}_{g, (df,0)}
+ \left\langle  \varnothing | 1 \right\rangle^{\GW, (S \times \p^1, S_{\infty})}_{g, (df,0)}.
\]
We find that
\[ \left\langle \varnothing | 1 \right\rangle^{\GW, (S \times \p^1, S_{0})}_{g, (df,0)}
= 
\delta_{g1} d_{\Sigma} \frac{\sigma(d)}{d}.
\]
Hence by the degeneration formula again (now degenerating off $3$ rational tails):
\[
\left\langle 1 \right\rangle^{\GW, S \times \p^1}_{g, (df,0)}
=
\left\langle \varnothing, \varnothing, \varnothing | 1 \right\rangle^{\GW, (S \times \p^1, S_{0,1,\infty})}_{g, (df,0)}
+ 3 \left\langle \varnothing | 1 \right\rangle^{\GW, (S \times \p^1, S_{0})}_{g, (df,0)}.
\]
Hence
\[
\left\langle \varnothing, \varnothing, \varnothing | 1 \right\rangle^{\GW, (S \times \p^1, S_{0,1,\infty})}_{g, (df,0)}
=
- \delta_{g1} d_{\Sigma} \frac{\sigma(d)}{d}.
\]
The case (b) is similar.
\end{proof}

Let
$\1, \pt_n \in H^{\ast}(S^{[n]})$
denote the unit and the class of a point on the Hilbert scheme respectively,
viewed here as the weighted partitions 
$\frac{1}{n!} (1,1)^n$ and $(1,\pt)^n$ respectively.
\begin{prop} \label{prop:p11 evaluation}
$\sum_{d \geq 0} Z^{(S \times \p^1, S_{0,1,\infty})}_{\GW, (df,n)}\left( \pt_n, 1, 1 \right) q^d
\ =\  \prod_{d \geq 1} (1-q^d)^{d_{\Sigma}}.$
\end{prop}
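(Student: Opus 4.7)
The strategy is to analyze the disconnected moduli of relative stable maps to $(S \times \p^1, S_{0,1,\infty})$ in class $(df, n)$ by splitting each source curve into connected components, and to show that only two types of connected components contribute: ``trivial tails'' of class $(0,1)$ (accounting for the relative markings) and ``main'' components of class $(d_i f, 0)$ (with no markings). This will reduce the invariant for general $n$ to the $n=0$ case treated in Lemma~\ref{lemma:some eval}(a).

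First I will establish vanishing of the connected invariant $\langle \lambda_1, \lambda_2, \lambda_3 \rangle^{\text{conn}, (S \times \p^1, S_{0,1,\infty})}_{g, (d'f, n')}$ whenever $n' \geq 2$ and the first relative insertion carries all $\pt$-weights. The key point is that for any connected map $(f_S, f_{\p^1}) : C' \to S \times \p^1$ of class $(d'f, n')$, the composition $\pi \circ f_S : C' \to \Sigma$ is constant, so $f_S(C')$ lies inside a single elliptic fiber $E_q \subset S$. Representing the $\pt$-insertions at $0$ by the Poincar\'e duals of $n$ generic points $p_1, \ldots, p_n \in S$ (chosen to lie in $n$ distinct elliptic fibers), the requirement that two markings at $0$ on the same connected component map to $p_i$ in distinct fibers is incompatible with $f_S(C') \subset E_q$, so the corresponding cycle intersection vanishes. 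Target bubbling of the $\p^1$-factor does not affect this argument since the $S$-projection is unchanged. For $n' = 1$, a degree $1$ relative map $f_{\p^1}$ forces $C' \cong \p^1$ with $f_{\p^1}$ the identity, hence $f_S : \p^1 \to S$ must be constant, forcing $d' = 0$; a direct computation gives the $(0,1)$ connected invariant equal to $\int_S \pt = 1$.

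With these vanishings in place, every stable map contributing to the disconnected invariant decomposes into exactly $n$ trivial tails (one passing through each $p_i$) together with an arbitrary configuration of main components summing to $(df, 0)$. For a fixed such decomposition, the moduli consist of $(n!)^2$ distinct marked configurations, corresponding to bijective assignments of the $n$ relative markings at $1$ and at $\infty$ to the $n$ tails; the assignment at $0$ is uniquely determined by the $\pt_i$-constraints. Each such configuration contributes integrand $1$, and the $(n!)^2$ factor is exactly canceled by the $\tfrac{1}{(n!)^2}$ prefactor in the two identity insertions $\1_{S^{[n]}} = \tfrac{1}{n!}(1,1)^n$. Hence the tail contribution equals $1$ and the invariant reduces to the corresponding $n=0$ invariant from the main components alone. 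Applying the exponential connected/disconnected relation together with Lemma~\ref{lemma:some eval}(a) yields
\[
\sum_{d \geq 0} q^d Z^{(S \times \p^1, S_{0,1,\infty})}_{\GW, (df, 0)}(\varnothing, \varnothing, \varnothing)
= \exp\Big( d_\Sigma \sum_{d \geq 1} \log(1 - q^d) \Big)
= \prod_{d \geq 1} (1-q^d)^{d_\Sigma}.
\]

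The main technical obstacle will be to confirm the vanishing argument for connected invariants in the presence of target bubbling at the relative divisors, verifying that the single-fiber constraint on $f_S(C')$ continues to apply uniformly across all bubble components. The remaining combinatorial bookkeeping of symmetry factors for the labelled markings is absorbed cleanly by the exponential formula relating connected and disconnected series.
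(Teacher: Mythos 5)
Your overall strategy coincides with the paper's: isolate the connected components, show that only unit-degree tubes and purely vertical components contribute, reduce to the $n=0$ series of Lemma~\ref{lemma:some eval}(a), and exponentiate. Your incidence argument for components of class $(d'f,n')$ with $n'\geq 2$ (image of a connected component lies in a single fiber of $\pi$, so it cannot pass through two generic $\pt$-representatives chosen in distinct fibers) is a legitimate and more elementary substitute for part of the paper's virtual-class argument in that range.

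However, there is a genuine gap in your treatment of the case $n'=1$, $d'>0$. You assert that a connected component of degree $1$ over $\p^1$ must have irreducible domain $C'\cong\p^1$ mapping isomorphically to the $\p^1$-factor, forcing $d'=0$. This is false: a connected stable map of class $(d'f,1)$ with $d'>0$ can consist of a tube $\p^1\to\{x\}\times\p^1$ with additional irreducible components attached at nodes that are contracted by the projection to $\p^1$ but map with total degree $d'$ into the fiber $\pi^{-1}(\pi(x))\times\{z\}$. Such configurations lie in the moduli space (the $\bullet$ convention only excludes entire connected components contracted by the full map to $S\times\p^1$), and your incidence argument gives no contradiction for them: the single relative point over $S_0$ imposes only the codimension-two condition that $x=p_i$, which matches the virtual dimension, so the contribution does not vanish for positional reasons. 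These are exactly the cross terms that would mix the $\pt$-insertions with positive powers of $q$ and change the answer, so ruling them out is the crux of the computation. The paper excludes them using the virtual class description of Proposition~\ref{prop:GW class elliptic surface} (cosection localization forcing a factor of $(-1)^{g-1}\lambda_{g-1}$ supported on maps to a single smooth fiber) together with Proposition~\ref{prop:elliptic surface genus 0} and the product formula; some input of this kind is unavoidable, and your proof would need to incorporate it to be complete.
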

\begin{proof}
We have to evaluate
\[ \sum_{d \geq 0}
Z^{(S \times \p^1, S_{0,1,\infty})}_{\GW, (df,n)}\left( \pt_n, 1, 1 \right) q^d
=
\sum_{d \geq 0} \sum_{g \in \BZ} (-1)^{g-1+n} z^{2g-2+2n} q^d
\left\langle \, \pt_n, 1, 1 \right\rangle^{\GW, (S \times \p^1, S_{0,1,\infty}), \bullet}_{g, (df,n)}. \]

Assume $n \geq 1$ first.
If $g>0$ or $d>0$,
by Proposition~\ref{prop:GW class elliptic surface} and Proposition~\ref{prop:elliptic surface genus 0} we have the vanishing of the following {\em connected} Gromov-Witten invariants:
\[ \left\langle \, \pt_n, 1, 1 \right\rangle^{\GW, (S \times \p^1, S_{0,1,\infty})}_{g, (df,n)} = 0.\]
In case $g=d=0$, the invariant vanishes unless $n=1$,
where the moduli space parametrizes a tube and hence is isomorphic to $S$. Then one finds
\[ \left\langle \, \pt_n, 1, 1 \right\rangle^{\GW, (S \times \p^1, S_{0,1,\infty})}_{0, (0,1)} = 1. \]

If $n=0$ by Lemma~\ref{lemma:some eval} we have
\[ \left\langle \, \varnothing, \varnothing, \varnothing \right\rangle^{\GW, (S \times \p^1, S_{0,1,\infty})}_{g, (df,0)}
=
- \delta_{g1} d_{\Sigma} \frac{\sigma(d)}{d}.
\]

The only maps that contribute to the disconnected invariant are hence given by $n$ tubes, and genus $1$ vertical components. The overall genus is $1-n$.
The contribution from the tubes is $1$. So:
\[
\sum_{d \geq 0}
Z^{(S \times \p^1, S_{0,1,\infty})}_{\GW, (df,n)}\left( \pt_n, 1, 1 \right) q^d
=
\exp\left( - d_{\Sigma} \sum_{d \geq 1} \frac{\sigma(d)}{d} q^d \right)
=
\prod_{m \geq 1} (1-q^m)^{d_{\Sigma}}. \qedhere
\]
\end{proof}

\subsection{Gromov-Witten evaluations}
Consider $H^{\ast}(S)$-weighted partitions of $n$,
\begin{gather*}
\lambda = \big( (\lambda_1, \delta_1) , \ldots, (\lambda_{\ell(\lambda)}, \delta_{\ell(\lambda)} ) \big) \\
\mu = \big( (\mu_1, \epsilon_1) , \ldots, (\mu_{\ell(\mu)}, \epsilon_{\ell(\mu)} ) \big).
\end{gather*}
We assume that $\deg(\lambda) + \deg(\mu) = 2n-1$.
Define also the tuples
\begin{gather*}
a=(a_1,\ldots, a_r) := (\lambda_1,\ldots, \lambda_{\ell(\lambda)}, -\mu_1, \ldots, -\mu_{\ell(\mu)}) \\
\gamma=(\gamma_1,\ldots, \gamma_r) := (\delta_1,\ldots, \delta_{\ell(\lambda)}, \epsilon_1, \ldots, \epsilon_{\ell(\mu)}.
\end{gather*}
The four cases in the following proposition cover (using linearity) {\it all} possible weighted partitions $\lambda,\mu$ with $\deg(\lambda) + \deg(\mu) = 2n-1$.

\begin{prop} \label{prop:DR eval connected}
Let $E \subset S$ be a fixed smooth fiber. The following holds:
\begin{enumerate}
\item[(a)]
If $\gamma_i|_{E} = 0$ for some $i$, then
$Z^{(S \times \p^1, S_{0,\infty}), \textup{connected}}_{\GW, (df,n)}\left( \lambda, \mu | \tau_{0}(\omega \Sigma) \right) = 0$
 for all $d>0$.
\item[(b)] If $\gamma_i|_{E} = \gamma_j|_{E} = \alpha$ or $\gamma_i|_{E} = \gamma_j|_{E} = \beta$ for some $i\ne j$ and $\gamma_{\ell}=[\Sigma]$ otherwise, then $Z^{(S \times \p^1, S_{0,\infty}), \textup{connected}}_{\GW, (df,n)}\left( \lambda, \mu | \tau_{0}(\omega \Sigma) \right) = 0$
 for all $d$.
\item[(c)]
If $\gamma_i=1$ and $\gamma_j=[\Sigma]$ for $j \neq i$, then
\begin{multline*}
\sum_{d \geq 1} Z^{(S \times \p^1, S_{0,\infty}), \textup{connected}}_{\GW, (df,n)}\left( \lambda, \mu | \tau_{0}(\omega \Sigma) \right) q^d \\
=
(-1)^n d_{\Sigma}
\frac{a_i^2}{a_1 a_2 \ldots a_{r}} \sum_{S \subset \{1, ... , r\}} (-1)^{|S|}
\sum_{g \geq 1} \frac{(a_S z)^{2g-2+r}}{(2g - 2 + r)!} \left( q \frac{d}{dq} \right)^{r-1} G_{2g}(q).
\end{multline*}
\item[(d)] If $\gamma_i|_{E} = \alpha$, $\gamma_j|_{E} = \beta$ with $i<j$ and $\gamma_{\ell}=[\Sigma]$ otherwise, then
\begin{multline*}
\sum_{d \geq 1} Z^{(S \times \p^1, S_{0,\infty}), \textup{connected}}_{\GW, (df,n)}\left( \lambda, \mu | \tau_{0}(\omega \Sigma) \right) q^d \\
= 
(-1)^n d_{\Sigma}
\frac{-a_i a_j}{a_1 a_2 \ldots a_{r}} \sum_{S \subset \{1, ... , r\}} (-1)^{|S|}
\sum_{g \geq 1} \frac{(a_S z)^{2g-2+r}}{(2g - 2 + r)!} \left( q \frac{d}{dq} \right)^{r-1} G_{2g}(q).
\end{multline*}
\end{enumerate}
\end{prop}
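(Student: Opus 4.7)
The plan is to reduce the relative Gromov-Witten invariants on $(S \times \p^1, S_{0, \infty})$ with fiber class $(df,n)$ to integrals on the moduli space of curves controlled by Theorem~\ref{thm:evaluation}, proceeding in three steps: rigidification, cosection localization to a smooth fiber $E$, and the product formula in relative Gromov-Witten theory.

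First, I would apply rigidification (Proposition~\ref{prop:rigidification}) to convert the fixed-$\p^1$ invariant with insertion $\tau_0(\omega \Sigma)$ into a rubber invariant on $(S \times \p^1, S_{0,\infty})^{\sim}$ with insertion $\tau_0(\Sigma)$, whose interior evaluation now lands in $S$ alone. Second, I would establish a rubber analog of Proposition~\ref{prop:GW class elliptic surface}: for class $(df, n)$, the virtual class of the rubber relative moduli space of $(S \times \p^1, S_{0,\infty})^{\sim}$ equals
\[
d_{\Sigma} \cdot \iota_{\ast}\left( [\Mbar^{\sim}(E \times \p^1, E_{0,\infty})]^{\vir} \cdot (-1)^{g-1}\lambda_{g-1} \right),
\]
where $\iota$ is the inclusion induced by a smooth fiber $E \subset S$. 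This is proved by the same cosection localization argument as in Proposition~\ref{prop:GW class elliptic surface}: a rational cosection of $\omega_S$ with simple zeros and poles on smooth fibers induces a cosection on the obstruction sheaf of the rubber moduli space, the degeneracy locus is confined to the preimage of the zero/pole fibers under the map to $\Sigma$, and universality of the localized cycle (checked via the test case $S = \p^1 \times E$ by virtual $\BC^{\ast}$-localization) fixes it uniquely. Under this reduction, the cohomology insertions restrict from $S$ to $E$: we have $\Sigma|_E = \pt_E$, $[\Sigma]|_E = \pt_E$, $1|_E = 1$, $f|_E = 0$, and odd classes $\alpha, \beta$ restrict to the generators of $H^1(E)$.

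Third, I would apply the product formula as in \eqref{product formula ExP1} to the rubber moduli space on $(E \times \p^1, E_{0, \infty})^{\sim}$, now with the rigidified marking carrying $\tau_0(\pt_E)$. This expresses each invariant (up to the partition function prefactor from \eqref{defn:ZGW}) as
\[
d_{\Sigma} \cdot \int_{\Mbar_{g,r+1}} \DR_g(a_1, \ldots, a_r, 0) \cdot (-1)^{g-1}\lambda_{g-1} \cdot \CC_g(\gamma_1|_E, \ldots, \gamma_r|_E, \pt_E),
\]
where the final $0$ in the DR tuple and the $\pt_E$ insertion correspond to the extra interior marking. For case (a), this vanishes since $\CC_g$ is multilinear in its arguments. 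For case (b), I would adapt the method of Proposition~\ref{prop:Ratio even odd}: the joint evaluation factors through $N_{a_1,\ldots,a_r} \times E \subset E^{r+1}$, and the reduced integral over $E^r$ vanishes by a computation parallel to Lemma~\ref{lemma:N evaluation}, using that $(m_{a_i} \times m_{-a_j})_{\ast}(\alpha \otimes \alpha) = -a_i a_j\,(\alpha \otimes \alpha)$ together with $\int_{E^2} (\alpha \otimes \alpha) \Delta_E = 0$, which follows from $\alpha^2 = 0$ on $E$. For cases (c) and (d), I would apply the two formulas of Theorem~\ref{thm:evaluation} with $n = r+1$ and take the limit $a_{r+1} \to 0$: the alternating sum over subsets produces a factor of $a_{r+1}$ that cancels the $1/a_{r+1}$ prefactor, shifting the exponent of $a_S$ from $2g-2+(r+1)$ down to $2g-2+r$ and the factorial from $(2g-1+r)!$ to $(2g-2+r)!$. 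Combining with $d_{\Sigma}(-1)^{g-1}$ and the partition function prefactor then yields the stated formulas.

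The main obstacle is the rubber analog of Proposition~\ref{prop:GW class elliptic surface} in Step 2: the obstruction theory of rubber relative stable maps includes contributions from bubble components over the relative divisors, and the rational cosection must be checked to behave appropriately on such bubbles. A fallback strategy would avoid the rubber cosection argument entirely by first handling $S = \p^1 \times E$ via $\BC^{\ast}$-localization and then using the degeneration of $S$ to the normal cone of a smooth fiber combined with induction on the degree $d$, parallel to the approach taken for Theorem~\ref{thm:GWPT correspondence}.
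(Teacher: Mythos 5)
There is a genuine gap at your Step 2, and it is exactly the one you flag yourself: you need a rubber analogue of Proposition~\ref{prop:GW class elliptic surface}, i.e.\ a cosection localization statement for the virtual class of the moduli space of rubber relative stable maps to $(S\times\p^1, S_{0,\infty})^{\sim}$. This is not established in the paper and is not routine: the obstruction theory of rubber maps lives on expanded degenerations and the interaction of the cosection with bubble components would have to be controlled. The paper sidesteps this entirely by reordering your Steps 2 and 3. After rigidification it first applies the divisor equation to strip the $\tau_0(\Sigma)$ insertion (producing the factor $d = \Sigma\cdot df$), then applies the product formula of \cite{LQ} to the rubber geometry $(S\times\CC_{0,2}, S_{0,\infty})$ with target surface $S$ itself; this expresses the invariant as
\[
d\int_{[\Mbar_{g,r}(S,df)]^{\vir}}\DR_g(a_1,\ldots,a_r)\prod_{i=1}^{r}\ev_i^{\ast}(\gamma_i),
\]
an integral over the \emph{absolute} moduli space $\Mbar_{g,r}(S,df)$. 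Only at that point is the cosection result invoked, in precisely the form already proven as Proposition~\ref{prop:GW class elliptic surface}. With that reordering no new rubber statement is needed, and your remaining computations go through.

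Two secondary remarks. First, because you do not use the divisor equation, you carry the rigidified marking as an extra DR part $a_{r+1}=0$ with a $\pt_E$ insertion and evaluate via a limit $a_{r+1}\to 0$ in Theorem~\ref{thm:evaluation}; that theorem is stated for all $a_i\neq 0$, so you must justify the limit (polynomiality of the DR integral in the $a_i$ does this, but it should be said), whereas the paper's divisor-equation route lands directly on Theorem~\ref{thm:evaluation} with $r$ nonzero parts. Second, for part (b) the paper simply quotes the monodromy vanishing $\CC_g(\alpha^{\times 2},\pt^{\times n})=0$; your alternative via the factorization of the evaluation map through $N_{a_1,\ldots,a_r}$ and $\int_E\alpha\cdot\alpha=0$ is correct and consistent with Proposition~\ref{prop:Ratio even odd}, just heavier than necessary.
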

Here, the superscript \emph{connected} stands for the partition function of connected invariants.

\begin{proof}
We have
\[ Z^{(S \times \p^1, S_{0,\infty}), \textup{connected}}_{\GW, (df,n)}\left( \lambda, \mu | \tau_{0}(\omega \Sigma) \right) 
=
\sum_{g} (-1)^{-n+r+g-1} z^{2g-2+r} \langle \lambda, \mu | \tau_0(\omega \Sigma) \rangle^{(S \times \p^1, S_{0, \infty}), \GW}_{g, (df, n)}.
\]
Since we assumed $d \geq 1$, the terms of genus zero vanish. Hence assume $g \geq 1$. 
By derigidification (Proposition~\ref{prop:rigidification}), the divisor equation, the product formula \cite{LQ}, and Proposition~\ref{prop:GW class elliptic surface} we get
\begin{align*}
& \langle \lambda, \mu | \tau_0(\omega \Sigma) \rangle^{(S \times \p^1, S_{0, \infty}), \GW}_{g, (df, n)} \\
& = 
\langle \lambda, \mu | \tau_0(\Sigma) \rangle^{(S \times \CC_{0,2}, S_{0, \infty}), \GW}_{g, (df, n)} \\
& = 
d \langle \lambda, \mu \rangle^{(S \times \CC_{0,2}, S_{0, \infty}), \GW}_{g, (df, n)} \\
& = 
d \int_{[ \Mbar_{g,r}(S,df)]^{\vir}} \DR_{g}(a_1,\ldots, a_r) \prod_{i=1}^{r} \ev_i^{\ast}(\gamma_i) \\
& = 
d \cdot d_{\Sigma} \int_{[ \Mbar_{g,r}(E,d)]^{\vir}} (-1)^{g-1} \lambda_{g-1} \DR_{g}(a_1,\ldots, a_r) \prod_{i=1}^{r} \ev_i^{\ast}(\gamma_i|_{E}).
\end{align*}
Part (a) follows. Part (b) also follows because $\CC_g(\alpha^{\times 2},\pt^{\times n}) = 0$ by a basic monodromy relation. Parts (c) and (d) follow from the integrals computed in Theorem~\ref{thm:evaluation}.
\end{proof}

The disconnected series is expressed in terms of the connected one as follows:
\begin{prop} \label{prop:disconnected GW eval}
For any $\gamma \in H^{\ast}(S)$ and $k \geq 0$ we have
\[ Z^{(S \times \p^1, S_{0,\infty})}_{\GW, (df,n)}\left( \lambda, \mu | \tau_{k}(\omega \gamma) \right) =
\sum_{ \substack{ \lambda = \lambda' \cup \rho \\ \mu = \mu' \cup \rho' }}
\left(\int_{S^{[n-n']}} \rho \cup \rho' \right)
Z^{(S \times \p^1, S_{0,\infty}), \textup{connected}}_{\GW, (df,n')}\left( \lambda', \mu' | \tau_{k}(\omega \gamma) \right)
 \]
where $\rho, \rho'$ run over subpartitions with $\ell(\rho)=\ell(\rho')$ and $|\rho|=|\rho'|$,
and $n'=n-|\rho|$.
\end{prop}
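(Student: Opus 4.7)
The plan is to derive the formula from the standard decomposition of the moduli space of disconnected relative stable maps, combined with the identification of tube invariants with the Poincar\'e pairing on the Hilbert scheme.

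First, I decompose the moduli space. A disconnected relative stable map to $(S \times \p^1, S_{0,\infty})$ in class $(df, n)$ with a single interior marking has a unique connected ``main'' component carrying the marking and some ``side'' components carrying no interior markings. The data $(df, n, \lambda, \mu)$ distribute across connected components, and the virtual class of the disconnected moduli space is the exterior product of the virtual classes of the connected pieces. Consequently the partition function factors as
\[
Z^\bullet_{(df,n)}(\lambda,\mu\,|\,\tau_k(\omega\gamma))
= \sum_{\mathrm{decomp}} Z^\circ_{(d_0f, n_0)}(\lambda_0, \mu_0 \,|\, \tau_k(\omega\gamma)) \prod_a Z^\circ_{(d_af, n_a)}(\rho_a, \rho'_a\,|\,\varnothing),
\]
summed over all combinatorial splittings with $d_0 + \sum_a d_a = d$ and $n_0 + \sum_a n_a = n$, together with matching splittings $\lambda = \lambda_0 \sqcup \bigsqcup_a \rho_a$ and $\mu = \mu_0 \sqcup \bigsqcup_a \rho'_a$.

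Next, I show that side components with $d_a > 0$ contribute zero, so that the main component must carry all of the fiber class $df$. For a side component of class $(d'f, n_i)$ with $d' > 0$, no interior markings, and profiles $\rho_a, \rho'_a$, I apply the product formula in relative Gromov-Witten theory \cite{LQ} together with Proposition~\ref{prop:GW class elliptic surface} to reduce the side-component invariant to an integral over $\Mbar_g(E, d')$ weighted by $(-1)^{g-1}\lambda_{g-1}$, a double ramification cycle, and relative evaluation classes pulled back from $E$. Since there are no interior markings, the resulting elliptic-curve class contains no point insertion, and a combination of Lemma~\ref{lemma:basic facts}, Lemma~\ref{lemma:unit vanishing}, and the Mumford relation $\lambda_{g-1}^2 = 2\lambda_{g-2}\lambda_g$ kills the integral.

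For the surviving decompositions, each side component has class $(0, n_{i,a})$, no interior markings, and profiles $\rho_a$, $\rho'_a$. The stable map then projects to a point of $S$ and covers $\p^1$ with degree $n_{i,a}$; stability and a dimension count force both profiles to be single parts $((n_{i,a}, \delta_a))$ and $((n_{i,a}, \delta'_a))$, so the map is the rigid tube $z \mapsto z^{n_{i,a}}$ with moduli $S$ and automorphism factor $1/n_{i,a}$. Integration against the relative evaluations over $S$ contributes $(\delta_a, \delta'_a)/n_{i,a}$ once the prefactor conventions of \eqref{defn:ZGW} are taken into account. Summing the products of these tube contributions over all matchings of the parts of $\rho$ with the parts of $\rho'$ reproduces the standard Nakajima formula for the Poincar\'e pairing $\int_{S^{[n-n']}}\rho\cup\rho'$ on the Hilbert scheme, completing the proof. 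The main obstacle is the vanishing of side contributions with positive fiber class, which depends on a careful combination of the product formula, Proposition~\ref{prop:GW class elliptic surface}, and the elliptic-curve vanishing lemmas.
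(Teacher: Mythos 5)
Your overall route is the same as the paper's: decompose the disconnected moduli space into the connected component carrying the interior marking plus unmarked components, argue that the unmarked components contribute only through tubes, and identify the product of tube contributions with the Poincar\'e pairing $\int_{S^{[n-n']}}\rho\cup\rho'$ (the sign being absorbed by the disconnected normalization in \eqref{defn:ZGW}). The difference is that the paper obtains the key intermediate fact --- every unmarked connected component must be a tube of some degree $m$ over $\p^1$, of degree $0$ over $S$, totally ramified over $S_{0,\infty}$, contributing $1/m$ --- by citing \cite[Lemma 1]{HAE}, whereas you attempt to derive it from scratch, and it is exactly there that your argument has gaps.

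First, for an unmarked component with positive degree over $S$ you claim that ``since there are no interior markings, the resulting elliptic-curve class contains no point insertion''; this is not true, because the relative markings of $\rho_a,\rho_a'$ carry cohomology weights on $S$ (e.g.\ $\pt$ or divisor classes) which restrict nontrivially to the fiber $E$ and do produce insertions after applying Proposition~\ref{prop:GW class elliptic surface}, so the vanishing does not follow from Lemmas~\ref{lemma:basic facts} and \ref{lemma:unit vanishing} alone. Moreover, the product formula as used in the paper (equation \eqref{product formula ExP1}) concerns the rubber geometry and produces the DR cycle; an unmarked component lives in the rigid $(S\times\p^1,S_{0,\infty})$, and the passage to rubber --- which is where the actual vanishing comes from, via the trivial $\BC^{\ast}$-direction in the obstruction theory --- is precisely the content of the cited lemma and is not supplied. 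Second, for the components of degree $0$ over $S$, a dimension count does not force both ramification profiles to be single parts: for instance profiles $((1,D),(1,D'))$ over $0$ and $((2,D''))$ over $\infty$ with divisor weights match the virtual dimension $\ell(\rho_a)+\ell(\rho_a')$, and excluding such configurations again requires the obstruction-bundle/rubber analysis of \cite[Lemma 1]{HAE}. Both holes are closed by simply invoking that lemma, as the paper does; as written, your substitute arguments would not go through.
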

\begin{proof}
Consider a stable map to $(S \times \p^1, S_{0,\infty})$
that lies on a component of the moduli space that contributes non-trivially to the invariant.
There is one component of the domain $C$ which carries the marking $\omega \gamma$.
By \cite[Lemma 1]{HAE} all other components of $C$ must parametrize tubes, that is maps
$f : \p^1 \to S \times \p^1$ which are totally ramified over the boundary $S_{0,\infty}$,
of some degree $m$ over $\p^1$, and of degree $0$ over $S$. The contribution of the tube is $1/m$, see \cite{HAE}.
We let $\rho, \rho'$ denote the weighted subpartitions corresponding to the markings  on the tube components at $S_{0}$ and $S_{\infty}$ respectively. Then the contributing factor from the tubes is precisely
\[
(-1)^{\ell(\rho) + |\rho|} \int_{S^{[|\rho|]}} \rho \cup \rho'.
\]
One checks that
$(-1)^{\ell(\rho) + |\rho|}$ is also the sign that 
appears in the difference between the connected and disconnected partition function. Hence the claim follows.
\end{proof}


\subsection{Pandharipande-Thomas evaluations}
Our next goal is to move the above evaluations to the PT side using the correspondence.
For that we rewrite the previous evaluation in the variable $p=e^{z}$.
For $r \geq 1$ recall from Section~\ref{sec:Modular and Jacobi forms} the series
\[
\A_r(p,q) = \frac{B_r}{r} + \delta_{r,1}\frac{1}{2} \frac{p+1}{p - 1} - \sum_{k,\ell \geq 1} \ell^{r-1} (p^{k} + (-1)^r p^{-k}) q^{k\ell} .
\]
Under the variable change $p=e^{z}$ we have the expansion:
\begin{align*}
\A_r(z) = 
\frac{1}{z} \delta_{r,1} & -2 \sum_{\substack{0 \leq m < r-1 \\ m \equiv r (\text{mod }2)}}
\frac{z^m}{m!} \left(q \frac{d}{dq} \right)^m G_{r-m} 
 -2 \sum_{g \geq 1} \frac{z^{2g-2+r}}{(2g-2+r)!} \left(q \frac{d}{dq} \right)^{r-1} G_{2g}
\end{align*}

\begin{lemma}
For any tuple $a=(a_1, \ldots, a_n)$ and $k < n$ we have
\[ \sum_{S \subset \{ 1, \ldots, n \}} (-1)^{|S|} a_S^{k} = 0, \]
where $a_S = \sum_{i\in S} a_i$.
\end{lemma}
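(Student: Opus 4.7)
The natural approach is via a generating function in an auxiliary variable $t$. The key observation is the factorization identity
\[
\sum_{S \subset \{1,\ldots,n\}} (-1)^{|S|} e^{t\, a_S} \;=\; \prod_{i=1}^{n} \bigl(1 - e^{t a_i}\bigr),
\]
which follows by distributing $e^{t a_S} = \prod_{i \in S} e^{t a_i}$ and using the standard expansion of a product of binomials $\prod_i (1+y_i) = \sum_S \prod_{i \in S} y_i$ with $y_i = -e^{t a_i}$.

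Each factor $1 - e^{t a_i}$ on the right is a power series in $t$ vanishing to order at least one at $t=0$, so the product vanishes to order at least $n$ at $t=0$. Comparing Taylor coefficients of $t^k$ on the two sides, one finds
\[
\frac{1}{k!} \sum_{S \subset \{1,\ldots,n\}} (-1)^{|S|} a_S^{k} \;=\; 0 \quad \text{for all } k < n,
\]
which is exactly the claim.

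There is nothing subtle to overcome here; the only care needed is to justify the expansion in $t$ (which is immediate since both sides are formal power series with polynomial coefficients in the $a_i$). An equivalent combinatorial proof expands $a_S^k$ multinomially and uses that for any proper subset $T \subsetneq \{1,\ldots,n\}$ one has $\sum_{S \supset T}(-1)^{|S|} = 0$; the generating function version above simply packages this cancellation efficiently.
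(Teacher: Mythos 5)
Your proof is correct. The factorization $\sum_{S}(-1)^{|S|}e^{t a_S}=\prod_{i=1}^n(1-e^{ta_i})$ is right, each factor vanishes to first order at $t=0$, so the product vanishes to order $n$ and the coefficient of $t^k$ for $k<n$ is zero, which is $\frac{1}{k!}$ times the sum in question. The route is genuinely different from the paper's: the paper treats the sum as a polynomial of degree $k$ in the $a_i$, observes that setting any single $a_i=0$ makes the terms cancel in pairs $S=T,\ T\cup\{i\}$ (so the polynomial is divisible by each $a_i$, hence by $a_1\cdots a_n$), and concludes it must vanish since $n>k$. Both arguments are elementary and short; the divisibility argument avoids introducing the auxiliary variable and any discussion of formal power series, while your exponential generating function makes the cancellation mechanism transparent and dovetails naturally with the way these alternating sums over subsets are actually used later in the paper (e.g.\ in Lemma~\ref{lemma:averaging}, where $\sum_S(-1)^{|S|}p^{a_S k}$ is factored in exactly this way). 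Either proof is perfectly acceptable.
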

\begin{proof}
The expression is a degree $k$ polynomial in the $a_i$. If a single $a_i$ is set to $0$, the terms cancel in pairs $S = T, T\cup\{i\}$, so the polynomial must be divisible by each $a_i$. But then it is divisible by $a_1a_2\cdots a_n$ and $n > k$, so the polynomial must be the zero polynomial.
\end{proof}

With these formulas the nonzero parts of Proposition~\ref{prop:DR eval connected} become:

\begin{prop}\label{prop:abcde}
If $\gamma_i=1$ and $\gamma_j=[\Sigma]$ for all $j \neq i$, then modulo the constant term (in $q$):
\[ \sum_{d \geq 1} Z^{(S \times \p^1, S_{0,\infty}), \textup{connected}}_{\GW, (df,n)}\left( \lambda, \mu | \tau_{0}(\omega \Sigma) \right) q^d \\
=
\frac{1}{2}(-1)^{n-1} d_{\Sigma}
\frac{a_i^2}{a_1 a_2 \ldots a_{r}} \sum_{S \subset \{1, ... , r\}} (-1)^{|S|}
A_r(a_S z)
\]
Similarly,
if $\gamma_i|_{E} = \alpha$, $\gamma_j|_{E} = \beta$ with $i<j$ and $\gamma_{\ell}=[\Sigma]$ otherwise, then modulo the constant term (in $q$):
\[
\sum_{d \geq 1} Z^{(S \times \p^1, S_{0,\infty}), \textup{connected}}_{\GW, (df,n)}\left( \lambda, \mu | \tau_{0}(\omega \Sigma) \right) q^d \\
=
\frac{1}{2}(-1)^{n-1} d_{\Sigma}
\frac{-a_i a_j}{a_1 a_2 \ldots a_{r}} \sum_{S \subset \{1, ... , r\}} (-1)^{|S|}
A_r(a_S z)
\]
\end{prop}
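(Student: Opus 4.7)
The plan is to substitute the evaluations of Proposition~\ref{prop:DR eval connected}(c)--(d) and recognize the resulting sums as the alternating-subset specialization of the Taylor expansion of $\A_r$ in Theorem~\ref{thm:2132}(ii). The balance condition $\sum_{i=1}^r a_i = |\lambda|-|\mu|=0$ combined with $a_i \neq 0$ forces $r \geq 2$, which is crucial below.

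Solving the Taylor expansion in Theorem~\ref{thm:2132}(ii) for the summand of interest gives
\[
\sum_{g\geq 1}\frac{z^{2g-2+r}}{(2g-2+r)!}\left(q\tfrac{d}{dq}\right)^{r-1}G_{2g}(q) \;=\; -\tfrac{1}{2}\A_r(z) + \frac{\delta_{r,1}}{2z} - \sum_{0\leq m<r-1}\frac{z^m}{m!}\left(q\tfrac{d}{dq}\right)^m G_{r-m}(q).
\]
I substitute $z\mapsto a_S z$ and apply the operator $\sum_{S\subset\{1,\ldots,r\}}(-1)^{|S|}$ to both sides. The $\delta_{r,1}/z$ pole vanishes outright since $r\geq 2$. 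For the polynomial-in-$z$ terms, each power $z^m$ comes weighted by $a_S^m$ with $0\leq m\leq r-2<r$, so the alternating-subset lemma stated just before Proposition~\ref{prop:abcde} (giving $\sum_S(-1)^{|S|}a_S^k=0$ for $k<r$) forces all such terms to cancel. What remains is the clean identity
\[
\sum_S(-1)^{|S|}\sum_{g\geq 1}\frac{(a_S z)^{2g-2+r}}{(2g-2+r)!}\left(q\tfrac{d}{dq}\right)^{r-1}G_{2g}(q) \;=\; -\tfrac{1}{2}\sum_S(-1)^{|S|}\A_r(a_S z).
\]

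Plugging this into the formulas of Proposition~\ref{prop:DR eval connected}(c) and (d) yields exactly the right-hand sides of Proposition~\ref{prop:abcde}: the combined sign $(-1)^n\cdot(-\tfrac{1}{2})$ matches the prefactor $\tfrac{1}{2}(-1)^{n-1}$, and the multiplicative factors $a_i^2/(a_1\cdots a_r)$ and $-a_ia_j/(a_1\cdots a_r)$ from cases (c) and (d) are preserved. The qualifier \textit{modulo the constant term in $q$} is included only for bookkeeping safety: by Theorem~\ref{thm:2132}(i) the value $\A_r(a_S z)|_{q=0}$ equals $B_r/r$ (plus a pole when $r=1$, which does not occur here), a constant independent of $S$, so $\sum_S(-1)^{|S|}\A_r(a_S z)|_{q=0}=0$ and the identity in fact holds exactly as a formal power series.

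I do not anticipate any serious obstacle: the proof is a bookkeeping exercise matching the Taylor coefficients of $\A_r$ against the Gromov--Witten output, with the alternating-sum lemma doing all the cancellation work. The only subtle point is ensuring that the hypothesis $r\geq 2$ is in force so that both the $\delta_{r,1}$ pole term and the $m<r-1$ polynomial terms are eliminated by the lemma.
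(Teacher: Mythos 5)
Your proposal is correct and is exactly the computation the paper intends: the paper gives no explicit proof of Proposition~\ref{prop:abcde}, but the surrounding text (the restated Taylor expansion of $\A_r$ and the alternating-sum lemma $\sum_{S}(-1)^{|S|}a_S^k=0$ for $k<r$) makes clear that the intended argument is precisely your substitution of Theorem~\ref{thm:2132}(ii) into Proposition~\ref{prop:DR eval connected}(c)--(d), with $r\ge 2$ killing the $\delta_{r,1}/z$ pole and the lemma killing the polynomial tail. Your additional observation that the identity actually holds exactly (since the $q^0$-coefficient $B_r/r$ of $\A_r$ is independent of $S$ and so cancels in the alternating sum) is a correct refinement of the ``modulo the constant term'' qualifier.
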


We want to apply now the GW/PT correspondence for the disconnected series in Proposition~\ref{prop:disconnected GW eval}.
To formulate the invariants on the PT  side concisely
we use Nakajima operators. 

For a class $\gamma \in H^2(S)$ and integers $b_1, \ldots, b_r \in \BZ$ recall the class
$\star^{b_1, \ldots, b_r}(\gamma) \in H^{\ast}(S^r)$ from \eqref{star class}.

Define an operator on Fock space by
\[ T =
-\frac{1}{2}\sum_{\substack{r \geq 2 \\ b_1, \ldots, b_r \in \BZ_{\neq 0} \\ \sum_i b_i = 0}}
\frac{1}{r!}
\frac{(-1)^{r} }{b_1 \cdots b_r}
\left( \sum_{S \subset \{ 1, \ldots, r \}} (-1)^{|S|} A_r(p^{b_S}) \right)
: \Fq_{b_1} \cdots \Fq_{b_r}( \star^{b_1,\ldots, b_d}(f) ) : \]
where $b_S = \sum_{i \in S} b_i$.


\begin{prop} \label{prop:rigidified eval}
For any $\lambda, \mu \in H^{\ast}(S^{[n]})$ we have:
\[
\sum_{d \geq 0} Z^{(S \times \p^1, S_{0,\infty})}_{\PT, (df,n)}\left( \lambda, \mu | \ch_2( \omega \Sigma ) \right) q^d
=
\int_{S^{[n]}} (D(\Sigma) \cdot \lambda + d_{\Sigma} T\lambda) \cdot \mu
\]
\end{prop}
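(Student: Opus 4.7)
The plan is to apply the GW/PT correspondence (Theorem~\ref{thm:GWPT correspondence}) to translate the PT partition function into the GW one, then reduce to connected invariants and invoke the evaluations already at our disposal. Since $\omega\Sigma$ is effectively a divisor class on $S\times\p^1$, the universal descendent correspondence sends $\ch_2(\omega\Sigma)$ to $\tau_0(\omega\Sigma)$ with no correction terms. Hence
\[
\sum_{d\geq 0}Z^{(S\times\p^1,S_{0,\infty})}_{\PT,(df,n)}(\lambda,\mu\,|\,\ch_2(\omega\Sigma))\,q^d
=\sum_{d\geq 0}Z^{(S\times\p^1,S_{0,\infty})}_{\GW,(df,n)}(\lambda,\mu\,|\,\tau_0(\omega\Sigma))\,q^d.
\]

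For the $d=0$ contribution, all vertical curves in $S$ have class $0$, so the moduli of relative stable pairs is isomorphic to $S^{[n]}$ (each pair degenerates to $n$ sections of the $\p^1$-factor labeled by a point of $S^{[n]}$), and the insertion $\ch_2(\omega\Sigma)$ evaluates classically to $D(\Sigma)$. This yields the summand $\int_{S^{[n]}}D(\Sigma)\cdot\lambda\cdot\mu$. For $d\geq 1$ I would apply Proposition~\ref{prop:disconnected GW eval} to decompose the disconnected series as a sum over decompositions $\lambda=\lambda'\cup\rho$, $\mu=\mu'\cup\rho'$, with tube factor $\int_{S^{[|\rho|]}}\rho\cup\rho'$ and a connected factor. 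The connected factor, after rigidification and the product formula of \cite{LQ}, is computed by Proposition~\ref{prop:DR eval connected} (and its $p$-expanded form Proposition~\ref{prop:abcde}). Parts (a) and (b) show the only surviving contributions come from two patterns of insertions along $E$, and these correspond exactly to the two summands defining $\star^{b_1,\ldots,b_r}(f)$ in \eqref{star class}: the $b_i^2\pr_i^\ast(\pt)$-piece arises from case (c) (one insertion is $1$), and the $-b_ib_j\pr_{ij}^\ast(\Delta^{\text{odd}}_\ast(f))$-piece arises from case (d) (two odd-class insertions). Collecting all contributions, the coefficient $1/(b_1\cdots b_r)$ matches the Nakajima normalization of Definition~\ref{defn:coh weighted to Nakajima}, the sum over orderings of the tuple $a$ produces the $1/r!$ of the normally ordered product, and the prefactor $\tfrac{1}{2}(-1)^{n-1}d_\Sigma$ becomes the scalar $d_\Sigma$ multiplying $(T\lambda,\mu)$ after the $(-1)^r$ and $-\tfrac12$ in the definition of $T$ are accounted for. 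The tube factors assemble with the connected evaluations to reconstruct the full matrix coefficient of $T$ on weighted partitions.

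The main obstacle is the careful bookkeeping of signs and combinatorial factors in the assembly step. One must verify that (i) the tube contributions $\int_{S^{[|\rho|]}}\rho\cup\rho'$ reproduce precisely the Wick contractions implicit in the normally ordered product $:\!\Fq_{b_1}\cdots\Fq_{b_r}\!:$ acting on $\lambda$ and then paired with $\mu$; (ii) the antisymmetry signs incurred when permuting odd Nakajima operators past each other match the signs in the $\Delta^{\text{odd}}_\ast(f)$ term so that case (d) yields the correct $-b_ib_j$ coefficient; and (iii) the constant-in-$q$ part of $A_r$ (which is dropped in Proposition~\ref{prop:abcde}) is consistently absent from the $d\geq 1$ sum, so that no spurious $q^0$ contribution is produced when reassembling into $T$. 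Once these checks are done, matching the coefficients of $q^d$ on both sides completes the proof.
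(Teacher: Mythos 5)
Your proposal follows essentially the same route as the paper: apply the GW/PT correspondence with $\overline{\ch_2(\omega\Sigma)}=\tau_0(\omega\Sigma)$, split off the $d=0$ term, and for $d\geq 1$ feed Proposition~\ref{prop:disconnected GW eval} together with cases (a)--(d) of Proposition~\ref{prop:DR eval connected} (in the form of Proposition~\ref{prop:abcde}) into the Nakajima-operator bookkeeping that produces $T$; the identification of case (c) with the $b_i^2\pr_i^{\ast}(\pt)$ piece and case (d) with the $-b_ib_j\pr_{ij}^{\ast}(\Delta^{\text{odd}}_{\ast}(f))$ piece of $\star^{b_1,\ldots,b_r}(f)$ is exactly what the paper does. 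One point is incomplete as written: your treatment of the $d=0$ term only accounts for the bottom Euler characteristic $\chi=n$, where the stable pairs moduli space is indeed $S^{[n]}$; but the partition function $Z^{\PT}_{(0,n)}$ sums over all $\chi\geq n$ (all powers of $p$), and you must separately show the $\chi>n$ contributions vanish. The paper does this by derigidifying to the rubber $(S\times\CC_{0,2},S_{0,\infty})$ and applying the divisor equation, using that the class $(0,n)$ has zero intersection with $\Sigma\times\p^1$; without this step the claimed $q^0$-coefficient $\int_{S^{[n]}}D(\Sigma)\lambda\mu$ is not justified. (Also, $\omega\Sigma$ is a codimension-two class, not a divisor, though the conclusion $\overline{\ch_2(\gamma)}=\tau_0(\gamma)$ for primary insertions is still the correct input from the correspondence matrix of \cite{MOOP}.) The remaining sign and combinatorial checks you list are the same ones the paper dismisses as a straightforward (if tedious) evaluation, organized there by first reducing, via symmetry, self-adjointness of $T$, and the degree constraint, to weights in $\{1,[\Sigma]\}$ with a single weight equal to $1$.
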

\begin{proof}
We first write out the operator $T$ more concretely as
\begin{multline*}
T = 
-\frac{1}{2} \sum_{\substack{r \geq 2 \\ b_1, \ldots, b_r \in \BZ_{\neq 0} \\ \sum_i b_i = 0}}
\frac{1}{(r-1)!}
\frac{b_1^2 (-1)^{r} }{b_1 \cdots b_r}
\left( 
\sum_{S \subset \{ 1, \ldots, r \}} (-1)^{|S|} A_r(p^{b_S}) \right) : \Fq_{b_1}(\pt) \Fq_{b_2}(f) \cdots \Fq_{b_r}(f) : \\
-\frac{1}{2} \sum_{\substack{r \geq 2 \\ b_1, \ldots, b_r \in \BZ_{\neq 0} \\ \sum_i b_i = 0}}
\frac{1}{2 (r-2)!}
\frac{-b_1 b_2 (-1)^{r} }{b_1 \cdots b_r}
\left( \sum_{S \subset \{ 1, \ldots, r \}} (-1)^{|S|} A_r(p^{b_S}) \right)
: \Fq_{b_1}\Fq_{b_2}(\Delta^{\text{odd}}_{\ast}(f)) \Fq_{b_3}(f) \cdots \Fq_{b_r}(f) :
\end{multline*}

We then consider the case $d>0$ of the equality that we want to prove.
By the GW/PT correspondence (Theorem~\ref{thm:GWPT correspondence})
we can consider GW invariants on the left hand side.
Let us first assume that all the cohomology weights of $\lambda$ and $\mu$ are even
and given by classes $1, \pt, [\Sigma]$ or $\gamma \in H^2(S)$ with $\gamma|_{E} = 0$.
Then by Propositions~\ref{prop:disconnected GW eval} and 
 \ref{prop:DR eval connected}(a)
only the cohomology weights $\alpha \in \{ 1, [\Sigma] \}$ 
can appear in the connected partition function in Proposition~\ref{prop:disconnected GW eval}.
Similarly, by the Nakajima commutation relation only the Nakajima operators $\Fq_k(\alpha)$ with $\alpha \in \{ 1, [\Sigma] \}$ 
interact with the operator $T$, that is have a non-trivial commutator with $T$.
The Gromov-Witten partition function is symmetric in $\lambda,\mu$. Similarly, we have $T^{\ast} = T$ by a direct check.
Taking also into account the dimension/degree constraint,
it hence suffices to consider the case where
\[ \lambda = (\lambda_1, 1) (\lambda_2, [\Sigma]) \cdots (\lambda_{\ell},[\Sigma]), \quad 
\mu=(\mu_1, [\Sigma]) \cdots (\mu_{\ell'},[\Sigma]), \]
Then using the fact that $\lambda$ corresponds to the cohomology class $\frac{1}{\prod_{i} \lambda_i} \Fq_{\lambda_1}(1) \prod_{i \geq 2} \Fq_{\lambda_i}([\Sigma]) \vacuum$, 
the first part of Proposition~\ref{prop:abcde}
implies the claimed equality by a straightforward evaluation.
The odd part is similar (although more tedious).

Now consider the case $d=0$. Here the PT invariants on the left hand side vanish in case $\chi>n$ by derigidification and the divisor equation.
For $\chi=n$ the moduli space of stable pairs is isomorphic to $S^{[n]}$.
Under this isomorphism the class $\tau_0(\omega \Sigma)$ corresponds to $D(\Sigma)$,
and we find
\[ 
Z^{(S \times \p^1, S_{0,\infty})}_{\PT, (0,n)}\left( \lambda, \mu | \ch_2( \omega \Sigma ) \right)
=
\langle \lambda, \mu | \tau_0(\omega \Sigma) \rangle^{\PT, (S \times \p^1, S_{0,\infty})}_{\chi=n, (0,n)} =
\int_{S^{[n]}} \lambda \cdot \mu \cdot D([\Sigma]). \]
\end{proof}

We obtain a similar expression for the insertion $\ch_3(\omega)$.
Define the operator
\[
\widehat{T} =
-\frac{1}{2}\sum_{\substack{r \geq 2 \\ b_1, \ldots, b_r \in \BZ_{\neq 0} \\ \sum_i b_i = 0}}
\frac{1}{r!}
\frac{(-1)^{r} }{b_1 \cdots b_r}
\left( \sum_{S \subset \{ 1, \ldots, r \}} (-1)^{|S|} b_S A_{r-1}(p^{b_S}) \right)
: \Fq_{b_1} \cdots \Fq_{b_r}( \star^{b_1,\ldots, b_d}(f) ) :~, \]
where for $r=2$ the sum runs over all subsets $S$ of $\{ 1, 2 \}$ such that $S \neq \varnothing, \{ 1, 2 \}$.

\begin{prop} \label{prop:rigidified eval2}
For any $\lambda, \mu \in H^{\ast}(S^{[n]})$ we have:
\[
\sum_{d \geq 0} Z^{(S \times \p^1, S_{0,\infty})}_{\PT, (df,n)}\left( \lambda, \mu | \ch_3( \omega  ) \right) q^d
=
\int_{S^{[n]}} \left(\delta - \frac{1}{2} D(c_1(S))\right) \cdot \lambda \cdot \mu + \int_{S^{[n]}} d_{\Sigma} \widehat{T}(\lambda) \cdot \mu
\]
\end{prop}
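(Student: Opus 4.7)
The argument will closely parallel the proof of Proposition~\ref{prop:rigidified eval}, modified in two places: the classical ($d=0$) contribution is computed using Grothendieck--Riemann--Roch, and the quantum part requires incorporating an additional $\psi$-class arising because the descendent is now $\ch_3(\omega)$ rather than $\ch_2(\omega\Sigma)$.

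For the $d=0$ contribution, the only stable pairs in class $(0,n)$ with $\chi=n$ have the form $(\CO_{Z \times \p^1},1)$ for $Z \subset S$ of length $n$, so the moduli space is canonically $S^{[n]}$ with universal sheaf $\mathbb F = \CO_{\CZ \times \p^1}$. Let $\pi_{S^{[n]}}, \pi_S$ be the projections from $S^{[n]} \times S$. Then
\[
\pi_{\ast}\!\left(\ch_3(\mathbb F) \cdot \rho^{\ast}(\omega)\right) = \pi_{S^{[n]}\,\ast}\ch_3(\CO_{\CZ}).
\]
Applying Grothendieck--Riemann--Roch to $\CO_S^{[n]} = \pi_{S^{[n]}\,\ast}\CO_{\CZ}$ and reading off the degree-$1$ part gives $\delta = c_1(\CO_S^{[n]}) = \pi_{S^{[n]}\,\ast}\ch_3(\CO_{\CZ}) + \tfrac{1}{2} D(c_1(S))$, which produces the classical term $\delta - \tfrac{1}{2} D(c_1(S))$.

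For $d>0$, apply the GW/PT correspondence (Theorem~\ref{thm:GWPT correspondence}) to transfer $\ch_3(\omega)$ to the GW side; its leading image is $\tau_1(\omega)$, the subleading terms are $\tau_0$-descendents whose contribution is controlled by the analysis of $\ch_2(\omega\Sigma)$ already carried out for $T$. Derigidify via Proposition~\ref{prop:rigidification} (the $\omega$ is absorbed, converting $\tau_1(\omega)$ to a $\psi$-class at an extra interior marking on the rubber), apply the product formula of \cite{LQ} together with Proposition~\ref{prop:GW class elliptic surface}, to reduce the connected partition function to integrals
\[
d\cdot d_{\Sigma}\int_{[\Mbar_{g,r+1}(E,d)]^{\mathrm{vir}}} (-1)^{g-1}\lambda_{g-1}\,\psi_{r+1}\,\DR_g(a_1,\ldots,a_r,0)\prod_{i=1}^{r}\ev_i^{\ast}(\gamma_i|_E),
\]
where the trailing $0$ comes from the unconstrained descendent marking.

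Finally, evaluate this integral following the method of Theorem~\ref{thm:evaluation}: push forward along the map forgetting the extra marking (the $\psi_{r+1}$ converts to $\kappa_0 = 2g-2+r$ combined with boundary corrections, and the $0$-entry of $\DR_g$ yields further boundary terms), then apply Hain's formula on compact type, Proposition~\ref{prop:lambdaformula}, and the socle evaluations of Getzler--Pandharipande. The resulting combinatorial sum is the analogue of the one appearing in Theorem~\ref{thm:evaluation} but with the leading power shifted so that after packaging into $p=e^z$ one obtains $\frac12(-1)^{n-1}d_{\Sigma}\,(a_i^2/\prod a_\ell)\sum_S(-1)^{|S|}a_S A_{r-1}(p^{a_S})$ modulo the constant term in $q$, and similarly in the odd case via Proposition~\ref{prop:Ratio even odd}. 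Assembling these by Wick's theorem (pairing $\pt\leftrightarrow 1$ and $f\leftrightarrow[\Sigma]$ as in the proof of Proposition~\ref{prop:rigidified eval}) reproduces the operator $\widehat T$. The hard part will be verifying that the $\psi_{r+1}$ pushforward and the extra $0$-ramification jet in Hain's formula combine to produce exactly the shift $A_r \rightsquigarrow a_S A_{r-1}$ rather than some messier expression; ensuring that the subleading descendent corrections from the GW/PT matrix contribute only to the already-handled constant term in $q$ is the second bookkeeping obstacle.
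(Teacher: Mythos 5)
Your plan follows the paper's proof in outline: the $d=0$ term via the identification $\ch_3(\omega)=\pi_{S^{[n]}\ast}\ch_3(\CO_{\CZ})=\delta-\tfrac12 D(c_1(S))$ (the paper states this directly; your GRR derivation of it is fine), and the $d>0$ terms via the GW/PT correspondence, derigidification, the product formula and Proposition~\ref{prop:GW class elliptic surface}. However, your displayed reduction carries a spurious factor of $d$. For $\ch_3(\omega)$ the derigidified insertion is $\tau_1(1)$ on the rubber and no divisor equation is applied, so the prefactor $d$ from the $\ch_2(\omega\Sigma)$ computation (which came from $\Sigma\cdot(df,n)=d$) must not appear; an extra $d$ would produce Fourier coefficients proportional to $d^{r-1}$ rather than the $d^{r-2}$ of $\A_{r-1}$, contradicting the target $\sum_S(-1)^{|S|}b_S\,\A_{r-1}(p^{b_S})$ in $\widehat T$ that you yourself write down at the end.

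Beyond that, the two ``obstacles'' you leave open are exactly where the content lies, and both have cleaner resolutions than you suggest. First, the $\psi_{r+1}$/Hain bookkeeping dissolves: $\DR_g(a_1,\ldots,a_r,0)$ is the pullback of $\DR_g(a_1,\ldots,a_r)$ under the map forgetting the extra marking, and every other factor in your integrand is also pulled back, so the projection formula with $\pi_{\ast}\psi_{r+1}=2g-2+r$ (equivalently, the dilaton equation applied on the rubber before invoking the product formula, which is what the paper does) simply multiplies the integrals of Theorem~\ref{thm:evaluation} by $2g-2+r$; combined with the overall $1/z$ this converts $\frac{(a_Sz)^{2g-2+r}}{(2g-2+r)!}$ into $a_S\frac{(a_Sz)^{2g-3+r}}{(2g-3+r)!}$, i.e.\ realizes $\A_r\rightsquigarrow a_S\A_{r-1}$ with no new socle computation. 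Second, the subleading correspondence terms: the paper invokes the explicit correspondence matrix of \cite{MOOP}, under which $\ch_3(\omega)$ corresponds exactly to $\frac1z\tau_1(\omega)$ with no corrections; your assertion that putative corrections ``contribute only to the already-handled constant term in $q$'' is unsupported and would be false if a correction of the form $\tau_0(\omega\, c_1)$ were actually present. As written, the proposal therefore contains one outright error and defers precisely the two steps that need proof.
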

\begin{proof}
By the GW/PT correspondence (in particular, using the explicit form of the correspondence matrix given in \cite{MOOP})
we have
\[
Z^{(S \times \p^1, S_{0,\infty})}_{\PT, (df,n)}\left( \lambda, \mu | \ch_3( \omega  ) \right)
=
\frac{1}{z}
Z^{(S \times \p^1, S_{0,\infty})}_{\GW, (df,n)}\left( \lambda, \mu | \tau_1( \omega  ) \right)
\]
The claim now follows from Theorem~\ref{thm:evaluation} parallel to the proofs of Propositions~\ref{prop:DR eval connected} and \ref{prop:rigidified eval}, using the dilaton equation in place of the divisor equation. In particular, for the constant term one works on the PT side and notes that for class $(0,n)$ and $\chi=n$ we have
\[ \ch_3(\omega) = \pi_{\ast}( \ch_3(\CO_Z) ) = c_1(\CO_S^{[n]}) - \frac{1}{2} D(c_1(S)). \qedhere \]
\end{proof}

Define the $2$-point operator
$\QPT \in \mathrm{End}\big( \Fock_S \otimes \BQ((p))[[q]] )\big)$
by setting for all $\lambda, \mu \in H^{\ast}(S^{[n]})$
\[
( \QPT(\lambda), \mu)
=
\sum_{d \geq 0} Z_{\PT, (df,n)}^{(S \times \CC_{0,2},S_{0,\infty})}(\lambda, \mu) q^d,
\]
(the unstable term $(d,\chi) = (0,n)$ is excluded in the sum, the corresponding coefficient is zero).

Recall the operator $\omega_{\gamma}(p)$ from \eqref{omega gamma}.

\begin{thm} \label{thm:QPT}
\[
\QPT = -\sum_{k>0} \ln(1-p^k) \Fq_{k}\Fq_{-k}(\Delta_{\ast} c_1(S))
+ d_{\Sigma} \sum_{m, d \geq 1} \omega_{df}(p^m) \frac{q^{md}}{md}.
\]
\end{thm}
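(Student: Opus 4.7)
The plan is to derive two partial differential equations for $\QPT$ from Propositions~\ref{prop:rigidified eval} and~\ref{prop:rigidified eval2}, verify both against the claimed formula, and use the absence of a $p^0 q^0$ contribution to pin down the integration constant. A crucial observation is that for an elliptic surface $c_1(S) = d_{\Sigma} f$ in $H^2(S)$, so $\Delta_{\ast}(c_1(S)) = d_{\Sigma} \Delta_{\ast}(f)$, and the entire claimed formula for $\QPT$ is $d_{\Sigma}$-proportional, consistent with the $d_{\Sigma}$ factors on the right-hand sides of the two propositions.

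First I would establish the PDE $q \frac{d}{dq} \QPT = d_{\Sigma} T$. By Proposition~\ref{prop:rigidification}, the rigid invariant with $\tau_0(\omega \Sigma)$ equals the rubber invariant with $\tau_0(\Sigma)$. Since $\pi_S^{\ast}\Sigma$ is a divisor on $S \times \CC_{0,2}$ with $\Sigma \cdot df = d$, the PT divisor equation gives $\tau_0(\Sigma) = d \cdot \mathrm{Id}$ for $d \geq 1$. The excluded unstable term $(d, \chi) = (0, n)$ contributes on the rigid side through the identification $P_{n, (0, n)} \cong S^{[n]}$ under which $\ch_2(\omega \Sigma)$ corresponds to the divisor $D(\Sigma)$; this exactly matches the $D(\Sigma)$ term on the right of Proposition~\ref{prop:rigidified eval}, and after cancellation the PDE follows. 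An analogous argument using $\ch_3(1) = m = \chi - n$ (a scalar on the moduli, equivalent to $p \frac{d}{dp}$ on the partition function) together with rigidification $\ch_3(\omega)^{\text{rigid}} = \ch_3(1)^{\text{rubber}}$ converts Proposition~\ref{prop:rigidified eval2} into the companion equation $p \frac{d}{dp} \QPT = d_{\Sigma} \widehat{T}$.

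Next I would verify the claimed formula satisfies both PDEs. Differentiating in $q$ yields $d_{\Sigma} \sum_{m, d \geq 1} \omega_{df}(p^m) q^{md}$, reducing the match with $d_{\Sigma} T$ to the identity $\sum_{m, d \geq 1} \omega_{df}(p^m) q^{md} = T$. Expanding $\prod_i(p^{m b_i/2} - p^{-m b_i/2}) = \sum_{S}(-1)^{|S|} p^{-m b_S}$ and summing geometrically in $m$, one recognizes $\sum_{m, d \geq 1} d^{r-1} p^{-m b_S} q^{md}$ inside the non-constant-in-$q$ part of $A_r(p^{b_S})$ from Theorem~\ref{thm:2132}(i), so the identity follows term by term. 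The $p$-derivative match with $d_{\Sigma} \widehat{T}$ is analogous, with $A_{r-1}$ replacing $A_r$ and the extra $b_S$ factor in $\widehat{T}$ tracking the action of $p \frac{d}{dp}$ on the factor $q^{md}/(md)$. The two PDEs then determine $\QPT$ uniquely, since neither $\ln(1-p^k)$ nor $\omega_{df}(p^m)$ contains a pure $p^0 q^0$ term, and by definition $\QPT$ has no such term either (the $(0, n, n)$ contribution being explicitly excluded).

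The main obstacle is the combinatorial bookkeeping in the Fourier matching. One must carefully track how $A_r(p^{b_S})$ expands in $\BQ((p))$ depending on the sign of $b_S$, the normal ordering of Nakajima operators when indices change sign, and the $\mathbf{i}^{2m} = (-1)^m$ sign convention in the PT partition function. Once these technicalities are handled, the remaining calculation reduces to direct manipulations with the generators of $\QJac$.
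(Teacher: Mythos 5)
Your proposal is correct and follows essentially the same route as the paper: rigidify the two-point rubber invariants by $\ch_2(\omega\Sigma)$ and $\ch_3(\omega)$, use Propositions~\ref{prop:rigidified eval} and~\ref{prop:rigidified eval2} to obtain $q\frac{d}{dq}\QPT = d_{\Sigma}T$ and $p\frac{d}{dp}\QPT = d_{\Sigma}\widehat{T}$, and then match Fourier expansions via the identity $\sum_{S}(-1)^{|S|}p^{a_Sk}=\prod_i(1-p^{a_ik})$ (the paper's Lemma~\ref{lemma:averaging}), fixing the constant by the excluded $(d,\chi)=(0,n)$ term. Your write-up merely makes explicit some steps the paper leaves implicit (the cancellation of the $D(\Sigma)$ and $\delta-\tfrac12 D(c_1(S))$ constant terms, and the uniqueness of the solution to the two PDEs).
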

\begin{proof}
By rigidifying the $2$-point invariants by either $\ch_2(\Sigma)$ or $\ch_3(1)$ and then using Propositions~\ref{prop:rigidified eval} and~\ref{prop:rigidified eval2} respectively, we find
\begin{gather}
q \frac{d}{dq} \QPT = d_{\Sigma} T \notag \\
p \frac{d}{dp} \QPT = d_{\Sigma} \widehat{T} \label{That and QPT}.
\end{gather}
The claim then follows from the formulas for $T$ and $\widehat{T}$ and Lemma~\ref{lemma:averaging} below.
\end{proof}

\begin{lemma} \label{lemma:averaging}
For $r \geq 2$ let $a_1, \ldots, a_r \in \BZ$ such that $\sum_i a_i = 0$. Then
\[
\sum_{S \subset \{ 1, \ldots, r \}} (-1)^{|S|} A_r(a_S z)
=
-2 \sum_{k,d \geq 1} d^{r-1} q^{kd} (1-p^{a_1k}) \cdots (1-p^{a_rk})
\]
\end{lemma}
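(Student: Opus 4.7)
The plan is to simply substitute the Fourier expansion of $\A_r$ from Theorem~\ref{thm:2132}(i) into the left-hand side, expand, and watch the sum over subsets factorize.

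First, since $r \geq 2$, the $\delta_{r,1}$ term in the Fourier expansion does not appear, so
\[
\A_r(a_S z) = \frac{B_r}{r} - \sum_{k,d \geq 1} d^{r-1}\bigl(p^{a_S k} + (-1)^r p^{-a_S k}\bigr) q^{kd}.
\]
The contribution of the constant term $B_r/r$ to $\sum_{S} (-1)^{|S|} \A_r(a_Sz)$ vanishes because $\sum_{S \subset \{1,\ldots,r\}} (-1)^{|S|} = (1-1)^r = 0$ (here we use $r \geq 1$).

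Next I would handle the main sum. The key combinatorial identity is the trivial factorization
\[
\sum_{S \subset \{1, \ldots, r\}} (-1)^{|S|} p^{a_S k} = \prod_{i=1}^{r} \bigl(1 - p^{a_i k}\bigr).
\]
For the companion sum with $p^{-a_S k}$, I would exploit the assumption $a_1 + \cdots + a_r = 0$, which gives $\prod_i p^{-a_i k} = 1$, hence
\[
\prod_{i=1}^{r} \bigl(1 - p^{-a_i k}\bigr) = \prod_{i=1}^{r} p^{-a_i k}\bigl(p^{a_i k} - 1\bigr) = (-1)^r \prod_{i=1}^{r}\bigl(1 - p^{a_i k}\bigr).
\]
Combining,
\[
\sum_{S} (-1)^{|S|} \bigl(p^{a_S k} + (-1)^r p^{-a_S k}\bigr) = \bigl(1 + (-1)^{2r}\bigr) \prod_{i=1}^{r}\bigl(1-p^{a_i k}\bigr) = 2 \prod_{i=1}^{r}\bigl(1-p^{a_i k}\bigr).
\]
Plugging this back yields exactly the right-hand side of the lemma.

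There is no real obstacle here; the only subtle point is the vanishing of the constant term, which relies on $r \geq 1$, and the use of the relation $\sum a_i = 0$ to pair up the $p^{a_S k}$ and $p^{-a_S k}$ contributions. Everything else is a direct expansion.
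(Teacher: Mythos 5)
Your proof is correct and follows essentially the same route as the paper, which simply records the key identity $\sum_{S} (-1)^{|S|} p^{a_S k} = \prod_i (1-p^{a_i k}) = (-1)^r \prod_i (1-p^{-a_i k})$ (the second equality using $\sum_i a_i = 0$) and leaves the substitution of the Fourier expansion implicit. Your write-up just makes explicit the vanishing of the $B_r/r$ term and the cancellation of the $\delta_{r,1}$ term, both of which are fine.
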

\begin{proof}
This follows from observing that
\[
\sum_{S \subset \{ 1, \ldots, r \}} (-1)^{|S|}p^{a_Sk} = (1-p^{a_1k})\cdots(1-p^{a_rk}) = (-1)^r(1-p^{-a_1k})\cdots(1-p^{-a_rk}).
\]
\end{proof}

\subsection{Hilbert scheme evaluations}
In Theorem~\ref{thm:QPT} we have obtained a formula for the
$2$-point operator in PT theory.
Here we conclude from this the $I$-function of Nesterov's Hilb/PT wall-crossing (or at least the part that is relevant for us) and the $2$-point operator on the Hilb side.

We first consider the $I$-function,
where we have complete results except for the bielliptic surfaces.
\begin{prop}(The $I$-function) \label{prop:I function}
Let $S$ be an elliptic surface which is not a bielliptic surface.
Under the variable change $p=-y$ and $q=t^f$
we have
\[
I_0(t,y) = \prod_{n \geq 1} (1-q^n)^{-d_{\Sigma}}.
\]
\[
\left[ I_1(t,y) \right]_{\deg=2}
=
D(c_1(S)) \cdot \log\left( (1-p) \prod_{r \geq 1} \frac{ (1-p q^r) (1-p^{-1} q^r)}{(1-q^r)^2} \right)
\prod_{n \geq 1} (1-q^n)^{-d_{\Sigma}}
\]
The formulas hold also for the bielliptic surface modulo $t^{[\Sigma]}$.
\end{prop}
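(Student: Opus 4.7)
The plan is to invoke Corollary~\ref{cor:I function determination}, which expresses $I_0$ and $[I_1]_{\deg=2}$ as generating series of primed PT invariants on the relative geometries $(S \times \CC_{0,N}/\CM_{0,N}, S_z)$ for $N=3$ and $N=2$ respectively. I first reduce to vertical fiber classes: for a non-bielliptic elliptic surface $S$, Proposition~\ref{prop:PT vanishing vertical classes not fiber} gives that all relevant PT invariants vanish unless the $S$-component of the curve class is a multiple of $f$, and for the bielliptic case the same vanishing holds for any $\beta$ not involving $[\Sigma]$. Both $I_0$ and $[I_1]_{\deg=2}$ thus become formal series in $q = t^{[f]}$ and $y$, connected to $p$ by $p=-y$.

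The evaluation of $I_0$ is then quick. Since $\CM_{0,3}$ is a point, the relevant moduli space is the ordinary relative pair $(S \times \p^1, S_{0,1,\infty})$, and because the insertions $(\pt_n, 1, 1)$ carry no descendents, the GW/PT correspondence of Theorem~\ref{thm:GWPT correspondence} reduces matters to the Gromov-Witten side. The evaluation from Proposition~\ref{prop:p11 evaluation},
\[
\sum_{d \geq 0} q^d Z^{(S \times \p^1, S_{0,1,\infty})}_{\GW,(df,n)}(\pt_n, 1, 1) = \prod_{d \geq 1}(1-q^d)^{d_\Sigma},
\]
then gives $I_0 = \prod_n (1-q^n)^{-d_\Sigma}$ directly.

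For $[I_1]_{\deg=2}$, the key point is that the generating series $\sum_{(df,k)>0} {'\langle \gamma, 1\rangle}^{\PT}_{0,(df,k)} q^d y^k$ from Corollary~\ref{cor:I function determination}(c), for $\gamma \in H^{4n-2}(S^{[n]})$ the Poincar\'e dual of a curve class, is essentially the matrix element $(\QPT \gamma, 1)$ of the $2$-point PT operator computed in Theorem~\ref{thm:QPT} (up to the $d=k=0$ boundary term, which is harmless since it is absorbed into the constant of integration). Writing $[I_1]_{\deg=2} = a\, D(c_1(S)) + b\, \delta$ for some $a, b \in \BQ((y))[[q]]$, I would pair against the exceptional curve class $A$ (which satisfies $A \cdot D(c_1(S)) = 0$ but $A \cdot \delta \neq 0$) to see that the Nakajima structure of $\QPT$ forces $b=0$, and then pair against any curve class not annihilating $D(c_1(S))$ to extract $a$. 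Substituting the explicit formula of Theorem~\ref{thm:QPT}, invoking Lemma~\ref{lemma:averaging} to combine the Eisenstein-type pieces, and recognizing the theta function \eqref{theta function} will identify $a$ with $\log\bigl((1-p)\prod_{r \geq 1} (1-pq^r)(1-p^{-1}q^r)/(1-q^r)^2\bigr)$. As a sanity check, specializing $q=0$ collapses this to $\log(1-p) = \log(1+y)$, matching the Fano evaluation of Example~\ref{example:Fano}.

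Finally, for a bielliptic $S$, I would use Example~\ref{example:torsion L}, which realizes $S$ as one component of the central fiber of a degeneration $S' \rightsquigarrow S \cup_E S_2$ with $S' \to \Sigma$ a non-bielliptic elliptic surface of positive degree. Since this degeneration has no vanishing cohomology modulo $[\Sigma]$, the degeneration formula for relative PT invariants (together with its compatibility with Nesterov's wall-crossing) transfers the already-established formulas from $S'$ to $S$, valid modulo $t^{[\Sigma]}$. The main obstacle I expect is the combinatorial identification of $a$: the operator $\QPT$ has two very differently structured summands, whereas the target is a single logarithm, so one must verify that the $\omega_{df}(p^m)$ contributions combine cleanly with the $\ln(1-p^k) \Fq_k \Fq_{-k}(\Delta_{\ast} c_1(S))$ piece via Lemma~\ref{lemma:averaging} to reproduce exactly the theta-type product.
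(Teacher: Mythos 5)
Your overall route — reduce to fiber classes, then feed Proposition~\ref{prop:p11 evaluation} (via the GW/PT correspondence) and Theorem~\ref{thm:QPT} into Corollary~\ref{cor:I function determination} — is the same as the paper's, and the computational part of your argument (including the $q=0$ sanity check against Example~\ref{example:Fano}) is fine. But there is a genuine gap in your reduction step.

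You claim that Proposition~\ref{prop:PT vanishing vertical classes not fiber} kills all curve classes whose $S$-component is not a multiple of $f$. That proposition only treats \emph{vertical} classes $\beta$ (those with $\pi_{\ast}\beta=0$) that are not multiples of $f$; it says nothing about non-vertical classes such as $\Sigma + df$, which are effective. This matters for two reasons. First, whenever $2g-2+d>0$ (equivalently $p_g(S)>0$ for non-product surfaces), the section satisfies $d_{\Sigma}=\int_{\Sigma}c_1(S)<0$, so $S$ is \emph{not} semi-positive; Corollary~\ref{cor:I function determination} is derived from Proposition~\ref{prop:wallcrossing semipositive}, whose simplification rests on the degree bookkeeping in \eqref{I_0 I_1}, which in turn assumes $d_\beta \geq 0$ for all contributing classes. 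So you cannot even invoke that corollary before excluding the non-vertical classes. Second, excluding them requires a separate argument. The paper splits into cases: if $p_g(S)=0$ and $S$ is not bielliptic, then $S$ is $\p^1\times E$ or a rational elliptic surface, hence semi-positive, and \eqref{I_0 I_1} forces $d_\beta=0$ for contributions to $I_0$ and $[I_1]_{\deg=2}$, which for these surfaces means $\beta$ is vertical (and then Corollary~\ref{cor:integral fibers} plus deformation invariance finishes); if $p_g(S)>0$, one takes a nonzero holomorphic $2$-form and uses cosection localization on the stable-pairs moduli (as in \cite{MPT, Nesterov2}) to show the $I$-function terms vanish for every $\beta$ not a multiple of $f$. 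This cosection step is entirely absent from your proposal, and without it the argument does not go through for the surfaces with $p_g(S)>0$ — which is precisely the case the main application cares about.

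A smaller remark: your treatment of the bielliptic case by degenerating via Example~\ref{example:torsion L} is more elaborate than needed; since a bielliptic surface has torsion canonical class it is semi-positive with $d_\beta=0$ for all $\beta$, so the same dimension argument applies, and the statement is simply asserted only modulo the non-vertical variables $t^{[\Sigma]}$ because those classes cannot be excluded there.
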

\begin{proof}
We first prove that $I_0(t,y)$
and $\left[ I_1(t,y) \right]_{\deg=2}$
only depend on $z$ and $t^f$, or equivalently, that there are no contributions from curve classes $\beta$ other than $df$.

Indeed, if $p_g(S)=0$ and $S$ is not a bielliptic surface, then $S$ is either $\p^1 \times E$ or a rational elliptic surface.
Hence we are in the semipositive case,
in which, according to \eqref{I_0 I_1}, only the vertical curve classes contribute. If $\beta$ is vertical, but not a multiple of $f$, then by Corollary~\ref{cor:integral fibers} we can deform to a situation where $\beta$ is no longer effective,
so the contribution to the $I$-function vanishes by deformation invariance.
If $p_g(S)>0$, we can take a non-zero holomorphic $2$-form $\theta \in H^0(S, \omega_S)$ and construct a cosection on the moduli space of stable pairs as in \cite{MPT} or \cite{Nesterov2}. The associated degeneracy locus will be empty for $\beta$ not a multiple of $f$, so the $I$-function terms will vanish.

Hence we only need to consider the case $\beta = df$.
We apply Corollary~\ref{cor:I function determination}.
This corollary works also in the non-semipositive case, since here the only contributing curve classes satisfies $c_1(S) \cdot \beta \geq 0$.
By Proposition~\ref{prop:p11 evaluation}
and Theorem~\ref{thm:QPT}
the invariants on the right hand side of Corollary~\ref{cor:I function determination}
are determined, so the claim follows by a direct computation.
\end{proof}

We state the Hilb/PT wall-crossing formula,
first for any curve class, then for a multiple of $f$.
\begin{thm}
Let $S$ be an elliptic surface which is not a bielliptic surface.
Let $q=t^f$.
\begin{enumerate}
\item[(a)] For $2h-2+N>0$ we have
\begin{multline} \label{wallcross elliptic surface}
\prod_{r \geq 1}(1-q^r)^{-d_{\Sigma} (2h-2+N)}
\sum_{\beta \cdot f = r} \sum_{k \in \BZ}
{'\langle \lambda_1, \ldots, \lambda_N \rangle^{\PT}_{h,(\beta,k)}} t^{\beta} (-p)^k \\
=
\left( (1-p) \prod_{r \geq 1} \frac{ (1-p q^r) (1-p^{-1} q^r)}{(1-q^r)^2} \right)^{r d_{\Sigma}}
\sum_{\beta \cdot f = r} \langle \lambda_1, \ldots, \lambda_{N} \rangle^{S^{[n]}}_{h, \beta + kA} t^{\beta} (-p)^{k}.
\end{multline}
\item[(b)] For $h=0$ and $N=2$ and $\deg_{\BC}(\lambda_1) + \deg_{\BC}(\lambda_2) = 2n-1$ we have:
\begin{multline*}
\sum_{\beta \cdot f=r} \sum_{k \in \BZ}
{'\langle \lambda_1, \lambda_2 \rangle^{\PT}_{0,(\beta,k)}} t^{\beta} (-p)^k \\
=
\left( (1-p) \prod_{r \geq 1} \frac{ (1-p q^r) (1-p^{-1} q^r)}{(1-q^r)^2} \right)^{r d_{\Sigma}}
\sum_{\substack{(\beta,k)>0 \\ \beta \cdot f = r}} \langle \lambda_1, \lambda_2 \rangle^{S^{[n]}}_{0, \beta + kA} t^{\beta} (-p)^{k}\\
+ 
\log\left( (1-p) \prod_{r \geq 1} \frac{ (1-p q^r) (1-p^{-1} q^r)}{(1-q^r)^2} \right)
\int_{S^{[n]}} \lambda_1 \lambda_2 D(c_1(S)).
\end{multline*}
\end{enumerate}
\end{thm}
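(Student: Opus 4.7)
The plan is to derive the theorem from the semi-positive wall-crossing formula Proposition~\ref{prop:wallcrossing semipositive} combined with the explicit $I$-function computation of Proposition~\ref{prop:I function}. The main obstacle is that Proposition~\ref{prop:wallcrossing semipositive} is stated under a semi-positivity hypothesis on $S$, whereas we claim the formula for every non-bielliptic elliptic surface with section.

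For each fixed $r \geq 0$ the statement picks out the part of the generating series supported on curve classes $\beta$ with $\beta \cdot f = r$. If $S$ is semi-positive (i.e., $S = \BP^1 \times E$ or a rational elliptic surface), I would apply Proposition~\ref{prop:wallcrossing semipositive}(a) directly. All remaining non-bielliptic elliptic surfaces with section satisfy $p_g(S) > 0$, and here I would split further. If $r > 0$, the Hilbert scheme invariants in class $\beta + kA$ vanish by Hu--Li--Qin \cite{HLQ}, since such a $\beta$ is neither extremal nor a multiple of $K_S$ (which is numerically a multiple of $f$), and the corresponding PT invariants vanish by the cosection argument (cut out by a non-zero $2$-form on $S$) already used in the proof of Proposition~\ref{prop:I function}; both sides are therefore zero. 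If $r = 0$, combining Proposition~\ref{prop:PT vanishing vertical classes not fiber}, Proposition~\ref{prop:vanishing Hilbert scheme}, and the vanishing of $I_{(\beta,k)}$ for $\beta$ not a multiple of $f$ (from the same argument), we may restrict Nesterov's sum in Theorem~\ref{thm:Nesterov} to splittings $\beta_0 + \beta_1 + \cdots + \beta_\ell$ in which every $\beta_i$ is a multiple of $f$. On this restricted sum the derivation of Proposition~\ref{prop:wallcrossing semipositive}(a) applies verbatim.

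Once reduced to the semi-positive wall-crossing, I would substitute the formulas for $I_0(t,y)$ and $[I_1(t,y)]_{\deg=2}$ from Proposition~\ref{prop:I function}. The left-hand prefactor matches directly, since $I_0^{2h-2+N} = \prod_{n \geq 1}(1-q^n)^{-d_\Sigma(2h-2+N)}$. For the exponential weight $\exp\bigl((I_1/I_0) \cdot (\beta+kA)\bigr)$, the key observation is that $K_S$ is pulled back from $\Sigma$, so $K_S$ is numerically $(2g-2+d) f$ and
\[
D(c_1(S)) \cdot (\beta+kA) \;=\; c_1(S) \cdot \beta \;=\; d_\Sigma \cdot (\beta \cdot f) \;=\; d_\Sigma \, r
\]
depends only on $r$ (using the standard fact $D(c_1(S)) \cdot A = 0$). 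This yields
\[
\exp\!\left(\frac{I_1}{I_0} \cdot (\beta + kA)\right) \;=\; \left( (1-p) \prod_{n \geq 1} \frac{(1-pq^n)(1-p^{-1}q^n)}{(1-q^n)^2} \right)^{r d_\Sigma},
\]
which is precisely the right-hand prefactor in part (a).

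Part (b) then follows by the same reduction, now invoking Proposition~\ref{prop:wallcrossing semipositive}(b). The only new feature is the extra summand $\frac{1}{I_0}\int_{S^{[n]}} \lambda_1 \lambda_2 I_1$, which encodes the $(\beta,k)=0$ contribution in Nesterov's expansion; substituting the values of $I_0$ and $[I_1]_{\deg=2}$ turns it into $\log\!\bigl((1-p) \prod_{n \geq 1} \tfrac{(1-pq^n)(1-p^{-1}q^n)}{(1-q^n)^2}\bigr) \cdot \int_{S^{[n]}} \lambda_1 \lambda_2 \, D(c_1(S))$, the second summand in the statement.
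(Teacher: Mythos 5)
Your proposal is correct and follows essentially the same route as the paper: the paper's proof is exactly to insert the $I$-function of Proposition~\ref{prop:I function} into Proposition~\ref{prop:wallcrossing semipositive}, justifying the use of the latter beyond the semi-positive case by the observation (from the cosection argument in the proof of Proposition~\ref{prop:I function}) that only curve classes with $c_1(S)\cdot\beta\geq 0$ contribute to the $I$-function. Your additional case analysis for $p_g(S)>0$ (vanishing of both sides when $r>0$, restriction of Nesterov's sum to fiber classes when $r=0$) is a slightly more explicit packaging of the same justification, and the final substitution using $D(c_1(S))\cdot(\beta+kA)=d_\Sigma r$ matches the paper's computation.
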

\begin{proof}
We insert the $I$-function terms computed in Proposition~\ref{prop:I function}
into Proposition~\ref{prop:wallcrossing semipositive},
which 
holds for all elliptic surfaces (not only for the semi-positive) since 
as we have seen in the proof of Proposition~\ref{prop:I function}
the only contributing curve classes in the $I$-function satisfy $c_1(S) \cdot \beta \geq 0$.
\end{proof}

In the case where we are only considering the classes $\beta = df$ we get:
\begin{cor} Let $S$ be any elliptic surface.
If $\deg_{\BC}(\lambda_1) + \deg_{\BC}(\lambda_2) = 2n-1$, then:
\begin{multline*}
\sum_{(d,k)>0} {'\langle \lambda_1, \lambda_2 \rangle^{\PT}_{0,(df,k)}} q^d (-p)^k 
=
\sum_{(d,k)>0} \langle \lambda_1, \lambda_2 \rangle^{S^{[n]}}_{0, df + kA} q^d (-p)^{k}\\
+ 
\log\left( (1-p) \prod_{r \geq 1} \frac{ (1-p q^r) (1-p^{-1} q^r)}{(1-q^r)^2} \right)
\int_{S^{[n]}} \lambda_1 \lambda_2 D(c_1(S)).
\end{multline*}
\end{cor}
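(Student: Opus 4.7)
The plan is to derive the corollary directly from Proposition~\ref{prop:wallcrossing semipositive}(b), by extracting the coefficient of $t^{df}$ on both sides and invoking the formulas for $I_0$ and $[I_1]_{\deg=2}$ from Proposition~\ref{prop:I function}. The key observation that makes the extraction clean is that for any $d' \geq 0$ and any $k$, the intersection $D(c_1(S)) \cdot (d'f + kA)$ vanishes: indeed, $c_1(S) \cdot f = -K_S \cdot f = 0$ since $K_S$ is pulled back from $\Sigma$, and $D(c_1(S)) \cdot A = 0$ because the divisor class $D(c_1(S))$ is pulled back from the symmetric product while $A$ is contracted by the Hilbert--Chow morphism. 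Consequently the exponential wall-crossing factor $\exp([I_1]_{\deg=2}/I_0 \cdot (d'f + kA)) = 1$ becomes trivial on curve classes of the form $d'f + kA$.

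First I would justify the use of Proposition~\ref{prop:wallcrossing semipositive}(b) for arbitrary elliptic surfaces, even those that are not semi-positive: as explained in the proof of the preceding theorem, the semi-positivity hypothesis there can be relaxed whenever the only curve classes contributing to $I_0$ and $[I_1]_{\deg=2}$ satisfy $c_1(S) \cdot \beta \geq 0$, and this holds for all elliptic surfaces by the cosection and deformation arguments in the proof of Proposition~\ref{prop:I function} (bielliptic surfaces are in fact semi-positive since $c_1(S)$ is torsion). Next I would extract the coefficient of $t^{df}$ from both sides of Proposition~\ref{prop:wallcrossing semipositive}(b). On the left, this yields the left-hand side of the corollary. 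On the right, contributions to the first sum from $\beta$ not a multiple of $f$ vanish by Proposition~\ref{prop:vanishing Hilbert scheme}, while the contributions from $\beta = d'f$ give $\sum_{(df,k)>0} \langle\lambda_1,\lambda_2\rangle^{S^{[n]}}_{0, df+kA} q^d(-p)^k$ thanks to the triviality of the exponential factor. The second term $\frac{1}{I_0}\int\lambda_1\lambda_2 I_1$, evaluated using the Proposition~\ref{prop:I function} formulas, produces the desired $\log$ factor times $\int\lambda_1\lambda_2 D(c_1(S))$.

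The main technical point is the bielliptic case, where Proposition~\ref{prop:I function} gives the $I$-function formulas only modulo $t^{[\Sigma]}$. The resolution is that any extra terms in $I_0$ and $I_1$ for a bielliptic surface involve positive powers of $t^{[\Sigma]}$, and these do not affect the $t^{df}$-coefficient of $1/I_0$ or $I_1/I_0$ (by expanding $1/I_0$ as a geometric series around its $\beta=df$ part, all higher-order contributions carry a positive power of $t^{[\Sigma]}$ and so contribute zero after coefficient extraction). Alternatively, for a bielliptic surface $c_1(S)$ is rationally trivial and $d_\Sigma = 0$, so both sides of the claimed identity degenerate (the $\log$ term vanishes as $D(c_1(S))=0$, and $I_0 = 1$, $[I_1]_{\deg=2} = 0$ for $\beta = df$ contributions), reducing the corollary to the bare identification of $\PT$ and Hilbert scheme invariants for $\beta = df$ which follows directly from Theorem~\ref{thm:Nesterov}.
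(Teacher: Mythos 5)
Your proposal is correct and takes essentially the same route as the paper: the paper deduces this corollary by specializing its preceding wall-crossing theorem (itself obtained by inserting the $I$-function of Proposition~\ref{prop:I function} into Proposition~\ref{prop:wallcrossing semipositive}) to vertical classes, where the prefactor $(\cdots)^{rd_\Sigma}$ trivializes, exactly as your direct extraction of the $t^{df}$-coefficient does. Your explicit treatment of the bielliptic case (where Proposition~\ref{prop:I function} is only known modulo $t^{[\Sigma]}$, which is harmless for the $t^{df}$-coefficient, and where in any case $d_\Sigma=0$ and $D(c_1(S))=0$) is a welcome elaboration of a point the paper leaves implicit when it drops the non-bielliptic hypothesis in the corollary.
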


Recall the $2$-point operator on the Hilbert scheme side $\QHilb$ defined in \eqref{QHilb}.
The wall-crossing above gives:
\[
( \QHilb(\lambda), \mu)
=
\sum_{d \geq 0} \sum_{\substack{k \in \BZ \\ k>0 \text{ if } d=0}} \langle \lambda, \mu \rangle^{S^{[n]}}_{0,df+kA} q^d (-p)^k.
\]
\begin{cor}  \label{cor:Q Hilb-PT comparision}
We have
\[
\QHilb = \QPT - 
\log\left( (1-p) \prod_{r \geq 1} \frac{ (1-p q^r) (1-p^{-1} q^r)}{(1-q^r)^2} \right)
e_{c_1(S)}
\]
\end{cor}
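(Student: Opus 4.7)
The plan is to deduce the identity directly from the previous corollary, which is the Hilb/PT wall-crossing comparison restricted to vertical curve classes of the form $df+kA$. First I unpack the two operators. By the definition of $\QHilb$ displayed just above,
\[
(\QHilb \lambda, \mu) = \sum_{(d,k) > 0} \langle \lambda, \mu \rangle^{S^{[n]}}_{0,df+kA}\, q^d (-p)^k.
\]
For $\QPT$, using that $d_{\beta} = 0$ when $\beta = df$, the PT partition function unfolds as
\[
Z^{(S \times \CC_{0,2}, S_{0,\infty})}_{\PT,(df,n)}(\lambda, \mu) = \sum_{k \in \BZ} {'\langle \lambda, \mu \rangle^{\PT}_{0,(df,k)}}\, (-p)^k,
\]
and since PT invariants vanish when $\beta = 0$ and $\chi < n$, the sum defining $(\QPT \lambda, \mu)$ effectively runs over $(d,k) > 0$, matching the range for $\QHilb$.

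Next I specialize to pairs with $\deg_{\BC}(\lambda) + \deg_{\BC}(\mu) = 2n - 1$. The previous corollary then reads
\[
(\QPT \lambda, \mu) - (\QHilb \lambda, \mu) = \log\left((1-p)\prod_{r \geq 1} \frac{(1-pq^r)(1-p^{-1}q^r)}{(1-q^r)^2}\right) \int_{S^{[n]}} \lambda \cup \mu \cup D(c_1(S)).
\]
Since $e_{c_1(S)}$ is by definition cup product with $D(c_1(S))$, the right-hand integral equals $(e_{c_1(S)} \lambda, \mu)$, and the claimed operator identity holds on this range of bidegrees.

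It remains to check that both sides automatically vanish on pairs with $\deg_{\BC}(\lambda) + \deg_{\BC}(\mu) \neq 2n-1$. All three operators $\QHilb$, $\QPT$, $e_{c_1(S)}$ shift complex cohomological degree by exactly one: for $e_{c_1(S)}$ this is immediate since $D(c_1(S)) \in H^2(S^{[n]})$; for $\QPT$ it is visible from the explicit formula in Theorem~\ref{thm:QPT}; and for $\QHilb$ it follows from the fact that $\Mbar_{0,2}(S^{[n]}, df+kA)$ has virtual dimension $2n-1$, which reduces to $\int_{df+kA} c_1(S^{[n]}) = 0$ via $K_{S^{[n]}} = D(K_S)$ together with the vanishings $c_1(S) \cdot f = 0$ and $D(\gamma) \cdot A = 0$. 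The whole argument is essentially a repackaging of the wall-crossing corollary, so there is no real obstacle beyond bookkeeping.
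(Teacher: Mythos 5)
Your proposal is correct and follows essentially the same route as the paper, which simply reads off the operator identity from the preceding wall-crossing corollary for classes $df+kA$; you additionally spell out the (correct but routine) check that all three operators shift complex cohomological degree by one, so the identity need only be verified when $\deg_{\BC}(\lambda)+\deg_{\BC}(\mu)=2n-1$. Nothing is missing.
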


We have obtained the proof of our main formula stated in Theorem~\ref{thm:2point operator}.
\begin{proof}[Proof of Theorem~\ref{thm:2point operator}]
This follows from Corollary~\ref{cor:Q Hilb-PT comparision}, Theorem~\ref{thm:QPT} and the identity
\[
\log\left( \prod_{r \geq 1} \frac{ (1-p q^r) (1-p^{-1} q^r)}{(1-q^r)^2} \right)
=
\sum_{m,d \geq 1} \frac{1}{m} q^{md} (1-p^m) (1-p^{-m}). \qedhere
\]
\end{proof}

We also prove the quasi-Jacobi form property stated in the introduction:
\begin{proof}[Proof of Theorem~\ref{thm:structrue constants are quasi Jacobi forms}]
We consider the case $D=\delta$, the other cases are similar.
We have
\begin{align*}
(D \ast_{\pi} \lambda, \mu)
& = \mathrm{Cst} + p \frac{d}{dp} ( Q^{\Hilb} \lambda, \mu) \\
& = \mathrm{Cst'} + p \frac{d}{dp} ( Q^{\PT} \lambda, \mu)
- (D(c_1(S)) \lambda, \mu) p\frac{d}{dp} \log \Theta(p,q) \\
& = \mathrm{Cst'} + d_{\Sigma} ( \widehat{T} \lambda, \mu)
- (D(c_1(S)) \lambda, \mu) p\frac{d}{dp} \log \Theta(p,q)
\end{align*}
for some constants $\mathrm{Cst}, \mathrm{Cst}' \in \BQ$.
The first equality above is the divisor equation,
the second uses Corollary~\ref{cor:Q Hilb-PT comparision}
and the definition of the theta function \eqref{theta function},
and the third follows from \eqref{That and QPT}.
The logarithmic derivative of $\Theta$ is equal to $\A_1$, and hence a quasi-Jacobi form, and by definition of $\widehat{T}$, the term $( \widehat{T} \lambda, \mu)$ is a linear combination of terms $\A_r(p^s)$ for some $s \in \BZ$.
Since $\A_r$ are quasi-Jacobi forms with poles at lattice points, these are again quasi-Jacobi forms, but with poles at $s$-torsion points.
\end{proof}

\begin{rmk}
When rewriting the wall-crossing in the standard PT normalization,
we see that the wall-crossing terms are Jacobi forms up to a normalization factor in $q$.
Indeed, let $2h-2+N>0$ and
consider the generating series of invariants of degree $r$ over the base:
\begin{gather*}
Z^{h,r}_{\PT}(\lambda_1, \ldots, \lambda_N) = \sum_{\substack{\beta \in H_2(S,\BZ) \\ \beta \cdot f = r}}
t^{\beta}
Z^{(S \times \CC_{h,N}, S_z)}_{\PT, (\beta,n)}\left( \lambda_1, \ldots, \lambda_N \right)  \\
Z^{h,r}_{\Hilb}(\lambda_1, \ldots, \lambda_N) = \sum_{\substack{\beta \in H_2(S,\BZ) \\ \beta \cdot f = r}} \sum_{k \in \BZ}
t^{\beta} (-p)^{k}
\langle \lambda_1, \ldots, \lambda_N \rangle^{S^{[n]}}_{h,\beta+kA}
\end{gather*}
Then with $q=t^{f}$ the relation \eqref{wallcross elliptic surface} can be rewritten as
\[
Z^{h,r}_{\PT}(\lambda_1, \ldots, \lambda_N)
=
\left( i \Theta(p,q)^r \prod_{r \geq 1} (1-q^r)^{2h-2+N} \right)^{d_{\Sigma}}
Z^{h,r}_{\Hilb}(\lambda_1, \ldots, \lambda_N).
\]
\end{rmk}

\subsection{A basic check}
Let $S = \p^1 \times E$ and let $\Sigma = \p^1 \times 0_{E}$ be the section.
The Hilbert scheme of $S$ admits the isotrivial fibration
\[ \rho : S^{[n]} \to \mathrm{Sym}^n(E) \xrightarrow{+} E \]
where the first map is induced by the projection of $S$ onto the second factor
and the second map is the sum map. Consider the fiber
\[
W := \rho^{-1}(0_E) \subset S^{[n]}.
\]
\begin{lemma} $[W] = D(\alpha) \cup D(\beta)$ in $H^2(S^{[n]})$. \end{lemma}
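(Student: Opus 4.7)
The plan is to express both $[W]$ and $D(\alpha)\cup D(\beta)$ as the pullback $\rho^\ast([\pt_E])$, where $[\pt_E] \in H^2(E)$ is the class Poincar\'e dual to a point.

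Since $W$ is the scheme-theoretic fiber of $\rho$ over $0_E$ and has the expected codimension $1$, one has $[W] = \rho^\ast([\pt_E])$. Using the relation $[\pt_E] = \alpha \cup \beta$ in $H^\ast(E)$ together with the fact that $\rho^\ast$ is a ring homomorphism yields
\[ [W] = \rho^\ast(\alpha) \cup \rho^\ast(\beta). \]

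It remains to show that $\rho^\ast(a) = D(a)$ in $H^1(S^{[n]})$ for every $a \in H^1(E)$, where on the right $a$ is viewed as a class on $S$ via pullback along $p_2 \colon S \to E$. This is essentially the statement that $\rho$ realizes the Albanese map of $S^{[n]}$. I would verify it by pairing both sides against test curves $C_\gamma \subset S^{[n]}$ of the form ``fix $n-1$ distinct generic points of $S$ and let one point move along a loop $\gamma$ representing a class in $H_1(S,\BZ)$''. A direct computation from the Nakajima formula $D(a) = \frac{1}{(n-1)!}\Fq_1(a)\Fq_1(1)^{n-1}\vacuum$ yields $\int_{C_\gamma} D(a) = \int_\gamma a$; on the other hand, $\rho_\ast [C_\gamma]$ equals the class of the loop $p_2 \circ \gamma$ in $H_1(E)$, so $\int_{C_\gamma}\rho^\ast(a) = \int_\gamma a$ as well. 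Since such curves span $H_1(S^{[n]},\BQ)$, this forces $\rho^\ast(a) = D(a)$.

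The main technical point is the test-curve identification $\rho^\ast(a) = D(a)$, which requires unpacking the action of the Nakajima operator $\Fq_1$ on a reduced $n$-tuple; granted this, the other two steps are formal and the conclusion $[W] = D(\alpha) \cup D(\beta)$ follows immediately.
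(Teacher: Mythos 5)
Your argument is correct, and it is a genuinely self-contained route: the paper itself offers no proof beyond the citation ``parallel to [OSV, Lemma 3.5]'', so there is nothing explicit to compare against line by line. Your factorization $[W]=\rho^{\ast}(\pt_E)=\rho^{\ast}(\alpha)\cup\rho^{\ast}(\beta)$ is clean (note $\rho$ is flat with reduced, mutually isomorphic fibers since the fibration is isotrivial, so the scheme-theoretic fiber class really is $\rho^{\ast}(\pt_E)$), and the whole content is correctly isolated in the Albanese-type identity $\rho^{\ast}(a)=D(a)$ on $H^1$. That step is also fine as you set it up: $H^1(S^{[n]})\cong \Fq_1(H^1(S))\Fq_1(1)^{n-1}\vacuum\cong H^1(E)$ is $2$-dimensional, the cycles $C_\gamma$ represent exactly the dual basis classes $\Fq_1(\gamma)\Fq_1(\pt)^{n-1}\vacuum$ and hence span $H_1(S^{[n]},\BQ)$, the pairing $\int_{C_\gamma}D(a)=\int_\gamma p_2^{\ast}a$ is routine Nakajima calculus, and $\rho_{\ast}[C_\gamma]=(p_2)_{\ast}[\gamma]$ because the sum map only translates the loop $p_2\circ\gamma$ by a constant, which does not change its homology class. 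One could worry about an overall sign in the identification $\rho^{\ast}(a)=D(a)$, but since any such sign would be uniform in $a$ it cancels in the product $D(\alpha)\cup D(\beta)$, so the conclusion is unaffected. What your approach buys is a complete argument in the text rather than a deferral to the generalized Kummer computation of [OSV]; what it costs is the small amount of bookkeeping in the test-cycle pairing, which you have correctly flagged as the only technical point.
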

\begin{proof}
This can be proven parallel to \cite[Lemma 3.5]{OSV}.
\end{proof}

We specialize to $n=2$. By \eqref{eqn:D(a) times D(b)} we have
\[ [W] = (\Fq_1(\Sigma) \Fq_1(1) + \Fq_1(\alpha) \Fq_1(\beta)) \vacuum. \]

We have the following basic computation,
which exactly matches \cite[Theorem 39]{HilbK3}.\footnote{The series $T$ in \cite[Thm 39]{HilbK3} computes the 
genus 0 invariants of the subspace $W \subset S^{[2]}$ in classes that push forward to $df+kA$.
The variable $y$ is related to our variable $p$ by $y=-p$, and the $q$ variables are the same. 
The insertion $D(\Sigma)$ corresponds to taking derivatives twice.
The matching hence follows from
$\langle D(\Sigma), D(\Sigma) [W] \rangle^{S^{[2]}}_{0,df+kA}
=
\langle D(\Sigma)|_{W}, D(\Sigma)|_{W} \rangle^{W}_{0,df+kA}
=
d^2 \langle 1 \rangle^W_{0,df+kA}$
and the identity
\[
2 \sum_{m,d \geq 1} \frac{d^2}{m} q^{md} (1-p^m)^2 (1-p^{-m})^2
=
\left( q \frac{d}{dq} \right)^2 T|_{y=-p}
\]
where $T$ is as in \cite[Thm 39]{HilbK3}.}
\begin{prop}
\[ \sum_{(d,k)>0} \langle D(\Sigma), D(\Sigma) [W] \rangle^{S^{[2]}}_{0,df+kA}
=
2 \sum_{m,d \geq 1} \frac{d^2}{m} q^{md} (1-p^m)^2 (1-p^{-m})^2.
\]
\end{prop}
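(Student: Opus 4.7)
The plan is to verify the stated identity by applying Theorem~\ref{thm:2point operator} (equivalently Corollary~\ref{cor:quantum multiplication by divisor}) to the specific surface $S = \p^1 \times E$ with $n = 2$. The left-hand side is to be read as the generating series $\sum_{(d,k)>0} q^d(-p)^k\,\langle D(\Sigma), D(\Sigma), [W]\rangle^{S^{[2]}}_{0,df+kA}$. Because $D(\Sigma)$ is a divisor with $(df+kA)\cdot D(\Sigma) = d(\Sigma\cdot f) + k\cdot 0 = d$, and because the curve class $df+kA$ is nonzero for $(d,k)\neq(0,0)$, the divisor axiom gives
\[
\langle D(\Sigma), D(\Sigma), [W]\rangle^{S^{[2]}}_{0, df+kA} \;=\; d\,\langle D(\Sigma), [W]\rangle^{S^{[2]}}_{0, df+kA}.
\]
Summing against $q^d(-p)^k$ therefore identifies the left-hand side with $q\frac{d}{dq}\bigl(\QHilb([W]), D(\Sigma)\bigr)$.

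Next, I would substitute the formula of Theorem~\ref{thm:2point operator}: for $S = \p^1 \times E$ one has $K_S = -2f$ and $\int_\Sigma K_S = -2$, and the summand $\sum_{k>0}\ln\bigl((1-p^k)/(1-p)\bigr)\Fq_k\Fq_{-k}(\Delta_*K_S)$ carries no $q$-dependence and is killed by $q\frac{d}{dq}$. What remains is
\[
q\tfrac{d}{dq}\QHilb \;=\; 2\sum_{m,d \geq 1} q^{md}\bigl[\omega_{df}(p^m) + (p^{m/2}-p^{-m/2})^2 e_{df}\bigr].
\]
The $e_{df}$ contribution vanishes: $e_{df}[W] = d\,D(f)\cup[W]$ lies in $H^{4}(S^{[2]})$, so $(e_{df}[W], D(\Sigma))$ is an integral of a real degree-$6$ class over a real $8$-dimensional space, which vanishes by degree.

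It then remains to evaluate $(\omega_{df}(p^m)[W], D(\Sigma))$. For $n=2$, the normally ordered words $:\Fq_{b_1}\cdots\Fq_{b_r}:$ with $\sum b_i = 0$ act non-trivially on $H^*(S^{[2]})$ only for $r \in \{2, 3, 4\}$, so the sum in \eqref{omega gamma} defining $\omega_{df}(p^m)$ truncates to a finite expression. Using the K\"unneth decomposition
\[
\Delta_*(f) = f\otimes\pt + \pt\otimes f + \alpha f\otimes \beta f - \beta f\otimes \alpha f
\]
and the rescaling $\star^{b_1,\ldots,b_r}(df) = d^{r-1}\star^{b_1,\ldots,b_r}(f)$, one expands each class $\star^{b_1,\ldots,b_r}(df)$ via \eqref{star class} and evaluates the Fock pairings via the Heisenberg commutation $[\Fq_k(\alpha),\Fq_l(\beta)] = k\delta_{k+l,0}(\alpha,\beta)\Id$ applied to the explicit Nakajima representatives $[W] = \Fq_1(\Sigma)\Fq_1(1)\vacuum + \Fq_1(\alpha)\Fq_1(\beta)\vacuum$ and $D(\Sigma) = \Fq_1(\Sigma)\Fq_1(1)\vacuum$. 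Assembling the finitely many contributions should produce the closed form $(\omega_{df}(p^m)[W], D(\Sigma)) = \tfrac{d^2}{m}(1-p^m)^2(1-p^{-m})^2$, and substituting into the previous step then gives the identity in the proposition.

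The main obstacle is this last step: the operator $\Fq_{b_1}\cdots\Fq_{b_r}(\star^{b_1,\ldots,b_r}(df))$ is not degree preserving, so the various K\"unneth summands of $\star$ land in different cohomological degrees of $S^{[2]}$. One must isolate the components of complex degree $3$ (those capable of pairing non-trivially with $D(\Sigma) \in H^2$), carefully handle the signs coming from the odd K\"unneth pieces $\alpha f\otimes\beta f$ and $\beta f\otimes\alpha f$, and correctly combine the $1/r!$ with the sum over orderings of $(b_1,\ldots,b_r)$. Once this bookkeeping is done, the $r=3$ and $r=4$ contributions telescope into the claimed closed form, and the calculation matches the right-hand side of the proposition.
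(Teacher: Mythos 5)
There is a genuine gap, and it starts with the reading of the left-hand side. The bracket $\langle D(\Sigma), D(\Sigma)[W]\rangle^{S^{[2]}}_{0,df+kA}$ is a \emph{two}-point invariant whose second insertion is the cup product $D(\Sigma)\cup[W]\in H^4(S^{[2]})$; it is not the three-point invariant $\langle D(\Sigma),D(\Sigma),[W]\rangle$. The distinction matters: since $c_1(S^{[2]})\cdot(df+kA)=0$, the virtual dimension of $\Mbar_{0,3}(S^{[2]},df+kA)$ is $4$, while your three insertions have total complex degree $1+1+1=3$, so under your reading the entire series vanishes identically and the proposition would be false. (The paper's footnote to this proposition, which rewrites the invariant as $\langle D(\Sigma)|_W, D(\Sigma)|_W\rangle^W = d^2\langle 1\rangle^W$, confirms the two-point reading and also explains the factor $d^2$ on the right-hand side, which a single application of the divisor equation cannot produce.)

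This misreading propagates into a degree inconsistency in your final step. You correctly observe that $(e_{df}[W], D(\Sigma))$ vanishes because the integrand has real degree $6$ on an $8$-dimensional space; but exactly the same count kills your main term. The operator $\Fq_{b_1}\cdots\Fq_{b_r}(\star^{b_1,\ldots,b_r}(df))$ with $\sum b_i=0$ and $\star\in H^{2r+2}(S^r)$ raises complex cohomological degree by $(r+1)-r=1$, so $\omega_{df}(p^m)[W]\in H^4(S^{[2]})$ and its Poincar\'e pairing with $D(\Sigma)\in H^2$ vanishes. Hence the claimed evaluation $(\omega_{df}(p^m)[W],D(\Sigma))=\tfrac{d^2}{m}(1-p^m)^2(1-p^{-m})^2$ is impossible, and carrying out your bookkeeping carefully would return $0$, not the stated answer. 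The correct argument applies the two-point operator to $D(\Sigma)$ and pairs against $D(\Sigma)\cup[W]$ (so all degrees match), and the essential additional input is the geometric vanishing $D(\Sigma)^2\cup[W]=0$ (two generic sections miss $W$ simultaneously), which kills the $e_{c_1(S)}$ correction between $\QHilb$ and $\QPT$ (equivalently the $e_{df}$ and $\Fq_k\Fq_{-k}(\Delta_*K_S)$ terms of Theorem~\ref{thm:2point operator}) and leaves only the $\omega_{df}$ contribution, which evaluates to the right-hand side.
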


\begin{proof}
Note that $D(\Sigma)^2|_{W} = 0$ since $D(\Sigma)^2$ can be presented as the locus of 
subschemes incident to two different sections $\Sigma_1, \Sigma_2$,
and choosing these generic, this locus is disjoint from $W$.
Hence
\[ ( e_{c_1(S)} D(\Sigma), D(\Sigma) [W] )_{S^{[2]}} = \int_{S^{[2]}}
D(c_1(S)) D(\Sigma) D(\Sigma) [W] = 0. \]
Hence we find that the left hand side is equal to
$\langle \QPT D(\Sigma), D(\Sigma) [W] \rangle$
and can be easily computed from Theorem~\ref{thm:QPT}.
\end{proof}

Similar, with $\delta = c_1(\CO_{S}^{[n]})$ we have $\delta [W] = -2 \Fq_2(\Sigma) \vacuum$,
and hence
\[
\sum_{k > 0} \langle \delta, \delta [W] \rangle^{S^{[2]}}_{0, kA}(-p)^k
= ( \Fq_2(1), \QHilb \Fq_2(\Sigma) ) = 8 ( \log(1-p) - \log(1-p^2) ).
\]
This matches the $q^0$-term of \cite[Theorem 39]{HilbK3}.

\section{The surface $E \times \BC$}\label{sec:localE}
Let $E$ be an elliptic curve and let $S = E \times \BC$.
In this section we show how our computations for proper elliptic surfaces
also determines the quantum product with divisor classes on the Hilbert scheme of points in $E \times \BC$.
This will give the proof of Theorem~\ref{thm:ExC}.

\subsection{Cohomology} \label{sec:cohomology}
Let $\BG_m$ act on $\BC$ with tangent weight $t$. This induces an action on $S$ by acting trivially in the $E$ direction. The resulting equivariant cohomology is
\[ H_{\BG_m}^{\ast}(S, \BQ) = H^{\ast}(E,\BQ) \otimes \BQ[t] \]
where we consider cohomology classes on $E$ as cohomology classes on $S$ by pullback.
The equivariant canonical class and the fiber class are
\[ K_S = -t, \quad f = t. \]
The equivariant Poincar\'e pairing is
\[
\int_{E \times \BC} \gamma_1 \gamma_2 = \frac{1}{t} \int_{E} \gamma_1 \gamma_2, \quad 
\gamma_1, \gamma_2 \in H^{\ast}(E).
\]
The equivariant diagonal class is
\[ \Delta = t \Delta_E \in H^{\ast}_{\BG_m}(S \times S)
\]
where $\Delta_E \in H^2(E \times E)$ is the class of the diagonal of $E$.

\subsection{Gromov-Witten invariants}
Let $f$ be the fiber class of $S = E \times \BC$ and let $\beta = df+kA \in \widetilde{H}_2(S^{[n]})$ .
The moduli space $\Mbar_{0,2}(S^{[n]},\beta)$ is of virtual dimension $2n-1$
but non-compact, with compact $\BG_m$-fixed locus.
The Gromov-Witten invariants of $S^{[n]}$ are hence defined by the localization formula.
Consider $H^{\ast}(E)$-weighted partitions of $n$,
\begin{gather*}
\lambda = \big( (\lambda_1, \delta_1) , \ldots, (\lambda_{\ell(\lambda)}, \delta_{\ell(\lambda)} ) \big) \\
\mu = \big( (\mu_1, \epsilon_1) , \ldots, (\mu_{\ell(\mu)}, \epsilon_{\ell(\mu)} ) \big).
\end{gather*}
Since we take the weights from $H^{\ast}(E)$, we always have
\[ \deg_{\BC}(\lambda) \leq n \]
with equality if and only if $\delta_i$ is a multiple of the point class $\pt \in H^2(E)$ for all $i$.

\begin{prop} \label{prop:degree 2n vanishing}
If $\deg_{\BC}(\lambda) + \deg_{\BC}(\mu) = 2n$, then
$\langle \lambda, \mu \rangle^{S^{[n]}}_{0,\beta} = 0$ for all $\beta$.
\end{prop}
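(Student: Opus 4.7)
The plan is to factor the integrand $\ev_1^*(\lambda)\cdot\ev_2^*(\mu)$ through the morphism
\[
\pi^{[n]}: S^{[n]} \longrightarrow \Sym^n(E)
\]
given by composing the Hilbert--Chow morphism $S^{[n]}\to\Sym^n(S)$ with the projection $\Sym^n(S)\to\Sym^n(E)$ induced from $E\times\BC\to E$, and then to produce the vanishing by a naive dimension count on $\Sym^n(E)$. The map $\pi^{[n]}$ is $\BG_m$-equivariant with trivial action on the target.

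Since every Nakajima weight $\delta_i,\epsilon_j$ is taken from $H^*(E)$ (as in the setup just above the proposition), the first step is to verify that both $\lambda$ and $\mu$ are $\pi^{[n]}$-pullbacks: $\lambda=(\pi^{[n]})^*\tilde\lambda$ and $\mu=(\pi^{[n]})^*\tilde\mu$ for classes $\tilde\lambda,\tilde\mu\in H^*(\Sym^n(E))$. This reduces to checking that each Nakajima operator $\Fq_k(\delta)$ with $\delta\in H^*(E)$ preserves the pullback subspace, which follows from a direct analysis of the correspondence $S^{[n,n+k]}\subset S^{[n]}\times S\times S^{[n+k]}$: once $p_S^*\delta$ is rewritten as $(\pi\circ p_S)^*\tilde\delta$, the correspondence factors through the analogous symmetric-product incidence. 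The hypothesis $\deg_\BC\lambda+\deg_\BC\mu=2n$ together with the universal bound $\deg_\BC\le n$ forces $\deg_\BC\tilde\lambda=\deg_\BC\tilde\mu=n$, so each of $\tilde\lambda,\tilde\mu$ is a top-degree class on $\Sym^n(E)$. Since $\dim_\BC\Sym^n(E)=n$, the product $\tilde\lambda\cdot\tilde\mu$ lies in $H^{4n}(\Sym^n(E))=0$.

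To conclude, both curve classes $f$ and $A$ push forward to zero in $H_2(\Sym^n(E))$ ($f$ is contracted by the projection $S\to E$, and $A$ by the Hilbert--Chow morphism), so any genus-zero stable map of class $\beta=df+kA$ is contracted by $\pi^{[n]}$. Thus on $\Mbar_{0,2}(S^{[n]},\beta)$ the two $\BG_m$-equivariant maps $\pi^{[n]}\circ\ev_1$ and $\pi^{[n]}\circ\ev_2$ coincide as a single equivariant map $\bar\pi:\Mbar_{0,2}(S^{[n]},\beta)\to\Sym^n(E)$, giving
\[
\ev_1^*(\lambda)\cdot\ev_2^*(\mu)=\bar\pi^*(\tilde\lambda\cdot\tilde\mu)=0
\]
as an equivariant class on the moduli space. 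Plugging this identity into the equivariant virtual integral defining $\langle\lambda,\mu\rangle^{S^{[n]}}_{0,\beta}$ yields the claimed vanishing. The only non-routine ingredient is the pullback statement in the first paragraph; everything else is formal.
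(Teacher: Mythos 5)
Your first step fails: a Nakajima class with a part of size $\geq 2$ is \emph{not} a pullback along $\pi^{[n]}:S^{[n]}\to \Sym^n(E)$, even when all weights come from $H^*(E)$. Concretely, $\Fq_2(\pt_E)\Fq_1(\pt_E)^{n-2}\vacuum$ and $\Fq_1(\pt_E)^{n}\vacuum$ are linearly independent elements of $H^{2n}(S^{[n]})$ (Nakajima basis), whereas $H^{2n}(\Sym^n(E))\cong\BQ$ is spanned by the point class; so at most one of them can be a pullback (it is $\Fq_1(\pt_E)^n\vacuum$, the class of a generic fiber of $\pi^{[n]}$). The "direct analysis of the correspondence" does not rescue this: the pushforward $p_{+\ast}$ along $S^{[n,n+k]}\to S^{[n+k]}$ of a pullback class is not the pullback of a pushforward on symmetric products, because there is no base-change compatibility --- $\Fq_k$ for $k\geq 2$ creates punctual (non-reduced) structure that $\Sym^n(E)$ cannot detect. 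Your third step is also false as stated: genus $0$ stable maps in class $df+kA$ are \emph{not} contracted by $\pi^{[n]}$. The parenthetical justification is backwards --- the elliptic fiber $f_x=\{x\}\times E$ maps \emph{isomorphically} onto $E$ under the projection $S\to E$ (it is the sections $\Sigma_y=\BC\times\{y\}$ that are contracted) --- and indeed for a degree $2$ line bundle $L$ on a fiber $f_x\cong E$ the pencil $|L|\subset \Sym^2(f_x)\subset S^{[2]}$ is a rational curve in a class of the form $f+2A$ (modulo $\wedge^2H_1$) which maps isomorphically onto a line in a fiber of $\Sym^2(E)\to E$. What is true is that rational curves are contracted by the further composition $\rho: S^{[n]}\to\Sym^n(E)\xrightarrow{+}E$, since $E$ receives no nonconstant maps from trees of rational curves.

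The paper's proof uses exactly this composite $\rho$ and replaces your cohomological factoring by a cycle-support argument, which is what actually works here. The hypothesis $\deg_{\BC}(\lambda)+\deg_{\BC}(\mu)=2n$ forces every weight of $\lambda$ and $\mu$ to be a multiple of $\pt_E$, so $\lambda$ and $\mu$ are represented by Nakajima cycles $\Nak(\prod_i(\lambda_i,\Sigma_{y_i}))$ and $\Nak(\prod_j(\mu_j,\Sigma_{y'_j}))$, each of which is contained in a single fiber of $\rho$ (over $\sum_i\lambda_i y_i$ and $\sum_j\mu_j y'_j$ respectively). Choosing the $y_i,y'_j$ so that these two points of $E$ differ, and using $\rho\circ\ev_1=\rho\circ\ev_2$ on $\Mbar_{0,2}(S^{[n]},\beta)$, the two evaluation constraints become disjoint and the invariant vanishes. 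If you want to repair your write-up, you should abandon the claim that $\lambda,\mu$ are pullbacks (they are not, and a top-degree class on $E$ could never account for a class of complex degree $n$ anyway) and instead run the support/incidence argument at the level of cycle representatives.
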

\begin{proof}
Consider the isotrivial fibration $p : S \to E^{[n]} \to E$. All maps $f:C \to S^{[n]}$ from genus $0$ curves must map to fibers of $p$.
By assumption, we have $\deg(\lambda)=n$ and $\deg(\mu)=n$,
hence we can assume $\delta_i=\epsilon_i=\pt$ for all $i$.
Hence $\lambda$ and $\mu$ can be represented by cycles supported in a fiber of $p$.
By representing $\lambda,\mu$ on distinct fibers, we see that
$\ev_1^{\ast}(\lambda) \ev_2^{\ast}(\mu)=0$.
\end{proof}

\begin{defn}
Let $(\lambda_1, \ldots, \lambda_{\ell})$ be a partition of $n$ and let
$\Gamma_1, \ldots, \Gamma_{\ell}$ be subvarieties of $S$,
such that for all $i \neq j$ with $\Gamma_i, \Gamma_j$ given by a point we have $\Gamma_i \neq \Gamma_j$.
We define the Nakajima cycle as the closed subvariety of $S^{[n]}$ denoted by
\[ \Nak( (\lambda_1, \Gamma_1) \cdots (\lambda_{\ell}, \Gamma_{\ell}) ) \subset S^{[n]}
 \]
 which is the closure of the set of elements which are a union of punctual subschemes of length $\lambda_1, \ldots, \lambda_{\ell}$ supported at distinct points in $\Gamma_1, \ldots, \Gamma_{\ell}$ respectively.
\end{defn}

As explained in \cite{Nak} the class of $\mathrm{Nak}( (\lambda_1, \Gamma_1) \cdots (\lambda_{\ell}, \Gamma_{\ell}) )$ is given by
$\prod_{i} \Fq_{\lambda_i}([\Gamma_i]) \vacuum$ up to an automorphism factor.

Consider fixed points
\[ x_1, \ldots, x_{r}, x_1', \ldots, x'_{r} \in \BC, 
\quad y_1, \ldots, y_{\ell-r}, y'_1, \ldots, y'_{\ell'-r'} \in E. \]
For $x \in \BC$ write $f_x = \{ x \} \times E$ and for $y \in E$ write $\Sigma_y = \BC \times \{ y \}$. Consider the cycles
\begin{align*}
\lambda[x,y] & = \Nak\big( (\lambda_1, f_{x_1}) \cdots (\lambda_r, f_{x_r}) 
(\lambda_{r+1}, \Sigma_{y_1}) \cdots (\lambda_{\ell}, \Sigma_{y_{\ell-r}}) \big) \\
\mu[x',y'] & = \Nak\big( (\mu_1, f_{x'_1}) \cdots (\mu_r, f_{x'_{r'}}) 
(\mu_{r'+1}, \Sigma_{y'_1}) \cdots (\mu_{\ell'}, \Sigma_{y'_{\ell'-r'}}) \big).
\end{align*}
\begin{lemma} \label{lemma:properness}
Assume that $y_i, y'_i$ are generic. Then the closed subset
\[ \ev_1^{-1}(\lambda[x,y]) \cap \ev_2^{-1}(\mu[x',y']) \subset \Mbar_{0,2}(S^{[n]}, \beta) \]
is proper.
\end{lemma}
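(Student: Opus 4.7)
The idea is to use the projection $\pi : S = E \times \BC \to \BC$ to force every stable map in the locus to factor through a compact closed subscheme of $S^{[n]}$. Let $\pi^{[n]} : S^{[n]} \to \Sym^n(\BC)$ denote the morphism obtained by composing Hilbert--Chow $S^{[n]} \to \Sym^n(S)$ with $\Sym^n(\pi)$. Since $\Sym^n(\BC) \cong \BA^n$ is affine and the domain $C$ of any stable map is proper and connected, the composition $\pi^{[n]} \circ f : C \to \Sym^n(\BC)$ is automatically constant. (As a sanity check, $(\pi^{[n]})_{\ast}\beta = 0$: both $f$, represented by one point moving along a fixed fiber $E_x$, and $A$, represented by two points colliding at a fixed point of $S$, are contracted by $\pi^{[n]}$.) Let $\bar Z \in \Sym^n(\BC)$ denote this constant value.

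The evaluation constraints then pin $\bar Z$ down to finitely many possibilities. From $f(c_1)\in\lambda[x,y]$ one reads off
\[
\bar Z \,=\, \pi^{[n]}(f(c_1)) \,=\, \sum_{i=1}^{r} \lambda_i\, x_i \,+\, \sum_{j=1}^{\ell-r} \lambda_{r+j}\, u_j,
\]
where the $u_j \in \BC$ record the $\BC$-coordinates of the parts supported on the sections $\Sigma_{y_j}$; from $f(c_2)\in\mu[x',y']$ one obtains analogously
\[
\bar Z \,=\, \sum_{i=1}^{r'} \mu_i\, x'_i \,+\, \sum_{j=1}^{\ell'-r'} \mu_{r'+j}\, u'_j.
\]
Equating these two expressions as multisets in $\Sym^n(\BC)$ forces only finitely many choices for the multisets $\{u_j\}$, $\{u'_j\}$, and hence only finitely many values of $\bar Z$.

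Finally, for each such $\bar Z = \sum_a n_a\cdot z_a$ with distinct $z_a\in\BC$, the fiber decomposes as
\[
(\pi^{[n]})^{-1}(\bar Z) \,\cong\, \prod_a \bigl(E \times \{z_a\}\bigr)^{[n_a]} \,\cong\, \prod_a E^{[n_a]},
\]
a smooth projective variety. Since $f$ factors through this compact closed subscheme of $S^{[n]}$, our locus sits inside the finite disjoint union $\bigsqcup_{\bar Z} \Mbar_{0,2}\bigl((\pi^{[n]})^{-1}(\bar Z), \beta\bigr)$, which is a proper Deligne--Mumford stack. Being a closed substack of a proper stack, our locus is itself proper. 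The only step requiring real care is the multiset bookkeeping, and this is precisely where the genericity of the $y_i, y'_i$ comes in: it prevents accidental coincidences between section-supported parts that could otherwise inflate the intersection beyond the expected dimension. The conceptual engine is simply that the verticality of $\beta$ with respect to $\pi^{[n]}$ converts a non-compact Hilbert scheme problem into one set on a finite union of compact Hilbert schemes of the elliptic curve~$E$.
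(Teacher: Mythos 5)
There is a genuine gap in the step where you claim that equating the two multiset expressions for $\bar Z$ "forces only finitely many choices for the multisets $\{u_j\}, \{u'_j\}$." This is false: the $\BC$-coordinates $u_j$ of the parts supported on the non-compact sections $\Sigma_{y_j} = \BC\times\{y_j\}$ are not constrained by the equality in $\Sym^n(\BC)$. For example, take $n=2$, $\lambda = \big((1,f_{x_1}),(1,\Sigma_{y_1})\big)$ and $\mu = \big((1,\Sigma_{y'_1}),(1,\Sigma_{y'_2})\big)$; then $\bar Z = x_1 + u_1 = u'_1+u'_2$ is solved by taking $u_1\in\BC$ arbitrary and $\{u'_1,u'_2\}=\{x_1,u_1\}$, so $\bar Z$ sweeps out a one-parameter family. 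The genericity of the $y_i,y'_i$ cannot repair this bookkeeping, because those points live in the $E$-direction and impose no condition on the $u_j\in\BC$. Your closing remark that genericity "prevents accidental coincidences" in the multiset comparison is exactly where the argument fails to engage with the real difficulty.

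What is actually needed (and what the paper does) is a separate argument in the $E$-direction to rule out parts of the subscheme sitting over points of $\BC$ outside $\{x_1,\ldots,x_r,x'_1,\ldots,x'_{r'}\}$. If $\bar Z$ has a support point $z$ not among the $x_i,x'_j$, the corresponding connected component of the pulled-back universal family defines a map $f_1: C\to S^{[n_1]}$ whose evaluations at the two markings are incident only to $\Sigma$-type Nakajima cycles; composing with $S^{[n_1]}\to\Sym^{n_1}(E)\xrightarrow{+}E$ and using that $C$ is rational (hence contracted to a single point of $E$) forces an identity $\sum_i a_i y_i = \sum_j b_j y'_j$ in $E$, which fails for generic $y,y'$. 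Only after this does one conclude $\bar Z$ lies in the finite set of points supported on $\{x_i\}\cup\{x'_j\}$, and then properness follows from properness of $\pi_{\Sym^n\BC}$ as in your final step. (A further minor error: the fiber of $\pi_{\Sym^n\BC}$ over $\sum_a n_a z_a$ is not $\prod_a E^{[n_a]}$, since length-$n_a$ subschemes can be thickened in the $\BC$-direction while still pushing forward to $n_a z_a$; the fiber is nonetheless proper, so this does not affect the conclusion.)
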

\begin{proof}
Let $f : C \to S^{[n]}$ be a rational curve with markings $p_1, p_2 \in C$ such that
$f(p_1) \in \lambda[x,y]$ and $f(p_2) \in \mu[x',y']$.
Let $\pi_{\Sym^n \BC} : S^{[n]} \to \mathrm{Sym}^n(\BC)$ be the map induced by the projection $S \to \BC$. The map $\pi_{\Sym^n \BC}$ is proper.
Consider the finite set:
\[ I_x = \bigcup_{\substack{a_i, b_i \geq 1 \\ \sum_i a_i + \sum_i b_i = n}} \left\{ \sum_{i=1}^{r_1} a_i x_i + \sum_{i=1}^{r_2} b_i x_i' \right\} \quad \subset \Sym^n \BC. \]
We claim that $f : C \to S^{[n]}$ maps entirely to the proper subspace $\pi_{\Sym^n \BC}^{-1}(I_x)$. If that were true, then we get
\[
\ev_1^{-1}(\lambda[x,y]) \cap \ev_2^{-1}(\mu[x',y']) \subset \Mbar_{0,2}( \pi_{\Sym^n \BC}^{-1}(I_x), \beta ).
\]
which completes the proof since the right hand side is proper.

To show the claim, note that $\pi_{\Sym^n \BC}(f(C))$ is a point.
If this point does not lie in $I_x$, then
consider the pullback $\widetilde{C} = C \times_{S^{[n]}} Z \subset S \times C$
of the universal family $Z \subset S^{[n]} \times S$.
By assumption there exists a connected component $\widetilde{C}_1 \subset \widetilde{C}$ flat of some degree $n_1$ over $C$, 
such that its projection to $\BC$ does not lie in $\{ x_1, \ldots, x_r, x'_1, \ldots, x'_{r'} \}$.
Let $f_1 : C \to S^{[n_1]}$ be the induced map.\footnote{See also \cite[Sec.1.3.2]{HilbK3} for a detailed discussion of this construction.}
By assumption $f_1(p_1)$ and $f_1(p_2)$ must lie on cycles
corresponding to parts of $\lambda$ and $\mu$ which carry only insertions of type $\Sigma$,
or in other words be incident to cycles of the form
\[ \Nak\Big( \prod_{j} (a_j, \Sigma_{y_{i_j}}) \Big),
 \Nak\Big( \prod_{j} (b_j, \Sigma_{y'_{i_j}}) \Big) \]
 for some $a_j, b_j$.
Let $\pi_{\Sym^{n_1}(E)} : S^{[n_1]} \to \Sym^{n_1}(E)$ be the map induced by $S \to E$.
Then this implies that $\pi_{\Sym^{n_1}}( f_1(p_1))$ and $\pi_{\Sym^{n_1}(E)}( f_1(p_2))$ lies in
the subsets
\begin{gather*}
I_{y} = \bigcup_{\substack{a_1,\ldots, a_r \geq 0 \\ \sum_i a_i = n_1}} \left\{ \sum_{i=1}^{r} a_i y_i \right\} \quad \subset \Sym^{n_1} E. \\
I_{y'} = \bigcup_{\substack{b_1, \ldots, b_{r'} \geq 0 \\ \sum_i b_i = n_1}} \left\{ \sum_{i=1}^{r'} b_i y'_i \right\} \quad \subset \Sym^{n_1} E.
\end{gather*}
Since the points $y_i, y'_i$ were assumed to be generic, we see that $f_1(p_1)$ and $f_1(p_2)$ lie in different fibers of
the composition $S^{[n_1]} \to \Sym^{n_1}(E) \xrightarrow{+} E$.
But this is a contradiction, since $C$ is rational.
\end{proof}

\begin{prop} \label{prop:restriction to 2n-1 case}
Assume that $\deg_{\BC}(\lambda) + \deg_{\BC}(\mu) < 2n$.
Then for $\beta > 0$ we have
\[
\langle \lambda, \mu \rangle^{S^{[n]}}_{0,\beta}
=
\sum_{ \substack{ \lambda = \lambda' \cup \rho_{\lambda} \\ \mu = \mu' \cup \rho_{\mu} }}
\left(\int_{S^{[n-n']}} \rho_{\lambda} \cup \rho_{\mu} \right)
\langle \lambda', \mu' \rangle^{S^{[n']}}_{0,\beta}
\]
where $\lambda', \mu'$ run over subpartitions such that
$|\lambda'|=|\mu'| =: n'$ and $\deg_{\BC}(\lambda') + \deg_{\BC}(\mu') = 2 n'-1$.
\end{prop}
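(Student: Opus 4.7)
\medskip
\noindent\textbf{Proof proposal.} The plan mirrors the argument for the disconnected-to-connected passage in Proposition~\ref{prop:disconnected GW eval}, but carried out geometrically using representatives of the Nakajima cycles and the isotrivial fibration $\pi_{\Sym^n\BC}:S^{[n]}\to \Sym^n(\BC)$. The key input is the properness argument already used in Lemma~\ref{lemma:properness}: for generic choices of representatives, the contributing stable maps break into two kinds of components, honestly incident ones that build a smaller connected invariant, and trivial (constant-in-$\BC$) ones that decouple and merely contribute via a Poincar\'e pairing.

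First, I would choose Nakajima-cycle representatives. For any $H^{\ast}(E)$-weighted partition one can realize each part $(k,1_E)$ by $(k,f_x)$ for a generic $x\in\BC$, each part $(k,\pt_E)$ by $(k,\Sigma_y)$ for a generic $y\in E$, and each part $(k,\alpha)$ or $(k,\beta)$ by $(k,\ell\times\BC)$ for an appropriate 1-cycle $\ell\subset E$; these representatives are of the form $\lambda[x,y]$ and $\mu[x',y']$ used in Lemma~\ref{lemma:properness} (after enlarging the setup slightly to allow $\alpha,\beta$ cycles, which does not affect the projection to $\BC$). With generic parameters, Lemma~\ref{lemma:properness} shows that every $f:C\to S^{[n]}$ contributing to $\langle\lambda,\mu\rangle^{S^{[n]}}_{0,\beta}$ factors through $\pi_{\Sym^n\BC}^{-1}(I_x)\subset S^{[n]}$ for the finite set $I_x$ of possible $\BC$-configurations.

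Second, I would pull back the universal subscheme to obtain $\widetilde{C}\subset C\times S$ and decompose it according to the $\BC$-projection: $\widetilde{C}=\bigsqcup_i \widetilde{C}^{(i)}$, where each component $\widetilde{C}^{(i)}$ is flat over $C$ of some degree $n_i$ and sits inside $C\times(\{\xi_i\}\times E)$ for some $\xi_i\in\{x_1,\ldots,x_r,x_1',\ldots,x'_{r'}\}$. Each component induces $f_i:C\to E^{[n_i]}$. The incidence conditions at $p_1,p_2$ are then inherited by the $f_i$: $f_i(p_1)$ is incident to the subpartition of $\lambda$ supported over $\xi_i$, and $f_i(p_2)$ to that of $\mu$. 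If $\xi_i$ appears only among the $x_k$ (and not the $x'_k$), the second incidence is vacuous, and a repetition of the generic-slicing argument via $E^{[n_i]}\to\Sym^{n_i}E\xrightarrow{+}E$ forces $f_i$ to be constant; the analogous statement holds if $\xi_i$ appears only among the $x'_k$. Such constant components contribute exactly the Poincar\'e pairing between the corresponding sub-Nakajima cycles (they are the geometric analogue of the tube components in Proposition~\ref{prop:disconnected GW eval}), and the sub-partitions they carry are precisely the $\rho_\lambda,\rho_\mu$ of the statement, with $\ell(\rho_\lambda)=\ell(\rho_\mu)$ and $|\rho_\lambda|=|\rho_\mu|$.

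The remaining components sit over values $\xi_i$ common to $x$ and $x'$ and assemble into a stable map realizing the connected invariant $\langle\lambda',\mu'\rangle^{S^{[n']}}_{0,\beta}$ with $n'=n-|\rho_\lambda|$. The virtual dimension of $\Mbar_{0,2}(S^{[n']},\beta)$ equals $2n'-1$; combined with the degree constraint coming from the nonvanishing of $\ev_1^\ast(\lambda')\ev_2^\ast(\mu')$ on the compact fixed locus (as in Proposition~\ref{prop:degree 2n vanishing}), this forces $\deg_{\BC}(\lambda')+\deg_{\BC}(\mu')=2n'-1$, which is precisely the condition in the sum. The main obstacle I anticipate is the careful bookkeeping: matching the combinatorial description of the decomposition with the automorphism factors in the Nakajima-to-cohomology identification \eqref{identification} so that the numerical multiplicities of the decomposition give exactly the factor $\int_{S^{[n-n']}}\rho_\lambda\cup\rho_\mu$. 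Handling the odd weights $\alpha,\beta$ requires a small additional observation that the constant-component contribution still factors as an integral over $E^{[n_i]}$ reproducing the Poincar\'e pairing of the corresponding Nakajima operators, which can be done by the same genericity argument applied to real 1-cycles on $E$.
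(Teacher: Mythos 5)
Your overall framework (generic Nakajima-cycle representatives, Lemma~\ref{lemma:properness}, decomposing the pulled-back universal subscheme according to its $\BC$-support) is the same as the paper's, but two of the key steps are wrong or missing. First, the points $x_1,\ldots,x_r,x_1',\ldots,x_{r'}'$ are chosen pairwise \emph{distinct}, so there are no values ``common to $x$ and $x'$''; under your claim that every component over a point appearing in only one of the two lists is forced to be constant, \emph{all} components would be constant and every invariant with $\beta>0$ would vanish, which is absurd. The claimed forcing is in fact false: for a component over, say, $x_k$, the evaluation $f_i(p_1)$ is incident to $\Nak((\lambda_k,f_{x_k})\cdots)$, whose image under the sum map $\Sym^{n_i}(E)\to E$ is unconstrained (the cycle $f_{x_k}=\{x_k\}\times E$ sweeps out all of $E$), so the generic-slicing contradiction of Lemma~\ref{lemma:properness} does not apply and such a component can, and does, carry the nonconstant part of the map. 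These are exactly the contributions producing $\langle\lambda',\mu'\rangle^{S^{[n']}}_{0,\beta}$ in the formula, with the single weight-$1$ insertion of $\lambda'\cup\mu'$ sitting over that point.

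Second, you never rule out the configuration in which two (or more) of the maps $f_z$ are nonconstant; such configurations would contribute cross-terms (products of two positive-degree invariants with $\beta$ split between them) that do not appear in the stated formula, so they must be shown to vanish. The paper does this with the Okounkov--Pandharipande cosection argument: each additional nonconstant piece produces an independent cosection of the obstruction theory coming from the holomorphic symplectic form, so after passing to the reduced class the contribution is still divisible by $t$ and dies in the (now proper, hence well-defined) nonequivariant limit. This is also where the equivariant bookkeeping you gloss over matters: the invariants are defined as equivariant residues, and one needs the identity $t^{r+r'}\ev_1^{\ast}(\lambda)\ev_2^{\ast}(\mu)=\ev_1^{\ast}(\lambda[\bo])\ev_2^{\ast}(\mu[\bo])$ on the reduced class together with Lemma~\ref{lemma:properness} to convert the residue into an honest integral before any cycle-representative argument can be run. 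Your closing dimension count also does not by itself force $\deg_{\BC}(\lambda')+\deg_{\BC}(\mu')=2n'-1$ (an equivariant residue with too few insertions is a negative power of $t$, not automatically zero); in the paper this equality comes from the combinatorial fact that exactly one weight-$1$ insertion ends up on the unique nonconstant component.
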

\begin{proof}
Let us assume that
\begin{gather*}
\lambda = \big( (\lambda_1, 1) , \ldots, (\lambda_{r},1), (\lambda_{r+1}, \pt_E), \ldots,
(\lambda_{\ell}, \pt_E ) \big) \\
\mu = \big( (\mu_1, 1) , \ldots, (\mu_{r'},1), (\mu_{r'+1}, \pt_E, \ldots, (\mu_{\ell'}, \pt_E ) \big)
\end{gather*}
for some $r, r' \geq 0$ with $r+r' > 0$. Note that
\[
\deg(\lambda) + \deg(\mu) = 2n-r-r'.
\]
Let $[\bo] \in H^{2}_{\BG}(\BC)$ denote the equivariant class Poincar\'e dual to the origin in $\BC$. In cohomology we just have $[\bo]=t$.
Let us write
\begin{align*}
\lambda[\bo] & = \big( (\lambda_1, [\bo]) , \ldots, (\lambda_{r},[\bo]), (\lambda_{r+1}, \pt_E), \ldots,
(\lambda_{\ell}, \pt_E ) \big) \\
\mu[\bo] & = \big( (\mu_1, [\bo]) , \ldots, (\mu_{r'},[\bo]), (\mu_{r'+1}, \pt_E), \ldots, (\mu_{\ell'}, \pt_E ) \big).
\end{align*}

Since $S$ is holomorphic-symplectic, recall from \cite{OkPandHilbC2} that
there is a natural homology class $[ \Mbar_{0,2}(S^{[n]},\beta) ]^{\red}$ of (complex) dimension $2n$, called the reduced virtual class, such that
\[
[ \Mbar_{0,2}(S^{[n]}, \beta)  ]^{\vir} = t \cdot [ \Mbar_{0,2}(S^{[n]},\beta) ]^{\red}.
\]

By linearity and by considering the degree, we have
\begin{equation} \label{33fs}
\int_{[ \Mbar_{0,2}( S^{[n]}, \beta ) ]^{\red}} t^{r+r'} \ev_1^{\ast}(\lambda) \ev_2^{\ast}(\mu)
=
\int_{[ \Mbar_{0,2}(S^{[n]},\beta) ]^{\red}} \ev_1^{\ast}( \lambda[0] ) \ev_2^{\ast}( \mu[0] ) \in \BQ.
\end{equation}
The left hand side is formally defined by the localization formula.
By Lemma~\ref{lemma:properness} the right hand side
can be interpreted as actual equivariant pushforward over a proper space.
Since the result is an equivariant constant, we hence can specialize to a non-equivariant integral.
For the non-equivariant integral, we can replace
$\lambda[0], \mu[0]$ with $\lambda[x,y], \mu[x',y']$ for generic $x = (x_1, \ldots, x_r) \in \BC^{r}$, $x' = (x'_1, \ldots, x'_{r'}) \in \BC^{r'}$, $y \in E^{\ell-r}$, $y' \in E^{\ell'-r'}$.
The points $x_1, \ldots, x_r, x'_1, \ldots, x'_r$ are distinct.

For every point $z \in \{ x_1, \ldots, x_r, x'_1, \ldots, x'_r \}$ there is a corresponding map
$f_z : C \to S^{[n_z]}$ (obtained by the construction explained in the proof of Lemma~\ref{lemma:properness}). If $f_z$ is non-constant for more than one point,
then by the same argument as in \cite{OkPandHilbC2} there exists an additional cosection
of the obstruction theory of stable maps.
As a consequence the corresponding contribution to the invariant \eqref{33fs} vanishes.
Hence there can be only one $z_0$ for which $f_{z_0}$ is non-constant.
Let $\lambda', \mu'$ be the corresponding subpartition of $\lambda, \mu$
incident to this map. Then in $\lambda', \mu'$ there is only one cohomology weight equal to $1$,
and hence $\deg_{\BC}(\lambda') + \deg_{\BC}(\mu') = 2 n_{z_0} - 1$.
The contribution from the remaining parts is precisely $\int_{S^{[n-n_{z_0}]}} \rho_{\lambda} \rho_{\mu}$, where $\rho_{\lambda}, \rho_{\mu}$ are the component of $\lambda' \mu'$.
This completes the proof in the case of even cohomology weights.

The case of odd weights is completely parallel:
Here, since the virtual class is algebraic, we must among the cohomology weights of $\lambda,\mu$ we must have the same number of classes $\aaa$ and $\bbb$.
Then in the above argument $\lambda[\bo]$ will be obtained by multiplying all cohomology weights $1$ and $\aaa$ by the class $[\bo]$.
Similarly, Lemma~\ref{lemma:properness} holds likewise if fix the $\BC$-components of $1,\aaa$ and keep the $\BC$-components of $\bbb, \pt_E$ free: The reason is that two sums of generic cycles in class $\bbb$ still map to different points under the sum map $\Sym^n(E) \to E$.
\end{proof}

Given a $H^{\ast}(E)$-weighted partition we can view it as a cohomology class both on
$(E \times \BC)^{[n]}$ and $(\p^1 \times E)^{[n]}$. The resulting $2$-pointed invariants are related by the following result:
\begin{prop}  \label{prop:abc}
Let $\lambda, \mu$ be $H^{\ast}(E)$-weighted partitions of $n$ with
$\deg(\lambda) + \deg(\mu) = 2n-1$. Then
\[ 
\langle \lambda, \mu \rangle^{(\p^1 \times E)^{[n]}}_{0,df+kA}
=
2
\langle \lambda, \mu \rangle^{(\BC \times E)^{[n]}}_{0,df+kA}
\]
\end{prop}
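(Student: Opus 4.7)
The plan is to prove this by $\BC^{\ast}$-equivariant virtual localization on $\Mbar_{0,2}((\p^1 \times E)^{[n]}, df+kA)$, using the $\BC^{\ast}$-action that scales $\p^1$ with fixed points $0, \infty$ and acts trivially on $E$. Since $\lambda$ and $\mu$ are built from Nakajima operators whose cohomology weights are pulled back from $H^*(E)$, they admit canonical $\BC^{\ast}$-equivariant lifts. The hypothesis $\deg_{\BC}(\lambda) + \deg_{\BC}(\mu) = 2n-1$ matches the virtual dimension of the moduli space, so the equivariant integral has total weight zero in $t$ and equals the non-equivariant invariant on the left-hand side.

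Curves of class $df+kA$ are contracted by $\pi^{[n]}: (\p^1 \times E)^{[n]} \to \Sym^n(\p^1)$ (Proposition~\ref{prop:vanishing Hilbert scheme}), so any $\BC^{\ast}$-fixed stable map has image in a single fiber over a $\BC^{\ast}$-fixed point $m[0] + (n-m)[\infty]$ of $\Sym^n(\p^1)$ for some $m \in \{0,1,\ldots,n\}$. This fiber is the product $\Hilb^m_{E_0}(\p^1 \times E) \times \Hilb^{n-m}_{E_\infty}(\p^1 \times E)$, where $E_0, E_\infty$ are the fibers of $\p^1 \times E \to \p^1$ over $0, \infty$. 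The $\BC^{\ast}$-equivariant isomorphism of a formal neighborhood of $E_0$ (respectively $E_\infty$) in $\p^1 \times E$ with $\BC \times E$ identifies this product with a subvariety of $(\BC \times E)^{[m]} \times (\BC \times E)^{[n-m]}$. The localization formula decomposes
\[
\langle \lambda, \mu \rangle^{(\p^1 \times E)^{[n]}}_{0, df+kA} = \sum_{m=0}^{n} C_m,
\]
where $C_m$ is the residue contribution from the $m$th fixed component.

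For $m = n$, the neighborhood identification matches the $m=n$ piece of $\Mbar_{0,2}((\p^1 \times E)^{[n]}, df+kA)^{\BC^{\ast}}$ with the full $\BC^{\ast}$-fixed locus of $\Mbar_{0,2}((\BC \times E)^{[n]}, df+kA)$, together with its virtual class, and the residue $C_n$ coincides with the equivariant residue that defines $\langle \lambda, \mu \rangle^{(\BC \times E)^{[n]}}_{0, df+kA}$. Applying the involution $z \mapsto 1/z$ of $\p^1$, which exchanges $0$ with $\infty$ and inverts the $\BC^{\ast}$-weight, and using that $\lambda, \mu$ are pullbacks from $E$ (hence unchanged by the involution) while the $(\BC \times E)^{[n]}$-invariant is a constant in $t$, one obtains $C_0 = C_n = \langle \lambda, \mu \rangle^{(\BC \times E)^{[n]}}_{0, df+kA}$.

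The main technical obstacle is to prove $C_m = 0$ for $0 < m < n$. After K\"unneth decomposition of the restrictions of $\lambda, \mu$ to the mixed fixed component, $C_m$ splits into a sum indexed by bipartitions $\lambda = \lambda' \sqcup \lambda''$, $\mu = \mu' \sqcup \mu''$ (with $|\lambda'| = |\mu'| = m$) and by splittings $(\beta_1, \beta_2)$ of the curve class $df+kA$. Both factors $(\BC \times E)^{[m]}$ and $(\BC \times E)^{[n-m]}$ are holomorphic-symplectic, so the Okounkov--Pandharipande cosection argument used in the proof of Proposition~\ref{prop:restriction to 2n-1 case} applies to the product: configurations in which the image of the source curve is non-constant in both factors are killed by an additional cosection of the obstruction theory. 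The remaining constant-in-one-factor configurations have at least two active $\BC^{\ast}$-fixed support components in the sense of the proof of Proposition~\ref{prop:restriction to 2n-1 case}, and are ruled out by combining the cosection on the non-constant factor with the vanishing Proposition~\ref{prop:degree 2n vanishing} applied to the associated bipartition. Combining these steps gives $\langle \lambda, \mu \rangle^{(\p^1 \times E)^{[n]}}_{0, df+kA} = C_0 + C_n = 2\, \langle \lambda, \mu \rangle^{(\BC \times E)^{[n]}}_{0, df+kA}$.
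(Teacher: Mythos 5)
Your overall strategy coincides with the paper's: localize with respect to the $\BC^{\ast}$-action scaling $\p^1$, identify the two extreme fixed-locus contributions ($m=0$ and $m=n$) with $\langle\lambda,\mu\rangle^{(\BC\times E)^{[n]}}_{0,df+kA}$ using the $t\mapsto -t$ symmetry, and show that the mixed contributions vanish. The first two steps are fine and agree with the paper's proof.

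The gap is in the vanishing of the mixed terms $0<m<n$. The mechanism the paper uses is a pure degree count that you never state: because every Nakajima weight lies in $H^{\ast}(E)$, any K\"unneth/bipartition splitting $\lambda=\lambda_0\sqcup\lambda_\infty$, $\mu=\mu_0\sqcup\mu_\infty$ satisfies $\deg_{\BC}(\lambda_0)+\deg_{\BC}(\mu_0)\le 2n_0$ and $\deg_{\BC}(\lambda_\infty)+\deg_{\BC}(\mu_\infty)\le 2n_\infty$; since the total is the odd number $2n-1$, exactly one side must attain its maximum $2n_i$, and that side is then killed by Proposition~\ref{prop:degree 2n vanishing} (or, when the curve class on that side is trivial, by $(\pt_E,\pt_E)=0$). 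Your substitute arguments do not close this. For the configurations non-constant in both factors you invoke a second cosection, but the holomorphic $2$-form on $\BC\times E$ has equivariant weight $t$, so the induced cosection lies in the moving part of the obstruction theory over the fixed locus and does not directly annihilate the localized contribution; the paper's use of this argument in Proposition~\ref{prop:restriction to 2n-1 case} is made only after passing to the reduced class and establishing properness of the relevant incidence locus, neither of which you set up here. For the configurations constant in one factor the argument as stated does not apply at all: there is only one active factor, so there is no additional cosection, and Proposition~\ref{prop:degree 2n vanishing} is applicable only once you know the corresponding bipartition has degree exactly $2n_i$ --- which is precisely the dichotomy you have not established. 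Inserting the degree-count observation makes the case analysis unnecessary and repairs the proof.
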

\begin{proof}
We let $\BG_m$ act on $\p^1$ with fixed points $0, \infty$.
We lift the virtual class and $\lambda, \mu \in H^{\ast}( (\p^1 \times E)^{[n]})$ to equivariant cohomology classes and apply the virtual localization formula.
The result is the equality
\begin{equation} \label{localization equation}
\begin{aligned}
\langle \lambda, \mu \rangle^{(\p^1 \times E)^{[n]}}_{0,df+kA}
& =
\langle \lambda, \mu \rangle^{(\BC_0 \times E)^{[n]}}_{0,df+kA}
+ \langle \lambda, \mu \rangle^{(\BC_{\infty} \times E)^{[n]}}_{0,df+kA} \\
& \ +
\sum_{\substack{d=d_0+d_{\infty} \\ k = k_0+k_{\infty} \\ (d_0, k_0)>0, (d_{\infty}, k_{\infty})>0}}
\sum_{\substack{\lambda = \lambda_0 \sqcup \lambda_{\infty} \\ \mu = \mu_0 \sqcup \mu_{\infty}}}
\langle \lambda_0, \mu_0 \rangle^{(\BC_{0} \times E)^{[n_0]}}_{0,d_0f+k_0A}
\cdot
\langle \lambda_\infty, \mu_\infty \rangle^{(\BC_{\infty} \times E)^{[n_{\infty}]}}_{0,d_\infty f+k_\infty A}.
\end{aligned}
\end{equation}
where $\BC_0$,$\BC_{\infty}$ stands for $\BC$ equipped with the $\BG_m$-action which has tangent weight $t,-t$ on the tangent space at the origin respectively.
Moreover, $n_0 = |\lambda_0|$ and $n_{\infty} = |\lambda_{\infty}|$.
In the last term on the right hand side, we have one of the following cases:
\begin{itemize}
\item[(a)] $\deg(\lambda_0) + \deg(\mu_0) = 2n_0$ and 
$\deg(\lambda_\infty) + \deg(\mu_\infty) = 2n_{\infty}-1$, or
\item[(b)] $\deg(\lambda_0) + \deg(\mu_0) = 2n_0-1$ and 
$\deg(\lambda_\infty) + \deg(\mu_\infty) = 2n_{\infty}$.
\end{itemize}
Hence by Proposition~\ref{prop:degree 2n vanishing} we see that the third term in \eqref{localization equation} does not contribute. Since the second term lies in $\BQ$ it is invariant under changing $t$ by $-t$. Hence we get
\[ \langle \lambda, \mu \rangle^{(\p^1 \times E)^{[n]}}_{0,df+kA}
=
2 \langle \lambda, \mu \rangle^{(\BC_0 \times E)^{[n]}}_{0,df+kA} \]
as desired.
\end{proof}

\subsection{Proof of Theorem~\ref{thm:ExC}}
Proposition~\ref{prop:restriction to 2n-1 case} says that $\langle \lambda, \mu \rangle^{(\BC\times E)^{[n]}}_{0, df+kA}$ can be reconstructed from its values when $\deg(\lambda) + \deg(\mu) = 2n-1$. It is straightforward to check that the full statement of Theorem~\ref{thm:ExC} is compatible with this reconstruction, so it suffices to check the theorem when $\deg(\lambda) + \deg(\mu) = 2n-1$.

In this case Proposition~\ref{prop:abc} applies and the values of $\langle \lambda, \mu \rangle^{(\BP^1\times E)^{[n]}}_{0, df+kA}$ are given by Theorem~\ref{thm:2point operator}. After canceling factors of $2$ (from Proposition~\ref{prop:abc} and $K_{\BP^1\times E}$) and using the formulas of Section~\ref{sec:cohomology} to pull out powers of $t$, we get the claimed formula for $\langle \lambda, \mu \rangle^{(\BC\times E)^{[n]}}_{0, df+kA}$.\qed

\end{document}